\DeclareSymbolFont{rsfs}{U}{rsfs}{m}{n}
\DeclareSymbolFontAlphabet{\mathscrsfs}{rsfs}
\theoremstyle{definition}
\newtheorem{Def}{Definition}[section]
\newtheorem{Rmk}[Def]{Remark}
\theoremstyle{plain}
\newtheorem{Prop}[Def]{Proposition}
\newtheorem{Thm}[Def]{Theorem}
\newtheorem{Lemma}[Def]{Lemma}
\newtheorem{Cor}[Def]{Corollary}
\numberwithin{equation}{section}
\newcommand{\R}{\mathbb{R}}
\newcommand{\E}{\mathbb{E}}
\newcommand{\Z}{\mathbb{Z}}
\newcommand{\N}{\mathbb{N}}
\newcommand{\calD}{\mathscrsfs{D}}
\def\card{{\cal C}ard}
\newcommand{\footremember}[2]{%
   \footnote{#2}
    \newcounter{#1}
    \setcounter{#1}{\value{footnote}}%
}
\def\oversortoftilde#1{\mathop{\vbox{\m@th\ialign{##\crcr\noalign{\kern3\p@}%
      \sortoftildefill\crcr\noalign{\kern3\p@\nointerlineskip}%
      $\hfil\displaystyle{#1}\hfil$\crcr}}}\limits}
\def\sortoftildefill{$\m@th \setbox\z@\hbox{$\braceld$}%
  \braceld\leaders\vrule \@height\ht\z@ \@depth\z@\hfill\braceru$}
\title{Wavelet-Type Expansion of Generalized Hermite Processes with rate of convergence}
\author{A Ayache \footremember{Lille}{Univ. Lille, 
CNRS, UMR 8524 - Laboratoire Paul Painlev\'e, F-59000 Lille, France. antoine.ayache@univ-lille.fr}, J. Hamonier \footnote{Univ. Lille, CHU Lille, ULR 2694 - METRICS : \'Evaluation des technologies de sant\'e et des
pratiques m\'edicales, F-59000 Lille, France. julien.hamonier@univ-lille.fr}, L. Loosveldt\footnote{\underline{\textbf{Corresponding author}} University of Luxembourg, Department of Mathematics (DMATH), Maison du Nombre, 6, avenue de la Fonte, L-4364 Esch-sur-Alzette, Grand Duchy Of Luxembourg. laurent.loosveldt@uni.lu}}
\begin{document}

\maketitle

\begin{abstract}
Wavelet-type random series representations of the well-known Fractional Brownian Motion (FBM) and many other related stochastic processes and fields have started to be introduced since more than two decades. Such representations provide natural frameworks for approximating almost surely and uniformly rough sample paths at different scales and for study of various aspects of their complex erratic behavior. 

Hermite process of an arbitrary integer order $d$, which extends FBM, is a paradigmatic example of a stochastic process belonging to the $d$th Wiener chaos. It was introduced very long time ago, yet many of its properties are still unknown when $d\ge 3$. In a paper published in 2004, Pipiras raised the problem to know whether wavelet-type random series representations with a well-localized smooth scaling function, reminiscent to those for FBM due to Meyer, Sellan and Taqqu, can be obtained for a Hermite process of any order $d$. He solved it in this same paper in the particular case $d=2$ in which the Hermite process is called the Rosenblatt process. Yet, the problem remains unsolved in the general case $d\ge 3$. The main goal of our article is to solve it, not only for usual Hermite processes but also for generalizations of them. Another important goal of our article is to derive almost sure uniform estimates of the errors related with approximations of such processes by scaling functions parts of their wavelet-type random series representations.
\end{abstract}
\noindent \textit{Keywords}: High order Wiener chaos, self-similar process, multiresolution analysis, FARIMA sequence, wavelet basis.

\noindent  \textit{2020 MSC}: Primary: 60G18, 42C40; secondary: 41A58.

\section{Introduction and background} 
\label{sec:intro}

Fractional Brownian Motion (FBM) with Hurst parameter $h \in (0,1)$, denoted $\{B_h(t) \}_{t \in \R}$, was introduced by Kolmogorov, in 1940, to generate Gaussian ``spirals'' in Hilbert spaces \cite{MR0003441}. Its first systematic study was carried out in the famous paper \cite{MR242239} by Mandelbrot and Van Ness, in 1968. It is the unique Gaussian process with $B_h(0)=0$, mean zero and covariance function
\[ \mathbb{E}[B_h(t) B_h(s)]=\frac{c_h}{2} \left( |t|^{2h}+|s|^{2h}-|t-s|^{2h} \right),\quad\mbox{for all $(t,s)\in\R^2$,}\]
where $c_h$ is a positive constant only depending on the Hurst parameter $h$.
Among its most fundamental properties, FBM has stationary increments and is $h$-self-similar, meaning that, for all fixed $a>0$, the processes $\{a^{-h} B_h(at)\}_{t \in \R}$ and $\{B_h(t) \}_{t \in \R}$ have the same finite-dimensional distributions. When $h=1/2$, the process $\{B_{1/2}(t) \}_{t \in \R}$ is a usual Brownian motion. We refer for instance to the monograph \cite{MR3076266} for a clear and concise presentation of various fundamental facts concerning FBM.

FBM appears naturally in many real-life applications in various domains, such as telecommunications, biology, finance, image processing, and so on. We refer for instance to \cite{MR1956041} for a monograph with an overview of its different \linebreak areas of applications. Thus, study of FBM and related processes has become a crucial issue since a long time. To this end, it is very useful to construct well appropriate representations for these processes. An important class of such representations consists in wavelet-type random series representations. More than two decades ago, they were introduced in the framework of FBM in several articles. We focus on the Meyer, Sellan and Taqqu seminal article \cite{MR1755100} whose main goal was to obtain representations which clearly separate the low frequency part of FBM from its high frequency part, and, more importantly, to express the low frequency part in terms of a well-localized smooth scaling function. For a better understanding of our paper, we believe it useful to precisely present in our introduction the most classical one of these wavelet-type representations of FBM due to \cite{MR1755100}, since one of our principle aims is to extend it to Generalized Hermite process. The article \cite{MR1755100} made use of the well-known class of the Meyer orthonormal wavelet bases of $L^2 (\R)$ as the main ingredient for its constructions of wavelet-type random series representations for FBM. Some fundamental properties of the two functions (scaling function and mother wavelet) generating such a basis are the following:
\begin{Rmk}
\label{rem:mey-bas}
Univariate scaling function and mother wavelet $\phi$ and $\psi$ associated with a Meyer orthonormal wavelet basis of $L^2(\R)$ belong to the Schwartz class $\mathcal{S}(\R)$ of infinitely differentiable functions whose derivatives of any order rapidly decay at infinity. Moreover, the Fourier transforms $\widehat{\phi}$ and $\widehat{\psi}$ are infinitely differentiable compactly supported functions satisfying
\[
{\rm supp}\,\widehat{\phi}\subseteq \left [-\frac{4\pi}{3},\frac{4\pi}{3}\right]\quad\mbox{and}\quad {\rm supp}\,\widehat{\psi}\subseteq \left [-\frac{8\pi}{3},\frac{8\pi}{3}\right]\setminus \left(-\frac{2\pi}{3},\frac{2\pi}{3}\right).
\]
Notice that throughout our article, we use the rather common convention that ${\cal F}(f)=\widehat{f}$, the Fourier transform of an arbitrary function $f\in\mathcal{S}(\R)$ is defined, for all $\xi\in\R$, as ${\cal F}(f)(\xi)=\widehat{f}(\xi):=(2\pi)^{-1/2}\int_{\R} e^{-i\xi x}f(x)\,dx$, while ${\cal F}^{-1}(f)$, the inverse Fourier transform of $f$, is defined, for every $x\in\R$, as ${\cal F}^{-1}(f)(x):=(2\pi)^{-1/2}\int_{\R} e^{i x \xi}f(x)\,d\xi$. 
\end{Rmk}

The article \cite{MR1755100} also made an extensive use of the notion of fractional primitive and derivative, which can be defined as follows:

\begin{Def}\label{def:fractwav}
Let $f$ be an arbitrary function of the Schwartz class $\mathcal{S}(\R)$. For all $h \in (1/2,1)$ (resp. $h\in (0,1/2]$), the fractional primitive of $f$ of order $h-1/2$ (resp. the fractional derivative of $f$ of order $1/2-h$) is the function denoted by $f_h$, which generally speaking belongs to $L^2 (\R)$, and which is defined through its Fourier transform $\widehat{f}_h$ by:
\begin{equation}
\label{def:fractwav:eq1}
\widehat{f}_h(\xi)=(i \xi)^{1/2-h}\widehat{f}(\xi),\quad \text{for almost all $\xi\in\R$.}
\end{equation}
One mentions that, using the common convention that, for all $(y,\alpha) \in \R^2$, when $y>0$ one has $y_+^\alpha=y^\alpha$ and otherwise one has $y_+^\alpha=0$, then, for any $h\in (1/2,1)$, the fractional primitive $f_h$ can be expressed as:
\begin{equation}\label{eqn:fractantideri}
f_h(s) = \frac{1}{\Gamma(h-1/2)} \int_\R (s-x)_+^{h-3/2} f(x) \, dx,\quad\mbox{for all $s\in\R$,}
\end{equation}
where $\Gamma$ is the usual "Gamma" Euler function defined, for all $z \in (0,+\infty)$, as $\Gamma(z):= \int_0^{+\infty} u^{z-1} e^{-u} \, du$. Also, one mentions that, when the Fourier transform $\widehat{f}$ of $f$ vanishes on a neighborhood of $0$ (notice the univariate Meyer mother wavelet $\psi$ satisfies this property), then one can drop the restriction $h\in (0,1)$ and may allow $h$ to be any real number. In the latter case, for all $h\in (1/2,+\infty)$ (resp. for all $h\in (-\infty,1/2]$) the fractional primitive (resp. derivative) $f_h$ can still be defined through its Fourier transform as in \eqref{def:fractwav:eq1}, and the equality \eqref{eqn:fractantideri} for fractional primitive remains valid. Moreover, for every $h\in\R$, one can easily check that $f_h$ belongs to the Schwartz class $\mathcal{S}(\R)$.
\end{Def}

Unfortunately, since for a univariate Meyer scaling function $\phi$ the Fourier transform $\widehat{\phi}$ does not vanish on a neighborhood of $0$, for all $h\in (0,1)$, the fractional primitive or derivative $\phi_h$, of $\phi$, fails to be a smooth well-localized function. In order to overcome this serious difficulty, a clever idea of \cite{MR1755100} was to "replace" $\phi_h$ by the so called fractional scaling function $\Phi_\Delta^{(\delta)}$, which belongs to $\mathcal{S}(\R)$ and which was defined in \cite{MR1755100} as follows:

%Despite this nice property, one can argue that the scaling function $\phi$ is not well-adapted for approximation purposes of FBM. Indeed, such an approximation would involve fractional primitive of $\phi$ that, in fact, does not belong to the Schwartz class, see Section \ref{sect:mainandstrat} for a discussion about this fact and references. For this reason, the authors in \cite{MR1755100} introduce the fractional scaling function.

\begin{Def}\label{def:fractscal}
The \textit{fractional scaling function} of order $\delta \in \R$ of a univariate Meyer scaling function $\phi$ is the function $\Phi_\Delta^{(\delta)}\in\mathcal{S}(\R)$ defined through its Fourier transform by:
\[ \forall \, \xi\in\R\setminus\{0\},\,\,\widehat{\Phi}_\Delta^{(\delta)}(\xi) = \left( \frac{1-e^{-i \xi}}{i \xi} \right)^\delta \widehat{\phi}(\xi) \quad \text{ and } \quad \widehat{\Phi}_\Delta^{(\delta)}(0)=1.\]
Similarly to $\widehat{\phi}$, the function $\widehat{\Phi}_\Delta^{(\delta)}$ has a compact support satisfying 
\[
{\rm supp}\,\widehat{\Phi}_\Delta^{(\delta)}\subseteq \left [-\frac{4\pi}{3},\frac{4\pi}{3}\right]\,.
\]
\end{Def}

\begin{Rmk}
\label{rem:frac-mey}
Let $\delta$ and $h$ be two arbitrary and fixed real numbers. One can check, from elementary properties of the Fourier transform (see e.g. the seminal book \cite{schwartz:78}), that the fractional scaling function $\Phi_\Delta^{(\delta)}$ and the fractional primitive or derivative $\psi_h$, of the univariate Meyer mother wavelet $\psi$, belong to $\mathcal{S}(\R)$, which means that they are infinitely differentiable functions whose derivatives of any order rapidly decay at infinity, in other words one has, for all fixed $m \in \N_0$ and $L\in (0,+\infty)$, 
\begin{equation}\label{maj:psih}
\sup_{x\in\R} \left\lbrace (3+|x|)^L \left (\Big | \frac{d^m}{dx^m}\Phi_\Delta^{(\delta)}(x)\Big|+\Big | \frac{d^m}{dx^m} \psi_h(x)\Big| \right) \right\rbrace < +\infty. 
\end{equation}
\end{Rmk}
 
Apart the fact that $\Phi_\Delta^{(\delta)}$ is a very smooth and very well-localized function, another major advantage in expressing the low frequency part of FBM in terms of it is to draw connections between the latter process and FARIMA random walk time series (i.e. partial sums of FARIMA sequence (see Definition \ref{def:farima} below)), as shown by the following theorem of \cite{MR1755100} which provides the most classical wavelet-type random series representation of FBM clearly separating its low and high frequency parts. 

\begin{Thm}[\textbf{Meyer, Sellan and Taqqu}] For each fixed $J\in\Z$, the FBM $\{ B_h(t) \}_{t \in \R}$ can be expressed as the following random series, which converges almost surely and uniformly in $t$ on each compact interval of $\R$, 
\begin{align}\label{eqn:expFBM1alter}
B_h(t) & = \sum_{k \in \Z} 2^{-Jh} S_{J,k}^{(h)} \left(\Phi_\Delta^{(h+1/2)}(2^Jt-k)-\Phi_\Delta^{(h+1/2)}(-k) \right) \nonumber \\
& \quad  + \sum_{j=J}^{+ \infty} \sum_{ k \in \Z} 2^{-j h} g^\psi_{j,k} \Big( \psi_{h+1}(2^{j} t-k)-\psi_{h+1}(-k) \Big),
\end{align}
where:
\begin{itemize}
\item $(g^\psi_{j,k})_{(j,k)\in \Z^2}$ is the sequence of the i.i.d. $\mathcal{N}(0,1)$ Gaussian random variables defined, for all $(j,k)\in\Z^2$, by
\begin{equation}\label{eqn:n01psi}
g^\psi_{j,k} := 2^{j/2}\int_\R  \psi(2^jx-k) \, dB(x);
\end{equation}
\item given the sequence $(g^\phi_{J,k})_{k\in\Z}$ of the i.i.d. $\mathcal{N}(0,1)$ Gaussian random variables defined, for all $k\in\Z$, by
\begin{equation}\label{eqn:n01phi}
g^\phi_{J,k} := 2^{J/2} \int_\R  \phi(2^J x-k) \, dB(x),
\end{equation}
$(S_{J,k}^{(h)})_{k\in\Z}$ is the Gaussian FARIMA random walk time series defined, for every $k\in\Z$, by
\[ 
S_{J,k}^{(h)}:=\left\{ \begin{array}{cl}
                       \sum_{\ell=1}^k Z_{J,\ell}^{(h-\frac{1}{2})} & \text{if } k > 0 \\ [2ex]
                       0 & \text{if } k = 0\\[2ex]
                       -\sum_{\ell=k+1}^0 Z_{J,\ell}^{(h-\frac{1}{2})}  & \text{if } k <0
                     \end{array} \right.
\]
with $(Z_{J,\ell}^{(h-\frac{1}{2})})_{\ell \in \N}$ the Gaussian FARIMA $(0,h-\frac{1}{2},0)$ sequence associated to $(g^\phi_{J,k})_{k}$, see the next definition.
\end{itemize} 
\end{Thm}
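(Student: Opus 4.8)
The plan is to obtain \eqref{eqn:expFBM1alter} by expanding, for each fixed $t$, the kernel of a moving-average representation of FBM in the Meyer multiresolution analysis and then reorganizing the resulting Gaussian series. Fix the normalizing constant $\kappa_h:=1/\Gamma(h+1/2)$ (this choice fixes, correspondingly, the value of the constant $c_h$ in the introduction) and write
\[
B_h(t)=\kappa_h\int_\R K_h(t,x)\,dB(x),\qquad\text{where}\quad K_h(t,x):=(t-x)_+^{h-1/2}-(-x)_+^{h-1/2}\in L^2(\R),
\]
which is FBM by the uniqueness recalled in the introduction. Fix $J\in\Z$ and use the Meyer MRA decomposition $L^2(\R)=V_J\oplus\bigoplus_{j\ge J}W_j$, with orthonormal bases $\{2^{J/2}\phi(2^J\cdot-k)\}_{k\in\Z}$ of $V_J$ and $\{2^{j/2}\psi(2^j\cdot-k)\}_{k\in\Z}$ of $W_j$. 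Expanding $K_h(t,\cdot)$ along this basis and integrating term by term against $dB$ — licit because the $g^\phi_{J,k}$ and $g^\psi_{j,k}$ of \eqref{eqn:n01phi} and \eqref{eqn:n01psi} are exactly the stochastic integrals of the basis functions and the partial sums converge in $L^2(\R)$ — gives, with convergence in $L^2(\Omega)$ for each fixed $t$,
\[
B_h(t)=\sum_{k\in\Z}c^\phi_{J,k}(t)\,g^\phi_{J,k}+\sum_{j=J}^{+\infty}\sum_{k\in\Z}c^\psi_{j,k}(t)\,g^\psi_{j,k},\qquad c^\phi_{J,k}(t):=\kappa_h\big\langle K_h(t,\cdot),2^{J/2}\phi(2^J\cdot-k)\big\rangle,
\]
and similarly for $c^\psi_{j,k}(t)$. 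It remains to (A) compute these deterministic coefficients, (B) recognize the two series, and (C) promote the convergence to almost sure and uniform on compacts.

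For (A) I would work in the time domain, which matches the structure of $K_h$. A change of variables turns $\big\langle K_h(t,\cdot),2^{j/2}\psi(2^j\cdot-k)\big\rangle$ into $\kappa_h^{-1}2^{-jh}$ times $\int_\R\big((2^jt-k-y)_+^{h-1/2}-(-k-y)_+^{h-1/2}\big)\psi(y)\,dy$, which by \eqref{eqn:fractantideri} (valid for $\psi$ because $\widehat\psi$ vanishes near $0$) equals $\Gamma(h+1/2)\big(\psi_{h+1}(2^jt-k)-\psi_{h+1}(-k)\big)$; hence $c^\psi_{j,k}(t)=2^{-jh}\big(\psi_{h+1}(2^jt-k)-\psi_{h+1}(-k)\big)$, and $\psi_{h+1}\in\mathcal{S}(\R)$ by Remark \ref{rem:frac-mey}. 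This is already the high-frequency part of \eqref{eqn:expFBM1alter}. The same computation with $\phi$ in place of $\psi$ gives $c^\phi_{J,k}(t)=2^{-Jh}\big(\phi_{h+1}(2^Jt-k)-\phi_{h+1}(-k)\big)$, where $\phi_{h+1}(u):=\Gamma(h+1/2)^{-1}\int_\R(u-x)_+^{h-1/2}\phi(x)\,dx$ is a well-defined but not rapidly decaying function, since $\widehat\phi(0)\ne0$; this is precisely the obstruction that forces the introduction of $\Phi_\Delta^{(h+1/2)}$. On the Fourier side one has the factorization $\widehat{\phi_{h+1}}(\xi)=(1-e^{-i\xi})^{-(h+1/2)}\,\widehat{\Phi_\Delta^{(h+1/2)}}(\xi)$; expanding $(1-z)^{-(h+1/2)}=\sum_{m\ge0}a_m z^m$ with $a_m:=\binom{m+h-1/2}{m}$ (so $a_m\asymp m^{h-1/2}/\Gamma(h+1/2)$, polynomial growth) and transferring back yields the pointwise, locally uniform identity $\phi_{h+1}=\sum_{m\ge0}a_m\,\Phi_\Delta^{(h+1/2)}(\cdot-m)$, the rapid decay of $\Phi_\Delta^{(h+1/2)}$ guaranteeing absolute convergence; the singularity of $(1-e^{-i\xi})^{-(h+1/2)}$ at the origin is harmless because only the \emph{increment} $\phi_{h+1}(u)-\phi_{h+1}(v)$ enters $c^\phi_{J,k}(t)$.

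For (B), substituting $c^\phi_{J,k}(t)=2^{-Jh}\sum_{m\ge0}a_m\big(\Phi_\Delta^{(h+1/2)}(2^Jt-k-m)-\Phi_\Delta^{(h+1/2)}(-k-m)\big)$ into the first series and reindexing $n:=k+m$ turns $\sum_k c^\phi_{J,k}(t)g^\phi_{J,k}$ into $\sum_{n\in\Z}2^{-Jh}\big(\Phi_\Delta^{(h+1/2)}(2^Jt-n)-\Phi_\Delta^{(h+1/2)}(-n)\big)\widetilde S_{J,n}$, where $\widetilde S_{J,n}:=\sum_{m\ge0}a_m g^\phi_{J,n-m}$ is the stationary Gaussian FARIMA$(0,h+1/2,0)$ sequence built from $(g^\phi_{J,k})_k$. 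The interchange of the two summations is the one genuinely delicate point; it is justified by noting that, almost surely, the double series is absolutely convergent, since $|a_m|\lesssim m^{h-1/2}$ grows only polynomially, $\Phi_\Delta^{(h+1/2)}$ decays faster than any polynomial, and $|g^\phi_{J,k}|=O(\sqrt{\log(2+|k|)})$ almost surely for a stationary Gaussian sequence. Two elementary identities then close the identification: first, the partition of unity $\sum_{n\in\Z}\Phi_\Delta^{(h+1/2)}(u-n)=1$ for every $u\in\R$ — which holds because $\widehat{\Phi_\Delta^{(h+1/2)}}$ coincides with $\widehat\phi$ up to a factor equal to $1$ at $0$ and vanishing at the other points of $2\pi\Z$ — gives $\sum_n\big(\Phi_\Delta^{(h+1/2)}(2^Jt-n)-\Phi_\Delta^{(h+1/2)}(-n)\big)=0$, so one may subtract the constant $\widetilde S_{J,0}$ from every $\widetilde S_{J,n}$ without altering the sum; second, $a_m-a_{m-1}$ equals the $m$-th Taylor coefficient of $(1-z)^{-(h-1/2)}$, hence $\widetilde S_{J,n}-\widetilde S_{J,n-1}$ is precisely the FARIMA$(0,h-1/2,0)$ variable $Z^{(h-1/2)}_{J,n}$, and therefore $\widetilde S_{J,n}-\widetilde S_{J,0}=S^{(h)}_{J,n}$. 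This recovers exactly the scaling-function part of \eqref{eqn:expFBM1alter}.

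For (C), on the high-frequency side one uses that, for fixed $t$, $\sum_{k\in\Z}(3+|2^jt-k|)^{-L}$ is bounded uniformly in $j$ and, combining this with the rapid decay \eqref{maj:psih} and the inequality $\sqrt{\log(2+j+|k|)}\le\sqrt{\log(2+j)}+\sqrt{\log(2+|k|)}$, obtains for $t\in[-T,T]$ a bound of the form
\[
\Big|\sum_{k\in\Z}2^{-jh}g^\psi_{j,k}\big(\psi_{h+1}(2^jt-k)-\psi_{h+1}(-k)\big)\Big|\le C_T\,2^{-jh}\sqrt{j+1}\;\sup_{k\in\Z}\frac{|g^\psi_{j,k}|}{\sqrt{\log(2+j+|k|)}},
\]
whose sum over $j\ge J$ converges almost surely since $h>0$ and the Gaussian supremum is almost surely finite; uniformity in $t$ over $[-T,T]$ then follows from a routine discretization at scale $2^{-j}$, using the control of $\tfrac{d}{dt}$ afforded by \eqref{maj:psih}. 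On the low-frequency side, the crude estimate $\mathbb{E}|S^{(h)}_{J,k}|^2=O(|k|^{2})$ — a sum of $O(|k|)$ terms with polynomially bounded, summable covariances — gives $|S^{(h)}_{J,k}|=O(|k|\sqrt{\log(2+|k|)})$ almost surely, and pairing this with the rapid decay of $\Phi_\Delta^{(h+1/2)}$ makes $\sum_k 2^{-Jh}S^{(h)}_{J,k}\big(\Phi_\Delta^{(h+1/2)}(2^Jt-k)-\Phi_\Delta^{(h+1/2)}(-k)\big)$ absolutely convergent, almost surely and uniformly on compacts. Finally, for each fixed $t$ the right-hand side of \eqref{eqn:expFBM1alter} equals $B_h(t)$ in $L^2(\Omega)$ by the previous steps and converges almost surely — hence, being Gaussian, also in $L^2(\Omega)$ — to a process continuous in $t$; the two limits therefore coincide, and continuity of both sides upgrades the equality to one holding almost surely, simultaneously for all $t$. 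The main obstacle is the bookkeeping of step (B): the rigorous justification of the summation interchange together with the two combinatorial identities that convert the series carrying the i.i.d.\ coefficients $g^\phi_{J,k}$ into the one driven by the FARIMA random walk $S^{(h)}_{J,k}$.
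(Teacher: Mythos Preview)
The paper does not give its own proof of this theorem: it is quoted from the Meyer--Sellan--Taqqu article \cite{MR1755100} as background, so there is no argument in the text to compare yours against. Your overall architecture --- expand $K_h(t,\cdot)$ in the Meyer MRA, identify the wavelet coefficients via \eqref{eqn:fractantideri}, and then rewrite the scaling-function block in terms of $\Phi_\Delta^{(h+1/2)}$ and a FARIMA structure --- is exactly the right strategy and is the one behind both \cite{MR1755100} and the paper's own Theorem~\ref{thm:basefreq}.

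There is, however, a genuine gap in step~(B). You introduce the ``stationary Gaussian FARIMA$(0,h+1/2,0)$ sequence'' $\widetilde S_{J,n}=\sum_{m\ge0}a_m\,g^\phi_{J,n-m}$, where $a_m$ are the Taylor coefficients of $(1-z)^{-(h+1/2)}$. But $a_m\sim m^{h-1/2}/\Gamma(h+1/2)$, so $\sum_m a_m^2=+\infty$ for every $h\in(0,1)$, and by Kolmogorov's three-series theorem the series $\sum_m a_m g^\phi_{J,n-m}$ diverges almost surely (and in $L^2(\Omega)$). In other words, FARIMA$(0,\delta,0)$ with $\delta=h+1/2\in(1/2,3/2)$ is outside the admissible range of Definition~\ref{def:farima}, and your intermediate object $\widetilde S_{J,n}$ does not exist. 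For the same reason, your justification of the interchange of the $k$- and $m$-sums by absolute convergence fails: after reindexing $n=k+m$, the inner sum $\sum_{m\ge0}|a_m|\,|g^\phi_{J,n-m}|$ is almost surely infinite for each $n$ when $h>1/2$, so the double series with absolute values is $+\infty$.

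The cure is to strip off one factor of $(1-e^{-i\xi})^{-1}$ \emph{before} expanding, writing
\[
\widehat{\phi_{h+1}}(\xi)=(1-e^{-i\xi})^{-1}\,(1-e^{-i\xi})^{-(h-1/2)}\,\widehat{\Phi}_\Delta^{(h+1/2)}(\xi),
\]
so that only the FARIMA$(0,h-1/2,0)$ filter (with $h-1/2\in(-1/2,1/2)$, hence $(\gamma_p^{(h-1/2)})\in\ell^2$) ever appears; the remaining $(1-e^{-i\xi})^{-1}$ is then handled by an Abel summation on the $k$-sum, which directly produces the random walk $S^{(h)}_{J,k}$. This is precisely how the paper organizes the analogous computation for $d\ge 2$ (see Lemma~\ref{Lemma:expphidelta}, Proposition~\ref{prop:intexpsig} and the Abel step in \eqref{eq:ablel}--\eqref{eq:abel-appoxp}): the convolution is carried out with exponent $h_\ell-1/2\in(0,1/2)$, never $h_\ell+1/2$. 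Your identities $a_m-a_{m-1}=\gamma_m^{(h-1/2)}$ and $\sum_n\Phi_\Delta^{(h+1/2)}(u-n)=\text{const}$ are correct and are exactly what one needs once the argument is reorganized this way.
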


%The final ingredient that we need to introduce in order to present the expansion of FBM given in \cite{MR1755100} are the  so-called FARIMA (autogressive fractionally integrated moving average) sequences.

\begin{Def}\label{def:farima}
Let $(g_k)_{k \in \Z}$ be an arbitrary sequence of i.i.d. centred Gaussian random variables (for instance the sequence $(g^\phi_{J,k})_{k\in\Z}$ in the previous theorem). For each fixed $\delta \in (-1/2,1/2)$, the Gaussian FARIMA $(0,\delta,0)$ sequence associated to $(g_k)_{k \in \Z}$ is denoted by $(Z^{(\delta)}_l)_{l \in \Z}$ and defined, for all $l \in \Z$, as:
\begin{equation}\label{eqn:farima}
 Z_l^{(\delta)} :=\gamma_0^{(\delta)} g_l+\sum_{p=1}^{+ \infty} \gamma_p^{(\delta)} g_{l-p}\,, \quad \text{with $\gamma_0^{(\delta)}:=1$ and } \gamma_p^{(\delta)}:= \frac{\delta \,\Gamma(p+\delta)}{\Gamma(p+1)\Gamma(\delta+1)}.
 \end{equation}
 \end{Def}

\begin{Rmk}
\label{rem1:farima}
Observe that, for the constant $a_{\delta}:=\delta/\Gamma(\delta+1)$, it can be derived from the Stirling's formula that 
 \begin{equation}
 \label{eqn:stirling}
  \gamma_p^{(\delta)}\sim a_{\delta}\, p^{\delta-1},\quad\mbox{when $p$ goes to $+\infty$},
 \end{equation}
 which implies that the random series in \eqref{eqn:farima} is convergent in $L^2(\Omega)$, where $\Omega$ is the underlying probability space. Also notice that the latter series is almost surely convergent as well, thanks to the Kolmogorov's Three-Series theorem. % see \cite[Remark 2.2.13]{theseyassine}. 
\end{Rmk}

%One of the main result of the paper \cite{MR1755100} can then be stated as follows.

\begin{Rmk}
The FBM $\{ B_h(t) \}_{t \in \R}$ can also be expressed as
\begin{equation} \label{eqn:expFBM2}
B_h(t) = \sum_{j \in \Z} \sum_{ k \in \Z} 2^{-j h} g^\psi_{j,k} \Big ( \psi_{h+1}(2^{j} t-k)-\psi_{h+1}(-k) \Big),
\end{equation}
where the series is convergent almost surely and uniformly in $t$ on each compact interval of $\R$. Representations of the type \eqref{eqn:expFBM2} have turned out to be very useful in the study of local and global sample path behavior of various stochastic processes and fields extending FBM. Also it is worth mentioning that, even in the case of the FBM itself, whose sample path behavior was widely studied in the literature prior to wavelet theory, in the very recent article \cite{esserloosveldt} the representation \eqref{eqn:expFBM2} has allowed to show that FBM sample paths have dense subsets of $\R$ of slow points and rapid points. 

However, as explained in \cite{MR1755100,MR1420505,MR2147061}, the representation \eqref{eqn:expFBM1alter} is much more convenient than \eqref{eqn:expFBM2} for approximating the FBM $\{ B_h(t) \}_{t \in \R}$. Indeed, according to \eqref{eqn:expFBM1alter}, when $J$ is large enough, $\{ B_h(t) \}_{t \in \R}$ can be approximated by its low frequency part 
\[
B_{h,J}(t)=\sum_{k \in \Z} 2^{-Jh} S_{J,k}^{(h)} \left(\Phi_\Delta^{(h+1/2)}(2^Jt-k)-\Phi_\Delta^{(h+1/2)}(-k) \right),
\]
whose coefficients $S_{J,k}^{(h)}$, $k\in\Z$, can be rather easily obtained from the coefficients $S_{J-1,k}^{(h)}$, $k\in\Z$, of $\{ B_{h,J-1}(t) \}_{t \in \R}$ by induction (pyramidal Mallat-type scheme); roughly speaking, this is due to the fact that the fractional scaling function $\Phi_\Delta^{(h+1/2)}$ generates a multiresolution analysis of $L^2(\R)$ (see \cite{MR1755100}). 
\end{Rmk}

In fact, FBM belongs to a much larger class of chaotic processes, the so-called Hermite processes. They are self-similar with stationary increments possessing a long-range dependence property. They first appeared in a natural way as limits of normalized partial sums of "strongly" correlated stationary Gaussian random sequences, in the so-called Non-Central Limit theorems established long time ago by Taqqu, Dobrushin and Major \cite{MR400329,MR550123,MR550122}. Apart the FBM, which is the Hermite process of order $1$, any other Hermite process of arbitrary integer order $d\ge 2$ is non-Gaussian; in fact it belongs to the $d$th Wiener chaos, and it is even considered to be a paradigmatic example of a stochastic process in this chaos whose many properties are still unknown, though the second order chaos has turned out to be less difficult to study than the higher order chaoses. %This lack of Gaussianity for the other Hermite processes makes them an alternative candidate for modelling purposes. %For this reason, they have attracted the attention of various authors in recent years, see for instance 
This fact have motivated many authors, interested in 
"conquering" non-Gaussian Wiener chaoses, to explore various issues related with them, we refer for instance to \cite{MR2820000,MR2808541,MR2578831,MR2352951,MR2105535,MR2573552,MR3079303} to cite but a few works in this area.

By the end of the introduction of the paper \cite{MR2105535} (see page 602 in it) published in 2004, Pipiras raised the problem to know whether wavelet-type random series representations with a well-localized smooth scaling function, reminiscent to the representation \eqref{eqn:expFBM1alter} of FBM, can be obtained for a Hermite process of any order $d$. He solved it in this same paper in the particular case $d=2$ in which the Hermite process is called the Rosenblatt process. Moreover, some further advances have recently been made in this particular case $d=2$ in the article \cite{MR4110623} in which a rather sharp estimate of the almost 
sure uniform rate of convergence of the wavelet-type random series representing the Rosenblatt process has been obtained, and has even been shown to be valid in the extended framework of the generalized Rosenblatt process. For deriving this sharp estimate, the article \cite{MR4110623} has introduced a new strategy which basically consists in expressing in a non-classical new way the approximation errors related with the approximation spaces of a multiresolution analysis of $L^2 (\R^2)$, namely in terms of bivariate wavelet functions having two distinct dilation indices $j_1$ and $j_2$ (see Section \ref{sect:mainandstrat} for more precision).

So far, the challenging problem presented in the previous paragraph has remained completely open in the general case $d\ge 3$. In fact, for solving it, one has to face at least the following two major difficulties:
\begin{itemize}
\item[(a)] To find in which way the low frequency part of an arbitrary Hermite process can be expressed in terms of FARIMA sequences and fractional scaling functions belonging to the Schwartz class.
\item[(b)] To show that a wavelet-type random series representation of an arbitrary Hermite process is almost surely uniformly convergent on compact intervals, and to estimate its almost sure uniform rate of convergent; the method introduced in \cite{MR4110623}  for reaching such a goal in the particular case of the generalized Rosenblatt process seems to be also useful in the general case of a Hermite process, yet some parts of it need to be significantly modified, in particular the crucial equality (2.33) in \cite{MR4110623} fails to be true in the general case since, for $d\ge 3$, as far as we know, there is no generalized Plancherel formula which, loosely speaking, would be of the type:
$\int_{\R^d} \prod_{l=1}^d f_l (x)\, dx=b_d \int_{\R^d} \prod_{l=1}^d \widehat {f}_l(\xi)\, d\xi$, where $b_d$ is a universal constant only depending on $d$, and $\widehat {f}_l$ is the Fourier transform of the function $f_l$.
\end{itemize}

The main aim of our present article is to propose a solution for this open problem, not only for usual Hermite processes but also for the generalized Hermite processes, of any integer order $d\ge 3$, which were introduced by Bai and Taqqu in \cite{MR3163219} and which extend the generalized Rosenblatt processes ($d=2$) due to  Maejima and Tudor \cite{MR2959876}. Also, with this article, we hope to open the door to future development of simulation methods for such generalized chaotic processes for which no simulation method is available so far. We hope as well to open the door to that of new strategies allowing to study in depth their erratic local sample path behavior, as for instance to show the existence of slow points and rapid points, in the same spirit of what has been very recently done for FBM in \cite{esserloosveldt} and for generalized Rosenblatt process in \cite{dawloosveldt}. 

The generalized Hermite process of an arbitrary integer order $d\ge 2$ is denoted by $\{X_{\mathbf{h}}^{(d)}(t)\}_{t \in \R_+}$, because it depends on a vector-valued Hurst parameter $\mathbf{h}:=(h_1,\ldots, h_d)$ whose coordinates $h_l$ satisfy 
\begin{equation}\label{eqn:hermi:cond}
h_1,\cdots,h_d \in (1/2,1) \text{ and } \sum_{\ell=1}^d h_\ell > d-\frac{1}{2}.
\end{equation}
This process belongs to the non-Gaussian $d$th Wiener chaos since it is defined, for each $t \in \R_+$, through the multiple Wiener integral:
\begin{equation}\label{eqn:def:ghp}
X^{(d)}_{\mathbf{h}}(t) := \int_{\R^d}' K^{(d)}_{\mathbf{h}}(t,x_1,\ldots,x_d) dB(x_1) \cdots dB(x_d),
\end{equation}
where $\{B(x)\}_{x \in \R}$ is a usual Brownian motion on the underlying probability space $(\Omega, \mathcal{F},\mathbb{P})$, and where the deterministic kernel function $K^{(d)}_{\mathbf{h}}$ is given, for every $(t,x_1,\ldots,x_d) \in \R_+ \times \R^d$, by
\begin{equation}\label{eqn:def:kernel}
K^{(d)}_{\mathbf{h}}(t,x_1,\cdots,x_d) := \frac{1}{\prod_{\ell=1}^d \Gamma(h_\ell-1/2)} \int_0^t \prod_{j=1}^d (s-x_\ell)_+^{h_\ell-3/2} \, ds,
\end{equation}
Observe that the symbol $\int_{\R^d}'$ in \eqref{eqn:def:ghp} denotes integration over $\R^d$ with diagonals $\{x_\ell=x_{\ell'}\}$, $\ell \neq \ell'$, excluded. Also observe that when all the coordinates $h_1,\ldots, h_d$ of the vector-valued parameter $\mathbf{h}$ are equal, then the process $\{X_{\mathbf{h}}^{(d)}(t)\}_{t \in \R_+}$ reduces to usual Hermite process.

The remaining of our article is organized as follows. In Section \ref{sect:mainandstrat}, we present the main lines of our strategies as well as some major ingredients in them including some preliminary proofs, and we state our three main theorems. Sections \ref{sect:approxproc}, \ref{sect:approxerro} and \ref{sec:altern-rep} are completely devoted to the proofs of our three main theorems. Some important results on multiple Wiener integrals, which are very useful for us, are given in Appendix \ref{sec:ape;profito}. At last the statements of some technical Lemmas, borrowed from the article \cite{MR4110623} and used in many our proofs, are recalled in Appendix \ref{sec:use-lemmata}.

\section{Strategies, main results and some major ingredients} \label{sect:mainandstrat}

Let us start by briefly recalling some fundamental definitions and facts from wavelet analysis in $L^2(\R^d)$ which will be useful for justifying our strategies.

\begin{Def}
\label{def:mraL}
A \textit{multiresolution analysis} of the Hilbert space $L^2(\R^d)$ is a sequence $(V_j^d)_{j \in \Z}$ of closed linear subspaces of $L^2(\R^d)$ such that
\begin{enumerate}[(a)]
\item for all $j \in \Z$, $V_j^d \subseteq V_{j+1}^d$;
\item $\bigcap_{j \in \Z} V_j^d = \{0\}$ and $\bigcup_{j \in \Z} V_j^d$ is dense in $L^2(\R^d)$;
\item for all $j \in \Z$, $V_j^d= \{f(2^j \cdot) \, : \, f \in V_0^d \}$;
\item there exists a function $\Phi \in V_0^d$, called scaling function, such that the sequence $\big (\Phi( \cdot -\mathbf{k})\big)_{\mathbf{k} \in \Z^d}$ is an orthonormal basis of $V_0^d$. Notice that in the univariate case $d=1$, this function $\Phi$ is denoted by $\phi$ as in the previous Section \ref{sec:intro}. 
\end{enumerate}
\end{Def}

\begin{Rmk}
It clearly results from (c) and (d) in Definition \ref{def:mraL}, that, for all fixed $j \in \Z$, the sequence $\big (2^{jd/2}\Phi( 2^{j} \cdot -\mathbf{k})\big)_{\mathbf{k} \in \Z^d}$ is an orthonormal basis of $V_j^d$.
\end{Rmk}

%A multiresolution analysis of $L^2(\R^d)$ provides a natural way to approximate any function $f\in L^2(\R^d)$; namely $f$ is approximated by $f_J$, its (orthogonal) projection of $f$ onto a space $V_J^d$, with $J$ large enough. Note that, in this case, $f-f_J$ belongs to $(V_J^d)^\bot$, the orthogonal complement of $V_J^d$ in $L^2(\R^d)$. In particular, one can expand $f_J$ in the basis $(2^{Jd/2}\Phi ( 2^{J} \cdot -\mathbf{k}))_{\mathbf{k} \in \Z^d}$ of $V_J^d$. In order to obtain an orthogonal basis of the space $(V_J^d)^\bot$, 
Usually, one denotes by $W_J^d$ the orthogonal complement of $V_J^d$ in $V_{J+1}^d$. 
%that is 
%\[
%V_{J+1}^d=V_J^d \overset{\bot } {\oplus} W_J^d.
%\]
Then, it follows from (a) and (b) in Definition \ref{def:mraL} that, for all fixed $J \in \Z$, the following very important equalities hold:
\begin{equation}
\label{eq:fundVW}
V_J^d = \overset{\bot}{\bigoplus_{- \infty<j<J}} W_j^d \text{ and } L^2(\R^d) =  V_J^d \overset{\bot } {\oplus} \left(\overset{\bot}{\bigoplus_{J \leq j < + \infty}} W_j^d \right) = \overset{\bot}{\bigoplus_{j \in \Z}} W_j^d \,.
\end{equation}
Using \eqref{eq:fundVW}, with $d=1$ and an arbitrary $J$, one can derive from the following fundamental theorem (see e.g. \cite{MR1162107,MR1085487}) orthonormal bases for the subspace $V_J^1\subset L^2(\R)$ and for the whole space $L^2(\R)$.

\begin{Thm}\label{thm:wavbas1}
There is a function $\psi\in W_0^1$, called mother wavelet, such that, for all fixed $j \in \Z$, the sequence of functions $\big (2^{j/2} \psi( 2^{j/2}\cdot -k)\big)_{k \in \Z}$ is an orthonormal basis of $W_j^1$. Then, the important equalities  (\ref{eq:fundVW}), imply, for all fixed $J \in \Z$, that:
\begin{enumerate}[(a)]
\item the sequence of functions $\big (2^{j/2}\psi( 2^{j}\cdot -k)\big)_{j <J,\,k \in \Z}$ is an orthonormal basis for the space $V_J^1$;
\item the sequences  of functions $\big (2^{J/2}\phi( 2^J\cdot -k)\big)_{ k \in \Z} \cup \big (2^{j/2}\psi( 2^{j}\cdot -k)\big)_{j \geq J,\, k \in \Z}$ and $(2^{j/2}\psi( 2^{j}\cdot -k))_{(j,k) \in \Z^2}$ are two orthonormal bases for the space $L^2(\R)$.
\end{enumerate}
Such bases are called orthonormal wavelet bases.
\end{Thm}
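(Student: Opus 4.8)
The statement is the classical derivation of the wavelet attached to a multiresolution analysis, and the plan is to carry it out on the Fourier side from the data in Definition~\ref{def:mraL}. Since $\phi\in V_0^1\subseteq V_1^1$ and $\big(2^{1/2}\phi(2\cdot-k)\big)_{k\in\Z}$ is an orthonormal basis of $V_1^1$, the scaling function satisfies a two-scale (refinement) equation $\phi(x)=\sqrt2\sum_{k\in\Z}h_k\,\phi(2x-k)$, where $h_k=\langle\phi,\sqrt2\,\phi(2\cdot-k)\rangle$ and $\sum_k|h_k|^2=1$; on the Fourier side this reads $\widehat\phi(\xi)=m_0(\xi/2)\,\widehat\phi(\xi/2)$ for the low-pass symbol $m_0(\xi):=2^{-1/2}\sum_k h_k e^{-ik\xi}\in L^2(\mathbb T)$. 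I would then turn the orthonormality of $(\phi(\cdot-k))_{k\in\Z}$, via Plancherel and periodisation of the Fourier integral, into the identity $\sum_{l\in\Z}|\widehat\phi(\xi+2\pi l)|^2=(2\pi)^{-1}$ for almost every $\xi$; inserting the refinement relation and splitting the sum according to the parity of $l$ yields the quadrature-mirror identity $|m_0(\xi)|^2+|m_0(\xi+\pi)|^2=1$ for almost every $\xi$.

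Next I would define the mother wavelet through the high-pass symbol $m_1(\xi):=e^{-i\xi}\,\overline{m_0(\xi+\pi)}$ by setting $\widehat\psi(\xi):=m_1(\xi/2)\,\widehat\phi(\xi/2)$, equivalently $\psi(x)=\sqrt2\sum_{k\in\Z}(-1)^k\overline{h_{1-k}}\,\phi(2x-k)$, which is a norm-convergent combination of the $\phi(2\cdot-k)$ and hence lies in $V_1^1$. Two elementary consequences of the quadrature-mirror identity then settle the orthogonality bookkeeping: first $|m_1(\xi)|^2+|m_1(\xi+\pi)|^2=1$, whence $\sum_{l\in\Z}|\widehat\psi(\xi+2\pi l)|^2=(2\pi)^{-1}$ and $(\psi(\cdot-k))_{k\in\Z}$ is orthonormal; second $m_1(\xi)\overline{m_0(\xi)}+m_1(\xi+\pi)\overline{m_0(\xi+\pi)}=0$, whence $\sum_{l\in\Z}\widehat\psi(\xi+2\pi l)\overline{\widehat\phi(\xi+2\pi l)}=0$, i.e.\ $\langle\psi(\cdot-k),\phi\rangle=0$ for every $k$, so that the closed linear span of $(\psi(\cdot-k))_{k\in\Z}$ is contained in $W_0^1=V_1^1\ominus V_0^1$.

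The step I expect to be the main obstacle is the reverse inclusion: proving that $V_0^1$ and the wavelet translates together already exhaust $V_1^1$, so that $W_0^1$ is \emph{exactly} the closed span of $(\psi(\cdot-k))_{k\in\Z}$. Here I would represent an arbitrary $f\in V_1^1$ as $\widehat f(\xi)=\mu(\xi/2)\,\widehat\phi(\xi/2)$ for a $2\pi$-periodic symbol $\mu\in L^2(\mathbb T)$, consider this relation together with its translate by $\pi$ in the variable $\xi/2$, and invert the modulation matrix whose columns are $\big(m_0(\xi),m_0(\xi+\pi)\big)$ and $\big(m_1(\xi),m_1(\xi+\pi)\big)$ — the three identities above show precisely that this matrix is unitary for almost every $\xi$. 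Solving for the two $\pi$-periodic symbols produced this way, and feeding them back through the refinement relations $\widehat\phi(\xi)=m_0(\xi/2)\widehat\phi(\xi/2)$ and $\widehat\psi(\xi)=m_1(\xi/2)\widehat\phi(\xi/2)$, exhibits $f$ as the sum of an element of $V_0^1$ and an element of the closed span of $(\psi(\cdot-k))_{k\in\Z}$. This yields $V_1^1=V_0^1\overset{\bot}{\oplus}W_0^1$ with $(\psi(\cdot-k))_{k\in\Z}$ an orthonormal basis of $W_0^1$; rescaling by the unitary dilation $f\mapsto 2^{j/2}f(2^j\cdot)$, which by property (c) of Definition~\ref{def:mraL} carries $V_0^1,V_1^1$ onto $V_j^1,V_{j+1}^1$ and hence $W_0^1$ onto $W_j^1$, shows that $\big(2^{j/2}\psi(2^j\cdot-k)\big)_{k\in\Z}$ is an orthonormal basis of $W_j^1$ for every $j\in\Z$ — the first assertion of the theorem.

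Finally I would assemble these bases through the orthogonal decompositions \eqref{eq:fundVW}: since $V_J^1=\overset{\bot}{\bigoplus_{-\infty<j<J}}W_j^1$, the union over $j<J$ of the orthonormal bases of the $W_j^1$ is an orthonormal basis of $V_J^1$, which is (a); and since $L^2(\R)=V_J^1\overset{\bot}{\oplus}\big(\overset{\bot}{\bigoplus_{J\le j<+\infty}}W_j^1\big)=\overset{\bot}{\bigoplus_{j\in\Z}}W_j^1$, adjoining the orthonormal basis $\big(2^{J/2}\phi(2^J\cdot-k)\big)_{k\in\Z}$ of $V_J^1$ to $\big(2^{j/2}\psi(2^j\cdot-k)\big)_{j\ge J,\,k\in\Z}$, respectively taking $\big(2^{j/2}\psi(2^j\cdot-k)\big)_{(j,k)\in\Z^2}$ on its own, gives the two orthonormal bases of $L^2(\R)$ claimed in (b).
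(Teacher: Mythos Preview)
Your proof sketch is correct and follows the classical Mallat--Meyer construction of the mother wavelet from a multiresolution analysis, carried out on the Fourier side via the low- and high-pass filters and the unitarity of the modulation matrix. However, the paper does not actually prove this theorem: it is stated as a known background result with the citation ``(see e.g.\ \cite{MR1162107,MR1085487})'' to the standard wavelet monographs of Meyer and Daubechies, and parts (a) and (b) are presented as immediate consequences of the orthogonal decompositions \eqref{eq:fundVW}. So there is no ``paper's own proof'' to compare against --- what you have written is precisely the standard textbook argument that those references contain, and it is sound.
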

Thanks to the tensor product method (see e.g. \cite{MR1162107,MR1085487}), for any integer $d>1$ one can construct from a multiresolution analysis $(V_j^1)_{j\in\Z}$ of $L^2(\R)$ a multiresolution analysis $(V_j^d)_{j\in\Z}$ for $L^2 (\R^d)$. Namely, for each $j\in\Z$, the space $V_j^d$ is defined as $V_j^d:=(V_j^{1})^{\otimes_d}$ the tensor product of the space $V_j^1$, $d$ times with itself. Then a scaling function $\Phi$, which can be associated in a natural way to such a multiresolution analysis $(V_j^d)_{j\in\Z}$, is $\Phi:=\phi^{\otimes_d}$, the tensor product of the univariate scaling function $\phi$, $d$ times with itself. In such a setting, it is well known that, for any fixed $J\in\Z$, an orthonormal wavelet basis of the space $(V_J^d)^\bot$ (the orthogonal complement of $V_J^d$ in $L^2 (\R^d)$) is: 
\begin{eqnarray*}
&&\Big\{2^{j d/2} \prod_{l=1}^d\psi^{(\eta_l)} (2^j x_l-k_l)\,:\, j\in\Z \mbox{ and } j\ge J,\\
&&\hspace{3cm} (\eta_1,\ldots, \eta_d)\in \{0,1\}^d \setminus \{0\}^d, \, (k_1,\ldots, k_d)\in\Z^d\Big\}, 
\end{eqnarray*}
where $\psi^{(0)}$ and $\psi^{(1)}$ respectively denote the univariate scaling function and mother wavelet $\phi$ and $\psi$ (see Theorem \ref{thm:wavbas1}).
Nevertheless, a major ingredient of strategies of our article consists in making use of another much less classical orthonormal wavelet basis of $(V_J^d)^\bot$. This idea comes from the article \cite{MR2105535} in which $d=2$ and whose main goal was to estimate almost sure rate of convergence of wavelet-type random series representation of the generalized Rosenblatt process.

In order to precisely define the non classical orthonormal wavelet basis of $(V_J^d)^\bot$ we intend to use, we need to give some further notations. For all multi-indices $\mathbf{j}=(j_1,\ldots, j_d)\in \Z^d$ and $\mathbf{k}=(k_1,\ldots,k_d)\in \Z^d$, we denote by $\psi_{\mathbf{j},\mathbf{k}}$ the multivariate wavelet function belonging to $L^2 (\R^d)$ defined as the tensor product: 
\begin{equation}
\label{eq:tensor-psi}
\psi_{\mathbf{j},\mathbf{k}}:=\bigotimes_{\ell=1}^{d}  \psi_{{j}_\ell,{k}_\ell},
\end{equation}
where the univariate wavelet functions $\psi_{j_l,k_l}$ are defined, for every $x\in\R$, as: $\psi_{j_l,k_l}(x):=2^{j_{l}/2} \psi(2^{j_l} x-k_l)$. Observe that the previous definition $V_J^d$ through tensor product, and the point $(a)$ in Theorem \ref{thm:wavbas1} implies that the collection of functions 
\[ \left \{\psi_{\mathbf{j},\mathbf{k}} \, : \, \mathbf{j},\mathbf{k} \in \Z^d \text{ and } \max_{\ell \in [\![1,d]\!]} j_\ell <J \right\}   \]
is an orthonormal basis of the subspace $V_J^d\subset L^2 (\R^d)$; while the point $(b)$ in this same theorem entails that the collection of functions
$
\big\{\psi_{\mathbf{j},\mathbf{k}} \, : \, \mathbf{j},\mathbf{k} \in \Z^d\big\} 
$ 
is an orthonormal basis of the whole space $L^2 (\R^d)$, since $L^2 (\R^d)=\big (L^2 (\R)\big)^{\otimes_d}$. Combining these two results, it turns out that the collection of functions
\begin{equation}\label{eqn:wavbasvjort}
\left \{\psi_{\mathbf{j},\mathbf{k}} \, : \, \mathbf{j},\mathbf{k} \in \Z^d \text{ and } \max_{\ell \in [\![1,d]\!]} j_\ell \geq J \right\}  
\end{equation}
is an orthonormal basis of the subspace $(V_J^d)^\bot\in L^2 (\R^d)$ which is the orthogonal complement of $V_J^d$ in $L^2(\R^d)$.

Let us now precisely explain the connection between the latter basis and the error of approximation of a generalized Hermite process by the scaling function part of its wavelet-type random series representation. For each fixed $t\in\R_+$ and integer $J\ge 1$, the two functions of $L^2(\R^d)$ $(x_1,\ldots,x_d) \mapsto K^{(d)}_{\mathbf{h},J}(t,x_1,\cdots,x_d)$ and $(x_1,\ldots,x_d) \mapsto K^{(d,\bot)}_{\mathbf{h},J}(t,x_1,\cdots,x_d)$  
respectively denote the two orthogonal projections of the function $(x_1,\ldots,x_d) \mapsto  K^{(d)}_{\mathbf{h}}(t,x_1,\cdots,x_d)$ (see \eqref{eqn:def:ghp} and \eqref{eqn:def:kernel}) onto $V_J^d$ and $(V_J^d)^\bot$. One clearly has that
\[ K^{(d)}_{\mathbf{h}}(t,\bullet)-K^{(d)}_{\mathbf{h},J}(t,\bullet)=K^{(d,\bot)}_{\mathbf{h},J}(t,\bullet),\]
which leads us to define the approximation and details processes associated with the generalized Hermite process $\{X_\mathbf{h}^{(d)}(t) \}_{t \in\R_+}$ in the following way:
\begin{Def}
Let $d \in \N$ and $\mathbf{h}$ satisfying the conditions \eqref{eqn:hermi:cond}. For all $J \in \N$, the \textit{approximation process at scale $J$} of the generalized Hermite process $\{X_\mathbf{h}^{(d)}(t) \}_{t \in \R_+}$ is the process defined, for all $t \in\R_+$, by the multiple integral:
\begin{equation}\label{eqn:approxprocess}
X^{(d)}_{\mathbf{h},J}(t):= \int_{\R^d}' K^{(d)}_{\mathbf{h},J}(t,x_1,\ldots,x_d) dB(x_1) \ldots dB(x_d);
\end{equation}
in fact $\{X_{\mathbf{h},J}^{(d)}(t) \}_{t \in \R_+}$ can be viewed as the scaling function part of the wavelet-type random series representation of $\{X_\mathbf{h}^{(d)}(t) \}_{t \in\R_+}$.
The \textit{details process at scale $J$} is defined, for all $t \in\R_+$, as:
\begin{equation}\label{eqn:detailsprocess}
X^{(d,\bot)}_{\mathbf{h},J}(t):=X^{(d)}_{\mathbf{h}}(t)-X^{(d)}_{\mathbf{h},J}(t)= \int_{\R^d}' K^{(d,\bot)}_{\mathbf{h},J}(t,x_1,\ldots,x_d) dB(x_1) \ldots dB(x_d);
\end{equation}
in fact $\{X^{(d,\bot)}_{\mathbf{h},J}(t)\}_{t\in\R_+}$ can be viewed as the error stemming from the approximation of $\{X_\mathbf{h}^{(d)}(t) \}_{t \in\R_+}$ by $\{X_{\mathbf{h},J}^{(d)}(t) \}_{t \in\R_+}$.
\end{Def}
Observe that combining \eqref{eqn:approxprocess} with the Wiener isometry and the fact that $\big(2^{J \frac{d}{2}}\Phi(2^J\cdot-\mathbf{k} )\big)_{ \mathbf{k} \in \Z^d}$ is an orthonormal basis of $V_J^d$, one gets, for each fixed $t\in\R_+$, that
\begin{equation}\label{eq:XJ}
X^{(d)}_{\mathbf{h},J}(t) = \sum_{\mathbf{k} \in \Z^d} \mu_{J,\textbf{k}} \mathfrak{K}^{(d,\mathbf{h})}_{J,\textbf{k}}(t), 
\end{equation} 
where the chaotic random variables $\mu_{J,\textbf{k}}$ and the deterministic coefficients $\mathfrak{K}^{(d,\mathbf{h})}_{J,\textbf{k}}(t)$ are given by: 
\begin{equation} \label{eqn:rv:mu}
\mu_{J,\mathbf{k}} := 2^{J \frac{d}{2}}\int_{\R^d}' \phi(2^J x_1-k_1) \cdots \phi(2^J x_d-k_d) \, dB(x_1) \ldots dB(x_d)
\end{equation}
and 
\begin{equation}\label{eqn:coefkbis}
\mathfrak{K}^{(d,\mathbf{h})}_{J,\textbf{k}}(t) :=2^{J \frac{d}{2}} \int_{\R^d} K^{(d)}_{\mathbf{h}}(t,x_1,\ldots,x_d) \phi(2^J x_1-k_1) \cdots \phi(2^J x_d-k_d) \, dx_1\cdots dx_d.
\end{equation}
Also observe that combining \eqref{eqn:detailsprocess} with the Wiener isometry and the fact that the collection of functions in \eqref{eqn:wavbasvjort} is an orthonormal basis of $(V_J^d)^\bot$, one obtains, for each fixed $t\in\R_+$, that
\begin{equation}\label{eq:X-XJ}
X^{(d,\bot)}_{\mathbf{h},J}(t)=  \sum_{\substack{(\mathbf{j},\mathbf{k}) \in (\Z^d)^2 \\  \displaystyle\max_{\ell \in [\![1,d]\!]} j_\ell \geq J }} \varepsilon_{\textbf{j},\textbf{k}}  \, \mathcal{K}^{(d,\mathbf{h})}_{\textbf{j},\textbf{k}}(t),
\end{equation}
where the chaotic random variables $\varepsilon_{\textbf{j},\textbf{k}}$ and the deterministic coefficients $\mathcal{K}^{(d,\mathbf{h})}_{\textbf{j},\textbf{k}}(t)$ are given by: 
\begin{equation}\label{eqn:rv:eps}
 \varepsilon_{\textbf{j},\textbf{k}} := \int_{\R^d}' \psi_{\mathbf{j},\mathbf{k}}(x_1, \ldots,x_d) \, dB(x_1)\ldots dB(x_d)
\end{equation}
and
\begin{equation}\label{eqn:coefk}
\mathcal{K}^{(d,\mathbf{h})}_{\textbf{j},\textbf{k}}(t) := \int_{\R^d} K^{(d)}_{\mathbf{h}}(t,x_1,\ldots,x_d) \psi_{\mathbf{j},\mathbf{k}}(x_1, \ldots,x_d) \, dx_1\cdots dx_d.
\end{equation}
One mentions in passing that, so far, one only knows that the random series in \eqref{eq:XJ} and 
\eqref{eq:X-XJ} are unconditionally convergent in $L^2(\Omega)$, for each fixed $t\in\R_+$.

\begin{Rmk}
\label{rem:hyp-meyer}
Similarly to the article \cite{MR1755100}, from now and till the end of our article we always assume that the univariate scaling function and mother wavelet $\phi$ and $\psi$ are associated with an orthonormal Meyer wavelet basis of $L^2(\R)$ (see Remark \ref{rem:mey-bas}). Then, it results from \eqref{eqn:fractantideri}, \eqref{eqn:def:kernel}, \eqref{eqn:coefk}, Fubini theorem and the changes of variable $y_\ell=2^{j_\ell}x_\ell -k_\ell$ (for all $\ell \in [\![1,d]\!]$), that
\begin{equation}\label{eqn:coefk2}
\mathcal{K}^{(d,\mathbf{h})}_{\textbf{j},\textbf{k}}(t) = 2^{ j_1 (1-h_1)+\cdots + j_d (1-h_d)} \int_0^t \prod_{\ell=1}^d \psi_{h_\ell}(2^{j_\ell}s-k_\ell) \, ds,
\end{equation}
where $\psi_{h_l}$ is the fractional primitive or order $h_l-1/2$ of $\psi$. Also, one can derive from \eqref{eqn:coefkbis} and similar arguments that
\begin{equation}\label{eqn:coefk2bis}
\mathfrak{K}^{(d,\mathbf{h})}_{J,\textbf{k}}(t) = 2^{-J(h_1+ \cdots+h_d-d)} \int_0^t \prod_{\ell=1}^d \phi_{h_\ell}(2^{J}s-k_\ell) \, ds,
\end{equation}
where $\phi_{h_l}$ is the fractional primitive or order $h_l-1/2$ of $\phi$. Notice that one knows from \eqref{def:fractwav:eq1} that the Fourier transform of $\phi_{h_l}$ satisfies
\begin{equation}
\label{eqn:coefk2bis-f}
\widehat{\phi}_{h_l}(\xi)=(i \xi)^{1/2-h_l}\widehat{\phi}(\xi),\quad\mbox{for almost all $\xi\in\R$.} 
\end{equation}
\end{Rmk}

In view of the fact that the $\phi_{h_l}$'s fail to belong to the Schwartz class $\mathcal{S}(\R)$ and are even badly localized functions, one of the main goal of our article will be to introduce, in the same spirit of what has been done for the approximation process of FBM in \cite{MR1755100} and for that of Rosenblatt process in \cite{MR2105535}, a modified version of the random series representation \eqref{eq:XJ} in which the deterministic coefficients are expressed in terms of "nice" fractional scaling functions (see Definition \ref{def:fractscal}) belonging to the  Schwartz class. In order to adapt ideas of \cite{MR1755100,MR2105535} to the framework of the generalized Hermite process $\{X_\mathbf{h}^{(d)}(t) \}_{t \in\R_+}$, which is much more complex than those of FBM and Rosenblatt process, we need to introduce, for each fixed $J\in\Z$, the sequence of random variables $(\sigma_{J,\textbf{k}}^{(\mathbf{h})})_{\textbf{k}\in\Z^d}$, defined, for all $\textbf{k}\in\Z^d$, as:
\begin{equation}
\label{eq:ext-farima}
\sigma_{J,\textbf{k}}^{(\mathbf{h})}:=\sum_{\textbf{p}\in\N_0^d}\Big (\prod_{l=1}^d \gamma_{p_l}^{(h_l-1/2)}\Big)\mu_{J,\mathbf{k}-\mathbf{p}},
\end{equation}
where the deterministic coefficients $\gamma_{p_l}^{(h_l-1/2)}$ are given by the third and the second equalities in \eqref{eqn:farima} with $p=p_l$ and $\delta=h_l-1/2$. Notice that the following Proposition \ref{prop:gen-farima} shows, among other things, that the definition \eqref{eq:ext-farima} makes sense.
Roughly speaking, the sequence $(\sigma_{J,\textbf{k}}^{(\mathbf{h})})_{\textbf{k}\in\Z^d}$ can be viewed as a generalized FARIMA sequence. In fact, it can be expressed in terms of usual FARIMA sequences (see Proposition \ref{prop:gen-farima} below). In order to provide the latter expression of $\sigma_{J,\textbf{k}}^{(\mathbf{h})}$ we need the following definition:

\begin{Def}
\label{def:setPS}
Let $S$ be an arbitrary finite subset of $\N$ whose cardinality is denoted by $\# S$. Then, for any integer $m$ such that $0 \leq m \leq \lfloor \# S /2 \rfloor$, one denotes by $\mathcal{P}_m^{S}$  the finite set of the partitions of $S$ with $m$ (non ordered) pairs and $\# S-2m$ singletons. Moreover, for the sake of simplicity, when $S=[\![1,n]\!]$ with $n \in \N$ being arbitrary, one sets $\mathcal{P}_m^{(n)}=\mathcal{P}_m^{[\![1,n]\!]}$.
\end{Def}

\begin{Prop}
\label{prop:gen-farima}
For all fixed $(J,\textbf{k}) \in \Z \times \Z^d$, the random series in \eqref{eq:ext-farima} is convergent almost surely and in $L^\gamma (\Omega)$, for any $\gamma\in (0,+\infty)$. Moreover, one has that 
\begin{align}\label{eqn:defofsigma}
\sigma_{J,\textbf{k}}^{(\mathbf{h})} = \sum_{m=0}^{\lfloor d/2 \rfloor } (-1)^m \sum_{P \in \mathcal{P}_m^{(d)}} \prod_{r=1}^m \mathbb{E}[ Z_{J,k_{\ell_r}}^{(h_{\ell_r}-1/2)}Z_{J,k_{\ell_r'}}^{(h_{\ell_r'}-1/2)}] \prod_{s=m+1}^{d-m} Z_{J,k_{\ell_s}}^{(h_{\ell_s}-1/2)},
\end{align}
where the indices $\ell_r$, $\ell_r'$ and $\ell_{s}''$ are such that 
\[ P =\Big\{\{\ell_1,\ell_1 '\},\ldots,\{\ell_m,\ell_m '\},\{\ell_{m+1}''\},\ldots,\{\ell_{d-m}''\}\Big\},\]
and where, for all $\delta \in (0,1/2)$, $(Z^{(\delta)}_{J,q})_{q \in \Z}$, is the FARIMA $(0,\delta,0)$ sequence (see Definition \ref{def:farima}) associated with the sequence $(g^\phi_{J,k})_{k\in\Z}$ of i.i.d. $\mathcal{N}(0,1)$ Gaussian random variables introduced in \eqref{eqn:n01phi}.
%given, for all $q \in \Z$, by 
%\[
% Z_{J,q}^{(\delta)} := \sum_{p=0}^{+ \infty} \gamma_p^{(\delta)} g_{J,q-p}^\phi.
% \]
\end{Prop}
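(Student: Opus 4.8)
\emph{Overall approach.} My plan is to prove both assertions together, by identifying $\sigma_{J,\mathbf{k}}^{(\mathbf{h})}$ as a Wick product of the FARIMA variables and then expanding that product by the Gaussian Wick formula. I would start from the standard fact on multiple Wiener integrals that, for every $\mathbf m\in\Z^d$, the chaotic variable $\mu_{J,\mathbf m}$ of \eqref{eqn:rv:mu} equals $I_d\big(\bigotimes_{l=1}^d\phi_{J,m_l}\big)$, where $\phi_{J,m}:=2^{J/2}\phi(2^J\cdot-m)$, and hence coincides with the Wick product $:\!g^\phi_{J,m_1}\cdots g^\phi_{J,m_d}\!:$ of the i.i.d.\ $\mathcal N(0,1)$ variables $g^\phi_{J,m}=I_1(\phi_{J,m})$ introduced in \eqref{eqn:n01phi}; in particular every $\mu_{J,\mathbf m}$ belongs to the fixed, $d$th, Wiener chaos.

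\emph{Truncated sums.} Next I would treat the truncated sums. For $N\in\N$ and $l\in[\![1,d]\!]$ put $Y_{l,N}:=\sum_{p=0}^{N}\gamma_p^{(h_l-1/2)}\,g^\phi_{J,k_l-p}$. Since $h_l-1/2\in(0,1/2)$, the coefficients $\gamma^{(h_l-1/2)}$ are square-summable, so $Y_{l,N}$ converges in $L^2(\Omega)$ -- hence, being a first-chaos variable, in $L^\gamma(\Omega)$ for every $\gamma\in(0,+\infty)$ by hypercontractivity -- and, by Remark \ref{rem1:farima} (Kolmogorov's three-series theorem), almost surely as well, to the FARIMA variable $Z_{J,k_l}^{(h_l-1/2)}$. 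Using the multilinearity of the Wick product (each factor being a first-chaos variable) together with the identity of the previous paragraph, one gets, for every $N$,
\begin{equation*}
\sum_{\mathbf p\in[\![0,N]\!]^d}\Big(\prod_{l=1}^{d}\gamma_{p_l}^{(h_l-1/2)}\Big)\mu_{J,\mathbf k-\mathbf p}=\;:\!Y_{1,N}\cdots Y_{d,N}\!:\,,
\end{equation*}
still an element of the $d$th chaos. I would then apply to the right-hand side the classical Wick formula for jointly centred Gaussian variables, obtaining the pointwise algebraic identity
\begin{equation*}
:\!Y_{1,N}\cdots Y_{d,N}\!:\;=\sum_{m=0}^{\lfloor d/2\rfloor}(-1)^m\sum_{P\in\mathcal P_m^{(d)}}\Big(\prod_{r=1}^{m}\mathbb E\big[Y_{\ell_r,N}Y_{\ell_r',N}\big]\Big)\prod_{s=m+1}^{d-m}Y_{\ell_s'',N},
\end{equation*}
where $\ell_r,\ell_r',\ell_s''$ read off the blocks of the partition $P$ as in the statement.

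\emph{Passing to the limit.} It then remains to let $N\to+\infty$. Each numerical factor $\mathbb E\big[Y_{\ell_r,N}Y_{\ell_r',N}\big]$ tends to $\mathbb E\big[Z_{J,k_{\ell_r}}^{(h_{\ell_r}-1/2)}Z_{J,k_{\ell_r'}}^{(h_{\ell_r'}-1/2)}\big]$ (from the $L^2$-convergence of the $Y$'s, or directly by a Cauchy-Schwarz estimate using $\gamma^{(\cdot)}\in\ell^2(\N_0)$), while each random factor $Y_{\ell_s'',N}$ tends to $Z_{J,k_{\ell_s''}}^{(h_{\ell_s''}-1/2)}$ almost surely and in every $L^\gamma(\Omega)$. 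As the sum over $(m,P)$ is finite and each summand is a product of finitely many factors converging in every $L^\gamma(\Omega)$, the left-hand sides converge, almost surely and in $L^\gamma(\Omega)$ for all $\gamma\in(0,+\infty)$, to the right-hand side of \eqref{eqn:defofsigma}; this yields simultaneously the convergence of the series \eqref{eq:ext-farima} and the claimed formula. (Unconditional $L^2$-convergence can moreover be read off from the orthogonality relation $\mathbb E[\mu_{J,\mathbf m}\mu_{J,\mathbf m'}]=\sum_{\sigma\in\mathfrak S_d}\prod_{l=1}^{d}\mathbbm 1_{\{m_l=m'_{\sigma(l)}\}}$ together with a Cauchy-Schwarz bound on the resulting double series.)

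\emph{Main obstacle.} The hypercontractivity bounds and the $\ell^2$-summability estimates are routine; the delicate point I anticipate is the bookkeeping -- making sure that the box-truncation of the multi-indexed series \eqref{eq:ext-farima} is compatible with factoring $\sum_{\mathbf p}$ over the blocks of each partition, given that $\gamma^{(h_l-1/2)}$ is square-summable but not absolutely summable, so that every interchange of sums must be carried out in $L^2(\Omega)$ (equivalently, on the fixed $d$th chaos) rather than absolutely. Routing the argument through the Wick product, as above, is precisely what keeps this under control, since it reduces everything to the already-established convergence of the one-dimensional FARIMA series.
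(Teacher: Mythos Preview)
Your proposal is correct and follows essentially the same route as the paper. The paper isolates the identity $\mu_{J,\mathbf m}=\,:g^\phi_{J,m_1}\cdots g^\phi_{J,m_d}:$ as a separate lemma (Lemma~\ref{prop:rv:formpourapprox}), proved by induction on $d$, and then combines it with Remark~\ref{rem1:farima} and the Janson-type equivalence of modes of convergence on a fixed chaos; you invoke the Wick-product identity directly and then expand, which is the same argument phrased more conceptually.
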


Before proving Proposition \ref{prop:gen-farima}, let us state the first main theorem of our article which provides a modified version of the random series representation \eqref{eq:XJ} obtained through the generalized FARIMA sequence $(\sigma_{J,\textbf{k}}^{(\mathbf{h})})_{\textbf{k}\in\Z^d}$ (see \eqref{eqn:defofsigma} and \eqref{eq:ext-farima}) as well as "nice" fractional scaling functions (see Definition \ref{def:fractscal}) belonging to the  Schwartz class.

\begin{Thm}\label{thm:basefreq}
The approximation process $\{X_{\mathbf{h},J}^{(d)}(t)\}_{t\in\R_+}$, defined in \eqref{eqn:approxprocess}, can be expressed, for all $t\in\R_+$, as:
\begin{equation}\label{eqn:basfreq:toprove}
X_{\mathbf{h},J}^{(d)}(t)= 2^{-J(h_1+\ldots+h_d-d)} \sum_{\mathbf{k} \in \Z^d} \sigma_{J,\textbf{k}}^{(\mathbf{h})} \int_0^t \prod_{\ell=1}^d \Phi_\Delta^{(h_\ell-1/2)}(2^J u-k_\ell) \, du,
\end{equation}
where the series is almost surely uniformly convergent in $t$ on each compact interval of $\R_+$.
\end{Thm}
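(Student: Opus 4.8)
The plan is to start from the representation \eqref{eq:XJ}--\eqref{eqn:coefk2bis} of the approximation process $X^{(d)}_{\mathbf{h},J}(t)$ --- which, as recalled above, holds with unconditional convergence in $L^2(\Omega)$ for each fixed $t$ --- and to transfer the ``fractional integration'' carried by the badly localized fractional Meyer scaling functions $\phi_{h_\ell}$ onto the chaotic coefficients: this replaces each $\phi_{h_\ell}$ by the Schwartz function $\Phi_\Delta^{(h_\ell-1/2)}$ and, correspondingly, the sequence $(\mu_{J,\mathbf{k}})_{\mathbf{k}}$ by the generalized FARIMA sequence $(\sigma^{(\mathbf{h})}_{J,\mathbf{k}})_{\mathbf{k}}$. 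The algebraic heart of the transfer is the deterministic identity
\[
\phi_{h_\ell}(x)=\sum_{p=0}^{+\infty}\gamma_p^{(h_\ell-1/2)}\,\Phi_\Delta^{(h_\ell-1/2)}(x-p),\qquad x\in\R,
\]
for each $\ell$. I would obtain it by a short Fourier computation: Definition \ref{def:fractscal} together with \eqref{eqn:coefk2bis-f} gives $\widehat{\Phi}_\Delta^{(h_\ell-1/2)}(\xi)=(1-e^{-i\xi})^{h_\ell-1/2}\,\widehat{\phi}_{h_\ell}(\xi)$, hence $\widehat{\phi}_{h_\ell}(\xi)=(1-e^{-i\xi})^{-(h_\ell-1/2)}\,\widehat{\Phi}_\Delta^{(h_\ell-1/2)}(\xi)$; since \eqref{eqn:farima} says exactly that the $\gamma_p^{(\delta)}$ are the Taylor coefficients of $z\mapsto(1-z)^{-\delta}$, one expands $(1-e^{-i\xi})^{-(h_\ell-1/2)}$ as a series of exponentials, multiplies by the bounded compactly supported factor $\widehat{\Phi}_\Delta^{(h_\ell-1/2)}$ (whose behaviour at the origin is tame enough that the partial sums are dominated in $L^1$), and inverts the Fourier transform term by term. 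The resulting series converges absolutely and locally uniformly because $\Phi_\Delta^{(\delta)}\in\mathcal{S}(\R)$ by Remark \ref{rem:frac-mey} (see \eqref{maj:psih}) while $|\gamma_p^{(\delta)}|$ grows only polynomially by \eqref{eqn:stirling}.

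Inserting this identity into \eqref{eqn:coefk2bis}, the product $\prod_{\ell=1}^d\big(\sum_{p_\ell}\gamma_{p_\ell}^{(h_\ell-1/2)}\Phi_\Delta^{(h_\ell-1/2)}(2^Js-k_\ell-p_\ell)\big)$ converges absolutely and uniformly for $s\in[0,t]$, so I may expand it and integrate term by term to get
\[
\mathfrak{K}^{(d,\mathbf{h})}_{J,\mathbf{k}}(t)=2^{-J(h_1+\cdots+h_d-d)}\sum_{\mathbf{p}\in\N_0^d}\Big(\prod_{\ell=1}^d\gamma_{p_\ell}^{(h_\ell-1/2)}\Big)\int_0^t\prod_{\ell=1}^d\Phi_\Delta^{(h_\ell-1/2)}(2^Js-k_\ell-p_\ell)\,ds,
\]
the $\mathbf{p}$-series being absolutely convergent for each fixed $\mathbf{k}$. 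Plugging this into \eqref{eq:XJ} and working with the finite partial sums $\sum_{\|\mathbf{k}\|_\infty\le N}\mu_{J,\mathbf{k}}\mathfrak{K}^{(d,\mathbf{h})}_{J,\mathbf{k}}(t)$, which converge to $X^{(d)}_{\mathbf{h},J}(t)$ in $L^2(\Omega)$, I may freely interchange the finite $\mathbf{k}$-sum with the absolutely convergent $\mathbf{p}$-sums, and the reindexing $\mathbf{m}=\mathbf{k}+\mathbf{p}$ turns the partial sum into $2^{-J(h_1+\cdots+h_d-d)}\sum_{\mathbf{m}\in\Z^d}\sigma^{(\mathbf{h}),N}_{J,\mathbf{m}}\int_0^t\prod_{\ell=1}^d\Phi_\Delta^{(h_\ell-1/2)}(2^Js-m_\ell)\,ds$, where $\sigma^{(\mathbf{h}),N}_{J,\mathbf{m}}:=\sum_{\mathbf{p}\in\N_0^d,\,\|\mathbf{m}-\mathbf{p}\|_\infty\le N}\big(\prod_\ell\gamma_{p_\ell}^{(h_\ell-1/2)}\big)\mu_{J,\mathbf{m}-\mathbf{p}}$ is a truncation of the series \eqref{eq:ext-farima} defining $\sigma^{(\mathbf{h})}_{J,\mathbf{m}}$. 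Letting $N\to+\infty$ and invoking Proposition \ref{prop:gen-farima} --- which gives $\sigma^{(\mathbf{h}),N}_{J,\mathbf{m}}\to\sigma^{(\mathbf{h})}_{J,\mathbf{m}}$ in $L^2(\Omega)$ and, since $\mathbb{E}[\mu_{J,\mathbf{k}}\mu_{J,\mathbf{k}'}]$ vanishes unless $\mathbf{k}'$ is a permutation of $\mathbf{k}$ (so that $\|\sum_{\mathbf{k}}a_{\mathbf{k}}\mu_{J,\mathbf{k}}\|_{L^2(\Omega)}^2\le (d!)^2\sum_{\mathbf{k}}|a_{\mathbf{k}}|^2$), the bound $\sup_{N,\mathbf{m}}\|\sigma^{(\mathbf{h}),N}_{J,\mathbf{m}}\|_{L^2(\Omega)}^2\le C\prod_{\ell=1}^d\sum_p(\gamma_p^{(h_\ell-1/2)})^2<+\infty$ --- together with $\sum_{\mathbf{m}\in\Z^d}\sup_{t\in[0,T]}\big|\int_0^t\prod_\ell\Phi_\Delta^{(h_\ell-1/2)}(2^Js-m_\ell)\,ds\big|<+\infty$ (a consequence of \eqref{maj:psih}), a standard double-limit argument yields \eqref{eqn:basfreq:toprove} with $L^2(\Omega)$-convergence of the series for each fixed $t\in\R_+$.

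It remains to upgrade this to almost sure uniform convergence in $t$ on every compact $[0,T]$. By its explicit form \eqref{eqn:defofsigma}, $\sigma^{(\mathbf{h})}_{J,\mathbf{m}}$ is a fixed finite polynomial in the stationary Gaussian FARIMA sequences $(Z^{(h_\ell-1/2)}_{J,q})_{q\in\Z}$, each of which has bounded variance (the $\gamma^{(\delta)}_p$ being square-summable by \eqref{eqn:stirling}); the classical almost sure bound $\sup_{|q|\le n}|Z^{(\delta)}_{J,q}|=O\big((\log n)^{1/2}\big)$ then gives $|\sigma^{(\mathbf{h})}_{J,\mathbf{m}}|=O\big((\log(2+\|\mathbf{m}\|_\infty))^{d/2}\big)$ almost surely. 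On the other hand, \eqref{maj:psih} applied to each $\Phi_\Delta^{(h_\ell-1/2)}$ yields, for every $L>0$, a bound $\sup_{t\in[0,T]}\big|\int_0^t\prod_{\ell=1}^d\Phi_\Delta^{(h_\ell-1/2)}(2^Js-m_\ell)\,ds\big|\le C_{L,T}\prod_{\ell=1}^d\big(3+\dist(m_\ell,[0,2^JT])\big)^{-L}$ (one keeps $L$-decay in the coordinates $m_\ell$ lying far from $[0,2^JT]$ and merely integrates a bounded factor in one of the remaining coordinates), which for $L$ large enough is summable over $\mathbf{m}\in\Z^d$ even after multiplication by $(\log(2+\|\mathbf{m}\|_\infty))^{d/2}$. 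Multiplying the two estimates, the series in \eqref{eqn:basfreq:toprove} is dominated by a deterministic convergent series of continuous functions of $t$, hence converges normally and a fortiori almost surely uniformly in $t$ on $[0,T]$; since for each fixed $t$ it also converges in $L^2(\Omega)$ to $X^{(d)}_{\mathbf{h},J}(t)$ by the previous step, its almost sure uniform limit coincides almost surely with $X^{(d)}_{\mathbf{h},J}(t)$, which finishes the proof.

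The step I expect to be the main obstacle is the rigorous bookkeeping of the rearrangement in the middle paragraph: because the FARIMA weights $\gamma^{(\delta)}_p$ are \emph{not} summable, neither \eqref{eq:XJ} nor the defining series \eqref{eq:ext-farima} of $\sigma^{(\mathbf{h})}_{J,\mathbf{m}}$ converges absolutely, so the interchange of the $\mathbf{k}$- and $\mathbf{p}$-summations cannot be done directly and must be routed through finite truncations and $L^2(\Omega)$-limits, combining the (near-)orthogonality of the chaotic variables $\mu_{J,\mathbf{k}}$ with the Schwartz decay of the fractional scaling functions. The secondary difficulty is the almost sure uniform convergence, which hinges on the logarithmic almost sure growth control of the underlying FARIMA sequences established in and around Proposition \ref{prop:gen-farima}.
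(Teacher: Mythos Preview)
Your argument is correct and is a genuinely different route from the paper's. You expand $\phi_{h_\ell}$ directly as $\sum_{p\ge 0}\gamma_p^{(h_\ell-1/2)}\Phi_\Delta^{(h_\ell-1/2)}(\cdot-p)$ and then rearrange the random series \eqref{eq:XJ} in $L^2(\Omega)$ via truncations, using the (near-)orthogonality of the $\mu_{J,\mathbf{k}}$ and the summability of the deterministic coefficients $\beta_{\mathbf{m}}=\int_0^t\prod_\ell\Phi_\Delta^{(h_\ell-1/2)}(2^Js-m_\ell)\,ds$ to justify the double limit. The paper instead works at the level of the kernel $K^{(d)}_{\mathbf{h},J}(t,\cdot)\in L^2(\R^d)$: it introduces the auxiliary $V_0^1$-function $\Phi^{(-\delta)}$ with $\widehat{\Phi}^{(-\delta)}(\xi)=(1-e^{i\xi})^{-\delta}\widehat{\phi}(\xi)$, expands it as $\sum_p\gamma_p^{(\delta)}\phi(\cdot+p)$ in $L^2(\R)$, and uses Fourier algebra to identify $K^{(d)}_{\mathbf{h},J}(t,\cdot)$ with $2^{-J(h_1+\cdots+h_d-d)}\sum_{\mathbf{k}}\beta_{\mathbf{k}}\,2^{Jd/2}\prod_\ell\Phi^{(1/2-h_\ell)}(2^J\cdot-k_\ell)$; the Wiener isometry then yields \eqref{eqn:basfreq:toprove} in $L^2(\Omega)$ directly, the key point being the multiple Wiener integral representation $\sigma_{J,\mathbf{k}}^{(\mathbf{h})}=I_d\big(2^{Jd/2}\prod_\ell\Phi^{(1/2-h_\ell)}(2^J\cdot-k_\ell)\big)$. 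The paper's approach thus bypasses your truncation bookkeeping entirely and, as a by-product, furnishes the integral representation of $\sigma_{J,\mathbf{k}}^{(\mathbf{h})}$ from which the logarithmic bound \eqref{rem:logbousig:eq1} follows via hypercontractivity (Theorem~6.7 in \cite{MR1474726}); you obtain the same bound more elementarily from the FARIMA expression \eqref{eqn:defofsigma} and the Gaussian $\sqrt{\log}$ bound on the $Z_{J,q}^{(\delta)}$. Your route is shorter and more hands-on; the paper's route yields the Wiener-integral form of $\sigma_{J,\mathbf{k}}^{(\mathbf{h})}$ as an additional structural statement.
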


\begin{Rmk}
Let $f$ be an arbitrary function in the Schwartz class $\mathcal{S}(\R)$ and let $(a_p)_{p \in \Z}$ be an arbitrary slowly increasing sequence of real numbers, that is we have, for some constants $\kappa>0$ and $\mu >0$ and for every $p\in\Z$,  $|a_p| \leq \kappa (1+|p|)^\mu$. It is known (see for instance \cite{MR1755100}) that, if we set $A_0=0$ and $A_q-A_{q-1}=a_q$ for all $q\in \Z\setminus\{0\}$ and $\widetilde{f}(y) = \int_{y-1}^y f(v) \, dv$ for all $y \in \R$, then the function $\widetilde{f}$ belongs to $\mathcal{S}(\R)$ and the sequence $\{A_k\}_{k \in \Z}$ is slowly increasing. Moreover, using an Abel transform, for all $t \in \R$, we have
\begin{equation}
\label{eq:ablel}
\sum_{k \in \Z} a_k\int_0^t f(v-k) \, dv = \sum_{q \in \Z} A_q (\widetilde{f}(t-q)-\widetilde{f}(-q)).
\end{equation} 
In order to apply \eqref{eq:ablel} in the framework of Theorem \ref{thm:basefreq}, we define, for each $(J,q, \mathbf{n}) \in \N \times \Z \times \Z^{d-1}$, the random variable $S_{J,q,\mathbf{n}}^{(\mathbf{h})}$ as:
\begin{equation}
\label{eq:gen-rw-farima}
S_{J,q,\mathbf{n}}^{(\mathbf{h})}=\left\{ \begin{array}{cl}
                      \sum_{p=1}^q \sigma_{J,(p,\mathbf{n}+p)}^{(\mathbf{h})} & \text{if } q > 0\\ [2ex]
                       0 & \text{if } q = 0\\ [2ex]
                       -\sum_{p=q+1}^0 \sigma_{J,(p,\mathbf{n}+p)}^{(\mathbf{h})} & \text{if } q <0,
                     \end{array} \right.
\end{equation}
with the convention that $\mathbf{n}+p:=(n_1+p,\ldots, n_{d-1}+p)$. Also, for every $\mathbf{n}=(n_1,\ldots, n_{d-1})\in\Z^{d-1}$, we define the function $\widetilde{\Phi}_{\Delta,\mathbf{n}}^{(\mathbf{h})}$, belonging to $\mathcal{S}(\R)$, as: 
\[ \widetilde{\Phi}_{\Delta,\mathbf{n}}^{(\mathbf{h})} (y):= \int_{y-1}^{y} \Phi_{\Delta}^{(h_1-1/2)}(v) \prod_{\ell=1}^{d-1} \Phi_{\Delta}^{(h_{\ell+1}-1/2)}(v-n_\ell) \, dv,\quad\mbox{for all $y\in\R$.} \]
Then, using Theorem \ref{thm:basefreq}, Fubini theorem, the change of variable $v=2^J u$, the change of indices $k_1=p$ and $n_{l-1}=k_{l}-k_1$ (for all $l\in [\![2,d]\!]$), the slow increase property for the sequence $\big(\sigma_{J,(p,\mathbf{n}+p)}^{(\mathbf{h})}\big)_{p\in\Z}$ provided by \eqref{rem:logbousig:eq1}, \eqref{eq:ablel}, a slow increase property (derived from \eqref{rem:logbousig:eq1} and \eqref{eq:gen-rw-farima}) for the sequence $\big(S_{J,q,\mathbf{n}}^{(\mathbf{h})}\big)_{q\in \Z}$ with a random constant\footnote{All along this paper, if $\mathbf{n} \in \Z^d$, we use the notation $|\mathbf{n}|=\sum_{\ell=1}^d |n_\ell|$.} $\kappa (\mathbf{n})=\mathcal{O}\big(\log^{d/2}(3+|\mathbf{n}|)\big)$, and the inequality 
\[
\sup_{y\in [0,Y]}\sup_{(q,\mathbf{n})\in \Z\times\Z^{d-1}}\Big\{\big (3+|p|+|\mathbf{n}|\big)^{L} \big|\widetilde{\Phi}_{\Delta,\mathbf{n}}^{(\mathbf{h})} (y)\big|\Big\}<\infty,\quad\mbox{for all fixed $Y,L>0$,}
\]
we obtain that
\begin{equation}
\label{eq:abel-appoxp}
X_{\mathbf{h},J}^{(d)}(t)= 2^{-J(h_1+\ldots+h_d+1-d)} \sum_{\mathbf{n} \in \Z^{d-1}} \sum_{q \in \Z} S_{J,q,\mathbf{n}}^{(\mathbf{h})}\left( \widetilde{\Phi}_{\Delta,\mathbf{n}}^{(\mathbf{h})}(2^Jt-q)-\widetilde{\Phi}_{\Delta,\mathbf{n}}^{(\mathbf{h})}(-q) \right) , 
\end{equation}
where the convergence of the random series holds almost surely and uniformly in $t$ on each compact interval of $\R_+$. Notice that the random series representation \eqref{eq:abel-appoxp} for the approximation $\{X_{\mathbf{h},J}^{(d)}(t)\}_{t\in\R_+}$ of the generalized Hermite process is reminiscent to that of the low frequency part (that is the scaling function part) in the representation of FBM in  \eqref{eqn:expFBM1alter}.
\end{Rmk}

 The proof of Theorem \ref{thm:basefreq} will be given in Section \ref{sect:approxproc}. Let us now focus on the proof of the fundamental Proposition \ref{prop:gen-farima}. Its starting point consists in an expression of the chaotic random variable $\mu_{J,\mathbf{k}}$ (see \eqref{eqn:rv:mu}) in terms of the i.i.d Gaussian random variables $g^\phi_{J,k}$ and Hermite polynomials $H_n$. We mention in passing that a rather similar expression also holds for the chaotic random variable $\varepsilon_{\textbf{j},\textbf{k}}$ (see \eqref{eqn:rv:eps}); it will be useful for us later. For giving these expressions for $\mu_{J,\mathbf{k}}$ and $\varepsilon_{\textbf{j},\textbf{k}}$ it is convenient to make use of the very common notation for multiple Wiener integral: for any $n\in\N$ and $f \in L^2(\R^n)$,
\[ I_n(f) =  \int_{\R^n}' f(x_1, \ldots, x_n) \, dB(x_1) \ldots dB(x_n).\]
It is known (see e.g. equation (1) in \cite{MR1106271}) that, for any univariate functions $\varphi_1,\ldots, \varphi_p$  of $L^2(\R)$ which are orthonormal and for every $n_1, \ldots, n_p\in \N$, one has
\begin{equation}\label{eqn:itoprod}
I_{n_1 + \cdots +n_p} \left( \varphi_{1}^{\otimes_{n_1}} \otimes \cdots \otimes \varphi_{p}^{\otimes_{n_p}} \right) = \prod_{\ell=1}^{p} H_{n_{\ell}} \left( \int_\R \varphi_\ell(x) \, dB(x) \right). 
\end{equation}
We recall that:
\begin{Def}
\label{def:hermP}
For all $n \in \Z_+$, the $n$th Hermite polynomial is the polynomial of degree $n$ denoted  by $H_n$ and defined, for every $x\in\R$, as:
\[ H_n(x)=(-1)^n e^{x^2 /2} \frac{d^n}{dx^n} e^{-x^2 /2} \,.\]
For instance, the first four Hermite polynomials are $H_0(x)=1$, $H_1(x)=x$, $H_2(x)=x^2-1$ and $H_3(x)=x^3-3x$. 
\end{Def}
The equality \eqref{eqn:itoprod} will play a crucial role in the sequel; for the sake of completeness its proof is given in Appendix \ref{sec:ape;profito}.
%Let us start by expressing the random variables \eqref{eqn:rv:mu} and \eqref{eqn:rv:eps} in a more explicit way. 
%If $f \in L^2(\R)$, we note $f^{\otimes_n}$ for the tensor product \[ \underset{\text{n times}}{\underbrace{f \otimes \cdots \otimes f}} \, : \, (x_1,\ldots,x_n) \mapsto f(x_1). \ldots. f(x_n) \] which is, of course, a $L^2(\R^n)$ (symmetric) function. 
In order to apply it to the multiple Wiener integrals in \eqref{eqn:rv:mu} and \eqref{eqn:rv:eps}, we need to introduce some notations. In fact any $(\mathbf{j},\mathbf{k}) \in (\Z^d)^2$ can be viewed as a finite sequence $\big ((j_m ,k_m)\big)_{1\le  m \le d}$ whose $d$ terms $(j_m ,k_m)$ belong to $\Z^2$ and some of them can be equal to each other. The positive integer $p(\mathbf{j},\mathbf{k})\le d$ denotes the number of the distinct terms of the sequence $(\mathbf{j},\mathbf{k})=\big ((j_m ,k_m)\big)_{1\le  m \le d}$, and the latter terms are denoted by $(\widetilde{j_\ell},\widetilde{k_\ell})$, $1\le l \le p(\mathbf{j},\mathbf{k})$; moreover the notation $(\widetilde{j_\ell},\widetilde{k_\ell})_{n_\ell}$, where $n_\ell\in\{1,\ldots, d\}$, means that $(\widetilde{j_\ell},\widetilde{k_\ell})$ has the multiplicity $n_\ell$, that is there are exactly $n_\ell$ terms of the sequence $\big ((j_m ,k_m)\big)_{1\le  m \le d}$ which are equal to $(\widetilde{j_\ell},\widetilde{k_\ell})$. At last, it is clear that 
$\sum_{\ell=1}^{p(\mathbf{j},\mathbf{k})} n_\ell =d$. Using these notations and \eqref{eqn:n01psi}, we can derive from \eqref{eqn:rv:eps} and \eqref{eqn:itoprod} that, for all $(\mathbf{j},\mathbf{k}) \in (\Z^2)^d$, 
\begin{equation}\label{eqn:rv:prodher}
\varepsilon_{\mathbf{j},\mathbf{k}} = \prod_{\ell=1}^{p(\mathbf{j},\mathbf{k})} H_{n_\ell} \left( g^\psi_{\widetilde{j}_\ell,\widetilde{k}_\ell}\right).
\end{equation}
Similar arguments and \eqref{eqn:n01phi} allow to shown that, for all $J \in \Z$ and $\mathbf{k} \in \Z^d$, 
\begin{equation}\label{eqn:rv:prodherbis}
\mu_{J,\mathbf{k}} = \prod_{\ell=1}^{p(\{J\}^d,\mathbf{k})} H_{n_\ell} \left( g^\phi_{J,\widetilde{k}_\ell}\right);
\end{equation}
 observe that the positive integer $n_\ell$ in \eqref{eqn:rv:prodherbis} is the multiplicity of $\widetilde{k}_\ell$ in $\mathbf{k}$.
 In order to connect the random variables $\mu_{J,\mathbf{k}}$ to FARIMA sequences (see Definition \ref{def:farima}), we have to rewrite the expression \eqref{eqn:rv:prodherbis} in a way that gives us an easier "access" to the i.i.d Gaussian random variables $g^\phi_{J,k}$ in it. %compared to the expression \eqref{eqn:rv:prodherbis}.
To this end, we recall that, for any $n\in\N$, the $n$th Hermite polynomial $H_n$ satisfies, for all $x\in\R$, the equality:
\begin{equation}
\label{eq:herm-part}
H_n (x)= \sum_{m=0}^{\lfloor n/2 \rfloor} (-1)^m a_m^{(n)} x^{n-2m},
\end{equation}
where $a_m^{(n)}$ is the number of partitions of $[\![1,n]\!]$ with $m$ (non ordered) pairs and $n-2m$ singletons.

\begin{Lemma}\label{prop:rv:formpourapprox}
Using notations already introduced in Definition \ref{def:setPS}, for all $J \in \Z$ and $\mathbf{k} \in \Z^d$, the random variable $\mu_{J,\mathbf{k}}$ in \eqref{eqn:rv:prodherbis} can be rewritten as:
\begin{align} \label{rv:formpourapprox}
\mu_{J,\mathbf{k}} = \sum_{m=0}^{\lfloor d/2 \rfloor} (-1)^m \sum_{P \in \mathcal{P}_m^{(d)}} \prod_{r=1}^m \mathbb{E}[g_{J,k_{\ell_r}}^\phi g_{J,k_{\ell_r'}}^\phi] \prod_{s=m+1}^{d-m} g_{J,k_{\ell_{s}''}}^\phi,
\end{align}
where the indices $\ell_r$, $\ell_r'$ and $\ell_{s}''$ are such that 
\[ P =\Big\{\{\ell_1,\ell_1 '\},\ldots,\{\ell_m,\ell_m '\},\{\ell_{m+1}''\},\ldots,\{\ell_{d-m}''\}\Big\}.\]
\end{Lemma}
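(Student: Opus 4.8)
The plan is to start from the expression \eqref{eqn:rv:prodherbis} for $\mu_{J,\mathbf{k}}$, which writes it as a product $\prod_{\ell=1}^{p} H_{n_\ell}(g^\phi_{J,\widetilde{k}_\ell})$ over the distinct values $\widetilde{k}_\ell$ appearing in $\mathbf{k}$ with multiplicities $n_\ell$, and to expand each Hermite polynomial by \eqref{eq:herm-part}. Substituting $H_{n_\ell}(x) = \sum_{m_\ell=0}^{\lfloor n_\ell/2\rfloor} (-1)^{m_\ell} a_{m_\ell}^{(n_\ell)} x^{n_\ell - 2m_\ell}$ and multiplying out, one gets a sum over tuples $(m_1,\ldots,m_p)$ of terms $(-1)^{m_1+\cdots+m_p} \big(\prod_\ell a_{m_\ell}^{(n_\ell)}\big) \prod_\ell (g^\phi_{J,\widetilde{k}_\ell})^{n_\ell - 2m_\ell}$. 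The combinatorial heart of the argument is to reinterpret this as a sum over set partitions of $[\![1,d]\!]$: recall that $a_{m_\ell}^{(n_\ell)}$ counts the partitions of an $n_\ell$-element set into $m_\ell$ pairs and $n_\ell - 2m_\ell$ singletons, and that the $n_\ell$ positions in $[\![1,d]\!]$ carrying the value $\widetilde{k}_\ell$ form precisely such a block. Therefore $\prod_\ell a_{m_\ell}^{(n_\ell)}$ counts the partitions $P$ of $[\![1,d]\!]$ into $m := m_1 + \cdots + m_p$ pairs and $d - 2m$ singletons \emph{with the constraint that each pair links two indices carrying the same $k$-value}.

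The next step is to remove that constraint. The key observation is that $\mathbb{E}[g^\phi_{J,k_a} g^\phi_{J,k_b}] = \mathbbm{1}_{\{k_a = k_b\}}$ since the $g^\phi_{J,k}$ are i.i.d.\ $\mathcal{N}(0,1)$; hence a factor $\mathbb{E}[g^\phi_{J,k_{\ell_r}} g^\phi_{J,k_{\ell_r'}}]$ automatically vanishes unless the two indices of the pair carry equal $k$-values, and equals $1$ otherwise. Consequently, summing over \emph{all} partitions $P \in \mathcal{P}_m^{(d)}$ (unconstrained) of the quantity $\prod_{r=1}^m \mathbb{E}[g^\phi_{J,k_{\ell_r}} g^\phi_{J,k_{\ell_r'}}] \prod_{s=m+1}^{d-m} g^\phi_{J,k_{\ell_s''}}$ reproduces exactly the constrained sum above: the only surviving partitions are those whose pairs join equal-value positions, and on those the expectation factors are all $1$, so the factor $\prod_\ell a_{m_\ell}^{(n_\ell)}$ is recovered as the number of such partitions with the given block-type $(m_1,\ldots,m_p)$, while the singleton product $\prod_{s} g^\phi_{J,k_{\ell_s''}}$ matches $\prod_\ell (g^\phi_{J,\widetilde{k}_\ell})^{n_\ell - 2m_\ell}$. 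Collecting terms by the total number $m$ of pairs and noting $m$ ranges over $0,\ldots,\lfloor d/2\rfloor$, one obtains precisely \eqref{rv:formpourapprox}.

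\textbf{Main obstacle.} The routine parts — expanding $H_{n_\ell}$, multiplying out, and keeping track of signs via $(-1)^{m_1+\cdots+m_p} = (-1)^m$ — are mechanical. The genuinely delicate point, and the one deserving the most careful writing, is the bijective bookkeeping between (i) the product $\prod_{\ell} a_{m_\ell}^{(n_\ell)}$ of counts of "block-internal" pairings and (ii) the number of global partitions $P$ of $[\![1,d]\!]$ all of whose pairs are monochromatic for the $\mathbf{k}$-coloring, matched block-type by block-type; together with the verification that, after inserting the expectation factors, the unconstrained sum over $\mathcal{P}_m^{(d)}$ collapses onto exactly this set with unit weights. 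One must be careful that distinct block-type vectors $(m_1,\ldots,m_p)$ with the same total $m$ contribute to the same inner sum over $P \in \mathcal{P}_m^{(d)}$ and that no double-counting or omission occurs; phrasing this as a partition of $\mathcal{P}_m^{(d)}$ according to how each pair distributes among the color classes, and discarding the classes containing a bichromatic pair (which are killed by the vanishing expectation), makes the identity transparent.
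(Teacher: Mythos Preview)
Your argument is correct and complete. The combinatorial bookkeeping you flag as the ``main obstacle'' is exactly right: the i.i.d.\ $\mathcal{N}(0,1)$ structure gives $\mathbb{E}[g^\phi_{J,k_a}g^\phi_{J,k_b}]=\mathbbm{1}_{\{k_a=k_b\}}$, so only monochromatic partitions survive, and these are counted block-by-block by $\prod_\ell a_{m_\ell}^{(n_\ell)}$.

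The paper takes a different route: it proves \eqref{rv:formpourapprox} by induction on $d$. After checking $d=1,2$ by hand, it peels off the first colour class (of size $a$, the multiplicity of $k_1$), writes $\mu_{J,\mathbf{k}}=H_a(g^\phi_{J,k_1})\cdot \mu_{J,(k_{a+1},\ldots,k_d)}$, applies \eqref{eq:herm-part} to the first factor and the induction hypothesis to the second, and then multiplies out. The product of the two inner sums over $\mathcal{P}_m^{(a)}$ and $\mathcal{P}_n^{[\![a+1,d]\!]}$ is naturally indexed by partitions in $\mathcal{P}_{m+n}^{(d)}$ having no pair straddling the two blocks; the paper then completes to all of $\mathcal{P}_{m+n}^{(d)}$ by noting that any straddling pair $\{\ell,\ell'\}$ has $k_\ell\neq k_{\ell'}$, so the corresponding expectation vanishes. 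Your approach avoids the induction entirely by expanding \emph{all} Hermite factors simultaneously and passing directly to the global partition picture. This is more economical and conceptually cleaner; the paper's inductive version has the mild advantage of making the ``extend by vanishing expectations'' step appear only once, between two blocks, rather than implicitly across all colour classes at once.
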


\begin{proof}
Let us proceed by induction on the positive integer $d$. It easily follows from \eqref{eqn:rv:prodherbis} and Definition \ref{def:hermP} that the equality \eqref{rv:formpourapprox} is satisfied in the two particular cases $d=1$ and $d=2$. In the sequel, one assumes that $d>2$ and that \eqref{rv:formpourapprox} holds for any positive integer $n$ such that $n <d$. Let us first show that these assumptions allow to prove \eqref{rv:formpourapprox} when the $d$ indices forming the multi-index $\mathbf{k}$ are all equal together, that is $\mathbf{k}=(k_1,\ldots,k_1)$. Indeed, the latter equality implies, for all $\ell_r, \ell_r' \in [\![1,d]\!]$, that $\mathbb{E}[g_{J,k_{\ell_r}}^\phi g_{J,k_{\ell_r'}}^\phi]=1$, which in turn entails that 
\begin{align*}
\sum_{m=0}^{\lfloor d/2 \rfloor} (-1)^m \sum_{P \in \mathcal{P}_m^{(d)}} \prod_{r=1}^m \mathbb{E}[g_{J,k_{\ell_r}}^\phi g_{J,k_{\ell_r'}}^\phi] \prod_{s=m+1}^{d-m} g_{J,k_{\ell_{s}''}}
&= \sum_{m=0}^{\lfloor d/2 \rfloor} (-1)^m a_m^{(d)} (g_{J,k_1}^\phi)^{d-2m}\\
&= H_d(g_{J,k_1}^\phi)=\mu_{J,\mathbf{k}}\,, 
\end{align*}
where the second and the third equalities respectively follow from \eqref{eq:herm-part} and \eqref{eqn:rv:prodherbis}. From now on, we focus on the most general case in which the $d$ indices forming the multi-index $\mathbf{k}$ are not necessarily equal together. Generally speaking, there exists a unique integer $a$ satisfying $1\le a<d$ such that one has $\mathbf{k}=(k_1,\ldots,k_1,k_{a+1},\ldots,k_d)$ with $k_1 \neq k_\ell$, for all $ a<\ell \leq d$; in fact $a$ is nothing else than the multiplicity of the first index of $\mathbf{k}$. Then, one can derive from \eqref{eqn:rv:prodherbis}, \eqref{eq:herm-part} and the induction hypothesis that
\begin{align*}
\mu_{J,\mathbf{k}} &= H_a(g_{J,k_1}^\phi) \prod_{\ell=2}^p H_{n_\ell}(\mu_{J,\widetilde{k}_\ell}) = \left (\sum_{m=0}^{\lfloor a/2 \rfloor} (-1)^m a_m^{(d)} (g_{J,k_1}^\phi)^{d-2m}\right)  \prod_{\ell=2}^p H_{n_\ell}(\mu_{J,\widetilde{k}_\ell})\\
& = \left( \sum_{m=0}^{ \lfloor a/2 \rfloor} (-1)^m \sum_{P_1  \in \mathcal{P}_m^{(a)}} \prod_{r=1}^m \mathbb{E}[g_{J,k_{\ell_r}}^\phi g_{J,k_{\ell_r'}}^\phi] \prod_{s=m+1}^{a-m} g_{J,k_{\ell_{s}''}}^\phi \right)\times\ldots \\ & \qquad  \ldots\times \left( \sum_{n=0}^{ \lfloor (d-a)/2 \rfloor} (-1)^n \sum_{P_2  \in \mathcal{P}_n^{[\![a+1,d]\!]}} \,\prod_{t=1}^n \mathbb{E}[g_{J,k_{\ell_t}}^\phi g_{J,k_{\ell_t'}}^\phi] \prod_{u=n+1}^{d-a-n} g_{J,k_{\ell_{u}''}}^\phi \right) \\
& = \sum_{v=0}^{\lfloor d/2 \rfloor} (-1)^v \sum_{m,n \, : \, m+n=v} \left( \sum_{P  \in \mathcal{P}_{v,[m,n]}^{(d,a)}} \prod_{r=1}^v \mathbb{E}[g_{J,k_{\ell_r}}^\phi g_{J,k_{\ell_r'}}^\phi] \prod_{s=v+1}^{d-v} g_{J,k_{\ell_{s}''}}^\phi  \right),
\end{align*}
where $\mathcal{P}_{v,[m,n]}^{(d,a)}$ is the subset of $\mathcal{P}_v^{(d)}$ of the partitions of $[\![1,d]\!]$ with $m$ (non ordered) pairs of integers in $[\![1,a]\!]$ and $n$ (non ordered) pairs of integers in $[\![a+1,d]\!]$; notice that when $\lfloor a/2 \rfloor + \lfloor (d-a)/2 \rfloor<m+n\le \lfloor d/2 \rfloor$ then $\mathcal{P}_{v,[m,n]}^{(d,a)}$ becomes an empty set, therefore the sum over it reduces to zero.

Finally notice that when $P' \in \mathcal{P}_v^{(d)}$ is a partition with at least a (non ordered) pair $\{\ell,\ell'\}$ such that $\ell \in [\![1,a]\!]$ and $\ell' \in[\![a+1,d]\!]$, then $\mathbb{E}[g_{J,k_\ell}^\phi g_{J,k_{\ell'}}^\phi]=0$, thus, using the fact that $\mathcal{P}_{v,[m',n']}^{(d,a)}\cap \mathcal{P}_{v,[m'',n'']}^{(d,a)}=\emptyset$ when $(m',n')\ne (m'',n'')$, one gets that
\begin{align*}
\sum_{v=0}^{\lfloor d/2 \rfloor} & (-1)^v \sum_{m,n \, : \, m+n=v} \left( \sum_{P  \in \mathcal{P}_{v,[m,n]}^{(d,a)}} \prod_{r=1}^v \mathbb{E}[g_{J,k_{\ell_r}}^\phi g_{J,k_{\ell_r'}}^\phi] \prod_{s=v+1}^{d-v} g_{J,k_{\ell_{s}''}}^\phi  \right) \\ & = \sum_{v=0}^{\lfloor d/2 \rfloor} (-1)^v \sum_{P \in \mathcal{P}_v^{(d)}} \prod_{r=1}^v \mathbb{E}[g_{J,k_{\ell_r}}^\phi g_{J,k_{\ell_r'}}^\phi] \prod_{s=v+1}^{d-v} g_{J,k_{\ell_{s}''}}^\phi,
\end{align*}
which shows that \eqref{rv:formpourapprox} is valid.
\end{proof}

We are now in position to prove Proposition \ref{prop:gen-farima}.

\begin{proof}[Proof of Proposition \ref{prop:gen-farima}] Combining Lemma \ref{prop:rv:formpourapprox} with Remark \ref{rem1:farima}, it can easily be shown that, when the integer $n$ goes to $+\infty$, the partial sum of order $n$ of the random series in \eqref{eq:ext-farima}, that is the $d$th order Wiener chaos random variable 
\[
\sum_{\textbf{p}\in[\![0,n]\!]^d}\Big (\prod_{l=1}^d \gamma_{p_l}^{(h_l-1/2)}\Big)\mu_{J,\mathbf{k}-\mathbf{p}}\,,
\] 
converges almost surely to 
\[
\sum_{m=0}^{\lfloor d/2 \rfloor } (-1)^m \sum_{P \in \mathcal{P}_m^{(d)}} \prod_{r=1}^m \mathbb{E}[ Z_{J,k_{\ell_r}}^{(h_{\ell_r}-1/2)}Z_{J,k_{\ell_r'}}^{(h_{\ell_r'}-1/2)}] \prod_{s=m+1}^{d-m} Z_{J,k_{\ell_s}}^{(h_{\ell_s}-1/2)}.
\]
The fact that the convergence also holds in $L^\gamma (\Omega)$, for any $\gamma\in (0,+\infty)$, can be derived from a general result in \cite{MR1474726} according to which any sequence of random variables belonging to a finite order Wiener chaos converges in $L^\gamma (\Omega)$ as soon as it converges in probability.
\end{proof}

The following theorem, which provides, for $\|\cdot \|_{I,\infty}$ the uniform norm on any compact interval $I \subset \R_+$, an almost sure estimate of the error stemming from the approximation of $\{X_{\mathbf{h}}^{(d)}(t)\}_{t\in I}$ by $\{X_{\mathbf{h},J}^{(d)}(t)\}_{t\in I}$ is the second main result of our article. This theorem will be proved in Section \ref{sect:approxerro}.

\begin{Thm}\label{thm:main1}
For any compact interval $I \subset \R_+$, there exists an almost surely finite random variable $C$ (depending on $I$) for which one has, almost surely, for each $J \in \N$,
\begin{equation}\label{eqn:thm:main1}
\| X^{(d)}_{\mathbf{h}}-X^{(d)}_{\mathbf{h},J} \|_{I,\infty} = \| X^{(d,\bot)}_{\mathbf{h},J} \|_{I,\infty} \leq C J^{\frac{d}{2}} 2^{-J(  h_1+ \cdots + h_d -d + 1/2)}.
\end{equation}
\end{Thm}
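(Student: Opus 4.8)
The plan is to follow the method of \cite{MR4110623} (which treats $d=2$), adapting the parts that relied on the bivariate Plancherel identity, unavailable for $d\ge 3$. Fix a compact interval $I\subset\R_+$. For $t\in I$ and $J\in\N$, the random variable $X^{(d,\bot)}_{\mathbf h,J}(t)=I_d\big(K^{(d,\bot)}_{\mathbf h,J}(t,\cdot)\big)$ belongs to the $d$-th Wiener chaos, and, by \eqref{eq:X-XJ} together with the orthonormality of the wavelet system $\{\psi_{\mathbf j,\mathbf k}\}$ in $L^2(\R^d)$,
\[
\big\|X^{(d,\bot)}_{\mathbf h,J}(t)\big\|_{L^2(\Omega)}^{2}\le d!\sum_{\substack{(\mathbf j,\mathbf k)\in(\Z^d)^2\\ \max_\ell j_\ell\ge J}}\big|\mathcal K^{(d,\mathbf h)}_{\mathbf j,\mathbf k}(t)\big|^{2},
\]
with the analogous bound for the increment $X^{(d,\bot)}_{\mathbf h,J}(t)-X^{(d,\bot)}_{\mathbf h,J}(s)$. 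So the proof splits into: (i) the deterministic estimate $\sup_{t\in I}\sum_{\max_\ell j_\ell\ge J}|\mathcal K^{(d,\mathbf h)}_{\mathbf j,\mathbf k}(t)|^{2}\le C\,2^{-2J(h_1+\cdots+h_d-d+1/2)}$, plus a companion increment bound; and (ii) turning this pointwise control into an almost sure uniform one on $I$, valid for all $J$ at once.

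For task (i), I would split the index set $\{\max_\ell j_\ell\ge J\}$ into the finitely many pieces indexed by the non-empty subsets $B\subseteq\{1,\dots,d\}$ of coordinates for which $j_\ell\ge J$ (the remaining coordinates then ranging over the integers $<J$). Using the expression \eqref{eqn:coefk2} of $\mathcal K^{(d,\mathbf h)}_{\mathbf j,\mathbf k}(t)$ as $2^{\sum_\ell j_\ell(1-h_\ell)}\int_0^t\prod_\ell\psi_{h_\ell}(2^{j_\ell}s-k_\ell)\,ds$, the Schwartz decay \eqref{maj:psih} of the fractional wavelets, and the fact that $\widehat\psi_{h_\ell}$ vanishes near the origin (so that all moments $\int_\R v^{m}\psi_{h_\ell}(v)\,dv$ vanish), one bounds $|\mathcal K^{(d,\mathbf h)}_{\mathbf j,\mathbf k}(t)|$ and then sums the squares over $\mathbf k\in\Z^d$, with the help of the technical lemmas recalled in Appendix \ref{sec:use-lemmata}. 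The vanishing-moment property forces, whenever two dilation indices differ, the underlying one-dimensional integrals to be concentrated near the endpoints $0$ and $t$, whereas if several $j_\ell$ coincide the associated product of wavelets integrates to a non-zero ``cluster'' constant; keeping track of this, the block $B=\{1,\dots,d\}$ is dominant and, inside it, the sum over the $j_\ell\ge J$ is a convergent geometric sum controlled by its term at $j_1=\cdots=j_d=J$, producing precisely the factor $2^{-2J(h_1+\cdots+h_d-d+1/2)}$ -- the relevant geometric summations converging exactly because of the two constraints $1-h_\ell>0$ and $\sum_\ell h_\ell>d-1/2$ of \eqref{eqn:hermi:cond} -- while all other blocks decay faster in $J$. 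The increment bound is obtained by the same scheme, with $\int_0^t$ replaced by $\int_s^t$, at the cost of an extra power of $|t-s|$.

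For task (ii), I would discretise $I$ along a dyadic grid $G_J$ with $\#G_J\asymp 2^{cJ}$, the constant $c$ chosen large enough that, by the increment estimate of (i), $\|X^{(d,\bot)}_{\mathbf h,J}\|_{I,\infty}\le\max_{t\in G_J}\big|X^{(d,\bot)}_{\mathbf h,J}(t)\big|+R_J$ with $R_J$ negligible compared with $J^{d/2}2^{-J(h_1+\cdots+h_d-d+1/2)}$. Since each $X^{(d,\bot)}_{\mathbf h,J}(t)$ lies in the $d$-th Wiener chaos, hypercontractivity (equivalently, the comparison of $L^{\gamma}(\Omega)$-norms inside a finite chaos already invoked through \cite{MR1474726}) gives the tail bound $\mathbb{P}\big(|X^{(d,\bot)}_{\mathbf h,J}(t)|>u\big)\le c_1\exp\!\big(-c_2\big(u/\|X^{(d,\bot)}_{\mathbf h,J}(t)\|_{L^2(\Omega)}\big)^{2/d}\big)$; combining it with a union bound over $G_J$ and with task (i), one sees that making $\mathbb{P}\big(\max_{t\in G_J}|X^{(d,\bot)}_{\mathbf h,J}(t)|>\lambda J^{d/2}2^{-J(h_1+\cdots+h_d-d+1/2)}\big)$ summable in $J$ requires exactly an excess factor $\asymp(\log\#G_J)^{d/2}\asymp J^{d/2}$ -- this is the origin of the polynomial prefactor in \eqref{eqn:thm:main1}. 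The Borel--Cantelli lemma then furnishes an almost surely finite random variable $C$ depending on $I$ with $\|X^{(d,\bot)}_{\mathbf h,J}\|_{I,\infty}\le C J^{d/2}2^{-J(h_1+\cdots+h_d-d+1/2)}$ for every $J\in\N$; the same estimates, applied to the differences $X^{(d)}_{\mathbf h,J'}-X^{(d)}_{\mathbf h,J}$, also show that the wavelet-type series converge almost surely and uniformly on $I$.

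The hard part is task (i), and more precisely the \emph{sharp} evaluation of the sums over $\mathbf k$. When the second moments are computed block by block, the covariances between the chaotic components pair up coordinates according to the partitions of Definition \ref{def:setPS}, so one is led, for each such partition and each pair of multi-indices $\mathbf j,\mathbf j'$, to a product of one-dimensional convolution-type sums of Schwartz functions carrying distinct dilation factors; all of these must be estimated and the resulting total power of $2^{\max_\ell j_\ell}$ must be shown to be exactly the one producing the rate above, uniformly in $t\in I$. For $d=2$ this bookkeeping was bypassed by the bivariate Plancherel formula (equation (2.33) of \cite{MR4110623}); for $d\ge 3$ it must be carried out directly in physical space, and making the exponents land on the optimal value -- rather than on a cruder bound that would make the final geometric summation over the scales diverge -- is the delicate heart of the argument.
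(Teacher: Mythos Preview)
Your proposal follows a genuinely different route from the paper's proof. You propose a ``second-moment plus concentration'' scheme: first bound $\|X^{(d,\bot)}_{\mathbf h,J}(t)\|_{L^2(\Omega)}^2 \le d!\sum_{\max_\ell j_\ell\ge J}|\mathcal K^{(d,\mathbf h)}_{\mathbf j,\mathbf k}(t)|^2$ by the claimed rate, then discretise $I$ and apply hypercontractive tail bounds with Borel--Cantelli. The paper instead works almost entirely \emph{pathwise}: it invokes the almost-sure logarithmic bound $|\varepsilon_{\mathbf j,\mathbf k}|\le C^*_d\prod_m\sqrt{\log(3+|j_m|+|k_m|)}$ of Lemma~\ref{lem:eqn:bound:rv}, splits the series according to the index $n$ realising $\max_\ell j_\ell$, and further partitions the $k_n$-sum via the sets $D^1_{j_n}(t),D^2_{j_n}(t),D^3_{j_n}(t)$ (a near/boundary/far decomposition relative to $[0,t]$). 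Four of the five resulting pieces ($\mathcal H^0_{n,J},\dots,\mathcal H^3_{n,J}$; Lemmata \ref{lemmepour22:1}--\ref{lemmepour22:4}) are bounded directly with this log-bound; only the leading ``bulk'' piece $\mathcal M_{n,J}$ requires an $L^2$ estimate plus Borel--Cantelli (Lemmata \ref{lem:eventomega**}, \ref{lemmepour22:5}), and that $L^2$ computation is much lighter than your task~(i) because $\mathcal A_{\mathbf j,\mathbf k}(t)$ has already been replaced by the $t$-independent constant $F_{\mathbf j,\mathbf k}=\int_{\R}\prod_\ell\psi_{h_\ell}(2^{j_\ell}s-k_\ell)\,ds$, so that the supremum over $t$ reduces to a maximum over a finite grid (Remark~\ref{rem:finiD1}). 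No increment estimate for $X^{(d,\bot)}_{\mathbf h,J}$ is ever needed.

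What each approach buys: yours is conceptually cleaner and would yield, as a by-product, the sharp deterministic rate for $\|K^{(d,\bot)}_{\mathbf h,J}(t,\cdot)\|_{L^2(\R^d)}$; but, as you yourself flag, obtaining that rate in physical space for \emph{all} blocks $B\subsetneq\{1,\dots,d\}$---where the coarse coordinates $j_\ell$ range over $(-\infty,J)$---is exactly the bookkeeping the paper circumvents. In particular, your claim that ``the block $B=\{1,\dots,d\}$ is dominant'' is where the difficulty hides: for a block with, say, only $j_n\ge J$, summing $2^{2j_\ell(1-h_\ell)}$ over $j_\ell<J$ already produces a factor $2^{2J(1-h_\ell)}$ per coarse coordinate, so the vanishing-moment gain from the single fine coordinate must compensate \emph{exactly}, and your sketch does not establish this. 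The paper's pathwise route trades that sharp deterministic analysis for the cruder log-bound on $\varepsilon_{\mathbf j,\mathbf k}$, at the price of the more intricate $D^1/D^2/D^3$ decomposition and the replacement $\mathcal A_{\mathbf j,\mathbf k}(t)\rightsquigarrow F_{\mathbf j,\mathbf k}$; its single $L^2$ input (Lemma~\ref{lem:boundF}) is the elementary Jensen bound $F_{\mathbf j,\mathbf k}^2\le c_\psi\, 2^{-j_n}\int_\R|\psi_{h_n}(2^{j_n}s-k_n)|\prod_{\ell\neq n}|\psi_{h_\ell}(2^{j_\ell}s-k_\ell)|^2\,ds$, and this is precisely what replaces the bivariate Plancherel identity that is unavailable for $d\ge 3$.
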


Before stating the third and the last main result of our article, let us explain the motivation behind it. As the collection of functions $ \left \{\psi_{\mathbf{j},\mathbf{k}} \, : \, \mathbf{j},\mathbf{k} \in \Z^d  \right\}$
is an orthonormal basis of $L^2(\R^d)$, one can also wish to give a random series representation for the generalized Hermite process $\{X_{\mathbf{h}}^{(d)}(t)\}_{t\in \R_+}$ using this basis. Indeed, similarly to \eqref{eq:X-XJ}, it can be shown that 
\begin{equation} \label{eqn:fullserie}
X^{(d)}_{\mathbf{h}}(t)=\sum_{(\mathbf{j},\mathbf{k}) \in (\Z^d)^2} \varepsilon_{\mathbf{j},\mathbf{k}} \mathcal{K}^{(d,\mathbf{h})}_{\textbf{j},\textbf{k}}(t),
\end{equation}
where the random series is unconditionally convergent in $L^2(\Omega)$, for each fixed $t \in \R_+$. Roughly speaking, our third main result shows that when the partial sums of the random series in \eqref{eqn:fullserie} are well-chosen, then its convergence  holds in a much stronger sense: almost surely for the uniform norm $\|\cdot \|_{[0,T],\infty}$, where the fixed real number $T>2$ is arbitrary. Also, our third main result provides an almost sure estimate of the rate of convergence of the series for the the uniform norm $\|\cdot \|_{[0,T],\infty}$. In order to precisely explain how the partial sums have to be chosen, we need the following definition:

\begin{Def}\label{def:forthm2}
Let $T>2$, $b>0$, $b'>0$ and $g>0$ be four fixed arbitrary real numbers. For all $N \in \N$, we define the two disjoint finite subsets of $(\Z^d)^2$
\[ \mathcal{S}_N^{+} := \{(\mathbf{j},\mathbf{k}) \in (\Z^d)^2 \, : \, -2^{Nb} \leq \min_{\ell \in [\![1,d]\!]} j_\ell, \, 0 \leq \max_{\ell \in [\![1,d]\!]} j_\ell < N, \, \max_{\ell \in [\![1,d]\!]}|k_\ell| \leq 2^{N+1}T \} \] 
and
\[ \mathcal{S}_N^{-} := \{(\mathbf{j},\mathbf{k}) \in (\Z^d)^2 \, : \, -2^{Nb'} \leq \min_{\ell \in [\![1,d]\!]} j_\ell \leq \max_{\ell \in [\![1,d]\!]} j_\ell <0, \max_{\ell \in [\![1,d]\!]}|k_\ell| \leq 2^{Ng} \}. \]
\end{Def}

We are now in position to state our third and last main result.

\begin{Thm}\label{thm:main2}
Let $T>2$, $b>0$, $b'>0$ and $g>0$ be four fixed arbitrary real numbers. For all $t \in \R^+$ and $N \in \N$, let $\widetilde{X}^{(d)}_{\mathbf{h},N}(t)$ be the $d$th order Wiener chaos random variable defined by
\begin{equation}
\label{anteq1:thm:main2}
\widetilde{X}^{(d)}_{\mathbf{h},N}(t) := \sum_{(\mathbf{j},\mathbf{k}) \in \mathcal{S}_N^{+} \cup \mathcal{S}_N^{-}} \varepsilon_{\mathbf{j},\mathbf{k}} \mathcal{K}^{(d,\mathbf{h})}_{\textbf{j},\textbf{k}}(t).
\end{equation}
There exists an almost surely finite random variable $C$ (depending on $T, b, b', g$) for which one has, almost surely, for all $N \in \N$,
\begin{equation}\label{eqn:toshowinthm2}
\| X^{(d)}_{\mathbf{h}}-\widetilde{X}^{(d)}_{\mathbf{h},N}\|_{[0,T],\infty} \leq C N^{\frac{d}{2}} 2^{-N(  h_1+ \cdots + h_d -d + 1/2)}.
\end{equation}
\end{Thm}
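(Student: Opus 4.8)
\textbf{Proof proposal for Theorem \ref{thm:main2}.}

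The plan is to decompose the approximation error $X^{(d)}_{\mathbf{h}}(t)-\widetilde{X}^{(d)}_{\mathbf{h},N}(t)$ into a small number of pieces indexed according to the geometry of the index sets $\mathcal{S}_N^{+}$ and $\mathcal{S}_N^{-}$, and to estimate each piece separately using the machinery already developed for Theorem \ref{thm:main1}, together with the alternative representation of $X^{(d,\bot)}_{\mathbf{h},J}$. First I would fix $N$ and $J=N$, and split $X^{(d)}_{\mathbf{h}}-\widetilde{X}^{(d)}_{\mathbf{h},N}$ as
\[
\big(X^{(d)}_{\mathbf{h}}-X^{(d)}_{\mathbf{h},N}\big)+\big(X^{(d)}_{\mathbf{h},N}-\widetilde{X}^{(d)}_{\mathbf{h},N}\big).
\]
The first term is exactly $X^{(d,\bot)}_{\mathbf{h},N}$, so by Theorem \ref{thm:main1} it is almost surely bounded on $[0,T]$ by $C N^{d/2} 2^{-N(h_1+\cdots+h_d-d+1/2)}$, which is the target bound. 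It remains to control the second term, i.e.\ the difference between the true scaling-function part $X^{(d)}_{\mathbf{h},N}$ (the sum over all $\mathbf{j},\mathbf{k}$ with $\max_\ell j_\ell<N$, using \eqref{eq:X-XJ} reasoning in reverse) and the truncated sum $\widetilde{X}^{(d)}_{\mathbf{h},N}$ restricted to $\mathcal{S}_N^{+}\cup\mathcal{S}_N^{-}$. Writing everything in terms of the chaotic variables $\varepsilon_{\mathbf{j},\mathbf{k}}$ and the coefficients $\mathcal{K}^{(d,\mathbf{h})}_{\mathbf{j},\mathbf{k}}(t)$ from \eqref{eqn:coefk}, \eqref{eqn:coefk2}, this difference is a sum of $\varepsilon_{\mathbf{j},\mathbf{k}}\mathcal{K}^{(d,\mathbf{h})}_{\mathbf{j},\mathbf{k}}(t)$ over the symmetric difference between $\{\max_\ell j_\ell<N\}$ and $\mathcal{S}_N^{+}\cup\mathcal{S}_N^{-}$.

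Next I would enumerate the index regions making up this symmetric difference. There are essentially three types of leftover indices: (i) those with $0\le\max_\ell j_\ell<N$ but with some $j_\ell<-2^{Nb}$ (very negative dilation indices in the ``positive block''), or with $\max_\ell|k_\ell|>2^{N+1}T$ (spatial indices far from $[0,T]$); (ii) those with $\max_\ell j_\ell<0$ but with some $j_\ell<-2^{Nb'}$ or $\max_\ell|k_\ell|>2^{Ng}$; and (iii) possible overlap/boundary bookkeeping between the two blocks, which is harmless. For each region I would bound the corresponding partial sum in $\|\cdot\|_{[0,T],\infty}$ by combining: the almost sure subgaussian-type bound on $\varepsilon_{\mathbf{j},\mathbf{k}}$ (via \eqref{eqn:rv:prodher}, Hermite polynomials applied to the i.i.d.\ $\mathcal{N}(0,1)$ variables $g^\psi_{\mathbf{j},\mathbf{k}}$, giving $|\varepsilon_{\mathbf{j},\mathbf{k}}|\lesssim \log^{d/2}(3+|\mathbf{j}|+|\mathbf{k}|)$ up to a random constant, uniformly, by a Borel--Cantelli argument as in \cite{MR4110623}); and the rapid decay of $\mathcal{K}^{(d,\mathbf{h})}_{\mathbf{j},\mathbf{k}}(t)$ coming from \eqref{eqn:coefk2} and the Schwartz-class estimate \eqref{maj:psih} on $\psi_{h_\ell}$. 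Concretely, from \eqref{eqn:coefk2}, $\sup_{t\in[0,T]}|\mathcal{K}^{(d,\mathbf{h})}_{\mathbf{j},\mathbf{k}}(t)|\lesssim 2^{\sum_\ell j_\ell(1-h_\ell)}\prod_\ell 2^{-j_\ell/2}(3+|k_\ell|)^{-L}$ whenever $k_\ell/2^{j_\ell}\notin[-1,T+1]$, and is $\lesssim 2^{\sum_\ell j_\ell(1-h_\ell)}$ in general (with an extra $2^{-j_\ell}$ per index with $j_\ell>0$), so each leftover region contributes a geometric series that, after summation, is super-polynomially small in $2^{N}$ — hence crushingly smaller than the $N^{d/2}2^{-N(h_1+\cdots+h_d-d+1/2)}$ target. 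The choices $b,b',g>0$ guarantee the truncation radii $2^{Nb},2^{Nb'},2^{Ng}$ grow fast enough that these tails are negligible; this is exactly why arbitrary positive $b,b',g$ suffice.

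The main obstacle will be item (ii): the region $\max_\ell j_\ell<0$ with very negative dilation indices. Here the factor $2^{\sum_\ell j_\ell(1-h_\ell)}$ with $h_\ell\in(1/2,1)$ decays as $j_\ell\to-\infty$ (since $1-h_\ell>0$), but only polynomially-in-scale, and the function $\psi_{h_\ell}$ on a coarse scale $2^{j_\ell}$ with $j_\ell$ very negative is spread over a huge spatial range, so the spatial decay in $k_\ell$ kicks in only for $|k_\ell|$ comparable to $2^{-j_\ell}$; one must check that the truncation $|k_\ell|\le 2^{Ng}$ together with $j_\ell\ge -2^{Nb'}$ still leaves the tail summable and of the right order. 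I expect this to require a careful splitting of the $k_\ell$-sum into $|k_\ell|\lesssim 2^{-j_\ell}(T+1)$ (where one uses the crude bound $|\mathcal{K}^{(d,\mathbf{h})}_{\mathbf{j},\mathbf{k}}(t)|\lesssim T\,2^{\sum_\ell j_\ell(1-h_\ell)}\prod_\ell 2^{-j_\ell/2}\cdot 2^{j_\ell/2}$, i.e.\ essentially $\|\psi_{h_\ell}\|_\infty$-type control integrated over $[0,T]$) versus $|k_\ell|$ large (where \eqref{maj:psih} gives rapid decay), and then summing over $j_\ell$ from $-2^{Nb'}$ up to $0$; the constraint $\sum_\ell h_\ell>d-1/2$ in \eqref{eqn:hermi:cond} is what makes the resulting multi-geometric series in the $j_\ell$'s converge. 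Once all three regions are shown to contribute $o\big(2^{-N(h_1+\cdots+h_d-d+1/2)}\big)$ almost surely — in fact super-polynomially small — adding them to the Theorem \ref{thm:main1} bound for $X^{(d,\bot)}_{\mathbf{h},N}$ yields \eqref{eqn:toshowinthm2}, with the uniform-in-$N$ almost sure constant obtained, as usual, by a Borel--Cantelli / Gaussian concentration argument over the countable family indexed by $N$.
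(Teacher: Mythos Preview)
Your proposal is correct and follows essentially the same route as the paper, with only an organizational difference. Rather than splitting as $(X^{(d)}_{\mathbf{h}}-X^{(d)}_{\mathbf{h},N})+(X^{(d)}_{\mathbf{h},N}-\widetilde{X}^{(d)}_{\mathbf{h},N})$ and invoking Theorem~\ref{thm:main1} for the first piece, the paper proves a Cauchy estimate $\|\widetilde{X}^{(d)}_{\mathbf{h},N+P}-\widetilde{X}^{(d)}_{\mathbf{h},N}\|_{[0,T],\infty}\le C N^{d/2}2^{-N(h_1+\cdots+h_d-d+1/2)}$ uniformly in $P$, and then identifies the uniform limit with $X^{(d)}_{\mathbf{h}}$ via $L^2(\Omega)$ convergence and path continuity. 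Under the hood both arguments reduce to the same list of bounds: the Section~\ref{sect:approxerro} lemmata ($\mathcal{H}^0_{n,N},\ldots,\mathcal{H}^3_{n,N},\mathcal{M}_{n,N}$) for the $\max_\ell j_\ell\ge N$ part, plus four extra tail lemmata (the paper's $\mathcal{L}^1,\ldots,\mathcal{L}^4$) covering precisely the leftover index regions you list in (i)--(ii). Your packaging is arguably slightly cleaner since Theorem~\ref{thm:main1} is already proved.

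One minor correction to your heuristics: the region you single out as the ``main obstacle'' --- all $j_\ell<0$ with some $j_\ell<-2^{Nb'}$ --- is in fact the \emph{easiest} tail. For $j_\ell\le 0$ and $s\in[0,T]$ one has $|2^{j_\ell}s|\le T$, so the Schwartz decay $(3+|2^{j_\ell}s-k_\ell|)^{-L}$ already controls the $k_\ell$-sum uniformly as soon as $|k_\ell|\gtrsim T$ (not $2^{-j_\ell}$ as you write); the remaining sum over $j_\ell\le -2^{Nb'}$ is then a plain geometric series in $2^{j_\ell(1-h_\ell)}$ with $1-h_\ell>0$, giving a doubly-exponentially small contribution without any splitting of the $k_\ell$-sum and without using $\sum_\ell h_\ell>d-1/2$. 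That hypothesis is only needed in the main term handled by Theorem~\ref{thm:main1}.
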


To prove Theorems \ref{thm:main1} and \ref{thm:main2}, we will need a logarithmic bound for the the sequence of random variables $(\varepsilon_{\mathbf{j},\mathbf{k}})_{(\mathbf{j},\mathbf{k}) \in (\Z^d)^2}$. We get it from the following lemma which is is a straightforward consequence of Lemma 2 in \cite{MR2027888} and of the fact that the 
$g_{j,k}^\psi:=I_1 (\psi_{j,k})$, $(j,k)\in\Z^2$, are $\mathcal{N}(0,1)$ Gaussian random variables.

\begin{Lemma}
\label{lem:bound-gjk}
There are $\Omega^*$ an event of probability $1$ and $C_1^*$ a positive random variable of finite moment of any order, such that, for all $\omega \in \Omega^*$ and for each $(j,k) \in \Z^2$, one has
\begin{equation}\label{eqn:boundn01}
\big| g_{j,k}^{\psi}(\omega)\big| \leq C_1^*(\omega) \sqrt{\log\big (3+|j|+|k|\big)}.
\end{equation}
\end{Lemma}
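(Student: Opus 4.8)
The plan is to exhibit the random variable $C_1^*$ explicitly as the supremum
\[
C_1^* := \sup_{(j,k)\in\Z^2}\frac{|g_{j,k}^\psi|}{\sqrt{\log\big(3+|j|+|k|\big)}},
\]
so that \eqref{eqn:boundn01} holds on the event $\Omega^*:=\{C_1^*<+\infty\}$ by the very definition of $C_1^*$, and then to prove that $\mathbb{P}(\Omega^*)=1$ and that $C_1^*$ has a finite moment of every order. Since the family $(\psi_{j,k})_{(j,k)\in\Z^2}$ is orthonormal in $L^2(\R)$ (Theorem \ref{thm:wavbas1}), the Wiener isometry shows that each $g_{j,k}^\psi=I_1(\psi_{j,k})$ is an $\mathcal{N}(0,1)$ variable, so the whole statement is the particular instance, with counting weight $3+|j|+|k|$, of the general deviation estimate for countable families of standard Gaussian variables recorded in Lemma~2 of \cite{MR2027888}. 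For completeness I would also sketch the direct argument.

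First I would invoke the elementary Gaussian tail bound $\mathbb{P}\big(|g_{j,k}^\psi|>u\big)\le 2e^{-u^2/2}$, valid for every $u>0$. Choosing $u=\lambda\sqrt{\log(3+|j|+|k|)}$ with a fixed constant $\lambda>2$ gives
\[
\mathbb{P}\Big(|g_{j,k}^\psi|>\lambda\sqrt{\log(3+|j|+|k|)}\Big)\le 2\,(3+|j|+|k|)^{-\lambda^2/2}.
\]
Because $\#\{(j,k)\in\Z^2:|j|+|k|=n\}\le 4n+1$, the series $\sum_{(j,k)\in\Z^2}(3+|j|+|k|)^{-\lambda^2/2}$ is dominated by $\sum_{n\ge0}(4n+1)(3+n)^{-\lambda^2/2}$, which converges since $\lambda^2/2>2$. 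By the Borel--Cantelli lemma, almost surely the inequality $|g_{j,k}^\psi|\le\lambda\sqrt{\log(3+|j|+|k|)}$ fails for only finitely many pairs $(j,k)$; as each $g_{j,k}^\psi$ is itself almost surely finite, the supremum defining $C_1^*$ is finite almost surely, that is $\mathbb{P}(\Omega^*)=1$.

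For the moments, the same union bound yields, for every $x>0$,
\[
\mathbb{P}(C_1^*>x)\le 2\sum_{(j,k)\in\Z^2}(3+|j|+|k|)^{-x^2/2},
\]
and for $x\ge3$ one writes $(3+|j|+|k|)^{-x^2/2}\le 3^{-(x^2/2-2)}(3+|j|+|k|)^{-2}$, so that $\mathbb{P}(C_1^*>x)\le 18\,\big(\sum_{(j,k)}(3+|j|+|k|)^{-2}\big)\,3^{-x^2/2}$. Thus $\mathbb{P}(C_1^*>x)$ decays faster than any exponential in $x$, and from $\mathbb{E}[(C_1^*)^p]=\int_0^\infty p\,x^{p-1}\mathbb{P}(C_1^*>x)\,dx$, splitting the integral at $x=3$ and bounding $\mathbb{P}(C_1^*>x)\le1$ on $[0,3]$, one obtains finiteness for every $p>0$. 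The argument is entirely routine; the only mild points of care are the counting estimate $\#\{(j,k):|j|+|k|=n\}\le 4n+1$ and the observation that Borel--Cantelli leaves finitely many exceptional indices whose contribution to the supremum is automatically finite. I do not anticipate any genuine obstacle.
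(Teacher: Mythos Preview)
Your approach is exactly what the paper does: it simply cites Lemma~2 of \cite{MR2027888} together with the fact that the $g_{j,k}^\psi$ are $\mathcal{N}(0,1)$, and you have written out that standard Borel--Cantelli argument in full.

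There is one small arithmetic slip in the moment bound. The series $\sum_{(j,k)\in\Z^2}(3+|j|+|k|)^{-2}$ that you invoke actually diverges: by your own counting estimate the contribution from level $|j|+|k|=n$ behaves like $(4n+1)(3+n)^{-2}\sim 4/n$. The fix is immediate: split off the exponent $3$ instead of $2$, i.e.\ for $x\ge\sqrt{6}$ write
\[
(3+|j|+|k|)^{-x^2/2}\le 3^{-(x^2/2-3)}(3+|j|+|k|)^{-3},
\]
and then $\sum_{(j,k)}(3+|j|+|k|)^{-3}<\infty$. With this correction everything goes through.
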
 
Next, observe that, for any $n \in \N$, there exists a constant $a_n >0$ such that, for all $x \in \R$
\begin{equation}\label{rem:hermP:eq1}
|H_n(x)| \le a_n \big(1+|x|^n\big)\,;
\end{equation}
the latter inequality is a straightforward consequence of the fact that $H_n$ is a polynomial function of degree $n$. Then, combining \eqref{eqn:rv:prodher} with \eqref{eqn:boundn01} and \eqref{rem:hermP:eq1}, one obtains the following lemma.

\begin{Lemma}
\label{lem:eqn:bound:rv}
Let $\Omega^*$ be the same event of probability $1$ as in Lemma~\ref{lem:bound-gjk}. There is
$C_d^*$ a positive random variable of finite moment of any order, such that, on $\Omega^*$, one has, for all $(\mathbf{j},\mathbf{k}) \in (\Z^2)^d$,
\begin{equation}\label{eqn:bound:rv}
|\varepsilon_{\mathbf{j},\mathbf{k}}| \leq C_d^* \prod_{\ell=1}^{p(\mathbf{j},\mathbf{k})}  \left(\sqrt{\log(3+|\widetilde{j}_\ell|+|\widetilde{k}_\ell|}\right)^{n_\ell} = C_d^* \prod_{m=1}^d \sqrt{\log(3+|j_m|+|k_m|)}.
\end{equation}
\end{Lemma}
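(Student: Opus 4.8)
The plan is to build the bound directly on the product representation \eqref{eqn:rv:prodher} of $\varepsilon_{\mathbf{j},\mathbf{k}}$, estimating each Hermite-polynomial factor by combining its polynomial growth \eqref{rem:hermP:eq1} with the almost sure logarithmic bound \eqref{eqn:boundn01} on the Gaussian coefficients $g^\psi_{j,k}$, valid on the full-probability event $\Omega^*$. No probabilistic input beyond Lemma~\ref{lem:bound-gjk} is needed; everything else is deterministic manipulation on $\Omega^*$.

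Concretely, I would fix $\omega\in\Omega^*$ and $(\mathbf{j},\mathbf{k})\in(\Z^2)^d$, and write, as in \eqref{eqn:rv:prodher}, $\varepsilon_{\mathbf{j},\mathbf{k}}=\prod_{\ell=1}^{p(\mathbf{j},\mathbf{k})}H_{n_\ell}\big(g^\psi_{\widetilde{j}_\ell,\widetilde{k}_\ell}\big)$. For each $\ell$, \eqref{rem:hermP:eq1} bounds the $\ell$-th factor by $a_{n_\ell}\big(1+|g^\psi_{\widetilde{j}_\ell,\widetilde{k}_\ell}|^{n_\ell}\big)$, and \eqref{eqn:boundn01} replaces $|g^\psi_{\widetilde{j}_\ell,\widetilde{k}_\ell}|$ by $C_1^*\sqrt{\log(3+|\widetilde{j}_\ell|+|\widetilde{k}_\ell|)}$. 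Because $3+|\widetilde{j}_\ell|+|\widetilde{k}_\ell|\ge 3$, one has $\sqrt{\log(3+|\widetilde{j}_\ell|+|\widetilde{k}_\ell|)}\ge\sqrt{\log 3}>1$, so the additive constant $1$ produced by \eqref{rem:hermP:eq1} can be absorbed into a power of this quantity, and the $\ell$-th factor is at most $a_{n_\ell}\big(1+(C_1^*)^{n_\ell}\big)\big(\sqrt{\log(3+|\widetilde{j}_\ell|+|\widetilde{k}_\ell|)}\big)^{n_\ell}$. Multiplying over $\ell=1,\ldots,p(\mathbf{j},\mathbf{k})$ and using $\sum_{\ell}n_\ell=d$ yields the first inequality in \eqref{eqn:bound:rv}; the equality displayed there is then nothing but the regrouping of the $d$ logarithmic factors $\sqrt{\log(3+|j_m|+|k_m|)}$ according to the distinct pairs $(\widetilde{j}_\ell,\widetilde{k}_\ell)$ and their multiplicities $n_\ell$.

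The only step deserving a short verification is that the random prefactor $\prod_{\ell=1}^{p(\mathbf{j},\mathbf{k})}a_{n_\ell}\big(1+(C_1^*)^{n_\ell}\big)$ can be replaced by one fixed random variable $C_d^*$, independent of $(\mathbf{j},\mathbf{k})$ and of finite moments of every order. Since $1\le n_\ell\le d$ and $p(\mathbf{j},\mathbf{k})\le d$, one has $(C_1^*)^{n_\ell}\le 1+(C_1^*)^d$, hence the prefactor is dominated by $\big(1+\max_{1\le n\le d}a_n\big)^d\big(2+(C_1^*)^d\big)^d=:C_d^*$; being a polynomial expression in $C_1^*$, this $C_d^*$ inherits from Lemma~\ref{lem:bound-gjk} finiteness of all moments. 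I do not anticipate any genuine obstacle in this lemma: once \eqref{eqn:rv:prodher}, \eqref{rem:hermP:eq1} and \eqref{eqn:boundn01} are in hand the argument is elementary, the only mildly delicate point being to keep the constant uniform in the multi-indices.
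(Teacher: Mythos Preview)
Your proposal is correct and follows exactly the approach the paper itself sketches: the paper simply states that the lemma is obtained by combining \eqref{eqn:rv:prodher}, \eqref{eqn:boundn01} and \eqref{rem:hermP:eq1}, and you have carefully spelled out those three steps, including the check that the random prefactor can be taken independent of $(\mathbf{j},\mathbf{k})$ and has all moments finite.
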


To prove Theorems \ref{thm:main1} we will also need to know precisely when two random variables $\varepsilon_{\mathbf{j},\mathbf{k}}$ and $\varepsilon_{\mathbf{r},\mathbf{s}}$ are correlated. For this purpose, it is useful to define the set $\calD(\mathbf{j},\mathbf{k})$.

\begin{Def}
\label{def:mutili-set}
Using the same notations as in \eqref{eqn:rv:prodher}, for all $(\mathbf{j},\mathbf{k}) \in (\Z^d)^2$, the set 
$\calD(\mathbf{j},\mathbf{k})$ is defined as:
\begin{equation}
\label{def:mutili-set:eq1}
\calD(\textbf{j},\textbf{k}):=\big\{(\widetilde{j_\ell},\widetilde{k_\ell})_{n_l}: 1\le l\le p(\mathbf{j},\mathbf{k})\big\}\,. 
\end{equation}
\end{Def}

\begin{Rmk}
\label{rem:eqa-multi-jk}
For any arbitrary two elements $(\mathbf{j},\mathbf{k})=\big ((j_m ,k_m)\big)_{1\le  m \le d}$ and $(\mathbf{r},\mathbf{s})=\big ((r_m ,s_m)\big)_{1\le  m \le d}$ of $(\Z^d)^2$, a necessary and sufficient condition for having $\calD(\textbf{j},\textbf{k})=\calD(\textbf{r},\textbf{s})$ is that there exists a permutation $\sigma$ of the set $\{1,\ldots, d\}$ for which one has $(j_m ,k_m)=(r_{\sigma(m)} , s_{\sigma(m)})$, for all $m\in\{1,\ldots, d\}$.  Thus, being given an arbitrary element $(\mathbf{j},\mathbf{k})$ of $(\Z^d)^2$, there are at most $d\,!-1$ other elements $(\mathbf{r},\mathbf{s})$ of $(\Z^d)^2$ which satisfy $\calD(\textbf{j},\textbf{k})=\calD(\textbf{r},\textbf{s})$. Notice that, in this case, as a consequence of equality \eqref{eqn:rv:prodher}, one has $\varepsilon_{\mathbf{j},\mathbf{k}} =\varepsilon_{\mathbf{r},\mathbf{s}}$.
\end{Rmk}

Let us also recall that, if $G$ is any arbitrary $\mathcal{N}(0,1)$ Gaussian random variable then, one has 
\begin{equation}\label{eqn:esp:prodher}
\E\big [H_m(G)H_n(G)\big] = \delta_{m,n} m!\,, \quad\mbox{for any $m,n \in \Z_+$,}
\end{equation}
where $\delta_{m,n}=1$ when $m=n$ and $\delta_{m,n}=0$ otherwise. A straightforward consequence of \eqref{eqn:esp:prodher} is that 
\begin{equation}\label{eqn:esp:her}
\E\big[H_n(G)\big]=0\,, \quad\mbox{for all integer $n\ge 1$.}
\end{equation}
Relation \eqref{eqn:esp:prodher} is the keystone of the proof of the following proposition.

\begin{Prop}\label{prop:corre}
For every $(\mathbf{j},\mathbf{k}) \in (\Z^2)^d$ and $(\mathbf{r},\mathbf{s}) \in (\Z^2)^d$, one has 
\begin{equation}
\label{prop:corre:eq1}
\E [\varepsilon_{\mathbf{j},\mathbf{k}}\varepsilon_{\mathbf{r},\mathbf{s}}]=
\left\{
\begin{array}{l}
\E [\varepsilon_{\mathbf{j},\mathbf{k}}^2]=\displaystyle\prod_{\ell=1}^{p(\mathbf{j},\mathbf{k})} n_l !\le d\, !\quad\mbox{if $\calD(\textbf{j},\textbf{k})=\calD(\textbf{r},\textbf{s})$,}\\
\\
0\quad\mbox{otherwise.}
\end{array}
\right.
\end{equation}
\end{Prop}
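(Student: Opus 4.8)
The plan is to compute $\E[\varepsilon_{\mathbf{j},\mathbf{k}}\varepsilon_{\mathbf{r},\mathbf{s}}]$ directly from the product-of-Hermite-polynomials representation \eqref{eqn:rv:prodher}. First I would write $\varepsilon_{\mathbf{j},\mathbf{k}} = \prod_{\ell=1}^{p(\mathbf{j},\mathbf{k})} H_{n_\ell}(g^\psi_{\widetilde{j}_\ell,\widetilde{k}_\ell})$ and similarly $\varepsilon_{\mathbf{r},\mathbf{s}} = \prod_{\ell'=1}^{p(\mathbf{r},\mathbf{s})} H_{m_{\ell'}}(g^\psi_{\widetilde{r}_{\ell'},\widetilde{s}_{\ell'}})$, where the $n_\ell$ (resp. $m_{\ell'}$) are the multiplicities attached to the distinct terms. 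The key structural fact is that the Gaussian random variables $g^\psi_{j,k} = I_1(\psi_{j,k})$ are jointly Gaussian and, because the $\psi_{j,k}$ are orthonormal in $L^2(\R)$, the family $(g^\psi_{j,k})_{(j,k)\in\Z^2}$ is a family of \emph{independent} $\mathcal{N}(0,1)$ random variables. Hence the expectation of the product factorizes over the distinct indices $(j,k)$ appearing in either $(\mathbf{j},\mathbf{k})$ or $(\mathbf{r},\mathbf{s})$.

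Next I would do the case analysis. If $\calD(\textbf{j},\textbf{k}) = \calD(\textbf{r},\textbf{s})$, then by Remark \ref{rem:eqa-multi-jk} the two multi-indices agree up to a permutation of coordinates, so $\varepsilon_{\mathbf{j},\mathbf{k}} = \varepsilon_{\mathbf{r},\mathbf{s}}$ and the two products are literally the same; then $\E[\varepsilon_{\mathbf{j},\mathbf{k}}\varepsilon_{\mathbf{r},\mathbf{s}}] = \E[\varepsilon_{\mathbf{j},\mathbf{k}}^2] = \prod_{\ell=1}^{p(\mathbf{j},\mathbf{k})} \E[H_{n_\ell}(g^\psi_{\widetilde{j}_\ell,\widetilde{k}_\ell})^2]$ by independence, which equals $\prod_{\ell=1}^{p(\mathbf{j},\mathbf{k})} n_\ell!$ by \eqref{eqn:esp:prodher} (the diagonal case $m=n$). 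The bound $\prod_{\ell} n_\ell! \le d!$ follows from $\sum_\ell n_\ell = d$ and the elementary inequality $\prod_\ell n_\ell! \le (\sum_\ell n_\ell)!$. If $\calD(\textbf{j},\textbf{k}) \ne \calD(\textbf{r},\textbf{s})$, then — again by Remark \ref{rem:eqa-multi-jk} — there is no permutation matching the two multi-indices, so there must exist a distinct pair $(j,k)$ occurring in exactly one of the two lists, or occurring in both but with different multiplicities. Grouping the Gaussians by their index $(j,k)$ and using independence, the expectation factorizes as a product over all distinct indices of terms of the form $\E[H_a(G)H_b(G)]$ (with the convention $H_0 \equiv 1$, so an index absent from one side contributes $b=0$); by \eqref{eqn:esp:prodher} any factor with $a \ne b$ vanishes, and by hypothesis at least one such factor occurs, so the whole product is $0$.

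The only mildly delicate point — and the step I would be most careful about — is the bookkeeping in the mismatched case: one has to argue cleanly that $\calD(\textbf{j},\textbf{k}) \ne \calD(\textbf{r},\textbf{s})$ forces at least one index $(j,k)$ whose multiplicity in $\mathbf{k}$ (relative to $\mathbf{j}$) differs from its multiplicity in $\mathbf{s}$ (relative to $\mathbf{r}$), including the possibility that the multiplicity is zero on one side. This is exactly the contrapositive of the characterization in Remark \ref{rem:eqa-multi-jk}: two finite lists of elements of $\Z^2$ agree up to permutation if and only if every element has the same multiplicity in both, so if they do not agree up to permutation, some element has different multiplicities, which is precisely the factor of the form $\E[H_a(G)H_b(G)]$ with $a\ne b$ that kills the product. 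Everything else is a direct application of independence of the $g^\psi_{j,k}$ together with the orthogonality relation \eqref{eqn:esp:prodher} for Hermite polynomials, so no serious obstacle is expected.
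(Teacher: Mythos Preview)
Your proposal is correct and follows essentially the same argument as the paper: the Hermite-product representation \eqref{eqn:rv:prodher}, independence of the $g^\psi_{j,k}$, and the orthogonality relation \eqref{eqn:esp:prodher}. The only cosmetic difference is that the paper splits the case $\calD(\textbf{j},\textbf{k})\neq\calD(\textbf{r},\textbf{s})$ into two subcases (an index present in one list but not the other, versus a common index with differing multiplicities), whereas you handle both at once via the convention $H_0\equiv 1$; your packaging is slightly cleaner but the content is identical.
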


\begin{proof} First notice that, in view of Remark \ref{rem:eqa-multi-jk}, \eqref{eqn:rv:prodher}, the independence of the $\mathcal{N}(0,1)$ Gaussian random variables $g_{\,\widetilde{j}_\ell,\widetilde{k}_\ell}^\psi$ with ${ \ell \in\{1,\ldots, p(\mathbf{j},\mathbf{k})\}}$ and \eqref{eqn:esp:prodher}, the equality
\eqref{prop:corre:eq1} is clearly satisfied when ${\calD(\textbf{j},\textbf{k})=\calD(\textbf{r},\textbf{s})}$. So, from now on, one assumes that $\calD(\textbf{j},\textbf{k})=\{(\widetilde{j_1},\widetilde{k_1})_{n_1},\ldots,(\widetilde{j_p},\widetilde{k_p})_{n_p}\}$ (where $p=p(\textbf{j},\textbf{k})$) is not equal to $\calD(\textbf{r},\textbf{s})=\{(\widetilde{r_1},\widetilde{s_1})_{m_1},\ldots,(\widetilde{r_q},\widetilde{s_q})_{m_q}\}$ (where $q=p(\textbf{r},\textbf{s})$) which happens in two different cases.

The first case consists in the situation where one has \\ $\{(\widetilde{j_1},\widetilde{k_1}),\ldots,(\widetilde{j_p},\widetilde{k_p})\} \neq \{(\widetilde{r_1},\widetilde{s_1}),\ldots,(\widetilde{r_q},\widetilde{s_q})\}$, which implies that there exists at least one element of these two sets which does not belong to the other set. For sake of simplicity, one assumes that $(\widetilde{j_1},\widetilde{k_1}) \notin \{(\widetilde{r_1},\widetilde{s_1}),\ldots,(\widetilde{r_q},\widetilde{s_q})\}$. Then, using \eqref{eqn:rv:prodher}, the fact that the $\mathcal{N}(0,1)$ Gaussian random variable $g^\psi_{\,\widetilde{j}_1,\widetilde{k}_1}$ is independent of the Gaussian vector $\big (g^\psi_{\,\widetilde{j}_2,\widetilde{k}_2},\ldots, g^\psi_{\,\widetilde{j}_p,\widetilde{k}_p}, g^\psi_{\,\widetilde{r}_1,\widetilde{s}_1},\ldots , g^\psi_{\,\widetilde{r}_q,\widetilde{s}_q}\big)$, and \eqref{eqn:esp:her}, one gets that
\[ 
\E[ \varepsilon_{\mathbf{j},\mathbf{k}} \varepsilon_{\mathbf{r},\mathbf{s}}] = \underset{=0}{\underbrace{\E\big [H_{n_1}(g^\psi_{\,\widetilde{j}_1,\widetilde{k}_1})\big]}} \E \left[ \prod_{\ell=2}^p H_{n_\ell} (g^\psi_{\,\widetilde{j}_\ell,\widetilde{k}_\ell}) \prod_{\ell'=1}^q H_{m_{\ell'}} (g^\psi_{\,\widetilde{j}_{\ell'},\widetilde{k}_{\ell'}})\right]=0. 
\]

The second case consists in the situation where one has $p=q$, \\
$\{(\widetilde{j_1},\widetilde{k_1}),\ldots,(\widetilde{j_p},\widetilde{k_p})\} =\{(\widetilde{r_1},\widetilde{s_1}),\ldots,(\widetilde{r_p},\widetilde{s_p})\}$ and $n_{\ell_0}\ne m_{\ell_0}$ for some $\ell_0\in\{1,\ldots, p\}$. For sake of simplicity, one assumes that $\ell_0=1$. Then, using \eqref{eqn:rv:prodher}, the fact that the $\mathcal{N}(0,1)$ Gaussian random variable $g^\psi_{\,\widetilde{j}_1,\widetilde{k}_1}$ is independent of the Gaussian vector $\big (g^\psi_{\,\widetilde{j}_2,\widetilde{k}_2},\ldots, g^\psi_{\,\widetilde{j}_p,\widetilde{k}_p}\big)$, and \eqref{eqn:esp:prodher}, one obtains that
\[ 
\E[ \varepsilon_{\mathbf{j},\mathbf{k}} \varepsilon_{\mathbf{r},\mathbf{s}}] = \underset{=0}{\underbrace{\E\big [H_{n_1}(g^\psi_{\,\widetilde{j}_1,\widetilde{k}_1}) H_{m_1}(g^\psi_{\,\widetilde{j}_1,\widetilde{k}_1})\big]}} \E \left[ \prod_{\ell=2}^p H_{n_\ell} (g^\psi_{\,\widetilde{j}_{\ell},\widetilde{k}_{\ell}}) H_{m_\ell} (g^\psi_{\,\widetilde{j}_\ell,\widetilde{k}_\ell})\right]=0. 
\]
\end{proof}

\section{Proof of Theorem \ref{thm:basefreq}} \label{sect:approxproc}

In this section, we aim at proving Theorem \ref{thm:basefreq}. First, we need to focus on the sequence $(\gamma_p^{(\delta)})_{p \in \N_0}$, $\delta \in (-1/2,1/2)$, used in \eqref{eqn:farima} to define the FARIMA $(0,\delta,0)$ sequence. We recall that, if $\delta \neq 0$, the continuous function defined for all complex number $z \notin [1,+ \infty)$
\[ F_\delta \,: \, z \mapsto (1-z)^{-\delta} \]
is analytic on the disk $|z|<1$ with Taylor expansion given by
\begin{equation}\label{eqn:analytic}
F_\delta(z) = \sum_{p=0}^{+ \infty} \gamma_p^{(\delta)} z^p.
\end{equation}
In the sequel, for any $\delta \neq 0$, and $\xi \in \R \setminus 2 \pi \Z$, using the continuity property of the function  $F_\delta$ the quantity $(1-e^{i \xi})^{-\delta}=F_\delta(e^{i \xi})$ is expressed as:
\begin{equation}
\label{eq:Fexi}
(1-e^{i \xi})^{-\delta}=F_\delta(e^{i \xi})=\lim_{r\in\R,\,r \to 1^{-}} F_\delta(r e^{i \xi})=\lim_{r\in\R,\,r \to 1^{-}}(1-re^{i \xi})^{-\delta}.
\end{equation}

For the sake of convenience, for all $p \in \N$, we set $\gamma_{-p}^{(\delta)}=0$. The following lemma shows that the sequence $(\gamma_p^{(\delta)})_{p \in \Z}$ is nothing else than the sequence of the Fourier coefficient of the function $\xi\mapsto (1-e^{i \xi})^{-\delta}$ which belongs to $L^2 ([0,2\pi])$. Recall that $L^2 ([0,2\pi])$ is the space of the complex-valued functions defined on the real line which are $2 \pi$-periodic and Lebesgue square-integrable on the interval $[0,2 \pi]$.

\begin{Lemma} \label{lemma:fouriercoef}
For all $\delta \in (0,1/2)$ and $p \in \Z$, we have
\[ \gamma_p^{(\delta)} = \frac{1}{2 \pi} \int_0^{2 \pi} e^{-i p \xi} (1-e^{i \xi})^{-\delta} \, d\xi. \]
\end{Lemma}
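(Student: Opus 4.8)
The plan is to recognize the right-hand side as the limit, as $r \to 1^-$, of the $p$th Fourier coefficient of the analytic function $\xi \mapsto F_\delta(re^{i\xi})$, to compute that coefficient explicitly from the Taylor expansion \eqref{eqn:analytic}, and then to pass to the limit $r\to1^-$ under the integral sign. The only step requiring care is the last one, where one needs a domination of $|F_\delta(re^{i\xi})|$ that is uniform in $r$ close to $1$ and integrable on $[0,2\pi]$; this is exactly where the restriction $\delta \in (0,1/2)$ enters.

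First, fix $r \in (0,1)$ and $p \in \Z$. By \eqref{eqn:analytic},
\[
\frac{1}{2\pi}\int_0^{2\pi} e^{-ip\xi}F_\delta(re^{i\xi})\,d\xi = \frac{1}{2\pi}\int_0^{2\pi} e^{-ip\xi}\sum_{q=0}^{+\infty}\gamma_q^{(\delta)}r^q e^{iq\xi}\,d\xi.
\]
By Remark \ref{rem1:farima}, in particular \eqref{eqn:stirling}, one has $|\gamma_q^{(\delta)}| \le C_\delta (1+q)^{\delta-1}$ for some constant $C_\delta>0$, hence $\sum_{q\ge 0}|\gamma_q^{(\delta)}|r^q < +\infty$; the partial sums of $\sum_q \gamma_q^{(\delta)}r^q e^{iq\xi}$ therefore converge uniformly in $\xi\in[0,2\pi]$, which legitimates interchanging the sum and the integral. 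Using that $\frac{1}{2\pi}\int_0^{2\pi}e^{i(q-p)\xi}\,d\xi$ equals $1$ when $q=p$ and $0$ otherwise, together with the convention $\gamma_{-p}^{(\delta)}=0$ for $p\in\N$, we obtain, for every $r\in(0,1)$ and every $p\in\Z$,
\[
\frac{1}{2\pi}\int_0^{2\pi} e^{-ip\xi}F_\delta(re^{i\xi})\,d\xi = \gamma_p^{(\delta)}\,r^{p}\quad\text{if }p\ge 0,\qquad\text{and}\qquad \frac{1}{2\pi}\int_0^{2\pi} e^{-ip\xi}F_\delta(re^{i\xi})\,d\xi = 0 = \gamma_p^{(\delta)}\quad\text{if }p<0.
\]

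It remains to let $r\to1^-$. On the one hand, the left-hand sides converge to $\gamma_p^{(\delta)}$ for every $p\in\Z$. On the other hand, by \eqref{eq:Fexi} one has $F_\delta(re^{i\xi}) = (1-re^{i\xi})^{-\delta} \to (1-e^{i\xi})^{-\delta}$ for every $\xi\in(0,2\pi)$ as $r\to1^-$. To pass to the limit under the integral we use the elementary identity $|1-re^{i\xi}|^2 = (1-r)^2 + 4r\sin^2(\xi/2)$, which gives, for all $r\in[1/2,1)$, the bound $|1-re^{i\xi}|\ge \sqrt{2}\,|\sin(\xi/2)|$, hence $|F_\delta(re^{i\xi})| \le 2^{-\delta/2}\,|\sin(\xi/2)|^{-\delta}$. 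Since $\delta<1$, the function $\xi\mapsto|\sin(\xi/2)|^{-\delta}$ is integrable on $[0,2\pi]$ (and, since $\delta<1/2$, it is even square-integrable, which in particular confirms that $\xi\mapsto(1-e^{i\xi})^{-\delta}$ belongs to $L^2([0,2\pi])$). The dominated convergence theorem then yields
\[
\lim_{r\to1^-}\frac{1}{2\pi}\int_0^{2\pi} e^{-ip\xi}F_\delta(re^{i\xi})\,d\xi = \frac{1}{2\pi}\int_0^{2\pi} e^{-ip\xi}(1-e^{i\xi})^{-\delta}\,d\xi,
\]
and comparing with the previous display gives $\gamma_p^{(\delta)} = \frac{1}{2\pi}\int_0^{2\pi} e^{-ip\xi}(1-e^{i\xi})^{-\delta}\,d\xi$ for all $p\in\Z$, as claimed. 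The main (and essentially only) obstacle is the domination estimate near $\xi=0$ and $\xi=2\pi$; everything else is routine manipulation of absolutely convergent series.
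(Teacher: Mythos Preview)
Your proof is correct and follows the same overall strategy as the paper: compute the $p$th Fourier coefficient of $\xi\mapsto F_\delta(re^{i\xi})$ from the Taylor expansion \eqref{eqn:analytic}, then let $r\to1^-$ via dominated convergence. The only difference lies in the domination step: the paper splits $[0,2\pi]$ into a shrinking neighbourhood $A_j=\{\xi:|1-e^{i\xi}|\le 2(1-r_j)\}$ (where the integral is shown to vanish directly) and its complement (where $|1-r_je^{i\xi}|>\tfrac12|1-e^{i\xi}|$ gives a dominating function), whereas you use the single uniform bound $|1-re^{i\xi}|^2=(1-r)^2+4r\sin^2(\xi/2)\ge 2\sin^2(\xi/2)$ for $r\in[1/2,1)$. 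Your route is a bit more direct and avoids the case splitting; both use exactly the same integrability condition $\delta<1$ on the dominating function.
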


\begin{proof}
Let $(r_j)_{j\in\N}$ be an arbitrary increasing sequence of real numbers in the open interval $(0,1)$ which converges to $1$. In view of \eqref{eq:Fexi}, we have
\[  \int_0^{2 \pi} e^{-i p \xi} (1-e^{i \xi})^{-\delta} \, d\xi = \int_0^{2 \pi} e^{-i p \xi} \lim_{r_j \to 1^{-}} (1-r_j e^{i \xi})^{-\delta} \, d\xi.\]
Let us first show that one can permute the limit and integration symbols. To this end, for all $j$, let us consider the subset of $[0,2 \pi]$
\[ A_j := \left \{ \xi \in [0, 2 \pi ] \, : \, |1-e^{i \xi}| \leq 2(1-r_j) \right \}. \]
Note that, for all $j$ large enough, if $\xi \in A_j$, then $$\xi \in [0,4(1-r_j)] \cup [2\pi-4(1-r_j),2 \pi].$$ Therefore, we can derive from the inequality $|1-r_j e^{i \xi} | \geq (1-r_j)$ that
\begin{align*}
\left | \int_{A_j} e^{-i p \xi}  (1-r_j e^{i \xi})^{-\delta} \, d\xi \right| & \leq (1-r_j)^{- \delta} \int_{A_{j}} \mathbbm{1}_{A_j}(\xi) \, d\xi \\
& \leq 8(1-r_j)^{1- \delta}.
\end{align*}
Since $1-\delta>0$, the latter inequality entails that 
\[ \lim_{j\rightarrow +\infty} \int_{A_j} e^{-i p \xi}  (1-r_j e^{i \xi})^{-\delta} \, d\xi=0. \]
Let us now assume that $\xi \in A_j^c:=[0,2\pi]\setminus A_j$, then we have that
\begin{align*}
|1-r_j e^{i \xi}| = |1-e^{i\xi} +(1-r_j)e^{i\xi}| \geq |1-e^{i \xi}|-(1-r_j) >\frac{1}{2}|1-e^{i \xi}|,
\end{align*}
which implies that
\begin{align*}
\left| e^{-i p \xi}  (1-r_j e^{i \xi})^{-\delta} \right| < 2^\delta |1-e^{i\xi}|^{-\delta}.
\end{align*}
As $\delta \in (0,1/2)$, the function $\xi \mapsto |1-e^{i\xi}|^{-\delta}$ is integrable on $[0,2 \pi]$, and since, for all $\xi \in (0,2 \pi)$,  $\mathbbm{1}_{A_j^c}(\xi) \to 1$, we conclude, by dominated convergence theorem, that
\[  \int_0^{2 \pi} e^{-i p \xi} (1-e^{i \xi})^{-\delta} \, d\xi =\lim_{r_j \to 1^{-}}  \int_0^{2 \pi} e^{-i p \xi} (1-r_j e^{i \xi})^{-\delta} \, d\xi.\]

Now, if $j$ is fixed, using \eqref{eqn:analytic}, we have
\begin{align*}
\int_0^{2 \pi} e^{-i p \xi} (1-r_j e^{i \xi})^{-\delta} \, d\xi &= \sum_{m=0}^{+ \infty} \gamma_m^{(\delta)} r_j^m \int_0^{2 \pi} e^{-ip\xi} e^{im \xi} \, d\xi = 2\pi r_j^p \gamma_p^{(\delta)} .
\end{align*}
The conclusion follows immediately.
\end{proof}

The following corollary is a straightforward consequence of Lemma \ref{lemma:fouriercoef} and a fundamental result of Fourier analysis.
\begin{Cor}\label{Cor:fouriercoef}
For all $\delta \in (0,1/2)$ we have
\[ (1-e^{i\xi})^{-\delta}=\sum_{p=0}^{+ \infty} \gamma_p^{(\delta)} e^{ip \xi},\]
with converge in $L^2([0,2\pi])$.
\end{Cor}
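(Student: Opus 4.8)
The plan is to read the identity off Lemma~\ref{lemma:fouriercoef} once one knows that the function $e_\delta\colon \xi \mapsto (1-e^{i\xi})^{-\delta}$ actually lies in $L^2([0,2\pi])$, and then to invoke the completeness of the exponential system. So first I would verify the $L^2$ membership: since $|1-e^{i\xi}| = 2\,|\sin(\xi/2)|$, one has $|e_\delta(\xi)|^2 = \big(2\,|\sin(\xi/2)|\big)^{-2\delta}$, and because $2\delta < 1$ the only singularities — at $\xi \in \{0,2\pi\}$, where the integrand behaves like $|\xi|^{-2\delta}$ near $0$ — are integrable; hence $e_\delta \in L^2([0,2\pi])$.

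Next, by Lemma~\ref{lemma:fouriercoef} the $p$-th Fourier coefficient of $e_\delta$, namely $\frac{1}{2\pi}\int_0^{2\pi} e^{-ip\xi}\,e_\delta(\xi)\,d\xi$, equals $\gamma_p^{(\delta)}$ for every $p \in \Z$; with the convention $\gamma_{-p}^{(\delta)} = 0$ fixed for $p \in \N$ before the statement, all Fourier coefficients of negative index vanish. I would then apply the fundamental fact that $(e^{ip\,\cdot})_{p \in \Z}$ is an orthonormal basis of $L^2([0,2\pi])$ — equivalently, that the Fourier series of any $L^2([0,2\pi])$ function converges to it in the $L^2$ norm (Riesz--Fischer together with Parseval) — to conclude
\[
e_\delta \;=\; \sum_{p \in \Z} \gamma_p^{(\delta)}\, e^{ip\,\cdot} \;=\; \sum_{p=0}^{+\infty} \gamma_p^{(\delta)}\, e^{ip\,\cdot}
\]
with convergence in $L^2([0,2\pi])$, which is exactly the assertion.

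I do not expect any genuine obstacle: the whole argument is one $L^2$-integrability check plus a citation. The one point worth flagging for the reader is the logical role of Lemma~\ref{lemma:fouriercoef}: Remark~\ref{rem1:farima} already shows $(\gamma_p^{(\delta)})_{p\ge 0} \in \ell^2$ because $\gamma_p^{(\delta)} \sim a_\delta\, p^{\delta-1}$ with $\delta - 1 < -1/2$, so Riesz--Fischer on its own produces \emph{some} $L^2$ function having these numbers as Fourier coefficients; it is Lemma~\ref{lemma:fouriercoef} that identifies that function with $e_\delta$ itself, and this is what makes the corollary meaningful.
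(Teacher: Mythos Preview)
Your proof is correct and is exactly the argument the paper has in mind: the paper states that the corollary is ``a straightforward consequence of Lemma~\ref{lemma:fouriercoef} and a fundamental result of Fourier analysis,'' having already noted just before that lemma that $\xi\mapsto(1-e^{i\xi})^{-\delta}$ lies in $L^2([0,2\pi])$. You have simply unpacked those two ingredients (the $L^2$-integrability check and the $L^2$-convergence of Fourier series) and added a clarifying remark on the role of Lemma~\ref{lemma:fouriercoef}.
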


\begin{Rmk}
Let us emphasize that Corollary \ref{Cor:fouriercoef} shows that the expectations involved in the expression \eqref{eqn:defofsigma} of the random variables $\sigma_{J,\mathbf{k}}$, $(J,\mathbf{k}) \in \Z \times \Z^d$, are easily computable. Indeed, for all $J \in \Z$ and $k,p,k',p' \in \Z$, the expectation $\E [g^\phi_{J,k-p},g^\phi_{J,k'-p'} ]$ does not vanish only when $k-p=k'-p'$ and, in this case, it is equal to $1$. Thus, we can write
\begin{align*}
\E [g^\phi_{J,k-p},g^\phi_{J,k'-p'} ] &= \frac{1}{2\pi}\int_0^{2\pi} e^{i(k-p) \xi} e^{-i(k'-p') \xi} \, d\xi. 
\end{align*}
Then, using Definition \ref{def:farima} and Remark \ref{rem1:farima}, we obtain, for all $\delta, \delta'\in (0,1/2)$, $J \in \N$ and $k,k' \in \Z$, that
\begin{align*}
\E[Z_{j,k}^{(\delta)}Z_{j,k'}^{(\delta')}] & = \frac{1}{2\pi}\int_0^{2\pi} \left( \sum_{p \in \Z} \gamma_p^{(\delta)} e^{i(k-p) \xi} \right) \left(\sum_{p' \in \Z} \gamma_{p'}^{(\delta')}e^{-i(k'-p') \xi} \right) \, d\xi \\
&= \frac{1}{2\pi}\int_0^{2\pi}  e^{i(k-k')\xi} \left( \sum_{p \in \Z} \gamma_p^{(\delta)} e^{-i p \xi} \right)  \left(\sum_{p' \in \Z} \gamma_{p'}^{(\delta')}e^{ip' \xi} \right) \, d\xi \\
& =\frac{1}{2\pi}\int_0^{2\pi} e^{i(k-k')\xi} (1-e^{-i\xi})^{-\delta} (1-e^{i\xi})^{-\delta'} \, d\xi.
\end{align*}
In particular, if $\delta=\delta'$, a fact that always occurs when we restrict to standard Hermite processes, we get that
\begin{align*}
\E[Z_{j,k}^{(\delta)}Z_{j,k'}^{(\delta)}] &=\frac{1}{2\pi}\int_0^{2\pi} e^{i(k-k')\xi} |1-e^{-i\xi}|^{-2 \delta}  \, d\xi \\
&=\frac{1}{2\pi}\int_0^{2\pi} e^{i(k-k')\xi} \left|2\sin \left(\frac{\xi}{2}\right)\right|^{-2 \delta}  \, d\xi.
\end{align*}
\end{Rmk}

Let us now recall that $\{\phi( \cdot -k) \, : \, k \in \Z \}$ is an orthonormal basis of the subspace $V_0^1$ of the multiresolution analysis of $L^2(\R)$ of the univariate Meyer scaling function $\phi$. Therefore (see e.g. \cite{MR1162107}), an arbitrary function $f$ of $L^2(\R)$ belongs to its subspace $V_0^1$ if and only if there exists a unique function $m_f\in L^2([0,2 \pi])$, such that the Fourier transform of $f$, satisfies in $L^2(\R)$ (that is for almost all $\xi\in\R$) the equality $\widehat{f}(\xi)= m_f (\xi)\widehat{\phi}(\xi)$.

\begin{Def}\label{def:fouphidelta}
For all $\delta \in (0,\frac{1}{2})$, $\Phi^{(-\delta)}$ is the $V_0^1$ function given by
\[ \widehat{\Phi}^{(-\delta)}(\xi)=(1-e^{i\xi})^{-\delta}\,\widehat{\phi}(\xi),\]
where the equality holds in $L^2(\R)$.
\end{Def}

\begin{Lemma}\label{Lemma:expphidelta}
For all $\delta \in (0,\frac{1}{2})$, we have
\begin{equation}\label{eqn:expofphidelta}
\Phi^{(-\delta)}(x) = \sum_{p=0}^{+ \infty} \gamma_p^{(\delta)} \phi(x+p)
\end{equation}
with convergence in $L^2(\R)$.
\end{Lemma}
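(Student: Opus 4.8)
The plan is to pass to the Fourier side and use the Plancherel identity. First I would recall that, with the convention of Remark~\ref{rem:mey-bas}, the Fourier transform of the translate $x\mapsto\phi(x+p)$ equals $\xi\mapsto e^{ip\xi}\widehat{\phi}(\xi)$; consequently the Fourier transform of the partial sum $\sum_{p=0}^{N}\gamma_p^{(\delta)}\phi(\cdot+p)$ is $\xi\mapsto\big(\sum_{p=0}^{N}\gamma_p^{(\delta)}e^{ip\xi}\big)\widehat{\phi}(\xi)$. Since the Fourier transform is an isometry of $L^2(\R)$, proving \eqref{eqn:expofphidelta} amounts to showing that these functions converge in $L^2(\R)$, as $N\to+\infty$, to $\widehat{\Phi}^{(-\delta)}(\xi)=(1-e^{i\xi})^{-\delta}\widehat{\phi}(\xi)$. (This target is indeed in $L^2(\R)$, and so genuinely defines the $L^2$ function $\Phi^{(-\delta)}$ of Definition~\ref{def:fouphidelta}, because $(1-e^{i\cdot})^{-\delta}\in L^2([0,2\pi])\subset L^2_{\mathrm{loc}}(\R)$ while $\widehat{\phi}$ is bounded with compact support.)

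Next I would set $g_N(\xi):=(1-e^{i\xi})^{-\delta}-\sum_{p=0}^{N}\gamma_p^{(\delta)}e^{ip\xi}$, which is a $2\pi$-periodic function, and estimate
\[
\Big\|\widehat{\Phi}^{(-\delta)}-\sum_{p=0}^{N}\gamma_p^{(\delta)}e^{ip\,\cdot}\widehat{\phi}\,\Big\|_{L^2(\R)}^2=\int_{\R}|g_N(\xi)|^2\,|\widehat{\phi}(\xi)|^2\,d\xi=\int_{-4\pi/3}^{4\pi/3}|g_N(\xi)|^2\,|\widehat{\phi}(\xi)|^2\,d\xi,
\]
using that $\operatorname{supp}\widehat{\phi}\subseteq[-4\pi/3,4\pi/3]$ (Remark~\ref{rem:mey-bas}). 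Bounding $|\widehat{\phi}(\xi)|$ by $\|\widehat{\phi}\|_{\infty}<+\infty$ and exploiting the $2\pi$-periodicity of $g_N$ — the interval $[-4\pi/3,4\pi/3]$ has length $8\pi/3<4\pi$, hence $\int_{-4\pi/3}^{4\pi/3}|g_N|^2\le 2\int_0^{2\pi}|g_N|^2$ — I would reduce the last display to $2\|\widehat{\phi}\|_{\infty}^2\int_0^{2\pi}|g_N(\xi)|^2\,d\xi$, which tends to $0$ by Corollary~\ref{Cor:fouriercoef}. This yields $L^2(\R)$-convergence of the Fourier transforms, hence of the functions themselves, and proves \eqref{eqn:expofphidelta}.

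The only genuinely delicate point is the mismatch between the two notions of $L^2$-convergence: Corollary~\ref{Cor:fouriercoef} only provides convergence of $\sum_{p}\gamma_p^{(\delta)}e^{ip\xi}$ to $(1-e^{i\xi})^{-\delta}$ in $L^2$ of a single period, whereas what is needed here is convergence in $L^2(\R)$ after multiplication by $\widehat{\phi}$. This is resolved precisely by the compact support and the boundedness of the Meyer scaling function's Fourier transform, combined with the periodicity of $g_N$; the remaining manipulations are routine.
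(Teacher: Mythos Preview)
Your proof is correct and follows essentially the same approach as the paper's: pass to the Fourier side and invoke Corollary~\ref{Cor:fouriercoef}. The paper's version is slightly more concise because it first notes that $(\gamma_p^{(\delta)})_p\in\ell^2(\Z)$ by \eqref{eqn:stirling}, which together with the orthonormality of $(\phi(\cdot+p))_p$ immediately gives $L^2(\R)$-convergence of the right-hand side, and then identifies the limit via the MRA characterization of $V_0^1$ recalled just before Definition~\ref{def:fouphidelta}; your explicit compact-support/periodicity estimate accomplishes the same bridging from $L^2([0,2\pi])$ to $L^2(\R)$ by hand.
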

\begin{proof}
As $\delta \in (0,\frac{1}{2})$, equation \eqref{eqn:stirling} entails that $(\gamma_p^{(\delta)})_{p \in \Z}$ is a $\ell^2(\Z)$ sequence. Thus the right-hand side of \eqref{eqn:expofphidelta} is a well-defined $L^2(\R)$ function with Fourier transform given by
\[ \sum_{p=0}^{+ \infty} \gamma_p^{(\delta)} e^{ip \xi} \widehat{\phi}(\xi).\]
The conclusion follows from Corollary \ref{Cor:fouriercoef}.
\end{proof}

Let us come back to generalized Hermite processes. We start by giving,for all $J \in \N$ and $t\in\R_+$, a $L^2(\R^d)$ expansion of the the kernel function $K^{(d)}_{\mathbf{h},J}(t,\bullet)$ (see \eqref{eqn:approxprocess}) using the following functions $\widetilde{a}_{J,k}^{(\delta)}$.

\begin{Def}\label{Def:coeajdelta}
For all $J \in \N$, $k \in \Z$ and $\delta\in (0,1/2)$ and $s\in\R_+$, we define the function
\[ \widetilde{a}_{J,k}^{(\delta)}(s) := \int_\R e^{i(2^Js-k)\xi}(1-e^{-i \xi})^{-\delta}\,\widehat{\Phi}^{(\delta)}_\Delta(\xi) \, d\xi,\]
where the fractional scaling function $\Phi^{(\delta)}_\Delta  \in \mathcal{S}(\R)$ is as in Definition \ref{def:fractscal}.
\end{Def}

\begin{Lemma} \label{lemma:firstexp}
For all $t \in \R_+$ and $J \in \N$, we have
\begin{align*}
&K^{(d)}_{\mathbf{h},J}(t,\mathbf{u}) \\ &= (2 \pi)^{-d/2}\sum_{\mathbf{k} \in \Z^d} 2^{-J(h_1+\cdots+h_d-d)}  \left( \int_0^t \prod_{\ell=1}^d \widetilde{a}_{J,k_\ell}^{(h_\ell-1/2)}(s) \, ds\right) 2^{J \frac{d}{2}} \prod_{\ell=1}^d \phi(2^J u_\ell-k_\ell),
\end{align*}
where the convergence holds in $L^2(\R^d)$, with respect to $\mathbf{u}$.
\end{Lemma}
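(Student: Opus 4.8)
The plan is to identify $K^{(d)}_{\mathbf{h},J}(t,\bullet)$ with the orthogonal projection of the $L^2(\R^d)$ function $K^{(d)}_{\mathbf{h}}(t,\bullet)$ onto $V_J^d$ and then to rewrite the resulting coefficients. Since $V_J^d=(V_J^1)^{\otimes_d}$, the system $\big(2^{Jd/2}\Phi(2^J\cdot-\mathbf{k})\big)_{\mathbf{k}\in\Z^d}$ with $\Phi=\phi^{\otimes_d}$ is an orthonormal basis of $V_J^d$, so expanding the projection in this basis yields, with convergence in $L^2(\R^d)$ automatically (it is the Fourier expansion of $K^{(d)}_{\mathbf{h},J}(t,\bullet)$ in a Hilbert basis),
\[
K^{(d)}_{\mathbf{h},J}(t,\mathbf{u})=\sum_{\mathbf{k}\in\Z^d}\Big(2^{Jd/2}\int_{\R^d}K^{(d)}_{\mathbf{h}}(t,x_1,\ldots,x_d)\prod_{\ell=1}^d\phi(2^Jx_\ell-k_\ell)\,dx_1\cdots dx_d\Big)2^{Jd/2}\prod_{\ell=1}^d\phi(2^Ju_\ell-k_\ell).
\]
The $\mathbf{k}$-th coefficient is exactly $\mathfrak{K}^{(d,\mathbf{h})}_{J,\mathbf{k}}(t)$ of \eqref{eqn:coefkbis}, so the already established formula \eqref{eqn:coefk2bis} turns this into
\[
K^{(d)}_{\mathbf{h},J}(t,\mathbf{u})=\sum_{\mathbf{k}\in\Z^d}2^{-J(h_1+\cdots+h_d-d)}\Big(\int_0^t\prod_{\ell=1}^d\phi_{h_\ell}(2^Js-k_\ell)\,ds\Big)2^{Jd/2}\prod_{\ell=1}^d\phi(2^Ju_\ell-k_\ell),
\]
where $\phi_{h_\ell}$ is the fractional primitive of $\phi$ of order $h_\ell-1/2$. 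Comparing with the target formula, it then suffices to prove the pointwise identity $\phi_{h_\ell}(2^Js-k)=(2\pi)^{-1/2}\widetilde{a}^{(h_\ell-1/2)}_{J,k}(s)$ for all $J,k,s$; the factors $(2\pi)^{-1/2}$, one per $\ell$, then come out of the $s$-integral as $(2\pi)^{-d/2}$, and the $L^2(\R^d)$-convergence is the one already granted.

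For this identity I would pass to the Fourier side. Fix $\ell$ and write $\delta=h_\ell-1/2\in(0,1/2)$. Using Definition \ref{def:fractscal} and \eqref{eqn:coefk2bis-f}, one has on the support of $\widehat{\phi}$
\[
(1-e^{-i\xi})^{-\delta}\,\widehat{\Phi}^{(\delta)}_\Delta(\xi)=(1-e^{-i\xi})^{-\delta}\Big(\frac{1-e^{-i\xi}}{i\xi}\Big)^{\delta}\widehat{\phi}(\xi)=(i\xi)^{-\delta}\widehat{\phi}(\xi)=\widehat{\phi}_{h_\ell}(\xi).
\]
Granting the middle equality (discussed below), and noting that $\widehat{\phi}_{h_\ell}\in L^1(\R)$ since it is compactly supported and behaves like $|\xi|^{-\delta}$ near $0$ with $\delta<1$, Definition \ref{Def:coeajdelta} together with the convention $\mathcal{F}^{-1}(g)(x)=(2\pi)^{-1/2}\int_\R e^{ix\xi}g(\xi)\,d\xi$ and Fourier inversion gives
\[
\widetilde{a}^{(\delta)}_{J,k}(s)=\int_\R e^{i(2^Js-k)\xi}\,\widehat{\phi}_{h_\ell}(\xi)\,d\xi=(2\pi)^{1/2}\,\phi_{h_\ell}(2^Js-k),
\]
which is precisely the identity wanted. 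This display also shows that the integral defining $\widetilde{a}^{(\delta)}_{J,k}$ converges absolutely: on $\mathrm{supp}\,\widehat{\phi}\subseteq[-4\pi/3,4\pi/3]$ its integrand is dominated by a constant times $|\xi|^{-\delta}$, which is integrable since $\delta<1/2$.

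The one delicate point — and the main obstacle — is the middle equality above, an identity between principal branches of complex powers, which I would verify by a direct computation of moduli and arguments. For $\xi\in(0,4\pi/3]$ one has $1-e^{-i\xi}=2\sin(\xi/2)\,e^{i(\pi-\xi)/2}$, so $\arg(1-e^{-i\xi})=\tfrac{\pi}{2}-\tfrac{\xi}{2}\in[-\tfrac{\pi}{6},\tfrac{\pi}{2})$, while $\arg(i\xi)=\tfrac{\pi}{2}$ and $(1-e^{-i\xi})/(i\xi)=e^{-i\xi/2}\,\sin(\xi/2)/(\xi/2)$ has argument $-\xi/2\in[-\tfrac{2\pi}{3},0)$; hence all three of $1-e^{-i\xi}$, $i\xi$ and $(1-e^{-i\xi})/(i\xi)$ lie in the slit plane $\C\setminus(-\infty,0]$ and the principal powers satisfy $(1-e^{-i\xi})^{-\delta}\big((1-e^{-i\xi})/(i\xi)\big)^{\delta}=(i\xi)^{-\delta}$, the case $\xi\in[-4\pi/3,0)$ following by complex conjugation (the only singular point of $(1-e^{-i\xi})^{-\delta}$ inside $\mathrm{supp}\,\widehat{\phi}$, namely $\xi=0$, is harmless as noted above). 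Everything else is a routine concatenation of the stated definitions, Fubini's theorem being already incorporated in \eqref{eqn:coefk2bis}.
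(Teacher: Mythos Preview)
Your proof is correct and follows essentially the same route as the paper's: expand $K^{(d)}_{\mathbf{h},J}(t,\bullet)$ in the orthonormal basis of $V_J^d$, invoke \eqref{eqn:coefk2bis} for the coefficients, and then identify $\phi_{h_\ell}(2^Js-k_\ell)$ with $(2\pi)^{-1/2}\widetilde{a}^{(h_\ell-1/2)}_{J,k_\ell}(s)$ via Fourier inversion. Your explicit verification of the principal-branch identity $(1-e^{-i\xi})^{-\delta}\big((1-e^{-i\xi})/(i\xi)\big)^{\delta}=(i\xi)^{-\delta}$ on $\mathrm{supp}\,\widehat{\phi}$ is a welcome addition that the paper leaves implicit.
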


\begin{proof}
We know that, for all $t \in \R_+$ and $J \in \N$, we have
\begin{align*}
&K^{(d)}_{\mathbf{h},J}(t,\mathbf{u})   = \sum_{\mathbf{k} \in \Z^d} \mathfrak{K}^{(d,\mathbf{h})}_{J,\textbf{k}}(t) \, 2^{J\frac{d}{2}}\prod_{\ell=1}^d \phi(2^J u_\ell-k_\ell),
\end{align*}
where $\mathfrak{K}^{(d,\mathbf{h})}_{J,\textbf{k}}(t)$ is as in \eqref{eqn:coefk2bis} and where the convergence holds in $L^2(\R^d)$ with respect to $\mathbf{u}$. Moreover, using Definition \ref{def:fractscal}, \eqref{eqn:coefk2bis-f} and the inverse Fourier transform, we get, for all $\ell \in [\![1,d]\!]$ and almost every $s \in \R$, that
\[ \phi_{h_\ell}(2^J s-k_\ell)  = \frac{1}{\sqrt{2\pi}} \int_\R e^{i(2^Js-k_\ell)\eta} (1-e^{-i\xi})^{-(h_\ell-1/2)} \widehat{\Phi}_\Delta^{(h_\ell-1/2)}(\xi) d\xi.\] 
Then combining the latter equality with \eqref{eqn:coefk2bis} and Definition \ref{Def:coeajdelta}, we obtain the lemma.
\end{proof}

In the sequel, for simplifying notations, we sometimes omit in them to make reference to the fixed indices $J \in \N$ and $t\in (0,\infty)$.  The previous lemma leads us the following definition:

\begin{Def}\label{def:alpha}
For all $\mathbf{k} \in \Z^d$, we define the coefficient
\[ \alpha_{\textbf{k}}^{(\mathbf{h})} := \int_0^t \prod_{\ell=1}^d \widetilde{a}_{J,k_\ell}^{(h_\ell-1/2)}(s) \, ds.\]
\end{Def}

\begin{Lemma}
For all $\mathbf{k} \in \Z^d$, the following equality holds:
\begin{equation}\label{eqn:avantconvo}
\alpha_{\textbf{k}}^{(\mathbf{h})} = (2 \pi)^\frac{d}{2}\sum_{\mathbf{q} \in \N_0^d} \int_0^t \prod_{\ell =1}^d \gamma_{q_\ell}^{(h_\ell-1/2)} \Phi_\Delta^{(h_\ell-1/2)}(2^J s - k_\ell-q_\ell) \, ds.
\end{equation}
\end{Lemma}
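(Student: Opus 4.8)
The plan is to put each univariate factor $\widetilde{a}_{J,k}^{(\delta)}(s)$ into closed form, then multiply the $d$ factors and integrate in $s$. To begin, I would expand $(1-e^{-i\xi})^{-\delta}$ in its Fourier series: applying Corollary~\ref{Cor:fouriercoef} after the change of variable $\xi\mapsto-\xi$ and using $2\pi$-periodicity, one has $(1-e^{-i\xi})^{-\delta}=\sum_{p=0}^{+\infty}\gamma_p^{(\delta)}e^{-ip\xi}$ with convergence in $L^2([0,2\pi])$, hence, by periodicity, in $L^2(K)$ for every bounded interval $K$; in particular on $K=[-\tfrac{4\pi}{3},\tfrac{4\pi}{3}]$, which contains ${\rm supp}\,\widehat{\Phi}^{(\delta)}_\Delta$ (Definition~\ref{def:fractscal}). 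Since, for fixed $J,k,s$, the function $\xi\mapsto e^{i(2^Js-k)\xi}\widehat{\Phi}^{(\delta)}_\Delta(\xi)$ is bounded and supported in $K$, it lies in $L^2(K)$, so one may pair it with the $L^2(K)$-convergent series term by term. Together with Fourier inversion on the Schwartz class (recall $\Phi^{(\delta)}_\Delta\in\mathcal{S}(\R)$), which yields $\int_\R e^{i(2^Js-k-p)\xi}\widehat{\Phi}^{(\delta)}_\Delta(\xi)\,d\xi=\sqrt{2\pi}\,\Phi^{(\delta)}_\Delta(2^Js-k-p)$, this gives
\[
\widetilde{a}_{J,k}^{(\delta)}(s)=\sqrt{2\pi}\sum_{p=0}^{+\infty}\gamma_p^{(\delta)}\,\Phi^{(\delta)}_\Delta(2^Js-k-p),
\]
a series that converges absolutely and uniformly for $s$ in a compact set, since $\Phi^{(\delta)}_\Delta$ decays faster than any polynomial (see~\eqref{maj:psih}) while $|\gamma_p^{(\delta)}|=\mathcal{O}(p^{\delta-1})$ by~\eqref{eqn:stirling}.

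Next, I would take the product over $\ell\in[\![1,d]\!]$ with $\delta=h_\ell-1/2$. As each of the $d$ series is absolutely convergent, the product of the sums is the sum of the products indexed by $\mathbf{q}=(q_1,\dots,q_d)\in\N_0^d$, so that, for every $s\in[0,t]$,
\[
\prod_{\ell=1}^d\widetilde{a}_{J,k_\ell}^{(h_\ell-1/2)}(s)=(2\pi)^{d/2}\sum_{\mathbf{q}\in\N_0^d}\,\prod_{\ell=1}^d\gamma_{q_\ell}^{(h_\ell-1/2)}\,\Phi^{(h_\ell-1/2)}_\Delta(2^Js-k_\ell-q_\ell).
\]
Integrating over $s\in[0,t]$ and interchanging $\int_0^t$ with $\sum_{\mathbf{q}\in\N_0^d}$ then gives~\eqref{eqn:avantconvo}, by Definition~\ref{def:alpha}. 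The interchange is licit by Fubini--Tonelli: the inner sum $\sum_{q_\ell\ge0}|\gamma_{q_\ell}^{(h_\ell-1/2)}|\,|\Phi^{(h_\ell-1/2)}_\Delta(2^Js-k_\ell-q_\ell)|$ is finite and bounded uniformly for $s\in[0,t]$, since the rapid decay of $\Phi^{(h_\ell-1/2)}_\Delta$ (see~\eqref{maj:psih}) dominates the slowly decaying coefficients $|\gamma_{q_\ell}^{(h_\ell-1/2)}|=\mathcal{O}(q_\ell^{\,h_\ell-3/2})$; hence the product over $\ell$ is integrable on $[0,t]$ and $\sum_{\mathbf{q}}\int_0^t(\cdots)\,ds<+\infty$.

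The only step that is not routine is the term-by-term interchange, in the first paragraph, of the integral over $\R$ with the Fourier series of $(1-e^{-i\xi})^{-\delta}$: that series converges merely in $L^2([0,2\pi])$, not uniformly, and its sum is unbounded near $2\pi\Z$, so it cannot be integrated against an arbitrary weight. The resolution rests on $\widehat{\Phi}^{(\delta)}_\Delta$ having compact support that avoids the singularities $2\pi\Z\setminus\{0\}$, on $(1-e^{-i\xi})^{-\delta}$ being locally $L^2$ near $\xi=0$ because $\delta<1/2$, and hence on $e^{i(2^Js-k)\xi}\widehat{\Phi}^{(\delta)}_\Delta(\xi)$ being an $L^2(K)$ function; the interchange then amounts to continuity of the $L^2(K)$ pairing. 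Everything afterwards is a manipulation of absolutely convergent series and Fubini's theorem.
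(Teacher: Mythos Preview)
Your proof is correct and follows essentially the same line as the paper's: first obtain the univariate identity $\widetilde{a}_{J,k}^{(\delta)}(s)=\sqrt{2\pi}\sum_{p\ge 0}\gamma_p^{(\delta)}\Phi_\Delta^{(\delta)}(2^Js-k-p)$ by pairing the $L^2$-convergent Fourier series of $(1-e^{-i\xi})^{-\delta}$ against the compactly supported $\widehat{\Phi}_\Delta^{(\delta)}$ (the paper phrases this as ``Cauchy--Schwarz inequality and elementary properties of Fourier transform''), then take the $d$-fold product and exchange with $\int_0^t$ via dominated convergence/Fubini using the Schwartz decay of $\Phi_\Delta^{(\delta)}$. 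Your closing paragraph is in fact a more explicit justification of exactly the point the paper treats tersely.
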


\begin{proof}
First, observe that one can easily derive from Definitions \ref{Def:coeajdelta} and \ref{def:fractscal}, Corollary \ref{Cor:fouriercoef}, Cauchy-Schwarz inequality and elementary properties of Fourier transform that, for all $\ell \in [\![1,d]\!]$, $k_l\in\Z$ and $s\in\R_+$,
\[ \sqrt{2 \pi}\sum_{q_\ell \in \N_0} \gamma_{q_\ell}^{(h_\ell-1/2)} \Phi_\Delta^{(h_\ell-1/2)}(2^J s - k_\ell-q_\ell)= \widetilde{a}_{J,k_\ell}^{(h_\ell-1/2)}(s).\]
Then the lemma can be derived from the dominated convergence theorem. Notice that the latter theorem can be used since the fact that the functions $\Phi_\Delta^{(h_\ell-1/2)}$, $\ell \in [\![1,d]\!]$, belong to Schwartz class $\mathcal{S}(\R)$ implies that 
\[  \sup_{s\in [0,t]}\left\{\sum_{\mathbf{q} \in \N_0^d} \prod_{\ell =1}^d \gamma_{q_\ell}^{(h_\ell-1/2)} \big|\Phi_\Delta^{(h_\ell-1/2)}(2^J s - k_\ell-q_\ell)\big|\right\} < \infty.\]
\end{proof}

\begin{Rmk}
In order to provide a more synthetic and convenient expression for the right-hand side of the equality \eqref{eqn:avantconvo}, let us introduce the two sequences of real numbers $(\beta^{(\mathbf{h})}_{\mathbf{k}})_{\mathbf{k} \in \Z^d}$ and $(\Gamma^{(\mathbf{h})}_{\mathbf{k}})_{\mathbf{k} \in \Z^d}$ defined as:
\begin{equation}
\label{eq-seqbe}
\beta^{(\mathbf{h})}_{\mathbf{k}} := \int_0^t \prod_{\ell =1}^d \Phi_\Delta^{(h_\ell-1/2)}(2^J s - k_\ell) \, ds
\end{equation}
and
\begin{equation}
\label{eq-seqGa}
\Gamma^{(\mathbf{h})}_{\mathbf{k}} :=  \prod_{\ell =1}^d \gamma_{-k_\ell}^{(h_\ell-1/2)},
\end{equation}
with the convention that $\gamma_{-k_\ell}^{(h_\ell-1/2)}=0$ as soon as $-k_\ell<0$. Then, in view of \eqref{eqn:avantconvo}, it turns out that, up to the multiplicative factor $(2 \pi)^\frac{d}{2}$, the sequence $(\alpha_{\textbf{k}}^{(\mathbf{h})})_{\textbf{k}\in\Z^d}$ is nothing else than the convolution product of the two sequences $(\beta^{(\mathbf{h})}_{\mathbf{k}})_{\mathbf{k} \in \Z^d}$ and $(\Gamma^{(\mathbf{h})}_{\mathbf{k}})_{\mathbf{k} \in \Z^d}$, that is:
\begin{equation}\label{eqn:convo}
\alpha_{\textbf{k}}^{(\mathbf{h})}=  (2 \pi)^\frac{d}{2} \sum_{\mathbf{q} \in \Z^d} \beta^{(\mathbf{h})}_{\mathbf{k}-\mathbf{q}} \Gamma^{(\mathbf{h})}_{\mathbf{q}} = (2 \pi)^\frac{d}{2} (\beta^{(\mathbf{h})} \ast \Gamma^{(\mathbf{h})})_{\textbf{k}}.
\end{equation}
 Moreover, one can easily observe from \eqref{eq-seqbe} and the fact that the functions $\Phi_\Delta^{(h_\ell-1/2)}$, $\ell \in [\![1,d]\!]$, belong to Schwartz class, that one has, for any fixed $J\in\N$, $t\in\R_+$ and arbitrarily large $\mu>0$,
 \begin{equation}\label{eqn:fastdecayforbeta}
\sup_{\mathbf{k} \in \Z^d} \left( \prod_{\ell=1}^d (1+|k_\ell|)^\mu |\beta^{(\mathbf{h})}_{\mathbf{k}}| \right) < \infty,
\end{equation}
which in particular implies that $(\beta^{(\mathbf{h})}_{\mathbf{k}})_{\mathbf{k} \in \Z^d} \in \ell^1(\Z^d)$. On another hand, we know from \eqref{eq-seqGa} and \eqref{eqn:stirling} that $(\Gamma^{(\mathbf{h})}_{\mathbf{k}})_{\mathbf{k} \in \Z^d} \in \ell^2(\Z^d)$. Thus, one can derive from \eqref{eqn:convo} that the sequence $(\alpha^{(\mathbf{h})}_{\mathbf{k}})_{\mathbf{k} \in \Z^d}$ belongs to $\ell^2(\Z^d)$, and that its Fourier transform $\widehat{\alpha}^{(\mathbf{h})}$, namely the function in $L^2([0,2 \pi]^d)$ defined through the series
\begin{equation}
\widehat{\alpha}^{(\mathbf{h})} (\mathbf{\xi}) := \sum_{\mathbf{k} \in \Z^d} \alpha_{\textbf{k}}^{(\mathbf{h})} e^{-i \langle \mathbf{k}, \mathbf{\xi} \rangle}.
\end{equation}
which converge in $L^2([0,2 \pi]^d)$, satisfies 
\begin{equation}\label{eqn:expressionalpha}
\widehat{\alpha}^{(\mathbf{h})} (\mathbf{\xi}) = (2 \pi)^\frac{d}{2} \widehat{\beta}^{(\mathbf{h})} (\mathbf{\xi}) \widehat{\Gamma}^{(\mathbf{h})} (\mathbf{\xi}),\quad\mbox{for almost all $\xi\in\R^d$,}
\end{equation}
where $\widehat{\beta}^{(\mathbf{h})}\in L^\infty ([0,2 \pi]^d)$ and $\widehat{\Gamma}^{(\mathbf{h})}\in L^2([0,2 \pi]^d)$ respectively denote the Fourier transforms of the sequences $(\beta^{(\mathbf{h})}_{\mathbf{k}})_{\mathbf{k} \in \Z^d} \in \ell^1(\Z^d)$ and $(\Gamma^{(\mathbf{h})}_{\mathbf{k}})_{\mathbf{k} \in \Z^d} \in \ell^2(\Z^d)$. We mention that $\widehat{\beta}^{(\mathbf{h})}$ is defined through the series 
\begin{equation}\label{eqn:beta}
\widehat{\beta}^{(\mathbf{h})} (\mathbf{\xi}) := \sum_{\mathbf{k} \in \Z^d} \beta_{\textbf{k}}^{(\mathbf{h})} e^{-i \langle \mathbf{k}, \mathbf{\xi} \rangle},
\end{equation}
which converges uniformly in $\xi\in\R^d$, and $\widehat{\Gamma}^{(\mathbf{h})}$ is defined through the series 
\begin{equation}\label{eqn:exprgammabis}
\widehat{\Gamma}^{(\mathbf{h})} (\mathbf{\xi}) := \sum_{\mathbf{k} \in \Z^d} \Gamma_{\textbf{k}}^{(\mathbf{h})} e^{-i \langle \mathbf{k}, \mathbf{\xi} \rangle} = \prod_{\ell =1}^d  \left(  \sum_{k_\ell=0}^{+ \infty}\gamma_{k_\ell}^{(h_\ell-1/2)}e^{i k_\ell  \mathbf{\xi}_\ell }  \right),
\end{equation}
with convergence in $L^2([0,2 \pi]^d)$. Observe that it follows from \eqref{eqn:exprgammabis}, \eqref{eq-seqGa} and Corollary \ref{Cor:fouriercoef}
\begin{equation}\label{eqn:exprgamma}
\widehat{\Gamma}^{(\mathbf{h})} (\mathbf{\xi})=\prod_{\ell =1}^d (1-e^{i\xi_\ell})^{1/2-h_\ell}, \quad\mbox{for almost all $\xi\in\R^d$.}
\end{equation}
\end{Rmk}

\begin{Lemma}
The series
\begin{equation}\label{eqn:serievj}
 \mathcal{V}^{(\mathbf{h})}(\mathbf{u}) := \sum_{\mathbf{k} \in \Z^d} \beta_{\mathbf{k}}^{(\mathbf{h})} 2^{J \frac{d}{2}} \prod_{\ell=1}^d \Phi^{(1/2 -h_\ell)}(2^J u_\ell-k_\ell)
\end{equation}
is normally convergent in $L^2(\R^d)$. Thus the function $\mathcal{V}^{(\mathbf{h})}$ belongs to $L^2(\R^d)$. Moreover, its Fourier transform satisfies, for almost every $\xi \in \R^d$,
\begin{equation} \label{eqn:transfoserivj}
 \widehat{\mathcal{V}}^{(\mathbf{h})}(\mathbf{\xi}) =  2^{-J \frac{d}{2}} \widehat{\beta}^{(\mathbf{h})} (2^{-J}\mathbf{\xi}) \prod_{\ell=1}^d \widehat{\Phi}^{(1/2-h_\ell)}(2^{-J} \xi_\ell).
\end{equation}
\end{Lemma}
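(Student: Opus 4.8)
The plan is to reduce everything to the tensor-product structure of the summands in \eqref{eqn:serievj}, the membership $\Phi^{(-\delta)}\in L^2(\R)$ from Definition \ref{def:fouphidelta}, and the rapid decay \eqref{eqn:fastdecayforbeta} of the coefficients $\beta^{(\mathbf{h})}_{\mathbf{k}}$. Throughout write $\delta_\ell:=h_\ell-1/2\in(0,1/2)$, so that $\Phi^{(1/2-h_\ell)}=\Phi^{(-\delta_\ell)}$ is the function of Definition \ref{def:fouphidelta}, square-integrable with compactly supported Fourier transform $\widehat{\Phi}^{(-\delta_\ell)}(\xi)=(1-e^{i\xi})^{-\delta_\ell}\widehat{\phi}(\xi)$. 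First I would prove the normal convergence of \eqref{eqn:serievj} in $L^2(\R^d)$; then the sum $\mathcal{V}^{(\mathbf{h})}$ is automatically a well-defined element of $L^2(\R^d)$, and since the Fourier transform is unitary on $L^2(\R^d)$ one may transform \eqref{eqn:serievj} term by term to obtain \eqref{eqn:transfoserivj}.

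For the normal convergence I would compute the $L^2(\R^d)$-norm of the generic summand. By Fubini and the change of variable $v_\ell=2^Ju_\ell-k_\ell$ in each coordinate, the dilation factors cancelling exactly, one gets
\[
\Big\|\,\mathbf{u}\mapsto 2^{Jd/2}\prod_{\ell=1}^d\Phi^{(-\delta_\ell)}(2^Ju_\ell-k_\ell)\Big\|_{L^2(\R^d)}=\prod_{\ell=1}^d\big\|\Phi^{(-\delta_\ell)}\big\|_{L^2(\R)}.
\]
Hence the $L^2(\R^d)$-norm of the $\mathbf{k}$-th term of \eqref{eqn:serievj} equals $|\beta^{(\mathbf{h})}_{\mathbf{k}}|\prod_{\ell=1}^d\|\Phi^{(-\delta_\ell)}\|_{L^2(\R)}$. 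Summing over $\mathbf{k}\in\Z^d$ and invoking the summability $(\beta^{(\mathbf{h})}_{\mathbf{k}})_{\mathbf{k}\in\Z^d}\in\ell^1(\Z^d)$ recorded in \eqref{eqn:fastdecayforbeta} gives a finite bound; hence the series is normally convergent in $L^2(\R^d)$, which establishes the first two assertions.

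For \eqref{eqn:transfoserivj}, I would Fourier-transform \eqref{eqn:serievj} term by term, which is legitimate because the series converges in $L^2(\R^d)$ and $\mathcal{F}$ is continuous there. With the $d$-dimensional version of the convention of Remark \ref{rem:mey-bas}, the scaling--translation rule yields, for each $\mathbf{k}$,
\[
\mathcal{F}\Big[\mathbf{u}\mapsto 2^{Jd/2}\prod_{\ell=1}^d\Phi^{(-\delta_\ell)}(2^Ju_\ell-k_\ell)\Big](\xi)=2^{-Jd/2}\,e^{-i\langle 2^{-J}\mathbf{k},\,\xi\rangle}\prod_{\ell=1}^d\widehat{\Phi}^{(-\delta_\ell)}(2^{-J}\xi_\ell).
\]
Since the product over $\ell$ does not depend on $\mathbf{k}$, it factors out of the sum, and the remaining series $\sum_{\mathbf{k}\in\Z^d}\beta^{(\mathbf{h})}_{\mathbf{k}}e^{-i\langle 2^{-J}\mathbf{k},\,\xi\rangle}$, which converges uniformly because $(\beta^{(\mathbf{h})}_{\mathbf{k}})_{\mathbf{k}}\in\ell^1(\Z^d)$, equals $\widehat{\beta}^{(\mathbf{h})}(2^{-J}\xi)$ by the definition \eqref{eqn:beta} of $\widehat{\beta}^{(\mathbf{h})}$. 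Assembling the factors and writing $\widehat{\Phi}^{(-\delta_\ell)}=\widehat{\Phi}^{(1/2-h_\ell)}$ gives \eqref{eqn:transfoserivj} for almost every $\xi\in\R^d$.

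I do not expect a genuine obstacle: the argument is essentially bookkeeping. The only points deserving a little care are the tracking of the $2^{\pm J}$ dilation factors under the Fourier transform on $\R^d$, and the observation that, although $\widehat{\Phi}^{(1/2-h_\ell)}$ is compactly supported and square-integrable, it is unbounded near the origin (because $\widehat{\phi}(0)\neq 0$ while $2\delta_\ell<1$); this causes no difficulty since all the identities above are asserted in the $L^2$, respectively almost-everywhere, sense, and the product $\prod_{\ell=1}^d\widehat{\Phi}^{(1/2-h_\ell)}(2^{-J}\xi_\ell)$ still lies in $L^2(\R^d)$ by compactness of its support.
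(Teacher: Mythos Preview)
Your proposal is correct and follows essentially the same approach as the paper: bound each term's $L^2(\R^d)$-norm by $|\beta^{(\mathbf{h})}_{\mathbf{k}}|\prod_\ell\|\Phi^{(1/2-h_\ell)}\|_{L^2(\R)}$ and use \eqref{eqn:fastdecayforbeta} for normal convergence, then pass to the Fourier side term by term and identify the resulting exponential sum with $\widehat{\beta}^{(\mathbf{h})}(2^{-J}\xi)$ via \eqref{eqn:beta}. The only cosmetic difference is that the paper extracts an a.e.-convergent subsequence of the Fourier transforms of the partial sums, whereas you invoke directly the uniform convergence of $\sum_{\mathbf{k}}\beta^{(\mathbf{h})}_{\mathbf{k}}e^{-i\langle 2^{-J}\mathbf{k},\xi\rangle}$; both routes yield the same identification.
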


\begin{proof} The normal convergence of the series in \eqref{eqn:serievj} easily follows from \eqref{eqn:fastdecayforbeta} and the straightforward inequality, for all $\mathbf{k} \in \Z^d$, 
\[ \left \|  \beta_{\mathbf{k}}^{(\mathbf{h})} 2^{J \frac{d}{2}} \prod_{\ell=1}^d \Phi^{(1/2-h_\ell)}(2^J u_\ell-k_\ell) \right \|_{L^2(\R^d)} \leq \prod_{\ell=1}^d  \| \Phi^{(1/2-h_\ell)} \|_{L^2(\R^d)} |\beta_{\mathbf{k}}^{(\mathbf{h})} |. \]
Next, for all $N \in \N$, let $\mathcal{V}^{(\mathbf{h})}_N$ be the finite sum, defined, for each $\mathbf{u}\in\R^d$, as:
\[  \mathcal{V}^{(\mathbf{h})}_N(\mathbf{u}) := \sum_{|\mathbf{k}| \leq N} \beta_{\mathbf{k}}^{(\mathbf{h})} 2^{J \frac{d}{2}} \prod_{\ell=1}^d \Phi^{(1/2-h_\ell)}(2^J u_\ell-k_\ell).\]
We already know that $\mathcal{V}_N^{(\mathbf{h})} \to \mathcal{V}^{(\mathbf{h})}$ in $L^2(\R^d)$ as $N \to + \infty$. Therefore, we also have the convergence $\widehat{\mathcal{V}}_N^{(\mathbf{h})} \to \widehat{\mathcal{V}}^{(\mathbf{h})}$ in $L^2(\R^d)$ as $N \to + \infty$, which implies the existence of a subsequence $(N_r)_{r \in \N}$ such that
\begin{equation}\label{eqn:convvj}
\lim_{r\rightarrow+\infty}\widehat{\mathcal{V}}_{N_r}^{(\mathbf{h})}(\xi)=\widehat{\mathcal{V}}^{(\mathbf{h})}(\xi), \quad\mbox{for almost every $\xi \in \R^d$.}
\end{equation}
Then, noticing that the Fourier transform of $\mathcal{V}^{(\mathbf{h})}_N$ satisfies, for all $N \in \N$ and $\xi\in\R^d$,
\[ \widehat{\mathcal{V}}_N^{(\mathbf{h})}(\xi)= 2^{-J \frac{d}{2}} \left( \sum_{|\mathbf{k}| \leq N} \beta_{\mathbf{k}}^{(\mathbf{h})} e^{-i2^{-J} \langle \mathbf{k},\xi \rangle} \right) \prod_{\ell=1}^d \widehat{\Phi}^{(1/2-h_\ell)}(2^{-J} \xi_\ell),\]
it turns out that \eqref{eqn:transfoserivj} is a consequence of \eqref{eqn:convvj} and \eqref{eqn:beta}.
\end{proof}

\begin{Lemma}\label{lemma:kjvj}
For all $t \in\R_+$ and $J \in \N$, we have
\[
K^{(d)}_{\mathbf{h},J}(t,\mathbf{u}) = 2^{-J(h_1+\cdots+h_d-d)}  \mathcal{V}^{(\mathbf{h})}(\mathbf{u}),\quad\mbox{for almost all $\mathbf{u}\in\R^d$.}
\]
\end{Lemma}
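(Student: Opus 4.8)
The plan is to establish the identity in the Fourier domain. Both functions involved lie in $L^2(\R^d)$: this was just established for $\mathcal{V}^{(\mathbf{h})}$, while $K^{(d)}_{\mathbf{h},J}(t,\cdot)$ is an $L^2(\R^d)$ function by construction (it is an orthogonal projection). Since the Fourier transform is injective on $L^2(\R^d)$, it suffices to check that $\widehat{K}^{(d)}_{\mathbf{h},J}(t,\cdot)$ and $2^{-J(h_1+\cdots+h_d-d)}\widehat{\mathcal{V}}^{(\mathbf{h})}$ agree for almost every $\xi\in\R^d$.

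First I would isolate the relevant series. By Lemma \ref{lemma:firstexp} and Definition \ref{def:alpha}, setting
\[
G(\mathbf{u}) := (2\pi)^{-d/2}\sum_{\mathbf{k}\in\Z^d}\alpha^{(\mathbf{h})}_{\mathbf{k}}\,2^{Jd/2}\prod_{\ell=1}^d\phi(2^J u_\ell-k_\ell),
\]
one has $K^{(d)}_{\mathbf{h},J}(t,\cdot)=2^{-J(h_1+\cdots+h_d-d)}\,G$, the series defining $G$ converging in $L^2(\R^d)$ because $(\alpha^{(\mathbf{h})}_{\mathbf{k}})_{\mathbf{k}}\in\ell^2(\Z^d)$ and the functions $2^{Jd/2}\prod_{\ell=1}^d\phi(2^J x_\ell-k_\ell)$, $\mathbf{k}\in\Z^d$, form an orthonormal basis of $V_J^d$. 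Applying the Fourier transform to the partial sums, using that the Fourier transform of $\phi(2^J\cdot-k)$ equals $2^{-J}e^{-ik2^{-J}\xi}\widehat{\phi}(2^{-J}\xi)$ and that $\sum_{|\mathbf{k}|\le N}\alpha^{(\mathbf{h})}_{\mathbf{k}}e^{-i2^{-J}\langle\mathbf{k},\xi\rangle}\to\widehat{\alpha}^{(\mathbf{h})}(2^{-J}\xi)$ in $L^2$ of every bounded set, I would obtain
\[
\widehat{G}(\xi)=(2\pi)^{-d/2}\,2^{-Jd/2}\,\widehat{\alpha}^{(\mathbf{h})}(2^{-J}\xi)\prod_{\ell=1}^d\widehat{\phi}(2^{-J}\xi_\ell),\qquad\text{for a.e. }\xi\in\R^d.
\]

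Next I would substitute the factorisations already at hand. By \eqref{eqn:expressionalpha} and \eqref{eqn:exprgamma}, $\widehat{\alpha}^{(\mathbf{h})}(\eta)=(2\pi)^{d/2}\widehat{\beta}^{(\mathbf{h})}(\eta)\prod_{\ell=1}^d(1-e^{i\eta_\ell})^{1/2-h_\ell}$ for almost every $\eta$; taking $\eta=2^{-J}\xi$ and invoking Definition \ref{def:fouphidelta} with $\delta=h_\ell-1/2\in(0,1/2)$, which reads $(1-e^{i\eta})^{1/2-h_\ell}\widehat{\phi}(\eta)=\widehat{\Phi}^{(1/2-h_\ell)}(\eta)$, the factors $(2\pi)^{\pm d/2}$ cancel and one is left with
\[
\widehat{G}(\xi)=2^{-Jd/2}\,\widehat{\beta}^{(\mathbf{h})}(2^{-J}\xi)\prod_{\ell=1}^d\widehat{\Phi}^{(1/2-h_\ell)}(2^{-J}\xi_\ell),\qquad\text{for a.e. }\xi\in\R^d.
\]
The right-hand side is precisely $\widehat{\mathcal{V}}^{(\mathbf{h})}(\xi)$ by \eqref{eqn:transfoserivj}. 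Hence $\widehat{G}=\widehat{\mathcal{V}}^{(\mathbf{h})}$ a.e., so $G=\mathcal{V}^{(\mathbf{h})}$ a.e.\ by injectivity of the Fourier transform, and multiplying by $2^{-J(h_1+\cdots+h_d-d)}$ gives the asserted equality.

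The only slightly delicate point — the one I would spell out carefully — is the identification of $\widehat{G}$ with the a.e.\ limit of the Fourier transforms of the partial sums, since a priori only $L^2(\R^d)$ convergence is available. This is handled by observing that $\widehat{\phi}$ is bounded with compact support (Remark \ref{rem:mey-bas}), so the compactly supported factor $\prod_{\ell=1}^d\widehat{\phi}(2^{-J}\xi_\ell)$ confines everything to a bounded region, on which the $\ell^2(\Z^d)$ summability of $(\alpha^{(\mathbf{h})}_{\mathbf{k}})_{\mathbf{k}}$ yields $L^2$-convergence of the trigonometric series to $\widehat{\alpha}^{(\mathbf{h})}(2^{-J}\cdot)$; uniqueness of $L^2$ limits then pins down $\widehat{G}$. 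Everything else is routine bookkeeping of the dilation factor $2^{-J}$ in the Fourier variable and of the multiplicative constants.
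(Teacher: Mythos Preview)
Your proof is correct and follows essentially the same route as the paper: both compute the Fourier transform of $K^{(d)}_{\mathbf{h},J}(t,\cdot)$ via Lemma~\ref{lemma:firstexp}, Definition~\ref{def:alpha}, the factorisation \eqref{eqn:expressionalpha} combined with \eqref{eqn:exprgamma} and Definition~\ref{def:fouphidelta}, recognise the result as $2^{-J(h_1+\cdots+h_d-d)}\widehat{\mathcal{V}}^{(\mathbf{h})}$ through \eqref{eqn:transfoserivj}, and conclude by injectivity of the Fourier transform. Your extra care in justifying the passage to the limit in the Fourier domain (using the compact support of $\widehat{\phi}$) is a welcome detail that the paper leaves implicit.
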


\begin{proof} For each fixed $t\in\R_+$, the function $\xi\mapsto\widehat{K^{(d)}_{\mathbf{h},J}}(t,\xi)$ denotes the Fourier transform in $L^2(\R^d)$ of the function $\mathbf{u}\mapsto K^{(d)}_{\mathbf{h},J}(t,\mathbf{u})$. It follows 
from Lemma \ref{lemma:firstexp}, Definition \ref{def:alpha}, \eqref{eqn:expressionalpha}, \eqref{eqn:exprgamma}, Definition \ref{def:fouphidelta} and \eqref{eqn:transfoserivj} that, for almost every $\xi \in \R^d$,
\begin{align*}
\widehat{K^{(d)}_{\mathbf{h},J}}(t,\xi) &= (2 \pi)^{-d/2}\, 2^{-J(h_1+\cdots+h_d-d/2)} \widehat{\alpha}^{(\mathbf{h})} (2^{-J}\mathbf{\xi})  \prod_{\ell=1}^d \widehat{\phi}(2^{-J} \xi_\ell) \\
& = 2^{-J(h_1+\cdots+h_d-d/2)} \widehat{\beta}^{(\mathbf{h})} (2^{-J}\mathbf{\xi}) \widehat{\Gamma}^{(\mathbf{h})} (2^{-J}\mathbf{\xi}) \prod_{\ell=1}^d \widehat{\phi}(2^{-J} \xi_\ell) \\
&= 2^{-J(h_1+\cdots+h_d-d/2)} \widehat{\beta}^{(\mathbf{h})} (2^{-J}\mathbf{\xi}) \prod_{\ell=1}^d \widehat{\Phi}^{(1/2-h_\ell)}(2^{-J} \xi_\ell) \\
&= 2^{-J(h_1+\cdots+h_d-d)} \widehat{\mathcal{V}}^{(\mathbf{h})}(\mathbf{\xi}).
\end{align*}
Then taking the inverse Fourier transforms of the two functions in both sides of the equality $\widehat{K^{(d)}_{\mathbf{h},J}}(t,\bullet)=2^{-J(h_1+\cdots+h_d-d)} \widehat{\mathcal{V}}^{(\mathbf{h})}(\bullet)$, we obtain the lemma.
\end{proof}
The following proposition provides, for each $J\in\N$, a useful representation of the generalized FARIMA sequence $(\sigma_{J,\textbf{k}}^{(\mathbf{h})})_{\textbf{k}\in\Z^d}$, introduced in \eqref{eq:ext-farima}, in terms of multiple Wiener integrals and the functions $2^{J \frac{d}{2}} \prod_{\ell=1}^d \Phi^{(1/2 -h_\ell)}(2^J u_\ell-k_\ell)$ in \eqref{eqn:serievj}. 
\begin{Prop}
\label{prop:intexpsig}
One has almost surely, for all $J\in\N$ and $\textbf{k}\in\Z^d$,
\begin{equation}
\label{prop:intexpsig:eq1}
\sigma_{J,\textbf{k}}^{(\mathbf{h})}=\int_{\R^d}' 2^{J \frac{d}{2}} \prod_{\ell=1}^d \Phi^{(1/2-h_\ell)}(2^J u_\ell-k_\ell) \, dB(u_1) \cdots dB(u_d).
\end{equation}
\end{Prop}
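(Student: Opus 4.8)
The plan is to expand the kernel appearing on the right-hand side of \eqref{prop:intexpsig:eq1} as an $L^2(\R^d)$-convergent series of (rescaled, shifted) tensor products of Meyer scaling functions, and then transfer this expansion into $L^2(\Omega)$ via the continuity of the $d$-fold Wiener integral, recognizing the resulting series as the one defining $\sigma_{J,\textbf{k}}^{(\mathbf{h})}$ in \eqref{eq:ext-farima}. First I would set $\delta_\ell:=h_\ell-1/2\in(0,1/2)$ and recall from Definition \ref{def:fouphidelta} and Lemma \ref{Lemma:expphidelta} that $\Phi^{(1/2-h_\ell)}=\Phi^{(-\delta_\ell)}=\sum_{p=0}^{+\infty}\gamma_p^{(\delta_\ell)}\phi(\cdot+p)$, with convergence in $L^2(\R)$. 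Substituting $2^Ju-k_\ell$ for the variable and multiplying by $2^{J/2}$ (an $L^2$-bounded operation) yields, in $L^2(\R)$,
\[
2^{J/2}\Phi^{(1/2-h_\ell)}(2^Ju-k_\ell)=\sum_{p_\ell=0}^{+\infty}\gamma_{p_\ell}^{(\delta_\ell)}\,2^{J/2}\phi\big(2^Ju-(k_\ell-p_\ell)\big).
\]
Since $\textbf{k}$ is fixed, the family $\big(2^{J/2}\phi(2^J\cdot-(k_\ell-p_\ell))\big)_{p_\ell\in\N_0}$ is orthonormal in $L^2(\R)$, and by \eqref{eqn:stirling} the coefficient sequence $(\gamma_{p_\ell}^{(\delta_\ell)})_{p_\ell}$ belongs to $\ell^2(\N_0)$. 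Taking the tensor product over $\ell\in[\![1,d]\!]$ and using that the tensor product of one-dimensional $L^2$-convergent series against orthonormal systems with $\ell^2$ coefficients converges unconditionally in $L^2(\R^d)$ to the tensor product of the limits, I obtain that
\[
g(\mathbf{u}):=2^{J\frac{d}{2}}\prod_{\ell=1}^d\Phi^{(1/2-h_\ell)}(2^Ju_\ell-k_\ell)=\sum_{\textbf{p}\in\N_0^d}\Big(\prod_{\ell=1}^d\gamma_{p_\ell}^{(h_\ell-1/2)}\Big)g_{\textbf{p}}\quad\text{in }L^2(\R^d),
\]
where $g_{\textbf{p}}(\mathbf{u}):=2^{J\frac{d}{2}}\prod_{\ell=1}^d\phi\big(2^Ju_\ell-(k_\ell-p_\ell)\big)$.

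Next I would invoke the fact (recalled in Appendix \ref{sec:ape;profito}) that the $d$-fold Wiener integral $I_d$ is a bounded linear operator from $L^2(\R^d)$ into $L^2(\Omega)$; applying it to the $L^2(\R^d)$-convergent series above gives, in $L^2(\Omega)$,
\[
I_d(g)=\sum_{\textbf{p}\in\N_0^d}\Big(\prod_{\ell=1}^d\gamma_{p_\ell}^{(h_\ell-1/2)}\Big)I_d(g_{\textbf{p}}).
\]
By the very definition \eqref{eqn:rv:mu} of the chaotic random variables one has $I_d(g_{\textbf{p}})=\mu_{J,\textbf{k}-\textbf{p}}$, so the right-hand side is exactly the limit (along cubes $[\![0,n]\!]^d$) of the partial sums of the series defining $\sigma_{J,\textbf{k}}^{(\mathbf{h})}$ in \eqref{eq:ext-farima}. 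By Proposition \ref{prop:gen-farima} that same series converges almost surely to $\sigma_{J,\textbf{k}}^{(\mathbf{h})}$; since both $L^2(\Omega)$-convergence and almost sure convergence imply convergence in probability and limits in probability are almost surely unique, we conclude $I_d(g)=\sigma_{J,\textbf{k}}^{(\mathbf{h})}$ almost surely. As $I_d(g)$ is precisely the right-hand side of \eqref{prop:intexpsig:eq1}, this establishes the proposition.

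The only genuinely delicate point is the passage from the one-dimensional $L^2$-expansions of the $\Phi^{(1/2-h_\ell)}$ to the $L^2(\R^d)$-expansion of $g$ together with the termwise application of $I_d$: one must make sure the multi-indexed series $\sum_{\textbf{p}\in\N_0^d}$ converges unconditionally (equivalently, that its square-summable coefficients multiply an orthonormal system in $L^2(\R^d)$), which is what simultaneously legitimizes identifying its sum with the cube partial sums used in \eqref{eq:ext-farima} and interchanging the summation with the continuous operator $I_d$. Everything else amounts to bookkeeping of the normalizations $2^{J/2}$ and of the index shift $k_\ell\mapsto k_\ell-p_\ell$.
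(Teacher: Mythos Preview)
Your proposal is correct and follows essentially the same route as the paper's proof, which simply cites Lemma \ref{Lemma:expphidelta}, the Wiener isometry, and \eqref{eq:ext-farima}. You have merely spelled out the details of that sketch: the $L^2(\R)$ expansion of each $\Phi^{(1/2-h_\ell)}$, the tensorization to $L^2(\R^d)$, the transfer through the bounded operator $I_d$, and the identification of the resulting series with \eqref{eq:ext-farima}.
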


\begin{proof} 
The proposition easily results from Lemma \ref{Lemma:expphidelta}, the Wiener isometry and  \eqref{eq:ext-farima}.
\end{proof}

\begin{Rmk}
\label{rem:logbousig}
Thanks to the representation \eqref{prop:intexpsig:eq1}, using Theorem 6.7 in \cite{MR1474726}, the Wiener isomety and arguments similar to those in the proofs of Lemmas 1 and 2 in \cite{MR2027888}, it can be shown that there exist $\widetilde{C}$ a positive finite random variable and $\widetilde{\Omega}$ an event of probability $1$, such that, one has on $\widetilde{\Omega}$, for all $J\in\N$ and $\textbf{k}\in\Z^d$,
\begin{equation}
\label{rem:logbousig:eq1}
\big |\sigma_{J,\textbf{k}}^{(\mathbf{h})}\big|\le \widetilde{C}\left (\log \Big (3+J+|\mathbf{k}|\Big)\right)^{d/2}. 
\end{equation} 
\end{Rmk}

We have enough materials to complete the proof of Theorem \ref{thm:basefreq}.

\begin{proof}[End of the Proof of Theorem \ref{thm:basefreq}] It follows from \eqref{eqn:approxprocess}, Lemma \ref{lemma:kjvj}, \eqref{eqn:serievj}, \eqref{eq-seqbe}, the Wiener isometry and \eqref{prop:intexpsig:eq1}, that, for each $t\in\R_+$, the random series in the right-hand side of \eqref{eqn:basfreq:toprove} is convergent in $L^2(\Omega)$ and is equal almost surely to $X_{\mathbf{h},J}^{(d)}(t)$. Moreover using \eqref{rem:logbousig:eq1}, the fact that for all $\delta\in (0,1/2)$ the $\Phi_{\Delta}^{(\delta)}$ belongs to the Schwartz class, and classical computations, one can show that the convergence of the latter random series also holds almost surely and uniformly $t$ on each compact interval of $\R_+$.
\end{proof}

\section{Proof of Theorem \ref{thm:main1}} \label{sect:approxerro}

In this section, we aim at proving Theorem \ref{thm:main1}. We will need a number of intermediary results which mainly consist in bounding in convenient ways well-chosen parts of the random series in \eqref{eq:X-XJ}. We mention that the event $\Omega^*$ of probability $1$ (see Lemma \ref{lem:bound-gjk}) will appear in the statements of many of them. In what follows, we write, for all $(\mathbf{j},\mathbf{k})\in  (\Z^d)^2$ and $t\in\R_+$,
\begin{equation}\label{def:AJK}
\mathcal{A}_{\mathbf{j},\mathbf{k}}(t):= \int_0^t \prod_{\ell=1}^d \psi_{h_\ell}(2^{j_\ell}s-k_\ell) \, ds,
\end{equation}
where $\psi_{h_\ell}$ is the fractional primitive of order $h_l-1/2$ of the univariate Meyer mother wavelet $\psi$ (see Definition \ref{def:fractwav}). We start by defining the following subsets of $\Z^d$. 
\begin{Def} For all $n \in [\![1,d]\!]$ and all $J \in \N$, we define the infinite subset of $\Z^d $
\begin{equation}\label{ens:alephnj}
\aleph_{n,J} := \left\lbrace \mathbf{j} \in \Z^d : j_n \ge J \mbox{ and }  \max_{\ell \in [\![1,d]\!]} j_\ell = j_n \right\rbrace.
\end{equation}
and, for all $T>0$,
\begin{equation}\label{ens:bethnjt}
\beth_{n,T} := \left\lbrace \mathbf{k}\in\Z^d : k_n\in\Z; \quad \exists\,\ell\in [\![1,d]\!] \setminus \{n\},\quad |k_\ell| \ge 2^{j_n+1}T \right\rbrace.
\end{equation}
\end{Def}

\begin{Lemma}\label{lemmepour22:1}
Let $T >2$ and $L \geq 3/2$ be two fixed real numbers. There exits a positive almost surely finite random variables $C$ such that, for all $n \in [\![1,d]\!]$ and $J \in \N$, on $\Omega^*$, the random variable
\begin{equation}\label{def:hnj0}
\mathcal{H}^0_{n,J} := \sum_{\mathbf{j} \in \aleph_{n,J}} 2^{ j_1 (1-h_1)+\cdots + j_d (1-h_d)} \sum_{\mathbf{k} \in \beth_{n,T} } \left|\varepsilon_{\mathbf{j},\mathbf{k}}\right| \sup_{t\in [0,T]} \left| \mathcal{A}_{\mathbf{j},\mathbf{k}}(t) \right|
\end{equation}
is bounded from above by $ C J^{\frac{d}{2}} \left(\log(3+J)\right)^{\frac{d-1}{2}} 2^{-J(h_1+ \cdots + h_d + L -d-1)}$.
\end{Lemma}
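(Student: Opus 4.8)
The plan is to combine the logarithmic bound on $|\varepsilon_{\mathbf{j},\mathbf{k}}|$ from Lemma~\ref{lem:eqn:bound:rv} with the rapid decay \eqref{maj:psih} of the fractional primitives $\psi_{h_\ell}$, the gain of extra powers of $2^{-J}$ coming entirely from the ``escaping coordinate'' built into the definition \eqref{ens:bethnjt} of $\beth_{n,T}$. (If $d=1$ then $[\![1,d]\!]\setminus\{n\}=\emptyset$, so $\beth_{n,T}=\emptyset$ and $\mathcal{H}^0_{n,J}=0$; thus we may assume $d\geq 2$.) I will fix once and for all the decay rate $P:=2L$ in \eqref{maj:psih}, so that $|\psi_{h_\ell}(x)|\leq c_P(3+|x|)^{-P}$ for all $\ell\in[\![1,d]\!]$ and $x\in\R$; note that $L\geq 3/2$ forces $P\geq 3>2$, which is needed below for summability over the $k_\ell$. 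On $\Omega^*$, Lemma~\ref{lem:eqn:bound:rv} gives $|\varepsilon_{\mathbf{j},\mathbf{k}}|\leq C_d^*\prod_{m=1}^d\sqrt{\log(3+|j_m|+|k_m|)}$, while trivially $\sup_{t\in[0,T]}|\mathcal{A}_{\mathbf{j},\mathbf{k}}(t)|\leq\int_0^T\prod_{\ell=1}^d|\psi_{h_\ell}(2^{j_\ell}s-k_\ell)|\,ds$ by \eqref{def:AJK}. Since a multi-index $\mathbf{k}\in\beth_{n,T}$ satisfies $|k_{n'}|\geq 2^{j_n+1}T$ for at least one $n'\in[\![1,d]\!]\setminus\{n\}$, a union bound over $n'$ reduces the estimate of $\mathcal{H}^0_{n,J}$ to bounding, for each fixed such $n'$, the same sum restricted to $\{\mathbf{k}\in\Z^d:|k_{n'}|\geq 2^{j_n+1}T\}$.

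The core of the argument is the localisation gain from the coordinate $n'$. For $\mathbf{j}\in\aleph_{n,J}$ one has $j_{n'}\leq j_n$, hence for $s\in[0,T]$ and $|k_{n'}|\geq 2^{j_n+1}T$ we get $|2^{j_{n'}}s-k_{n'}|\geq|k_{n'}|-2^{j_n}T\geq\max\{\tfrac12|k_{n'}|,\,2^{j_n}T\}$. Splitting the exponent $P$ in two yields $|\psi_{h_{n'}}(2^{j_{n'}}s-k_{n'})|\leq c_P(3+\tfrac12|k_{n'}|)^{-P/2}(3+2^{j_n}T)^{-P/2}$, a quantity independent of $s$ which therefore comes out of the $ds$-integral; using $(3+2^{j_n}T)^{-P/2}\leq T^{-P/2}2^{-j_nP/2}$ and, since $P/2>1$, the bound $\sum_{k_{n'}\in\Z}\sqrt{\log(3+|j_{n'}|+|k_{n'}|)}\,(3+\tfrac12|k_{n'}|)^{-P/2}\leq C\big(\sqrt{\log(3+|j_{n'}|)}+1\big)$, the full sum over $k_{n'}$ is controlled by a constant times $2^{-j_nP/2}\big(\sqrt{\log(3+|j_{n'}|)}+1\big)$. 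For the remaining $d-1$ coordinates one substitutes $u=2^{j_n}s$ in $\int_0^T\prod_{\ell\neq n'}|\psi_{h_\ell}(2^{j_\ell}s-k_\ell)|\,ds$, interchanges the (nonnegative) summation over $(k_\ell)_{\ell\neq n'}$ with the integral, and applies the elementary estimate $\sum_{k\in\Z}\sqrt{\log(3+|j|+|k|)}\,(3+|x-k|)^{-P}\leq C\big(\sqrt{\log(3+|j|)}+\sqrt{\log(3+|x|)}+1\big)$, a straightforward consequence of \eqref{maj:psih} (compare the technical lemmas of Appendix~\ref{sec:use-lemmata}); since $|2^{j_\ell-j_n}u|\leq 2^{j_n}T$ throughout the range of integration, this bounds the corresponding contribution by a constant depending on $T$ times $\prod_{\ell\neq n'}\big(\sqrt{\log(3+|j_\ell|)}+\sqrt{j_n}+1\big)$.

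Putting the two steps together, for fixed $n'$ and $\mathbf{j}\in\aleph_{n,J}$ the inner sum over $\mathbf{k}$ is at most a constant times $2^{-j_nP/2}\big(\sqrt{\log(3+|j_{n'}|)}+1\big)\prod_{\ell\neq n'}\big(\sqrt{\log(3+|j_\ell|)}+\sqrt{j_n}+1\big)$, and it remains to multiply by $2^{j_1(1-h_1)+\cdots+j_d(1-h_d)}$ and sum over $\mathbf{j}\in\aleph_{n,J}$. For each $\ell\neq n$ one first sums $j_\ell$ from $-\infty$ up to $j_n$: since $1-h_\ell>0$ this series converges and is dominated by the terms with $j_\ell$ close to $j_n$, which replaces the $\ell$-th factor by $2^{j_n(1-h_\ell)}$ times a $\sqrt{j_n}$-type factor — or merely by $2^{j_n(1-h_{n'})}\sqrt{\log(3+j_n)}$ when $\ell=n'$. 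One is then left with a single sum over $j_n\geq J$ whose summand, besides $2^{j_n(d-h_1-\cdots-h_d-P/2)}$, carries only powers of $j_n$ and of $\log(3+j_n)$; by \eqref{eqn:hermi:cond} the exponent $d-h_1-\cdots-h_d-P/2$ is $<\tfrac12-P/2<0$ (as $P=2L\geq 3$), so this geometric-type sum is dominated by its term $j_n=J$ and is bounded by a constant times $2^{J(d-h_1-\cdots-h_d-P/2)}J^{(d-1)/2}(\log(3+J))^{1/2}\leq 2^{J(d-h_1-\cdots-h_d-P/2)}J^{d/2}(\log(3+J))^{(d-1)/2}$ for $d\geq 2$. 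Finally $d-h_1-\cdots-h_d-P/2=(d+1-h_1-\cdots-h_d-L)-1$, so $2^{J(d-h_1-\cdots-h_d-P/2)}\leq 2^{-J(h_1+\cdots+h_d+L-d-1)}$; absorbing the $d-1$ choices of $n'$ and all the (deterministic) constants, which are multiples of $C_d^*$, gives the claimed bound for $\mathcal{H}^0_{n,J}$ with an almost surely finite $C$.

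The only genuine difficulty is the bookkeeping in this final summation: one must keep track of the fact that exactly one coordinate — namely $n'$ — has been ``spent'' on the localisation gain, so that only $d-1$ rather than $d$ of the expensive $\sqrt{j_n}$-type factors survive (this is what keeps the powers of $J$ and of $\log(3+J)$ within the announced range), and one must verify that every one-dimensional series over a free index $j_\ell$ with $\ell\neq n$ converges — which is exactly where the condition $h_\ell<1$ from \eqref{eqn:hermi:cond} is used.
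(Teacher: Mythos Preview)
Your proof is correct and follows essentially the same strategy as the paper: bound $|\varepsilon_{\mathbf{j},\mathbf{k}}|$ via Lemma~\ref{lem:eqn:bound:rv}, use the rapid decay \eqref{maj:psih} to extract a negative power of $2^{j_n}$ from the ``escaping'' coordinate in $\beth_{n,T}$, and then sum geometrically over $\mathbf{j}\in\aleph_{n,J}$.

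There are two small technical differences worth noting. First, the paper decomposes $\beth_{n,T}$ over all boolean patterns $v\in\mathcal{B}_n$ of constrained/unconstrained coordinates, whereas you use a simple union bound over a single escaping index $n'$; your version is a bit leaner and loses nothing. Second, the paper applies \eqref{maj:psih} with exponent $L$ and harvests the gain $2^{-j_n(L-1)}$ from the tail sum $\sum_{|k_{n'}|>2^{j_n+1}T}(3+|k_{n'}|)^{-L}$, while you apply \eqref{maj:psih} with exponent $P=2L$ and split it as $(3+|\cdot|)^{-P/2}(3+|\cdot|)^{-P/2}$, one half for summability over $k_{n'}$ and the other for a pointwise gain $2^{-j_nL}$. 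Your device therefore yields an extra factor $2^{-J}$ which you then discard in the last step; this is harmless and explains why your intermediate bound $J^{(d-1)/2}(\log(3+J))^{1/2}$ is sharper than the claimed $J^{d/2}(\log(3+J))^{(d-1)/2}$.
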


\begin{proof}
Let $\mathbf{j} \in \aleph_{n,J}$. Let us define the set of boolean vectors
\[ \mathcal{B}_n := \Big\{v=(v_l)_{\ell \in [\![1,d]\!]} \in \{0,1\}^d \, : \, v_n =1 \text{ and } \exists\,\ell' \neq n \, : \, v_{\ell'}=0 \Big\}\]
and, for all $v \in \mathcal{B}_n$, the set
\[ \beth_{n,T}^v := \left\lbrace \mathbf{k}\in\Z^d : k_\ell \in\Z \text{ if } v_\ell=1 \text{ and } |k_\ell| \ge 2^{j_n+1}T \text{ otherwise} \right\rbrace. \]
Of course, we have
\begin{equation}\label{eqn:ho:cutsum}
\sum_{\mathbf{k} \in \beth_{n,T} } \left|\varepsilon_{\mathbf{j},\mathbf{k}}\right| \sup_{t\in [0,T]} \left| \mathcal{A}_{\mathbf{j},\mathbf{k}}(t) \right| \leq \sum_{v \in \mathcal{B}_n} \sum_{\mathbf{k} \in \beth_{n,T}^v } \left|\varepsilon_{\mathbf{j},\mathbf{k}}\right| \sup_{t\in [0,T]} \left| \mathcal{A}_{\mathbf{j},\mathbf{k}}(t) \right|.
\end{equation}

Fix $v \in \mathcal{B}_n$, using together  the inequality \eqref{log:ineg1}, the triangular inequality, the fact that the function $y \mapsto (2+y)^{-L}\sqrt{\log(2+y)}$ is decreasing on $\R_+$ and the inequality \eqref{log:ineg2}, we have, for all $s \in [0,T]$
\begin{align}
&\prod_{\ell=1}^d \frac{\sqrt{\log(3+|j_\ell|+|k_\ell|)}}{(3+|2^{j_\ell}s-k_\ell|)^L} \nonumber\\
&= \left(\prod_{\ell : v_\ell=1} \sum_{k_\ell \in \Z} \frac{\sqrt{\log(3+|j_\ell|+|k_\ell|)}}{(3+|2^{j_\ell}s-k_\ell|)^L} \right)\times \ldots \nonumber \\ & \qquad \ldots \times \left(\prod_{\ell' : v_{\ell'}=0} \sum_{|k_{\ell'}|> 2^{j_n+1}T} \frac{\sqrt{\log(3+|j_{\ell'}|+|k_{\ell'}|)}}{(3+|2^{j_{\ell'}}s-k_{\ell'}|)^L} \right) \nonumber \\
& \leq c_0 2^L \left(\prod_{\ell : v_\ell=1}\sqrt{\log(3+|j_\ell|+2^{j_\ell}T)} \right) \times \ldots \nonumber\\ & \qquad \ldots \times\left( \prod_{\ell' : v_{\ell'}=0} \sum_{|k_{\ell'}|> 2^{j_n+1}T} \frac{\sqrt{\log(3+|j_{\ell'}|)}\sqrt{\log(3+|k_{\ell'}|)}}{(3+|k_{\ell'}|)^L} \right) \nonumber \\
& \leq c_1 \left(\prod_{\ell : v_\ell=1}\sqrt{\log(3+|j_\ell|+2^{j_n}T)} \right)\times \ldots \nonumber \\ & \qquad \ldots \times \left( \prod_{\ell' : v_{\ell'}=0} \sqrt{\log(3+|j_{\ell'}|)}\int_{2^{j_n+1}T}^{+\infty} \frac{\sqrt{\log(2+y)}}{(2+y)^L} dy \right) \nonumber \\
& \leq c_2  \sqrt{\prod_{\ell=1, \ell\neq n}^d \log(3+|j_\ell|)} \times j_n^{\frac{d}{2}} \times 2^{-j_n(L-1)(\# \{ \ell' \, : \, v_{\ell'}=0 \})} \nonumber \\
& \le c_2  \sqrt{\prod_{\ell=1, \ell\neq n}^d \log(3+|j_\ell|)} \times j_n^{\frac{d}{2}} \times 2^{-j_n(L-1)}, \label{maj:gnj0:avant}
\end{align}
with $c_0$, $c_1$ and $c_2$ positive deterministic constants not depending on $n$, $\mathbf{j}$, $v$ nor $J$. Then, the expression \eqref{def:AJK},  the bound \eqref{eqn:bound:rv}, the fast decay property \eqref{maj:psih} and inequality \eqref{maj:gnj0:avant} give
\begin{align}
 &\sum_{\mathbf{k} \in \beth_{n,T}^v } \left|\varepsilon_{\mathbf{j},\mathbf{k}}\right| \sup_{t\in [0,T]} \left| \mathcal{A}_{\mathbf{j},\mathbf{k}}(t) \right| \leq C_1  \sqrt{\prod_{\ell=1, \ell\neq n}^d \log(3+|j_\ell|)} \times j_n^{\frac{d}{2}} \times 2^{-j_n(L-1)}, \label{maj:gnj0}
\end{align}
with $C_1$ a positive almost surely finite random variables not depending on $n$, $\mathbf{j}$, $v$ nor $J$. Then, using \eqref{def:hnj0}, \eqref{eqn:ho:cutsum}, \eqref{maj:gnj0} and the triangular inequality, we get
\begin{align}
&\mathcal{H}^0_{n,J}  \le C_3 \sum_{j_n\ge J} \quad \sum_{\substack{j_\ell \le j_n \\ \ell \in [\![1,d]\!] \setminus \{n\}}} \sqrt{\prod_{\substack{\ell=1 \\ \ell\neq n}}^d \log(3+|j_\ell|)} \times j_n^{\frac{d}{2}} \times 2^{-j_n(L-1)} \prod_{\ell=1}^d 2^{j_\ell(1-h_\ell)} \nonumber \\
& =C_3 \sum_{j_n\ge J} \quad \sum_{\substack{j_\ell \le j_n \\ \ell \in [\![1,d]\!] \setminus \{n\}}} \left( \prod_{\substack{\ell=1 \\ \ell\neq n}}^d \sqrt{\log(3+|j_\ell|)} 2^{j_\ell(1-h_\ell)} \right) \times j_n^{\frac{d}{2}} \times 2^{-j_n(h_n+L-2)} \nonumber \\
& = C_3  \sum_{j_n\ge J} j_n^{\frac{d}{2}} 2^{-j_n(h_n+L-2)} \prod_{\substack{\ell=1 \\ \ell\neq n}}^d \sum_{j_\ell=-\infty}^{j_n} \sqrt{\log(3+|j_\ell|)} 2^{j_\ell(1-h_\ell)} \nonumber
\end{align} 
and since, by inequality \eqref{log:ineg1}, we have
\begin{align*}
 \prod_{\substack{\ell=1 \\ \ell\neq n}}^d & \sum_{j_\ell=-\infty}^{j_n} \sqrt{\log(3+|j_\ell|)} 2^{j_\ell(1-h_\ell)}\\
 &= \prod_{\substack{\ell=1 \\ \ell\neq n}}^d \log(3+|j_n|)\sum_{p=0}^{+\infty} \sqrt{\log(3+|j_n-p|)} 2^{(j_n-p)(1-h_\ell)} \\
& = \prod_{\substack{\ell=1 \\ \ell\neq n}}^d \log(3+|j_n|) 2^{j_n(1-h_\ell)} \left(\sum_{p=0}^{+\infty} \sqrt{\log(3+|p|)} 2^{-p(1-h_n)} \right) \\
& \leq c \left(\log(3+|j_n|)\right)^{\frac{d-1}{2}} \prod_{\substack{\ell=1 \\ \ell\neq n}}^d 2^{j_n(1-h_\ell)} ,
\end{align*}
for a deterministic constant $c>0$, we conclude that
\begin{align}
\mathcal{H}^0_{n,J} &\le C_2 \sum_{j_n\ge J} j_n^{\frac{d}{2}} \left(\log(3+|j_n|)\right)^{\frac{d-1}{2}} 2^{-j_n(h_n+L-2)} \prod_{\substack{\ell=1 \\ \ell\neq n}}^d 2^{j_n(1-h_\ell)} \nonumber \\ 
& = C_2 \sum_{j_n\ge J} j_n^{\frac{d}{2}} \left(\log(3+|j_n|)\right)^{\frac{d-1}{2}} 2^{-j_n(h_1+ \cdots + h_d + L-d-1)} \nonumber \\ 
& \le C_3 J^{\frac{d}{2}} \left(\log(3+J)\right)^{\frac{d-1}{2}} 2^{-J(h_1+ \cdots + h_d + L-d-1)},
\end{align}
where $C_2$ and $C_3$ are positive almost surely finite random variables.
\end{proof}

For the next Lemma, we need to define a partition of $\Z$. This is similar to what is done in \cite[Definition 2.5]{MR4110623} for the Rosenblatt process.

\begin{Def}
Let $a$ be a fixed real number satisfying $1/2 < a < 1$. For all $(j,k)\in \Z_+ \times \Z$, let us denote by $B_{j,k}$ the interval 
\begin{equation}\label{def:bjk}
B_{j,k}:= [ k2^{-j}-2^{-ja},k2^{-j}+2^{-ja} ].
\end{equation}
For all $j\in\N$ and for all $t\in\R_+$, we consider the three disjoint subsets of $\Z$:
\begin{align}
D_j^1(t) & := \lbrace k\in\Z : B_{j,k} \subseteq [0,t] \rbrace,  \label{def:dj1}\\
D_j^2(t) & := \lbrace k\in\Z\setminus D_j^1(t) : B_{j,k} \cap [0,t] \neq \emptyset \rbrace,\label{def:dj2}\\
D_j^3(t) & := \lbrace k\in\Z  : B_{j,k} \cap [0,t] = \emptyset \rbrace. \label{def:dj3}
\end{align}
These three sets, depending on $t$ and $a$, form a partition of $\Z$:
\[
\Z = \bigcup_{\ell=1}^3 D_j^{\ell}(t).
\]
\end{Def}

\begin{Rmk}
By definition, it is clear that $D_j^3(t)$ is always an infinite countable set while $D_j^1(t)$ and $D_j^2(t)$ are two finites sets, possibly empty. Moreover, for all strictly positive real number $T$ and all $j\in\Z_+$, we have 
\begin{align}
& \sup_{t\in [0,T]} \left\lbrace \card (D_j^1(t)) \right\rbrace \le c' 2^j, \label{maj:carddj1} \\
& \sup_{t\in [0,T]} \left\lbrace \card (D_j^2(t)) \right\rbrace \le c'' 2^{j(1-a)}, \label{maj:carddj2} 
\end{align}
where $c'\ge 1$ and $c''\ge 1$ are two finite positive constants. Let us also remark that $c'$ only depends on $T$ while $c''$ does not depend on it.
\end{Rmk}

\begin{Lemma}\label{lemmepour22:2}
Let $T >2$ and $L \ge 2^{-1}(1-a)^{-1}+1$ be two fixed real numbers. There exits a positive almost surely finite random variables $C$ such that, for all $n \in [\![1,d]\!]$ and $J \in \N$, on $\Omega^*$, the random variable
\begin{align*}
\mathcal{H}^3_{n,J} &:= \sum_{\mathbf{j} \in \aleph_{n,J}} 2^{ j_1 (1-h_1)+\cdots + j_d (1-h_d)} \times \ldots  \\ & \qquad \ldots \times\sup_{t \in [0,T]} \left \{\sum_{k_n \in D_{j_n}^3(t) } \sum_{ \substack{k_\ell \in \Z \\ \ell\in [\![1,d]\!] \setminus \{n\}}} \left|\varepsilon_{\mathbf{j},\mathbf{k}}\right|  \left| \mathcal{A}_{\mathbf{j},\mathbf{k}}(t) \right| \right\}    
\end{align*}
is bounded from above by
$ C J^{\frac{d+1}{2}} \left(\log(3+J)\right)^{\frac{d-1}{2}} 2^{-J(h_1+ \cdots + h_d + (L-1)(1-a) -d)}$.
\end{Lemma}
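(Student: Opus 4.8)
The plan is to follow closely the architecture of the proof of Lemma~\ref{lemmepour22:1}, the only genuinely new ingredient being a tail estimate for the summation over $k_n\in D^3_{j_n}(t)$. First I would fix $n\in [\![1,d]\!]$, $J\in\N$, $\mathbf{j}\in\aleph_{n,J}$ and $t\in[0,T]$. Taking $m=0$ in the Schwartz decay \eqref{maj:psih} gives $|\psi_{h_\ell}(x)|\le c\,(3+|x|)^{-L}$, so by \eqref{def:AJK} one has $|\mathcal{A}_{\mathbf{j},\mathbf{k}}(t)|\le c^{d}\int_0^t\prod_{\ell=1}^d(3+|2^{j_\ell}s-k_\ell|)^{-L}\,ds$; combining this with the bound \eqref{eqn:bound:rv} on $|\varepsilon_{\mathbf{j},\mathbf{k}}|$, which holds on $\Omega^*$, and using Tonelli's theorem (all terms being non-negative) to bring the $\mathbf{k}$-summations inside the $s$-integral, one reduces the double sum inside the supremum to
\begin{align*}
&\sum_{k_n\in D^3_{j_n}(t)}\sum_{\substack{k_\ell\in\Z\\ \ell\neq n}}|\varepsilon_{\mathbf{j},\mathbf{k}}|\,|\mathcal{A}_{\mathbf{j},\mathbf{k}}(t)| \\
&\qquad \le c\,C_d^*\int_0^t \Big(\sum_{k_n\in D^3_{j_n}(t)}\frac{\sqrt{\log(3+|j_n|+|k_n|)}}{(3+|2^{j_n}s-k_n|)^{L}}\Big)\prod_{\ell\neq n}\Big(\sum_{k_\ell\in\Z}\frac{\sqrt{\log(3+|j_\ell|+|k_\ell|)}}{(3+|2^{j_\ell}s-k_\ell|)^{L}}\Big)\,ds.
\end{align*}

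For the $d-1$ unrestricted one-dimensional sums I would invoke the summation Lemmas recalled in Appendix~\ref{sec:use-lemmata}, namely inequalities \eqref{log:ineg1}--\eqref{log:ineg2}, exactly as in Lemma~\ref{lemmepour22:1}: each of them is $\le c\sqrt{\log(3+|j_\ell|+2^{j_\ell}s)}\le c\sqrt{\log(3+|j_\ell|)}\,\sqrt{j_n}$, where I used $j_\ell\le j_n$ (valid since $\mathbf{j}\in\aleph_{n,J}$), $s\le T$, and $\log(3+2^{j_n}T)\le c\,j_n$. The crucial new point is the $k_n$-sum. Since $B_{j_n,k_n}\cap[0,t]=\emptyset$ (see \eqref{def:bjk}, \eqref{def:dj3}) and $[0,s]\subseteq[0,t]$, the point $s$ lies outside $B_{j_n,k_n}$, hence $|2^{j_n}s-k_n|>2^{j_n(1-a)}$ for every $s\in[0,t]$ and every $k_n\in D^3_{j_n}(t)$. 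I would then split these admissible $k_n$ into the dyadic shells $2^{m}\,2^{j_n(1-a)}\le|2^{j_n}s-k_n|<2^{m+1}\,2^{j_n(1-a)}$, $m\ge 0$, each containing at most $c\,2^{m}2^{j_n(1-a)}$ integers, on which $|k_n|\le 2^{j_n}T+2^{m+1}2^{j_n(1-a)}$ and therefore $\sqrt{\log(3+|j_n|+|k_n|)}\le c(\sqrt{j_n}+\sqrt{m})$; summing the geometric series in $m$ (convergent since $L>1$) yields
\[
\sum_{k_n\in D^3_{j_n}(t)}\frac{\sqrt{\log(3+|j_n|+|k_n|)}}{(3+|2^{j_n}s-k_n|)^{L}}\le c\,\sqrt{j_n}\;2^{-j_n(1-a)(L-1)},\qquad s\in[0,t].
\]

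Multiplying the $d$ one-dimensional estimates, integrating the resulting ($s$-free) bound over $[0,t]$ and taking the supremum over $t\in[0,T]$, I get that each $\mathbf{j}\in\aleph_{n,J}$ contributes at most $C\,j_n^{d/2}\,2^{-j_n(1-a)(L-1)}\prod_{\ell\neq n}\sqrt{\log(3+|j_\ell|)}$, where $C$ is a positive almost surely finite random variable (a deterministic multiple of $C_d^*$, uniform in $n$, $J$, $\mathbf{j}$). Multiplying by $2^{j_1(1-h_1)+\cdots+j_d(1-h_d)}$ and summing over $\aleph_{n,J}$, i.e.\ over $j_n\ge J$ and $j_\ell\le j_n$ for $\ell\neq n$, the inner sums are treated exactly as at the end of the proof of Lemma~\ref{lemmepour22:1}: $\sum_{j_\ell\le j_n}\sqrt{\log(3+|j_\ell|)}\,2^{j_\ell(1-h_\ell)}\le c\sqrt{\log(3+|j_n|)}\,2^{j_n(1-h_\ell)}$ (using $h_\ell<1$ and \eqref{log:ineg1}), leaving
\[
\mathcal{H}^3_{n,J}\le C\sum_{j_n\ge J}j_n^{d/2}\,(\log(3+j_n))^{(d-1)/2}\,2^{-j_n(h_1+\cdots+h_d+(L-1)(1-a)-d)}.
\]
Because $h_1+\cdots+h_d>d-\tfrac12$ and $(L-1)(1-a)\ge\tfrac12$ (this being the content of the hypothesis $L\ge 2^{-1}(1-a)^{-1}+1$), the exponent is strictly positive, so the sum over $j_n\ge J$ is dominated by its first term; this gives the announced bound, in fact with $J^{d/2}$ in place of $J^{\frac{d+1}{2}}$, which is even stronger. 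The one delicate step is the $k_n$-tail estimate above: one must extract the decay $2^{-j_n(1-a)(L-1)}$ — this is precisely where the half-width $2^{-j_na}$ of the interval $B_{j_n,k_n}$ and the lower bound on $L$ enter — while keeping the logarithmic weight down to a power of $\sqrt{j_n}$, so that the polynomial prefactor does not deteriorate after the summation over $\mathbf{j}\in\aleph_{n,J}$; everything else is a bookkeeping of logarithmic factors that runs parallel to the proof of Lemma~\ref{lemmepour22:1}.
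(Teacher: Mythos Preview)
Your proof is correct and follows essentially the same architecture as the paper's: bound $|\mathcal{A}_{\mathbf{j},\mathbf{k}}|$ via the Schwartz decay \eqref{maj:psih}, bound $|\varepsilon_{\mathbf{j},\mathbf{k}}|$ via \eqref{eqn:bound:rv}, factor the $\mathbf{k}$-sum, treat the $d-1$ unrestricted $k_\ell$-sums by Lemma~\ref{lem:maj:somme1}, and then sum over $\aleph_{n,J}$ exactly as in Lemma~\ref{lemmepour22:1}. The one genuine difference is in the $k_n\in D^3_{j_n}(t)$ sum: the paper simply invokes the ready-made Lemma~\ref{lem:maj:somme2} (borrowed from \cite{MR4110623}), which returns a factor $(1+j_n)\,2^{-j_n(L-1)(1-a)}$, whereas you rederive this estimate from scratch by a dyadic shell decomposition around $2^{j_n}s$, and in doing so obtain the slightly sharper factor $\sqrt{j_n}\,2^{-j_n(L-1)(1-a)}$. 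This is why your final bound carries $J^{d/2}$ rather than the paper's $J^{(d+1)/2}$; both are admissible for the downstream application in Theorem~\ref{thm:main1}, so the improvement is cosmetic, but your direct argument is self-contained and avoids importing the extra lemma.
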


\begin{proof}
Let $t\in [0,T]$ and $\mathbf{j} \in \aleph_{n,J}$. Using together the expression \eqref{def:AJK}, the fast decay property \eqref{maj:psih}, the bound \eqref{eqn:bound:rv} as well as Lemmata \ref{lem:maj:somme1} and \ref{lem:maj:somme2}, we get
\begin{align}
& \sum_{k_n\in D_{j_n}^3(t)} \quad \sum_{\substack{k_\ell\in\Z \\ \ell \in [\![1,d]\!] \setminus \{n\}}} \left|\varepsilon_{\mathbf{j},\mathbf{k}}\right| \left| \mathcal{A}_{\mathbf{j},\mathbf{k}}(t) \right| \nonumber \\
& \le C_0 \sum_{k_n\in D_{j_n}^3(t)} \quad \sum_{\substack{k_\ell\in\Z \\ \ell \in [\![1,d]\!] \setminus \{n\}}} \int_0^T \prod_{\ell=1}^d \frac{\sqrt{\log(3+|j_\ell|+|k_\ell|}}{(3+|2^{j_\ell}s-k_\ell|)^L} ds \nonumber \\
& = C_0 \int_{0}^T \left( \sum_{k_n\in D_{j_n}^3(t)} \frac{\sqrt{\log(3+|j_n|+|k_n|}}{(3+|2^{j_n}s-k_n|)^L}  \right) \prod_{\substack{\ell=1 \\ \ell\neq n}}^d \sum_{k_\ell \in\Z} \frac{\sqrt{\log(3+|j_\ell|+|k_\ell|}}{(3+|2^{j_\ell}s-k_\ell|)^L} ds \nonumber \\
& \le C_1 \prod_{\substack{\ell=1 \\ \ell\neq n}}^d \sqrt{\log(3+|j_\ell|+2^{j_\ell}T)} \int_0^T \sum_{k_n\in D_{j_n}^3(t)} \frac{\sqrt{\log(3+|j_n|+|k_n|}}{(3+|2^{j_n}s-k_n|)^L} ds \nonumber \\
& \le C_2 \prod_{\substack{\ell=1 \\ \ell\neq n}}^d \sqrt{\log(3+|j_\ell|+2^{j_\ell}T)} (1+j_n) 2^{-j_n(L-1)(1-a)}. \label{maj:gnj3}
\end{align}
Now, let us remark that, if $j_n \geq J$, combining the triangular inequality and \eqref{log:ineg1}, it comes
\begin{align}
\sum_{\substack{j_\ell \leq j_n \\ \ell \in [\![1,d]\!] \setminus \{n\}}} & \prod_{\substack{\ell=1 \\ \ell\neq n}}^d \left( \sqrt{\log(3+|j_\ell|+2^{j_\ell}T)} 2^{j_\ell(1-h_\ell)} \right) \nonumber\\
& = \prod_{\substack{\ell=1 \\ \ell\neq n}}^d \left( \sum_{j_\ell=-\infty}^{j_n} \sqrt{\log(3+|j_\ell|+2^{j_\ell}T)} 2^{j_\ell(1-h_\ell)} \right) \nonumber \\
& = \prod_{\substack{\ell=1 \\ \ell\neq n}}^d \left( \sum_{p=0}^{+ \infty} \sqrt{\log(3+|j_n-p|+2^{j_n-p}T)} 2^{(j_n-p)(1-h_\ell)} \right) \nonumber \\
& \leq  \left(\sqrt{\log(3+2^{j_n}T)\log(3+|j_n|)}\right)^{d-1} \times \ldots \nonumber\\
& \qquad \ldots \times \prod_{\substack{\ell=1 \\ \ell\neq n}}^d 2^{j_n(1-h_\ell)} \left(  \sum_{p=0}^{+ \infty} \sqrt{\log(3+p)} 2^{-p(1-h_\ell)} \right) \nonumber \\
& \leq c (\sqrt{\log(3+2^{j_n}T)\log(3+|j_n|)})^{d-1} \prod_{\substack{\ell=1 \\ \ell\neq n}}^d 2^{j_n(1-h_\ell)} \label{maj:gnj3bis}
\end{align}
for a deterministic constant $c>0$. Then, by the the definition \eqref{ens:alephnj}, we get from \eqref{maj:gnj3} and  \eqref{maj:gnj3bis}
\begin{align*}
\mathcal{H}^3_{n,J} & \le C_3 \sum_{j_n\ge J} (1+j_n) 2^{-j_n(h_1+\cdots+h_d+(L-1)(1-a)-d)} \times \ldots \\
& \qquad \ldots \times (\sqrt{\log(3+2^{j_n}T)\log(3+|j_n|)})^{d-1} \\
& \leq C_4 \sum_{j_n\ge J} j_n^{\frac{d+1}{2}} (\log(3+|j_n|))^{\frac{d-1}{2}}  2^{-j_n(h_1+\cdots+h_d+(L-1)(1-a)-d)} \\
& \leq C_5 J^{\frac{d+1}{2}} (\log(3+|J|))^{\frac{d-1}{2}}  2^{-J(h_1+\cdots+h_d+(L-1)(1-a)-d)}
\end{align*}
where $C_3$, $C_4$ and $C_5$ are positive almost surely finite random variables.
\end{proof}

\begin{Lemma}\label{lemmepour22:3}
Let $T >2$ be a fixed real number. There exits a positive almost surely finite random variables $C$ such that, for all $n \in [\![1,d]\!]$ and $J \in \N$, on $\Omega^*$, the random variable
\begin{align*}
\mathcal{H}^2_{n,J} &:= \sum_{\mathbf{j} \in \aleph_{n,J}} 2^{ j_1 (1-h_1)+\cdots + j_d (1-h_d)} \times \ldots \\ & \qquad \ldots \times  \sup_{t \in [0,T]} \left \{\sum_{k_n \in D_{j_n}^2(t) } \sum_{\substack{k_\ell \in \Z \\ \ell\in [\![1,d]\!] \setminus \{n\}}} \left|\varepsilon_{\mathbf{j},\mathbf{k}}\right|  \left| \mathcal{A}_{\mathbf{j},\mathbf{k}}(t) \right| \right\}
\end{align*}
is bounded from above by
$ C J^{\frac{d}{2}} \left(\log(3+J)\right)^{\frac{d-1}{2}} 2^{-J(h_1+ \cdots + h_d +a -d)}.$
\end{Lemma}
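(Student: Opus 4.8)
The plan is to mimic the proof of Lemma \ref{lemmepour22:2}, keeping the step that separates the index $n$ from the remaining $d-1$ indices, but replacing the estimate of the sum over the infinite set $D_{j_n}^3(t)$ by one tailored to the \emph{finite} boundary set $D_{j_n}^2(t)$, whose cardinality is controlled by \eqref{maj:carddj2}.

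First I would fix $t\in[0,T]$ and $\mathbf{j}\in\aleph_{n,J}$. Starting from the expression \eqref{def:AJK} of $\mathcal{A}_{\mathbf{j},\mathbf{k}}(t)$, applying the fast decay property \eqref{maj:psih} of the $\psi_{h_\ell}$'s with an exponent $L>1$ that we are free to take as large as we wish, using the logarithmic bound \eqref{eqn:bound:rv} on $|\varepsilon_{\mathbf{j},\mathbf{k}}|$ valid on $\Omega^*$, enlarging $\int_0^t$ to $\int_0^T$ and interchanging the nonnegative sums with the integral, one gets
\begin{align*}
&\sum_{k_n\in D_{j_n}^2(t)}\sum_{\substack{k_\ell\in\Z\\\ell\neq n}}|\varepsilon_{\mathbf{j},\mathbf{k}}|\,|\mathcal{A}_{\mathbf{j},\mathbf{k}}(t)|\\
&\quad\le C_0\int_0^T\left(\sum_{k_n\in D_{j_n}^2(t)}\frac{\sqrt{\log(3+|j_n|+|k_n|)}}{(3+|2^{j_n}s-k_n|)^L}\right)\prod_{\substack{\ell=1\\\ell\neq n}}^d\left(\sum_{k_\ell\in\Z}\frac{\sqrt{\log(3+|j_\ell|+|k_\ell|)}}{(3+|2^{j_\ell}s-k_\ell|)^L}\right)ds.
\end{align*}
For each $\ell\neq n$, Lemma \ref{lem:maj:somme1} bounds the corresponding factor by $c\sqrt{\log(3+|j_\ell|+2^{j_\ell}T)}$ uniformly in $s\in[0,T]$. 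For the index $n$, I would combine three elementary facts: $\card(D_{j_n}^2(t))\le c''2^{j_n(1-a)}$ by \eqref{maj:carddj2}; every $k_n\in D_{j_n}^2(t)$ satisfies $|k_n|\le(T+1)2^{j_n}$, hence $\sqrt{\log(3+|j_n|+|k_n|)}\le c(1+\sqrt{j_n})$ when $j_n\ge J$; and $\int_0^T(3+|2^{j_n}s-k_n|)^{-L}ds\le c\,2^{-j_n}$ after the substitution $u=2^{j_n}s$ and $L>1$. Together these give
\[
\int_0^T\sum_{k_n\in D_{j_n}^2(t)}\frac{\sqrt{\log(3+|j_n|+|k_n|)}}{(3+|2^{j_n}s-k_n|)^L}\,ds\le c(1+\sqrt{j_n})\,2^{-j_n a}.
\]

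Plugging these in yields, for every $t\in[0,T]$, a bound on the inner double sum of the form $C_1(1+\sqrt{j_n})2^{-j_na}\prod_{\ell\neq n}\sqrt{\log(3+|j_\ell|+2^{j_\ell}T)}$, which no longer depends on $t$, so the supremum over $t\in[0,T]$ costs nothing. Then I would multiply by $2^{j_1(1-h_1)+\cdots+j_d(1-h_d)}$ and sum over $\mathbf{j}\in\aleph_{n,J}$: the $d-1$ sums over $j_\ell\le j_n$, $\ell\neq n$, are handled exactly as in the derivation of \eqref{maj:gnj3bis}, producing a factor $c\,(\sqrt{\log(3+2^{j_n}T)\log(3+|j_n|)})^{d-1}\prod_{\ell\neq n}2^{j_n(1-h_\ell)}$, which is $\le c\,j_n^{(d-1)/2}(\log(3+j_n))^{(d-1)/2}\prod_{\ell\neq n}2^{j_n(1-h_\ell)}$ since $\log(3+2^{j_n}T)\le c\,j_n$. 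Collecting all powers of $2^{j_n}$ leaves $2^{-j_n(h_1+\cdots+h_d+a-d)}$, with $h_1+\cdots+h_d+a-d>0$ because $\sum_\ell h_\ell>d-1/2$ and $a>1/2$; hence, using $(1+\sqrt{j_n})j_n^{(d-1)/2}\le 2j_n^{d/2}$, the remaining geometric-type tail $\sum_{j_n\ge J}j_n^{d/2}(\log(3+j_n))^{(d-1)/2}2^{-j_n(h_1+\cdots+h_d+a-d)}$ is $\le C\,J^{d/2}(\log(3+J))^{(d-1)/2}2^{-J(h_1+\cdots+h_d+a-d)}$, which is the claimed estimate.

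The computations are entirely routine once the tools are lined up; the one point that needs care is the index-$n$ sum, where one must keep the cardinality bound $\card(D_{j_n}^2(t))\le c''2^{j_n(1-a)}$ in play — replacing $D_{j_n}^2(t)$ by $\Z$ would drop the decisive gain $2^{-j_na}$ and make the final series over $j_n$ fail to concentrate at $j_n=J$. It is precisely the interplay between the smallness of $D_{j_n}^2(t)$ and the extra factor $2^{-j_n}$ furnished by the $s$-integral that produces the $2^{-j_na}$ responsible for the exponent $h_1+\cdots+h_d+a-d$ (rather than $h_1+\cdots+h_d+(L-1)(1-a)-d$ as for $\mathcal{H}^3_{n,J}$), and which also keeps the polynomial-in-$J$ factor at $J^{d/2}$ instead of $J^{(d+1)/2}$.
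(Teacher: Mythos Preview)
Your proof is correct and follows essentially the same approach as the paper's: both separate the index $n$ from the others via Lemma \ref{lem:maj:somme1}, then combine the cardinality bound \eqref{maj:carddj2}, the estimate $|k_n|\le (T+1)2^{j_n}$, and the change of variable in the $s$-integral to extract the decisive factor $2^{-j_n a}\sqrt{1+j_n}$, before summing over $\mathbf{j}\in\aleph_{n,J}$ via \eqref{maj:gnj3bis}. The only cosmetic difference is the order in which the three ingredients for the index-$n$ sum are applied.
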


\begin{proof}
let $L>1$ be a fixed real number, $t\in [0,T]$ and $\mathbf{j} \in \aleph_{n,J}$. Using the definition \eqref{def:AJK}, the fast decay property \eqref{maj:psih}, inequality \eqref{eqn:boundn01}, Lemma \ref{lem:maj:somme1}, the inequality $|k_n| \le 2^{j_n(1-a)} + 2^{j_n}T$, for all $k_n \in D_{j_n}^2(t)$, the change of variable $z=2^{j_n}s-k_n$, the bound \eqref{maj:carddj2} ant the inequality \eqref{log:ineg2}, we have 

\begin{align}
& \sum_{k_n\in D_{j_n}^2(t)} \quad \sum_{\substack{k_\ell\in\Z \\ \ell \in [\![1,d]\!] \setminus \{n\}}}  \left|\varepsilon_{\mathbf{j},\mathbf{k}}\right|  \left| \mathcal{A}_{\mathbf{j},\mathbf{k}}(t) \right|   \nonumber \\
& \le C_0 \int_0^T \sum_{k_n\in D_{j_n}^2(t)} \quad \sum_{\substack{k_\ell\in\Z \\ \ell \in [\![1,d]\!] \setminus \{n\}}} \prod_{\ell=1}^d \frac{\sqrt{\log(3+|j_\ell|+|k_\ell|)}}{(3+|2^{j_\ell}s-k_\ell|)^L} ds \nonumber \\
& = C_0 \int_0^T \sum_{k_n\in D_{j_n}^2(t)} \frac{\sqrt{\log(3+|j_n|+|k_n|)}}{(3+|2^{j_n}s-k_n|)^L}  \prod_{\substack{\ell=1 \\ \ell\neq n}}^d \sum_{k_\ell \in \Z} \frac{\sqrt{\log(3+|j_\ell|+|k_\ell|)}}{(3+|2^{j_\ell}s-k_\ell|)^L} ds \nonumber \\
& \le C_1 \prod_{\substack{\ell=1 \\ \ell\neq n}}^d \sqrt{\log(3+|j_\ell| + 2^{j_\ell}T)} \int_0^T \sum_{k_n\in D_{j_n}^2(t)} \frac{\sqrt{\log(3+|j_n|+|k_n|)}}{(3+|2^{j_n}s-k_n|)^L} ds \nonumber \\
& \le C_1 \sqrt{\log(3+|j_n|+2^{j_n(1-a)}+2^{j_n}T)} \prod_{\substack{\ell=1 \\ \ell\neq n}}^d \sqrt{\log(3+|j_\ell| + 2^{j_\ell}T)}  \times \ldots \nonumber \\ &  \qquad \ldots \times \qquad \sum_{k_n\in D_{j_n}^2(t)} \int_0^T \frac{ds}{(3+|2^{j_n}s-k_n|)^L} \nonumber \\
&= C_2 \left( \int_0^T \frac{dz}{(3+|z|)^L} \right) \sqrt{\log(3+|j_n|+2^{j_n(1-a)}+2^{j_n}T)} \nonumber \times \ldots \\ & \qquad \ldots \times \prod_{\substack{\ell=1 \\ \ell\neq n}}^d \sqrt{\log(3+|j_\ell| + 2^{j_\ell}T) } \,\card (D_{j_n}^2(t)) 2^{-j_n} \nonumber \\
& \le C_3 2^{-j_n a}\,\sqrt{\log(3+|j_n|+2^{j_n(1-a)}+2^{j_n}T)} \prod_{\substack{\ell=1 \\ \ell\neq n}}^d \sqrt{\log(3+|j_\ell| + 2^{j_\ell}T) } \nonumber \\
& \le C_4 2^{-j_n a}\,\sqrt{1+j_n} \prod_{\substack{\ell=1 \\ \ell\neq n}}^d \sqrt{\log(3+|j_\ell| + 2^{j_\ell}T) },   \label{maj:gnj2}
\end{align}
where $C_0$, $C_1$, $C_2$, $C_3$ and $C_4$ are positive almost surely finite random variables not depending on $n$, $t$, $\mathbf{j}$ and $J$. Then, combining the definition \eqref{ens:alephnj} and the inequalities \eqref{maj:gnj3bis}, $a > 1/2$  and $\sum_{\ell=1}^d h_\ell > d-1/2$, we get
\begin{align*}
& \mathcal{H}^2_{n,J}  \le C_4 \sum_{\mathbf{j} \in \aleph_{n,J}} \left( \prod_{\ell=1}^d 2^{j_\ell(1-h_\ell)} \right) 2^{-j_n a} \sqrt{1+j_n} \prod_{\substack{\ell=1 \\ \ell\neq n}}^d \sqrt{\log(3+|j_\ell| + 2^{j_\ell}T) } \\
& \le C_5 \sum_{j_n \geq J} 2^{-j_n (h_n+a-1)} \sqrt{1+j_n} (\sqrt{\log(3+2^{j_n}T)\log(3+|j_n|)})^{d-1} \times \ldots \\ & \qquad \ldots \times \prod_{\substack{\ell=1 \\ \ell\neq n}}^d 2^{j_n(1-h_\ell)} \\
& \leq C_6 \sum_{j_n \geq J} 2^{-j_n (h_1+ \cdots +h_n+a-d)} j_n^\frac{d}{2} (\log(3+|j_n|))^\frac{d-1}{2}  \\
& \leq C_7 2^{-J (h_1+ \cdots +h_n+a-d)} J^\frac{d}{2} (\log(3+|J|))^\frac{d-1}{2},
\end{align*}
where $C_5$, $C_6$ and $C_7$ are positive almost surely finite random variables.
\end{proof}

Let us now consider an integral which plays a crucial role in what follows.

\begin{Def}
For all $(\mathbf{j},\mathbf{k}) \in (\Z^d)^2$, one sets
\begin{equation}\label{eqn:fjk}
F_{\mathbf{j},\mathbf{k}} := \int_{\R} \prod_{\ell =1}^d \psi_{h_\ell} (2^{j_\ell}s-k_\ell) \, ds.
\end{equation}
\end{Def}
We start by giving an upper bound for $(F_{\mathbf{j},\mathbf{k}})^2$.

\begin{Lemma}\label{lem:boundF}
There exists a deterministic constant $c_\psi>0$ such that for all $(\mathbf{j},\mathbf{k}) \in (\Z^d)^2$, if $ \displaystyle j_n = \max_{\ell \in [\![1,d]\!]} j_\ell$, we have
\[ (F_{\mathbf{j},\mathbf{k}})^2 \leq c_\psi \, 2^{-j_n} \int_{\R}   \left|\psi_{h_n} (2^{j_n}s-k_n) \right|\prod_{\substack{\ell =1 \\ \ell \neq n}}^d  \left|\psi_{h_\ell} (2^{j_\ell}s-k_\ell) \right|^2 \, ds.\]
\end{Lemma}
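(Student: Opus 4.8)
The plan is to estimate $F_{\mathbf{j},\mathbf{k}}$ by a Cauchy--Schwarz argument in which we peel off half of the factor carrying the largest dilation index $j_n$. Since every fractional primitive $\psi_{h_\ell}$ belongs to the Schwartz class $\mathcal{S}(\R)$ (see Remark~\ref{rem:frac-mey}), the decay estimate \eqref{maj:psih}, applied with $m=0$ and, say, $L=2$, shows in particular that each $\psi_{h_\ell}$ lies in $L^1(\R)$; hence
\[
c_\psi := \max_{1\le m\le d}\|\psi_{h_m}\|_{L^1(\R)}
\]
is a finite deterministic constant, and the integral defining $F_{\mathbf{j},\mathbf{k}}$, as well as those appearing below, converge absolutely.

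We would begin from $|F_{\mathbf{j},\mathbf{k}}|\le \int_\R\prod_{\ell=1}^d|\psi_{h_\ell}(2^{j_\ell}s-k_\ell)|\,ds$ and write the integrand as the product of $|\psi_{h_n}(2^{j_n}s-k_n)|^{1/2}$ with $|\psi_{h_n}(2^{j_n}s-k_n)|^{1/2}\prod_{\ell\neq n}|\psi_{h_\ell}(2^{j_\ell}s-k_\ell)|$. The Cauchy--Schwarz inequality then yields
\[
(F_{\mathbf{j},\mathbf{k}})^2\le\Bigl(\int_\R|\psi_{h_n}(2^{j_n}s-k_n)|\,ds\Bigr)\Bigl(\int_\R|\psi_{h_n}(2^{j_n}s-k_n)|\prod_{\substack{\ell=1\\\ell\neq n}}^d|\psi_{h_\ell}(2^{j_\ell}s-k_\ell)|^2\,ds\Bigr).
\]
Performing the change of variable $u=2^{j_n}s-k_n$ in the first factor transforms it into $2^{-j_n}\|\psi_{h_n}\|_{L^1(\R)}\le c_\psi\,2^{-j_n}$, and substituting this bound gives exactly the asserted inequality.

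There is no real obstacle in this lemma; it is a routine Cauchy--Schwarz estimate, and the essential point is simply that the half-power peeled off should be that of the factor with the maximal index $j_n$, so that the gain is the dyadic factor $2^{-j_n}$ that will be needed later. The only subtleties worth mentioning are the absolute convergence of $F_{\mathbf{j},\mathbf{k}}$ and of the right-hand side integral, both guaranteed by the Schwartz decay \eqref{maj:psih}, and the fact that the constant $c_\psi$ is permitted to depend on the fixed parameter $\mathbf{h}$ through the finitely many functions $\psi_{h_1},\dots,\psi_{h_d}$, which is why taking a single maximum over $\ell$ suffices.
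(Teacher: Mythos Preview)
Your proof is correct and essentially equivalent to the paper's: the paper phrases the same estimate via Jensen's inequality with respect to the probability measure having density $2^{j_n}\|\psi_{h_n}\|_{L^1(\R)}^{-1}|\psi_{h_n}(2^{j_n}s-k_n)|$, which for the convex function $x\mapsto x^2$ is precisely your Cauchy--Schwarz splitting. The constant $c_\psi=\max_\ell\|\psi_{h_\ell}\|_{L^1(\R)}$ is identical in both arguments.
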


\begin{proof}
Let $\mathbb{P}_{j_n,k_n}$ be the probability measure defined on the Borel $\sigma$-algebra of $\R$ and with density given by the function
\[ s \mapsto 2^{j_n} \|\psi_{h_n} \|_{L^1(\R)}^{-1} |\psi_{h_n}(2^{j_n}x-k_{n})|.\]
We clearly have that
\begin{align*}
(F_{\mathbf{j},\mathbf{k}})^2 &\leq  \left( \int_{\R}  \prod_{\ell =1}^d  \left|\psi_{h_\ell} (2^{j_\ell}s-k_\ell) \right| \, ds \right)^2 \\
& = 2^{-2j_n} \|\psi_{h_n} \|_{L^1(\R)}^2 \left( \int_{\R}  \prod_{\substack{\ell =1 \\ \ell \neq n}}^d  \left|\psi_{h_\ell} (2^{j_\ell}s-k_\ell) \right| \, {\rm d}\mathbb{P}_{j_n,k_n}(s) \right)^2.
\end{align*}
Then, it results from Jensen's inequality that 
\begin{align*}
(F_{\mathbf{j},\mathbf{k}})^2 &\leq 2^{-2j_n} \|\psi_{h_n} \|_{L^1(\R)}^2  \int_{\R}  \prod_{\substack{\ell =1 \\ \ell \neq n}}^d  \left|\psi_{h_\ell} (2^{j_\ell}s-k_\ell) \right|^2 \, {\rm d}\mathbb{P}_{j_n,k_n} (s) \\
& =2^{-j_n} \|\psi_{h_n} \|_{L^1(\R)} \int_{\R}     \left|\psi_{h_n} (2^{j_n}s-k_n) \right|\prod_{\substack{\ell =1 \\ \ell \neq n}}^d  \left|\psi_{h_\ell} (2^{j_\ell}s-k_\ell) \right|^2 \, ds.
\end{align*}
Then, setting $ \displaystyle c_\psi:= \max_{\ell \in [\![1,d]\!]} \|\psi_{h_\ell} \|_{L^1(\R)}$ one obtains the lemma.
\end{proof}

\begin{Lemma}\label{lemmepour22:4}
Let $T >2$ and $L>d+2$ be two fixed real numbers. There exits a positive almost surely finite random variable $C$ such that, for all $n \in [\![1,d]\!]$ and $J \in \N$, on $\Omega^*$, the random variable
\begin{align*}
\mathcal{H}^1_{n,J} &:= \sum_{\mathbf{j} \in \aleph_{n,J}} 2^{ j_1 (1-h_1)+\cdots + j_d (1-h_d)} \times \ldots \\
& \qquad \ldots \times \sup_{t \in [0,T]} \left \{\sum_{k_n \in D_{j_n}^1(t) } \sum_{\substack{k_\ell \in \Z \\ \ell\in [\![1,d]\!] \setminus \{n\}}} \left|\varepsilon_{\mathbf{j},\mathbf{k}}\right|  \left| \mathcal{A}_{\mathbf{j},\mathbf{k}}(t)- F_{\mathbf{j},\mathbf{k}}\right| \right\}    
\end{align*}
is bounded from above by $ C J^{d-1}\, 2^{-J((L-2)(1-a)+h_1+ \cdots + h_d  -d+1)}$.
\end{Lemma}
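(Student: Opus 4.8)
The plan is to exploit that, when $k_n\in D^1_{j_n}(t)$, the rescaled bump $\psi_{h_n}(2^{j_n}\cdot-k_n)$ is centred at $k_n2^{-j_n}$, which by \eqref{def:bjk} and \eqref{def:dj1} lies inside $[0,t]$ at distance at least $2^{-j_na}$ from each endpoint, so its mass outside $[0,t]$ is extremely small. First I would write, from \eqref{def:AJK} and \eqref{eqn:fjk},
\[
\mathcal{A}_{\mathbf{j},\mathbf{k}}(t)-F_{\mathbf{j},\mathbf{k}}=-\Big(\int_{-\infty}^{0}+\int_{t}^{+\infty}\Big)\prod_{\ell=1}^{d}\psi_{h_\ell}(2^{j_\ell}s-k_\ell)\,ds ,
\]
and record the elementary estimate: for $s\in\R\setminus[0,t]$ and $k_n\in D^1_{j_n}(t)$ one has $|2^{j_n}s-k_n|=2^{j_n}\dist(s,[0,t])+\rho$, where $\rho\ge 2^{j_n(1-a)}$ runs over a unit-spaced set as $k_n$ varies in $D^1_{j_n}(t)$. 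Then I would bound $|\varepsilon_{\mathbf{j},\mathbf{k}}|$ by the logarithmic product of Lemma~\ref{lem:eqn:bound:rv} and each $|\psi_{h_\ell}(x)|$ by the fast-decay envelope $c_L(3+|x|)^{-L}$ of \eqref{maj:psih}, so that it suffices to estimate, for $\mathbf{j}\in\aleph_{n,J}$ and uniformly in $t\in[0,T]$, a double sum of the form $\sum_{k_n\in D^1_{j_n}(t)}\sum_{k_\ell\in\Z\ (\ell\neq n)}(\text{log weights})\times\int_{\R\setminus[0,t]}(\text{product of decay envelopes})\,ds$.

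For the factor indexed by $n$, since $|k_n|\le 2^{j_n}t\le 2^{j_n}T$ we may replace $\sqrt{\log(3+|j_n|+|k_n|)}$ by $\sqrt{\log(3+|j_n|+2^{j_n}T)}\le c\sqrt{j_n}$, and summing over $k_n$ the quantity $\big(3+2^{j_n}\dist(s,[0,t])+\rho\big)^{-L}$ (an integral bound, $L-1>1$) collapses to $c(L)\big(2+2^{j_n}\dist(s,[0,t])+2^{j_n(1-a)}\big)^{-(L-1)}$. For each $\ell\neq n$ I would use the periodisation estimate $\sum_{k_\ell\in\Z}\sqrt{\log(3+|j_\ell|+|k_\ell|)}\,|\psi_{h_\ell}(2^{j_\ell}s-k_\ell)|\lesssim\sqrt{\log(3+|j_\ell|+2^{j_\ell}|s|+2)}$ (fast decay and sub-additivity of $\sqrt{\log}$; on the bounded part of the tail this is Lemma~\ref{lem:maj:somme1}). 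On each tail I then perform the change of variable $w=2^{j_n}\dist(s,[0,t])$ (Jacobian $2^{-j_n}$): the polynomial part integrates exactly to $\int_0^\infty\big(2+w+2^{j_n(1-a)}\big)^{-(L-1)}\,dw=(L-2)^{-1}\big(2+2^{j_n(1-a)}\big)^{-(L-2)}\lesssim 2^{-j_n(1-a)(L-2)}$, which is the ``boundary gain'', while the logarithmic weights — bounded by a constant times $(j_n+\log(1+w))^{(d-1)/2}$, using $2^{j_\ell}\le 2^{j_n}$ and $t\le T$ — survive the integration (the surplus decay $L>d+2$ beats $\log(1+w)$) and contribute only a further polynomial factor in $j_n$.

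It remains to reinstate the prefactor $2^{j_1(1-h_1)+\cdots+j_d(1-h_d)}$ and sum over $\mathbf{j}\in\aleph_{n,J}$. As in the proof of Lemma~\ref{lemmepour22:1}, using \eqref{log:ineg1} and the positivity of each $1-h_\ell$, every sum $\sum_{j_\ell\le j_n}\sqrt{\log(3+|j_\ell|+2^{j_\ell}T)}\,2^{j_\ell(1-h_\ell)}$ with $\ell\neq n$ is a geometric-type series dominated by its top term and contributes $2^{j_n(1-h_\ell)}$ up to a power of $\log(3+j_n)$, so the prefactor collapses to $2^{j_n(d-h_1-\cdots-h_d)}$. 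Combined with the factor $2^{-j_n}\,2^{-j_n(1-a)(L-2)}$ above, the remaining series $\sum_{j_n\ge J}$ has exponent $(L-2)(1-a)+h_1+\cdots+h_d-d+1$, which is strictly positive since $h_1+\cdots+h_d>d-1/2$ by \eqref{eqn:hermi:cond}; it is therefore dominated by its first term at $j_n=J$, yielding the factor $2^{-J((L-2)(1-a)+h_1+\cdots+h_d-d+1)}$. All accumulated powers of $\log(3+j_n)$ and of $\sqrt{j_n}$ contribute only a fixed polynomial factor in $J$, comfortably absorbed into the stated $J^{d-1}$; finally one sums over the finitely many $n\in[\![1,d]\!]$, with $\Omega^*$ the event of Lemma~\ref{lem:bound-gjk}.

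The main obstacle is the allocation of the $L$-fold decay of $\psi_{h_n}$: one unit is needed to make the $k_n$-summation convergent, strictly more than one unit to make the tail integral in $s$ converge, and the leftover must still amount to $(L-2)(1-a)$ to reach the advertised rate — all while the $d$ logarithmic weights $\sqrt{\log(3+|j_\ell|+2^{j_\ell}|s|+2)}$ grow (only logarithmically) along the unbounded tail $\R\setminus[0,t]$ and must be dominated; the hypothesis $L>d+2$ is exactly what reconciles these demands. Uniformity in $t\in[0,T]$ requires nothing extra, since every bound above depends on $t$ only through $T$.
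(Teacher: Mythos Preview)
Your proposal is essentially the paper's own argument: write $\mathcal{A}_{\mathbf{j},\mathbf{k}}(t)-F_{\mathbf{j},\mathbf{k}}$ as a tail integral over $\R\setminus[0,t]$, use \eqref{maj:psih} and \eqref{eqn:bound:rv}, sum the $k_\ell$ ($\ell\neq n$) via Lemma~\ref{lem:maj:somme1}, exploit the $2^{j_n(1-a)}$ gap from $D^1_{j_n}(t)$ when summing over $k_n$ and integrating over the tail, and finish by collapsing the geometric $j_\ell$-sums. The only substantive difference is in how the logarithmic weights along the unbounded tail are handled: the paper keeps $|j_\ell|$ inside the logarithm (writing $\sqrt{\log(3+|j_\ell|+|2^{j_n}s-k_n|)}$ and then performing two integrations by parts), whereas you argue more directly that the surplus decay from $L>d+2$ absorbs $(\log(1+w))^{(d-1)/2}$.

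There is one imprecision worth flagging. Your claim that the log weights are bounded by $(j_n+\log(1+w))^{(d-1)/2}$ tacitly uses $|j_\ell|\le j_n$, which is false for $j_\ell$ very negative; the term $\sqrt{\log(3+|j_\ell|+2^{j_\ell}|s|)}$ still carries a $\sqrt{\log(3+|j_\ell|)}$ factor in that range. This is harmless --- either keep $\sqrt{\log(3+|j_\ell|)}$ as a separate factor (via \eqref{log:ineg1}) and absorb it in the convergent series $\sum_{j_\ell\le j_n}2^{j_\ell(1-h_\ell)}\sqrt{\log(3+|j_\ell|)}$, which is exactly what your last paragraph already describes, or split the $j_\ell$-sum at $-j_n$ --- but the intermediate bound as stated is not uniform in $\mathbf{j}\in\aleph_{n,J}$ and should be amended.
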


\begin{proof}
Let us fix $t\in [0,T]$ and $\mathbf{j} \in \aleph_{n,J}$. Using the definitions \eqref{def:AJK}, \eqref{eqn:fjk}, \eqref{def:bjk} and \eqref{def:dj1}, the fast decay property \eqref{maj:psih}, \eqref{eqn:bound:rv}, the inequality $|k_n| \leq 2^{j_n} T$, for all $k_n \in D_{j_n}^1(t)$, the inequality $2^{j_n} T \geq j_n$, Lemma \ref{lem:maj:somme1}, the fact that $ \displaystyle j_n = \max_{\ell \in [\![1,d]\!]} j_\ell$, the triangular inequality and finally inequality \eqref{log:ineg1}, it comes
\begin{align}
&\sum_{k_n \in D_{j_n}^1(t) } \sum_{\substack{k_\ell \in \Z \\ \ell\in [\![1,d]\!] \setminus \{n\}}} \left|\varepsilon_{\mathbf{j},\mathbf{k}}\right|  \left| \mathcal{A}_{\mathbf{j},\mathbf{k}}(t)- F_{\mathbf{j},\mathbf{k}}\right| \nonumber \\
& \leq C_0 \sqrt{\log(3+2 ^{j_n+1} T)} \int_{\R \setminus [0,t]} \left( \sum_{k_n \in D_{j_n}^1(t) } \frac{1}{(3+|2^{j_n}s-k_n|)^L} \right) \times \ldots \nonumber \\ & \qquad \qquad  \qquad \qquad \qquad \qquad \qquad  \ldots \times\prod_{\substack{\ell =1;\, \ell \neq n}}^d \left( \sum_{k_\ell \in \Z } \frac{\sqrt{\log(3+|j_\ell|+|k_\ell|)}}{(3+|2^{j_\ell}s-k_\ell|)^L} \right) \, ds \nonumber \\
& \leq C_1 \sqrt{\log(3+2 ^{j_n+1} T)} \int_{\R \setminus [0,t]}  \sum_{k_n \in D_{j_n}^1(t) } \frac{\prod_{\substack{\ell =1;\,\ell \neq n}}^d  \sqrt{\log(3+|j_\ell|+|2^{j_\ell} s|)}}{(3+|2^{j_n}s-k_n|)^L}   \, ds \nonumber \\
& \leq C_1 \sqrt{\log(3+2 ^{j_n+1} T)} \int_{\R \setminus [0,t]} \ \sum_{k_n \in D_{j_n}^1(t) } \frac{\prod_{\substack{\ell =1;\,\ell \neq n}}^d  \sqrt{\log(3+|j_\ell|+|2^{j_n} s|)}}{(3+|2^{j_n}s-k_n|)^L}   \, ds \nonumber \\
& \leq C_1 \int_{\R \setminus [0,t]}  \sum_{k_n \in D_{j_n}^1(t) } \frac{ \prod_{\substack{\ell =1;\,\ell \neq n}}^d \sqrt{\log(3+|j_\ell|+|2^{j_n} s-k_n|)}}{(3+|2^{j_n}s-k_n|)^L}   \, ds \times  \ldots \nonumber \\
& \qquad \ldots \times (\log(3+2 ^{j_n+1} T))^\frac{d}{2} , \label{eqn:int}
\end{align}
where $C_0$ and $C_1$ are positive almost surely finite random variables not depending on $n$, $t$, $\mathbf{j}$ and $J$. Let us estimate the last integral in \eqref{eqn:int}. First we bound it by the sum of the integrals $I^1_{\mathbf{j},\mathbf{k}}(t)$ and $I^2_{\mathbf{j},\mathbf{k}}$ where
\[ I^1_{\mathbf{j},\mathbf{k}}(t):= \int_{t}^{+ \infty}  \sum_{k_n \leq 2 ^{j_n}t-2^{j_n(1-a)} } \frac{\prod_{\substack{\ell =1;\, \ell \neq n}}^d \sqrt{\log(3+|j_\ell|+2^{j_n} s-k_n)}}{(3+2^{j_n}s-k_n)^L}   \, ds\]
and
\[ I^2_{\mathbf{j},\mathbf{k}} := \int_{- \infty}^{0}  \sum_{k_n \geq 2^{j_n(1-a)}} \frac{ \prod_{\substack{\ell =1;\, \ell \neq n}}^d \sqrt{\log(3+|j_\ell|+k_n-2^{j_n} s)}}{(3+k_n-2^{j_n}s)^L}   \, ds.\]
Concerning, $I^1_{\mathbf{j},\mathbf{k}}(t)$, we use the change of variable $y=2^{j_n}(s-t)$ and the fact that, for all $j \in \Z$, the function $y \mapsto (2+y)^{-L/(d-1)} \sqrt{\log(2+|j|+y)}$ is decreasing on $\R_+$ to get
\begin{align}
& I^1_{\mathbf{j},\mathbf{k}}(t) = 2^{-j_n} \int_0^{+ \infty} \sum_{k_n \leq 2 ^{j_n}t-2^{j_n(1-a)} } \prod_{\substack {\ell =1 \\ \ell \neq n}}^d\frac{ \sqrt{\log(3+|j_\ell|+y+2^{j_n} t-k_n)}}{(3+y+2^{j_n} t-k_n)^{L/(d-1)}}   \, dy \nonumber \\
&\leq 2^{-j_n} \int_0^{+ \infty} \sum_{p=0}^{+ \infty} \prod_{\substack {\ell =1 \\ \ell \neq n}}^d\frac{ \sqrt{\log(3+|j_\ell|+y+2^{j_n(1-a)}+p)}}{(3+y+2^{j_n(1-a)}+p)^{L/(d-1)}}   \, dy \nonumber \\
&\leq 2^{-j_n} \int_0^{+ \infty} \left(\int_0^{+ \infty} \prod_{\substack {\ell =1 \\ \ell \neq n}}^d \frac{ \sqrt{\log(2+|j_\ell|+y+2^{j_n(1-a)}+z)}}{(2+y+2^{j_n(1-a)}+z)^{L/(d-1)}} \, dz\right) \, dy \nonumber \\
&\leq 2^{-j_n} \int_0^{+ \infty} \left(\int_0^{+ \infty}  \frac{\prod_{\substack {\ell =1;\, \ell \neq n}}^d \sqrt{\log(2+|j_\ell|+y+2^{j_n(1-a)}+z)}}{(2+y+2^{j_n(1-a)}+z)^{L}} \, dz\right) \, dy . \label{eqn:I1bis}
\end{align}
We estimate the integral over $z$ in \eqref{eqn:I1bis} using integration by parts. Notice that there is no restriction to assume that $J$ is large enough so that the inequality $d-1\le\log(2+2^{J(1-a)})$ holds. Then, using the inequality $j_n\ge J$, we get, for all $(y,z)\in \R_+^2$ and for every $j_\ell \in \Z$ (with $l\ne n$) that 
\[\frac{d-1}{\sqrt{\log(2+|j_\ell|+y+2^{j_n(1-a)}+z)})} \leq  \sqrt{\log(2+|j_\ell|+y+2^{j_n(1-a)}+z)}.\]
Thus, denoting by $D_z$ the partial derivative operator with the variable $z$ and using the latter inequality, we get that
\begin{align*}
& D_z \prod_{\substack{\ell =1;\, \ell \neq n}}^d \sqrt{\log(2+|j_\ell|+y+2^{j_n(1-a)}+z)} \\
& = \sum_{\substack{\ell=1;\, \ell \neq n }}^d \,\frac{\prod_{\substack {i =1; \, i \neq n, \ell}}^d  \sqrt{\log(2+|j_i|+y+2^{j_i(1-a)}+z)}}{2  \sqrt{\log(2+|j_\ell|+y+2^{j_n(1-a)}+z)}(2+|j_\ell|+y+2^{j_n(1-a)}+z)} \\
& \leq \frac{\prod_{\substack {\ell =1;\, \ell \neq n}}^d\sqrt{\log(2+|j_\ell|+y+2^{j_n(1-a)}+z)}}{2 (2+y+2^{j_n(1-a)}+z)} 
\end{align*}
and consequently that
\begin{align*}
&\int_0^{+ \infty} \frac{\prod_{\substack {\ell =1;\, \ell \neq n}}^d \sqrt{\log(2+|j_\ell|+y+2^{j_n(1-a)}+z)}}{(2+y+2^{j_n(1-a)}+z)^L} \, dz \\
& \leq 2\times\frac{\prod_{\substack {\ell =1;\, \ell \neq n}}^d\sqrt{\log(2+|j_\ell|+y+2^{j_n(1-a)})}}{(2+y+2^{j_n(1-a)})^{L-1}}.
\end{align*}
This leads to
\begin{align}
I^1_{\mathbf{j},\mathbf{k}}(t) & \leq 2^{1-j_n} \int_0^{+ \infty}    \frac{\prod_{\substack {\ell =1;\, \ell \neq n}}^d\sqrt{\log(2+|j_\ell|+y+2^{j_n(1-a)})}}{(2+y+2^{j_n(1-a)})^{L-1}} \,  \, dy \nonumber \\
& \leq  2^{2-j_n}\times \frac{ \prod_{\substack {\ell =1;\, \ell \neq n}}^d\sqrt{\log(2+|j_\ell|+2^{j_n(1-a)})}}{(2+2^{j_n(1-a)})^{L-2}},  \label{eqn:I1}
\end{align}
where the last inequality is obtained through an integration by parts and the same arguments as before. Observe that, by using the definition of $I^2_{\mathbf{j},\mathbf{k}}$ one can show, as we already did it for deriving \eqref{eqn:I1}, that
\begin{equation}\label{eqn:I2}
I^2_{\mathbf{j},\mathbf{k}}\leq 2^{2-j_n}\times\frac{ \prod_{\substack {\ell =1;\, \ell \neq n}}^d \sqrt{\log(2+|j_\ell|+2^{j_n(1-a)})}}{(2+2^{j_n(1-a)})^{L-2}}.
\end{equation}
Next, it follows from the definition \eqref{ens:alephnj}, the inequalities \eqref{eqn:int}, \eqref{eqn:I1} and \eqref{eqn:I2}, the triangle inequality, the inequalities  \eqref{log:ineg1} and \eqref{log:ineg2} and the assumptions \eqref{eqn:hermi:cond} that
\begin{align*}
&\mathcal{H}^1_{n,J} := \sum_{\mathbf{j} \in \aleph_{n,J}} 2^{ j_1 (1-h_1)+\cdots + j_d (1-h_d)} \times \ldots \nonumber \\
& \qquad  \ldots \times\sup_{t \in [0,T]} \left \{\sum_{k_n \in D_{j_n}^1(t) } \sum_{\substack{k_\ell \in \Z \\ \ell\in [\![1,d]\!] \setminus \{n\}}} \left|\varepsilon_{\mathbf{j},\mathbf{k}}\right|  \left| \mathcal{A}_{\mathbf{j},\mathbf{k}}(t)- F_{\mathbf{j},\mathbf{k}}\right| \right\} \\
& \leq C_2 \sum_{j_n=J}^{+ \infty} (j_n+1)^{\frac{d}{2}}\, 2^{-j_n((L-2)(1-a)+h_n)} \times \ldots\nonumber \\
& \qquad  \ldots \times \prod_{\substack{\ell =1;\, \ell \neq n}}^d \left( \sum_{j_\ell = - \infty}^{j_n} 2^{j_\ell(1-h_\ell)} \sqrt{\log(2+|j_\ell|+2^{j_n(1-a)})} \right) \\
& \leq C_2 \sum_{j_n=J}^{+ \infty} (j_n+1)^{\frac{d}{2}}\, 2^{-j_n((L-2)(1-a)+h_1+\cdots+h_d-d+1)} \times \ldots \nonumber \\
& \qquad  \ldots \times \prod_{\substack{\ell =1;\, \ell \neq n}}^d \left( \sum_{p = 0}^{+\infty} 2^{-p(1-h_\ell)} \sqrt{\log(2+j_n+2^{j_n(1-a)}+p)} \right) \\
&  \leq C_3 \sum_{j_n=J}^{+ \infty} j_n^{d-1}\, 2^{-j_n((L-2)(1-a)+h_1+\cdots+h_d-d+1)} \\
& \leq C_4 J^{d-1}\, 2^{-J((L-2)(1-a)+h_1+\cdots+h_d-d+1)}
\end{align*}
where $C_2$, $C_3$ and $C_4$ are positive almost surely finite random variables not depending on $n$, $t$ and $J$ large enough.
\end{proof}

\begin{Def}
\label{def:rvs-M}
Let $T>2$ be a fixed real number. For all $t \in [0,T]$, $n \in [\![1,d]\!]$ and $\mathbf{j} \in \aleph_{n,1}$ (see \eqref{ens:alephnj}), the random variable $\widetilde{\mathcal{M}}_{n,\mathbf{j}}  (t)$ is defined as:
\begin{equation}\label{eqn:tiMnJ}
\widetilde{\mathcal{M}}_{n,\mathbf{j}}  (t) := \sum_{ \textbf{k} \in \daleth_{n}(t)} F_{\textbf{j},\textbf{k}} \varepsilon_{\textbf{j},\textbf{k}}, 
\end{equation} 
where the $F_{\textbf{j},\textbf{k}}$'s are as in \eqref{eqn:fjk} and the finite set $\daleth_{n}(t)\subset\Z^d$ is such that
\begin{equation}\label{eqn:set-tiMnJ}
\daleth_{n}(t) := \{\mathbf{k} \in \Z^d \, : \, k_n \in D_{j_n}^1(t) ; \, \forall \ell\in [\![1,d]\!] \setminus \{n\}, \, |k_\ell| \leq 2^{j_n +1} T \};
\end{equation}
the finite set $D_{j_n}^1(t)\subset\Z$ being defined through \eqref{def:dj1}. Finally, for evey $J\in\N$, we set
\begin{equation}\label{eqn:MnJ}
\mathcal{M}_{n,J} := \sum_{\mathbf{j} \in \aleph_{n,J}} 2^{j_1(1-h_1)+ \cdots j_d(1-h_d)} \sup_{t \in [0,T]}  |\widetilde{\mathcal{M}}_{n,\mathbf{j}}  (t) |.
\end{equation}
We mention in passing that $\aleph_{n,J}\subseteq \aleph_{n,1}$. Also, we mention that later we will show (see Remark \ref{rem:finiD1}) that the supremum in \eqref{eqn:MnJ} is in fact a supremum on a well-chosen finite set; therefore $\mathcal{M}_{n,J}$ is a positive finite random variable.
\end{Def}

Our next goal is to obtain an appropriate upper bound for $\mathcal{M}_{n,J}$. To this end, we need to bound in a convenient way the random variables $\widetilde{\mathcal{M}}_{n,\mathbf{j}}  (t)$. In order to do so we will combine some Borel-Cantelli arguments with the following fundamental result \cite[Theorem 6.7]{MR1474726}.

\begin{Lemma}\label{lem:jas}
For any fixed integer $d \geq 1$, there exists a (strictly) positive finite universal deterministic constant $c_d$ such that, for every random variable $X$ belonging to the Wiener chaos of order $d$ and for each real number $y \geq 2$, one has
\begin{equation}
\mathbb{P}(X \geq y \|X\|_{L^2(\Omega)} ) \leq \exp\big (-c_d\, y^{2/d}\,\big ).
\end{equation}
\end{Lemma}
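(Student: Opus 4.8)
The plan is to obtain the stated tail bound as a direct consequence of the hypercontractivity of the Ornstein--Uhlenbeck semigroup on a fixed Wiener chaos, together with an optimization over the moment order. First I would recall the fundamental moment estimate available for a random variable $X$ in the $d$th Wiener chaos: by hypercontractivity (equivalently, by Nelson's inequality applied to the chaos projection, see \cite{MR1474726}), there is a constant depending only on $d$ and on $p$ such that $\|X\|_{L^p(\Omega)} \le (p-1)^{d/2} \|X\|_{L^2(\Omega)}$ for every $p \ge 2$. Normalizing, we may assume $\|X\|_{L^2(\Omega)}=1$ and must show $\mathbb{P}(X \ge y) \le \exp(-c_d y^{2/d})$ for $y \ge 2$.

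The key steps, in order, are: first, apply Markov's inequality to $|X|^p$ to get $\mathbb{P}(X \ge y) \le \mathbb{P}(|X| \ge y) \le y^{-p}\,\mathbb{E}[|X|^p] \le y^{-p}(p-1)^{pd/2}$; second, rewrite the right-hand side as $\exp\big(-p\log y + \tfrac{pd}{2}\log(p-1)\big)$; third, choose $p$ as a function of $y$ to make the exponent as negative as possible, roughly $p \approx c\, y^{2/d}$ for a small constant $c$ (so that $\tfrac{d}{2}\log(p-1)$ is comparable to, but strictly less than, $\log y$, leaving a linear-in-$p$ gain); fourth, check that for $y \ge 2$ this choice yields an exponent bounded above by $-c_d\, y^{2/d}$ for an explicit $c_d>0$ depending only on $d$. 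A mild technical point to handle is that $p$ must be chosen $\ge 2$ and the logarithmic factor must not eat the whole gain; the threshold $y \ge 2$ is exactly what guarantees there is room, and one treats the bounded range of $y$ (if any slack is needed near $y=2$) by simply enlarging the constant, since probabilities are at most $1$.

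I would present this as essentially the classical argument, citing \cite[Theorem 6.7]{MR1474726} for the statement itself and using hypercontractivity as the only analytic input; no properties specific to the processes in this paper are needed. The only place requiring a little care is the optimization in the third step: one should verify that with $p = \lfloor \lambda y^{2/d}\rfloor$ for suitable $\lambda = \lambda(d)$ one indeed has $p \ge 2$ when $y \ge 2$ and that $-p\log y + \tfrac{pd}{2}\log(p-1) \le -c_d y^{2/d}$; this is elementary calculus once $\lambda$ is fixed small enough that $\tfrac{d}{2}\log(\lambda y^{2/d}) \le \tfrac12 \log y$ for all $y \ge 2$, which fails for large $y$ unless one is slightly more careful — so in fact the cleaner route is to keep $p$ free, write the exponent as $-p(\log y - \tfrac{d}{2}\log(p-1))$, and note that choosing $p-1 = y^{2/d}$ gives exponent $-\,(1+y^{2/d})\big(\log y - \log y\big)$, which is degenerate; hence the genuinely correct choice is $p - 1 = \varepsilon\, y^{2/d}$ with $\varepsilon$ a fixed small constant, giving $\log y - \tfrac d2\log(p-1) = \log y - \tfrac d2\log\varepsilon - \log y = -\tfrac d2 \log \varepsilon = \tfrac d2\log(1/\varepsilon) > 0$ up to the $\tfrac d2 \cdot \tfrac{2}{d}\log y$ bookkeeping — so the clean identity is that $\tfrac{d}{2}\log(p-1) = \tfrac{d}{2}\log\varepsilon + \log y$, whence the exponent equals $-p\big(-\tfrac d2\log\varepsilon\big) = -\tfrac d2\log(1/\varepsilon)\,(1+\varepsilon y^{2/d})$, which is $\le -c_d y^{2/d}$ with $c_d = \tfrac{d\varepsilon}{2}\log(1/\varepsilon)$. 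This last bookkeeping is the only real obstacle, and it is routine; everything else is a one-line application of Markov plus hypercontractivity.
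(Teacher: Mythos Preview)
The paper does not actually prove this lemma; it is stated as a direct citation of \cite[Theorem~6.7]{MR1474726} (Janson's book on Gaussian Hilbert spaces) and used as a black box. Your proposal goes further and sketches the standard proof via hypercontractivity, which is correct in substance: the moment bound $\|X\|_{L^p}\le (p-1)^{d/2}\|X\|_{L^2}$, Markov's inequality, and the choice $p-1=\varepsilon\, y^{2/d}$ do yield the claimed tail with $c_d=\tfrac{d\varepsilon}{2}\log(1/\varepsilon)$. Your exposition of the optimization step is a bit tangled (you first try $p-1=y^{2/d}$, notice it degenerates, then recover with the $\varepsilon$ factor), and you should state cleanly that either $\varepsilon$ is chosen so that $p=1+\varepsilon y^{2/d}\ge 2$ for all $y\ge 2$, or else the bounded range of $y$ near $2$ is absorbed into the constant using $\mathbb{P}\le 1$, as you yourself indicate. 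There is no genuine gap: this is precisely the argument underlying Janson's theorem, so your proposal supplies a valid proof where the paper simply invokes the reference.
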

We will apply Lemma \ref{lem:jas} to the random variable $\widetilde{\mathcal{M}}_{n,\mathbf{j}}  (t)$. This is why it is useful to control their $L^2(\Omega)$ norms uniformly in $t\in [0,T]$.

\begin{Lemma}
There exists a finite constant $c>0$, depending on $T$, such that, for all $n \in [\![1,d]\!] $ and $\mathbf{j} \in \aleph_{n,1}$, we have 
\begin{equation}\label{eqn:majl2}
\sup_{t \in [0,T]} \| \widetilde{\mathcal{M}}_{n,\mathbf{j}}  (t) \|_{L^2(\Omega)} \leq c\, 2^{-j_n /2}.
\end{equation}
\end{Lemma}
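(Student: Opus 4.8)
The plan is to expand the $L^2(\Omega)$-norm as a double sum over $\daleth_n(t)\times\daleth_n(t)$, collapse it to a single sum of squares $\sum_{\mathbf{k}}(F_{\mathbf{j},\mathbf{k}})^2$ by means of the correlation structure of the chaos variables $\varepsilon_{\mathbf{j},\mathbf{k}}$ (Proposition~\ref{prop:corre}), and then estimate that sum via Lemma~\ref{lem:boundF} together with the cardinality bound \eqref{maj:carddj1}.

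First I would fix $t\in[0,T]$, $n\in[\![1,d]\!]$ and $\mathbf{j}\in\aleph_{n,1}$, and use \eqref{eqn:tiMnJ} to write
\[
\|\widetilde{\mathcal{M}}_{n,\mathbf{j}}(t)\|_{L^2(\Omega)}^2=\sum_{\mathbf{k},\mathbf{k}'\in\daleth_{n}(t)}F_{\mathbf{j},\mathbf{k}}F_{\mathbf{j},\mathbf{k}'}\,\E[\varepsilon_{\mathbf{j},\mathbf{k}}\varepsilon_{\mathbf{j},\mathbf{k}'}].
\]
By Proposition~\ref{prop:corre}, only the pairs with $\calD(\mathbf{j},\mathbf{k})=\calD(\mathbf{j},\mathbf{k}')$ contribute, and for such pairs $|\E[\varepsilon_{\mathbf{j},\mathbf{k}}\varepsilon_{\mathbf{j},\mathbf{k}'}]|\le d!$. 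Then, using $2|F_{\mathbf{j},\mathbf{k}}F_{\mathbf{j},\mathbf{k}'}|\le(F_{\mathbf{j},\mathbf{k}})^2+(F_{\mathbf{j},\mathbf{k}'})^2$, the symmetry of the resulting double sum, and Remark~\ref{rem:eqa-multi-jk} (there are at most $d!$ multi-indices $\mathbf{k}'$ with $\calD(\mathbf{j},\mathbf{k}')=\calD(\mathbf{j},\mathbf{k})$), I obtain
\[
\|\widetilde{\mathcal{M}}_{n,\mathbf{j}}(t)\|_{L^2(\Omega)}^2\le(d!)^2\sum_{\mathbf{k}\in\daleth_{n}(t)}(F_{\mathbf{j},\mathbf{k}})^2.
\]

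Next, since $\mathbf{j}\in\aleph_{n,1}$ forces $j_n=\max_{\ell\in[\![1,d]\!]}j_\ell$, Lemma~\ref{lem:boundF} gives
\[
(F_{\mathbf{j},\mathbf{k}})^2\le c_\psi\,2^{-j_n}\int_{\R}|\psi_{h_n}(2^{j_n}s-k_n)|\prod_{\substack{\ell=1\\\ell\neq n}}^{d}|\psi_{h_\ell}(2^{j_\ell}s-k_\ell)|^2\,ds.
\]
Summing over $\mathbf{k}\in\daleth_{n}(t)$ (see \eqref{eqn:set-tiMnJ}) the sum factorizes in the variables $k_\ell$: for each $\ell\neq n$ I bound $\sum_{k_\ell\in\Z}|\psi_{h_\ell}(2^{j_\ell}s-k_\ell)|^2$ by a constant uniformly in $s$ (via the fast-decay property \eqref{maj:psih} and Lemma~\ref{lem:maj:somme1}), while for the factor in $k_n$, after swapping the finite sum over $k_n\in D_{j_n}^1(t)$ with the integral, $\int_{\R}|\psi_{h_n}(2^{j_n}s-k_n)|\,ds=2^{-j_n}\|\psi_{h_n}\|_{L^1(\R)}$, so this factor equals $2^{-j_n}\|\psi_{h_n}\|_{L^1(\R)}\,\card(D_{j_n}^1(t))\le c'\|\psi_{h_n}\|_{L^1(\R)}$ by \eqref{maj:carddj1}, with $c'$ depending only on $T$. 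Combining these estimates yields $\sum_{\mathbf{k}\in\daleth_{n}(t)}(F_{\mathbf{j},\mathbf{k}})^2\le c_T\,2^{-j_n}$, hence $\|\widetilde{\mathcal{M}}_{n,\mathbf{j}}(t)\|_{L^2(\Omega)}\le c\,2^{-j_n/2}$ uniformly in $t\in[0,T]$, which is \eqref{eqn:majl2}.

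The delicate point is the passage from the double sum to $\sum_{\mathbf{k}}(F_{\mathbf{j},\mathbf{k}})^2$: it rests on knowing exactly when the $\varepsilon_{\mathbf{j},\mathbf{k}}$ are correlated (Proposition~\ref{prop:corre}) and on Lemma~\ref{lem:boundF}, whose Jensen-type trick is precisely what makes $\sum_{\mathbf{k}}(F_{\mathbf{j},\mathbf{k}})^2$ summable --- a crude bound of the form $(F_{\mathbf{j},\mathbf{k}})^2\le\big(\int_{\R}\prod_{\ell=1}^d|\psi_{h_\ell}(2^{j_\ell}s-k_\ell)|\,ds\big)^2$ does not sum conveniently because of the square outside the integral. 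Once these two ingredients are in place the rest is a routine Schwartz-decay computation, the decisive quantitative input being that $\card(D_{j_n}^1(t))=O(2^{j_n})$ uniformly for $t\in[0,T]$, which cancels exactly the factor $2^{-j_n}$ produced by $\|\psi_{h_n}(2^{j_n}\cdot-k_n)\|_{L^1(\R)}$.
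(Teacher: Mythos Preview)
Your proof is correct and follows essentially the same strategy as the paper's: both reduce $\|\widetilde{\mathcal{M}}_{n,\mathbf{j}}(t)\|_{L^2(\Omega)}^2$ to $(d!)^2\sum_{\mathbf{k}\in\daleth_n(t)}(F_{\mathbf{j},\mathbf{k}})^2$ via Proposition~\ref{prop:corre} and the bound $\card\{\mathbf{k}':\calD(\mathbf{j},\mathbf{k}')=\calD(\mathbf{j},\mathbf{k})\}\le d!$, and then estimate that sum by Lemma~\ref{lem:boundF}, the fast decay \eqref{maj:psih}, and \eqref{maj:carddj1}. The only cosmetic difference is that the paper organizes the collapse of the double sum through an explicit partition of $\daleth_n(t)$ into equivalence classes (using convexity of $x\mapsto x^2$ on each class), whereas you use $2|ab|\le a^2+b^2$ directly; both routes land on the same inequality \eqref{eqn:412intermediaire}.
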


\begin{proof}
The equivalence relation $\sim$ on the set $\daleth_{n}(t)$ is defined as:
\[
\forall \,(\textbf{k},\textbf{k}') \in \daleth_{n}(t)\times\daleth_{n}(t),\,\,\,\, \textbf{k}\sim \textbf{k}'  \Longleftrightarrow \varepsilon_{\textbf{j,k}} = \varepsilon_{\textbf{j,k}'}.
\]
Let us emphasize that, we know from Remark \ref{rem:eqa-multi-jk} and Proposition \ref{prop:corre} that 
\begin{equation}
\label{eq:equiv-R}
\forall \,(\textbf{k},\textbf{k}') \in \daleth_{n}(t)\times\daleth_{n}(t),\,\,\,\, \textbf{k}\sim \textbf{k}'  \Longleftrightarrow \E[\varepsilon_{\textbf{j,k}}\varepsilon_{\textbf{j,k}'}] \neq 0. 
\end{equation}
Since $\daleth_{n}(t)$ is a finite set, the equivalence classes for the equivalence relation $\sim$ are in finite number denoted by $M$. Let us then denote them by $\daleth_{n,1}(t),\ldots, \daleth_{n,M}(t)$. Then using a well-known result on equivalence relations, the set $\daleth_{n}(t)$ can be expressed as:
\begin{equation}
\label{eq:Uequi-cl}
\daleth_{n}(t)=\bigcup_{i=1}^M \daleth_{n,i}(t)\quad\mbox{(disjoint union).}
\end{equation} 
Let us also mention from Remark \ref{rem:eqa-multi-jk} that, for each $i\in  [\![1,M]\!]$, we have
\begin{align}\label{eqn:boundcardi}
\card (\daleth_{n,i}(t)) \le d\,!
\end{align}
Next, observe that it follows from \eqref{eq:equiv-R} that 
\begin{equation}
\label{eq:cov-ze}
{\rm Cov}\bigg (\,\sum_{\textbf{k}\in \daleth_{n,i}'(t)} F_{\textbf{j,k}} \varepsilon_{\textbf{j,k}} \,,\, \sum_{\textbf{k}\in \daleth_{n,i}''(t)} F_{\textbf{j,k}} \varepsilon_{\textbf{j,k}} \bigg)=0, \quad\mbox{when $i'\ne i''$}.
\end{equation}
Then, one can derive from \eqref{eqn:tiMnJ}, \eqref{eq:Uequi-cl}, \eqref{eq:cov-ze}, Proposition \ref{prop:corre} and the triangular inequality,  that
\begin{align*}
& \| \widetilde{\mathcal{M}}_{n,\mathbf{j}}  (t) \|_{L^2(\Omega)}^2 = \left\|\sum_{i=1}^M \sum_{\textbf{k}\in \daleth_{n,i}(t)} F_{\textbf{j,k}} \varepsilon_{\textbf{j,k}} \right\|_{L^2(\Omega)}^2 = \sum_{i=1}^M \left\|\sum_{\textbf{k}\in \daleth_{n,i}(t)} F_{\textbf{j,k}} \varepsilon_{\textbf{j,k}} \right\|_{L^2(\Omega)}^2 \\
& = \sum_{i=1}^M \sum_{\textbf{k}\in\daleth_{n,i}(t)} \sum_{\textbf{k}'\in\daleth_{n,i}(t)} F_{\textbf{j,k}} F_{\textbf{j,k}'} \E\left[\varepsilon_{\textbf{j,k}} \varepsilon_{\textbf{j,k}'}\right] \\
& \le d\,!\sum_{i=1}^M \sum_{\textbf{k}\in\daleth_{n,i}(t)} \sum_{\textbf{k}'\in\daleth_{n,i}(t)} \big | F_{\textbf{j,k}}\big |\big |F_{\textbf{j,k}'}\big | \\
&= d\,!\sum_{i=1}^M \left (\sum_{\textbf{k}\in\daleth_{n,i}(t)} \big | F_{\textbf{j,k}}\big |\right)^2\,.
\end{align*}
Then using the convexity of the function $x\mapsto x^2$, the inequality \eqref{eqn:boundcardi} and the equality \eqref{eq:Uequi-cl}, we get that
\begin{align}\label{eqn:412intermediaire}
\| \widetilde{\mathcal{M}}_{n,\mathbf{j}}  (t) \|_{L^2(\Omega)}^2 & \leq (d\, !)^2\sum_{i=1}^M \sum_{\textbf{k}\in\daleth_{n,i}(t)} F_{\textbf{j,k}}^2 =  (d\, !)^2 \sum_{\textbf{k}\in\daleth_{n}(t)} F_{\textbf{j,k}}^2\,.
\end{align}
Moreover, putting together Lemma \ref{lem:boundF}, \eqref{eqn:set-tiMnJ}, the fast decay property \eqref{maj:psih} with $L>1$, and the inequality $\displaystyle\sup_{x\in\R}\sum_{k \in \Z} \big (3+|x-k|\big)^{-L}<\infty$ we get that
\begin{align}
\sum_{\textbf{k}\in\daleth_{n}(t)} F_{\textbf{j,k}}^2 &\leq c_\psi\, 2^{-j_n} \sum_{k_n \in D_{j_n}^1(t)} \int_{\R}  \left|\psi_{h_n} (2^{j_n}s-k_n) \right| \prod_{\substack{\ell =1 \\ \ell \neq n}}^d \left(  \sum_{k_\ell \in \Z} \left|\psi_{h_\ell} (2^{j_\ell}s-k_\ell) \right|^2 \right)\, ds \nonumber \\
& \leq c_1  2^{-j_n} \sum_{k_n \in D_{j_n}^1(t)} \int_{\R}  \left|\psi_{h_n} (2^{j_n}s-k_n) \right| \prod_{\substack{\ell =1 \\ \ell \neq n}}^d \left(  \sum_{k_\ell \in \Z} \big(3+|2^{j_\ell}s-k_\ell|\big)^{-L} \right)\, ds \nonumber \\
& \leq c_2  2^{-j_n} \sum_{k_n \in D_{j_n}^1(t)} \int_{\R}  \left|\psi_{h_n} (2^{j_n}s-k_n) \right| \, ds\nonumber\\
&= c_2\| \psi_{h_n}\|_{L^1(\R)}\,  2^{-2 j_n}\, \card (D_{j_n}^1(t))\,,  \label{boundsuppfjk}
\end{align}
where $c_1,c_2$ are finite positive constants not depending on $n$, $t$ and $\mathbf{j}$. Then, \eqref{eqn:412intermediaire}, \eqref{boundsuppfjk},  and \eqref{maj:carddj1} entail that $\| \widetilde{\mathcal{M}}_{n,\mathbf{j}}  (t) \|_{L^2(\Omega)}^2\le (d\, !)^2 c_2 2^{-j_n}$.
\end{proof}
The following remark shows that the supremum in \eqref{eqn:MnJ} is in fact a supremum on a well-chosen finite set.
\begin{Rmk}
\label{rem:finiD1}
For each fixed $j \in \N$ and $t\in\R_+$, we denote by $m_{j,t}$ the integer part of the real number  $2^jt-2^{j(1-a)}$, that is $m_{j,t}:=\lfloor 2^jt-2^{j(1-a)} \rfloor$. Thus, in view of \eqref{def:dj1} of the set $D_j^1(t)$, it turns out that
\begin{equation}
\label{rem:finiD1bis}
D_j^1(t)=\left\{ \begin{array}{cl}
                       \emptyset & \text{if } t \in [0, 2^{1-ja})\\
                       D_j^1(m_{j,t}2^{-j}+2^{-ja}) & \text{if } t \in [2^{1-ja},\infty).
                     \end{array} \right.
\end{equation}
Then, we can derive from \eqref{rem:finiD1bis} that, for all $n \in [\![1,d]\!] $ and $\mathbf{j} \in \aleph_{n,1}$,
\begin{equation}\label{eqn:discret}
\sup_{t \in [0,T]} |\widetilde{\mathcal{M}}_{n,\mathbf{j}}(t)|=\sup_{m \in \mathcal{I}_{j_n}} |\widetilde{\mathcal{M}}_{n,\mathbf{j}}(m2^{-j_n}+2^{-j_na}) |,
\end{equation}
where the arbitrary real number $T>2$ is fixed and $\mathcal{I}_j$ stands for the finite set
\begin{equation}\label{eqn:Ij}
\mathcal{I}_j := \N \cap (2 ^{j(1-a)}-1,2^jT-2^{j(1-a)}].
\end{equation}
\end{Rmk}

\begin{Lemma}\label{lem:eventomega**}
Let $T >2$ be a fixed real number. There exist $\Omega^{**}$ an event of probability $1$ and a positive almost surely finite random variable $C^{**}$ such that,  for all $n \in [\![1,d]\!] $ and $\mathbf{j} \in \aleph_{n,1}$, on $\Omega^{**}$, we have
\begin{equation}\label{bound:mnjl2}
\sup_{t \in [0,T]} |\widetilde{\mathcal{M}}_{n,\mathbf{j}}(t)| \leq C^{**}  2^{-j_n /2}\,\log(3+|\mathbf{j}|+ 2^{j_n} T)^\frac{d}{2} .
\end{equation}
\end{Lemma}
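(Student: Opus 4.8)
Plan for proving Lemma \ref{lem:eventomega}.**

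The plan is to discretize the supremum over $t\in[0,T]$ using Remark \ref{rem:finiD1}, so that for each $\mathbf{j}\in\aleph_{n,1}$ we only need to control the finitely many random variables $\widetilde{\mathcal{M}}_{n,\mathbf{j}}(m2^{-j_n}+2^{-j_na})$ with $m\in\mathcal{I}_{j_n}$, where $\#\mathcal{I}_{j_n}\le 2^{j_n}T$. Each such random variable lies in the Wiener chaos of order $d$, so I would apply the concentration bound of Lemma \ref{lem:jas} together with the uniform $L^2(\Omega)$ estimate $\|\widetilde{\mathcal{M}}_{n,\mathbf{j}}(t)\|_{L^2(\Omega)}\le c\,2^{-j_n/2}$ from \eqref{eqn:majl2}. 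The idea is then a Borel--Cantelli argument: choose, for a suitable constant $\lambda>0$, the threshold $y_{n,\mathbf{j}}=\lambda\big(\log(3+|\mathbf{j}|+2^{j_n}T)\big)^{d/2}$ and estimate
\[
\mathbb{P}\Big(\exists\, n\in[\![1,d]\!],\ \mathbf{j}\in\aleph_{n,1},\ m\in\mathcal{I}_{j_n}:\ |\widetilde{\mathcal{M}}_{n,\mathbf{j}}(m2^{-j_n}+2^{-j_na})|> c\,2^{-j_n/2}\,y_{n,\mathbf{j}}\Big).
\]
By Lemma \ref{lem:jas} each individual term is at most $\exp(-c_d\,y_{n,\mathbf{j}}^{2/d})=\exp\big(-c_d\,\lambda^{2/d}\log(3+|\mathbf{j}|+2^{j_n}T)\big)=(3+|\mathbf{j}|+2^{j_n}T)^{-c_d\lambda^{2/d}}$.

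The next step is to check that the union bound over $n$, $\mathbf{j}$ and $m$ gives a convergent sum once $\lambda$ is large enough. Summing over $m\in\mathcal{I}_{j_n}$ produces a factor $\#\mathcal{I}_{j_n}\le 2^{j_n}T\le C(3+|\mathbf{j}|+2^{j_n}T)$, so the exponent is reduced by $1$; summing over $\mathbf{j}\in\aleph_{n,1}$ (so $j_n=\max_\ell j_\ell\ge 1$, hence $|\mathbf{j}|\lesssim d\,j_n$ on that set, and the $d-1$ remaining coordinates range over $(-\infty,j_n]$) costs at most a polynomial factor in $3+|\mathbf{j}|+2^{j_n}T$ of some fixed degree $\rho(d)$; and the sum over the finitely many $n\in[\![1,d]\!]$ is harmless. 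Thus, choosing $\lambda$ so that $c_d\lambda^{2/d}>\rho(d)+2$, the total probability is finite, in fact the tail sums over $\{j_n\ge N\}$ go to $0$ as $N\to\infty$. Borel--Cantelli then yields an event $\Omega^{**}$ of probability $1$ on which, for all but finitely many triples, $|\widetilde{\mathcal{M}}_{n,\mathbf{j}}(m2^{-j_n}+2^{-j_na})|\le c\,2^{-j_n/2}y_{n,\mathbf{j}}$; absorbing the finitely many exceptional triples into an almost surely finite random constant $C^{**}$ (using that each $\widetilde{\mathcal{M}}_{n,\mathbf{j}}(t)$ is a finite sum of random variables with finite moments, hence almost surely finite, and that the exceptional set can be taken uniform in $m$ by enlarging the constant), and then invoking \eqref{eqn:discret} to pass back from the discrete supremum to $\sup_{t\in[0,T]}$, gives \eqref{bound:mnjl2}.

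The main obstacle I anticipate is bookkeeping of the polynomial losses: one must verify carefully that the number of relevant indices $\mathbf{j}\in\aleph_{n,1}$ with a given value of $j_n$, together with $\#\mathcal{I}_{j_n}$, is bounded by a fixed power of $(3+|\mathbf{j}|+2^{j_n}T)$ — this uses crucially that on $\aleph_{n,1}$ the coordinate $j_n$ dominates all others, so $|\mathbf{j}|$ is comparable to $d\,j_n$ and $2^{j_n}T$ already dwarfs any polynomial in the $j_\ell$. Once that polynomial degree $\rho(d)$ is pinned down, choosing $\lambda=\lambda(d)$ large enough in the concentration bound is immediate, and the rest is the standard Borel--Cantelli mechanism. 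A minor secondary point is making sure the random variable $C^{**}$ and the event $\Omega^{**}$ can be chosen uniformly over all $n$ and $\mathbf{j}$ (not depending on them), which follows because the Borel--Cantelli event and the enumeration of exceptional triples are themselves defined uniformly.
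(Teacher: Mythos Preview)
Your proposal is correct and follows essentially the same route as the paper: discretize via Remark \ref{rem:finiD1}, apply the chaos tail bound of Lemma \ref{lem:jas} with the $L^2$ estimate \eqref{eqn:majl2}, and conclude by Borel--Cantelli. The paper packages the Borel--Cantelli step slightly differently, first proving the abstract statement that any $\N$-indexed sequence $(X_j)$ in the $d$th chaos satisfies $|X_j|\le C_1\log(3+j)^{d/2}\|X_j\|_{L^2(\Omega)}$ on an event of probability $1$, and then invoking an ``indexation argument'' to transfer this to the countable family $\{\widetilde{\mathcal{M}}_{n,\mathbf{j}}(m2^{-j_n}+2^{-j_na})\}$; your direct union-bound computation is equivalent.

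One small slip: you write ``hence $|\mathbf{j}|\lesssim d\,j_n$ on that set'', which is false since the coordinates $j_\ell$ with $\ell\ne n$ range over $(-\infty,j_n]$ and can have arbitrarily large absolute value. This does not damage the argument, because the decay of $(3+|\mathbf{j}|+2^{j_n}T)^{-c_d\lambda^{2/d}}$ in $|\mathbf{j}|$ is exactly what makes the sum over those unbounded coordinates converge once $\lambda$ is large; just drop that parenthetical remark.
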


\begin{proof}
Let us first show that if $(X_j)_{j \in \N}$ is an arbitrary sequence of random variables in the Wiener chaos of order $d$, there exist $\Omega_1$, an event of probability $1$, and a positive almost surely finite random variable $C_1$ such that, for all $j \in \N$, on $\Omega_1$, we have
\begin{equation}
\label{eq1:seqMj}
|X_j| \leq C_1 \log(3+j)^\frac{d}{2} \|X_j\|_{L^2(\Omega)}.
\end{equation}
Let $\kappa \geq 2$ be a constant which will be precisely defined later. Applying, for any $j \in \N$, Lemma \ref{lem:jas} to the random variable $X_j$, we get that
\begin{equation}\label{maj:probajas}
\mathbb{P} \left( |X_j| \geq \kappa \log(3+j)^\frac{d}{2} \|X_j\|_{L^2(\Omega)} \right) \leq \exp\big (-c_d \kappa^\frac{2}{d} \log(3+j)\big),
\end{equation}
where $c_d$ is the same universal positive constant as in Lemma \ref{lem:jas}. 
Thus, assuming that the constant $\kappa$ satisfies $\kappa > c_d^{-\frac{d}{2}}$, it turns out that the series
\[ \sum_{j \in \N} \mathbb{P} \left( |X_j| \geq \kappa \log(3+j)^\frac{d}{2} \|X_j\|_{L^2(\Omega)} \right)  \]
is convergent; then, the existence of $\Omega_1$ and $C_1$ follows from Borel-Cantelli Lemma.
Next, notice that, thanks to an indexation argument, the result obtained in \eqref{eq1:seqMj} can be applied to 
the sequence of random variables 
\[
\left\{\widetilde{\mathcal{M}}_{n,\mathbf{j}}(m2^{-j_n}+2^{-j_na})\,: \,n \in [\![1,d]\!],\, \textbf{j} \in \aleph_{n,1}, m\in I_{j_n}\right\}.
\]
By this way, we can show that there are $\Omega^{**}$ an event of probability $1$ and a positive almost surely finite random variable $C_2$ (depending on $T$) such that, on $\Omega^{**}$, we have, for all $n \in [\![1,d]\!] $, $\textbf{j} \in \aleph_{n,1}$ and  $m\in I_{j_n}$, that
\begin{align} \label{maj:Mnjm}
|\widetilde{\mathcal{M}}_{n,\mathbf{j}}(m2^{-j_n}+2^{-j_na})| \leq C_2 \log(3+|\mathbf{j}|+m)^\frac{d}{2} \|\widetilde{\mathcal{M}}_{n,\mathbf{j}}(m2^{-j_n}+2^{-j_na})\|_{L^2(\Omega)}. 
\end{align}
Then, putting together \eqref{maj:Mnjm}, \eqref{eqn:majl2}, and Remark \ref{rem:finiD1}, we obtain \eqref{bound:mnjl2}. 
\end{proof}

\begin{Lemma}\label{lemmepour22:5}
Let $T >2$ be a fixed real number. There exits a positive almost surely finite random variable $C$ such that, for all $n \in [\![1,d]\!] $ and $J \in \N$, on $\Omega^{**}$ (see Lemma \ref{lem:eventomega**}), the random variable $\mathcal{M}_{n,J}$ (see (\ref{eqn:MnJ})) is bounded from above by
$ C J^{\frac{d}{2}} 2^{-J(h_1+\cdots+h_d -d+ \frac{1}{2})}$.
\end{Lemma}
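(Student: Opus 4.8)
The plan is to feed the pointwise (in $\omega$) estimate of Lemma~\ref{lem:eventomega**} into the definition \eqref{eqn:MnJ} of $\mathcal{M}_{n,J}$ and then reduce the whole quantity to a single polynomial--geometric series in the largest index $j_n$; the structure is parallel to the proofs of Lemmata~\ref{lemmepour22:1}--\ref{lemmepour22:4}, but simpler, since the chaos concentration has already been carried out in Lemma~\ref{lem:eventomega**}. Working on the event $\Omega^{**}$ and using \eqref{eqn:MnJ}, \eqref{ens:alephnj} and \eqref{bound:mnjl2}, I would first write
\[
\mathcal{M}_{n,J}\le C^{**}\sum_{j_n\ge J}\ \sum_{\substack{j_\ell\le j_n\\ \ell\in[\![1,d]\!]\setminus\{n\}}}\Big(\prod_{\ell=1}^d 2^{j_\ell(1-h_\ell)}\Big)\,2^{-j_n/2}\,\log\big(3+|\mathbf{j}|+2^{j_n}T\big)^{d/2}.
\]
Then I would substitute $p_\ell:=j_n-j_\ell\ge 0$ for $\ell\neq n$, so that $\prod_{\ell=1}^d 2^{j_\ell(1-h_\ell)}=2^{j_n(d-h_1-\cdots-h_d)}\prod_{\ell\neq n}2^{-p_\ell(1-h_\ell)}$; combined with the factor $2^{-j_n/2}$, the power of $2$ attached to $j_n$ becomes $2^{-j_n\lambda}$ with $\lambda:=h_1+\cdots+h_d-d+\tfrac12>0$ by \eqref{eqn:hermi:cond}, and this is exactly the decay that will produce the announced rate.

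The one point that requires a little care is to decouple the logarithmic factor. Since $|\mathbf{j}|\le d|j_n|+\sum_{\ell\neq n}p_\ell$ and $\log(3+a+b)\le\log(3+a)+\log(3+b)$ for $a,b\ge 0$ (applied $d-1$ times to the summands $p_\ell$), together with the elementary bound $(x_0+\cdots+x_{d-1})^{d/2}\le d^{(d/2)-1}\sum_{i=0}^{d-1}x_i^{d/2}$ when $d\ge 2$, one gets
\[
\log\big(3+|\mathbf{j}|+2^{j_n}T\big)^{d/2}\le c_d\Big(\log\big(3+d|j_n|+2^{j_n}T\big)^{d/2}+\sum_{\ell\neq n}\log(3+p_\ell)^{d/2}\Big).
\]
Substituting this and summing over the $p_\ell$'s, every factor $\sum_{p\ge 0}2^{-p(1-h_\ell)}$ and $\sum_{p\ge 0}\log(3+p)^{d/2}2^{-p(1-h_\ell)}$ is a finite constant because $1-h_\ell>0$; hence, for a positive almost surely finite random variable $C'$ independent of $n$ and $J$,
\[
\mathcal{M}_{n,J}\le C'\sum_{j_n\ge J}2^{-j_n\lambda}\,\log\big(3+d|j_n|+2^{j_n}T\big)^{d/2}.
\]

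To finish I would use $3+d j_n+2^{j_n}T\le(3+d+T)2^{j_n}$ for $j_n\ge 1$ to get $\log(3+d|j_n|+2^{j_n}T)\le c_T\, j_n$, so that the right-hand side above is at most $C' c_T^{d/2}\sum_{j_n\ge J}j_n^{d/2}2^{-j_n\lambda}$, and then bound this tail by the standard estimate $\sum_{j\ge J}j^{d/2}2^{-j\lambda}\le\big(\sum_{p\ge 0}(1+p)^{d/2}2^{-p\lambda}\big)\,J^{d/2}2^{-J\lambda}$, valid for $J\ge 1$ (it follows from $J+p\le J(1+p)$). This yields $\mathcal{M}_{n,J}\le C\, J^{d/2}2^{-J(h_1+\cdots+h_d-d+1/2)}$ on $\Omega^{**}$, as claimed. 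The main (and essentially only) point needing attention is the bookkeeping with the logarithmic weight just described; the genuinely delicate estimate --- the uniform $L^2$-control of $\widetilde{\mathcal{M}}_{n,\mathbf{j}}(t)$ and the hypercontractivity/concentration inequality of Lemma~\ref{lem:jas} --- is already packaged inside Lemma~\ref{lem:eventomega**}.
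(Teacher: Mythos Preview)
Your proof is correct and follows essentially the same route as the paper: plug \eqref{bound:mnjl2} into \eqref{eqn:MnJ}, substitute $p_\ell=j_n-j_\ell$, decouple the logarithmic weight, and then sum the resulting polynomial--geometric tail in $j_n$. The only cosmetic difference is that the paper splits the logarithm multiplicatively via \eqref{log:ineg1}, i.e.\ $\log(3+|\mathbf{j}|+2^{j_n}T)^{d/2}\le \log(3+d\,j_n+2^{j_n}T)^{d/2}\prod_{\ell\neq n}\log(3+p_\ell)^{d/2}$, whereas you split it additively and then use convexity---both lead to the same final bound.
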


\begin{proof}
Let us fix $J \in \N$, using \eqref{eqn:MnJ}, \eqref{ens:alephnj}, \eqref{bound:mnjl2}, \eqref{log:ineg1}, the triangular inequality,  \eqref{log:ineg2} and \eqref{eqn:hermi:cond}, we obtain that
\begin{align*}
 \mathcal{M}_{n,J} &\leq C_0 \sum_{j_n=J}^{+ \infty} 2^{j_n(\frac{1}{2}-h_n)} \log(3+ d\,j_n+2^{j_n} T)^\frac{d}{2} \times \ldots \nonumber \\
& \ldots \times \prod_{\substack{\ell=1 \\\ell \neq d}}^d \left( \sum_{j_\ell =- \infty}^{j_n} 2^{j_\ell(1-h_\ell)} \log(3+|j_n- j_\ell|)^\frac{d}{2}  \right)\\
& \leq C_1 \sum_{j_n=J}^{+ \infty} 2^{-j_n(h_1+\cdots +h_d -d+ \frac{1}{2})} \log(3+ d\, j_n+2^{j_n} T)^\frac{d}{2} \times \ldots \nonumber \\
& \ldots \times \prod_{\substack{\ell=1 \\\ell \neq d}}^d \left( \sum_{p=0}^{+ \infty} 2^{-p(1-h_\ell)} \log(3+p +2^{j_n}T)^\frac{d}{2}  \right) \\
& \leq C_2  \sum_{j_n=J}^{+ \infty} 2^{-j_n(h_1+\cdots +h_d -d+ \frac{1}{2})} j_n^\frac{d}{2} \\
& \leq C_3 2^{-J(h_1+\cdots +h_d -d+ \frac{1}{2})} J^\frac{d}{2}.
\end{align*}
where $C_0$, $C_1$, $C_2$ and $C_3$ are positive almost surely finite random variables not depending on $n$ and $J$. 
\end{proof}

We are now in position to complete the proof of Theorem \ref{thm:main1}.

\begin{proof}[End of the Proof of Theorem \ref{thm:main1}]
Without loss of generality, one can assume that the compact interval $I$ in the statement of the theorem is of the form $I=[0,T]$ for a fixed real number $T>2$. Let $\widetilde{\Omega}$ be the event of probability $1$ defined as: $\widetilde{\Omega}:= \Omega^* \cap \Omega^{**}$, where $\Omega^*$ and $\Omega^{**}$ are as in Lemmata \ref{lem:bound-gjk} and  \ref{lem:eventomega**}. 

First, we will show that, for each fixed $\omega \in \widetilde{\Omega}$ and $\mathbf{j} \in \Z^d$, the series of continuous function $\sum_{\mathbf{k}\in \Z^d} {\cal A}_{\mathbf{j},\mathbf{k}}\varepsilon_{\mathbf{j},\mathbf{k}}(\omega)$ is normally convergent with respect to the uniform norm $\| \cdot \|_{I,\infty}$.
Using the fast decay property \eqref{maj:psih}, the definition \eqref{def:AJK} of the continuous function ${\cal A}_{\mathbf{j},\mathbf{k}}$, the bound \eqref{eqn:bound:rv} and the triangular inequality, one gets, for some positive finite random variable $C_1$, depending on $T$ and $\mathbf{j}\in \Z^d$ and that
\begin{align*}
&\sum_{\mathbf{k} \in \Z^d} \|{\cal A}_{\mathbf{j},\mathbf{k}}\|_{I,\infty} |\varepsilon_{\mathbf{j},\mathbf{k}}(\omega)| \\
& \leq C_1(\omega) \sum_{\mathbf{k} \in \Z^d} \int_0^T \prod_{\ell=1}^d \frac{\sqrt{\log(3+|j_\ell|+|k_\ell|)}}{(1+2^{j_\ell}T+|2^{j_\ell}s-k_\ell|)^2} \, ds \\
& \leq C_1(\omega) \sum_{\mathbf{k} \in \Z^d} \int_0^T \prod_{\ell=1}^d \frac{\sqrt{\log(3+|j_\ell|+|k_\ell|)}}{(1+2^{j_\ell}T+|k_\ell|-|2^{j_\ell}s|)^2} \, ds \\
& \leq  C_1(\omega) T \sum_{\mathbf{k} \in \Z^d} \frac{\sqrt{\log(3+|j_\ell|+|k_\ell|)}}{(1+|k_\ell|)^2} < \infty,
\end{align*}
which shows that the normal convergence holds.

Next, for each $\mathbf{j}\in \Z^d$, we denote by $\{X_\mathbf{j}(t)\}_{t \in I}$ the stochastic process with continuous paths vanishing outside of $\widetilde{\Omega}$ and defined on $I \times \widetilde{\Omega}$ as
\begin{equation}
\label{eqn:proof22ter}
X_\mathbf{j}(t,\omega)=\sum_{\mathbf{k}\in \Z^d} {\cal A}_{\mathbf{j},\mathbf{k}}(t) \varepsilon_{\mathbf{j},\mathbf{k}}(\omega). 
\end{equation}
Next observe that in order to complete the proof of the theorem, it is enough to show that there exists a positive finite random variable $\widetilde{C}$ such that, for every $J \in \N$, the following inequality holds on $\widetilde{\Omega}$:
\begin{equation}\label{eqn:proof22}
\sum_{\substack{\mathbf{j} \in \Z^d \\  \max_{\ell \in [\![1,d]\!]} j_\ell \geq J}} 2^{j_1(1-h_1)+ \cdots +j_d(1-h_d)} \| X_\mathbf{j}\|_{I,\infty} \leq \widetilde{C} J^{\frac{d}{2}} 2^{-J(h_1+\cdots+h_d-d+\frac{1}{2})}.
\end{equation}
Indeed, assuming that \eqref{eqn:proof22} is true, then it clearly entails that, for all fixed $J \in \N$ and every $\omega \in \widetilde{\Omega}$, one has
\[ \sum_{\substack{\mathbf{j} \in \Z^d \\  \max_{\ell \in [\![1,d]\!]} j_\ell \geq J}} 2^{j_1(1-h_1)+ \cdots +j_d(1-h_d)} \| X_\mathbf{j}(\cdot,\omega)\|_{I,\infty} <,\infty,\]
which means that the series of continuous function
\begin{equation}\label{eqn:proof22bis}
X_J(\cdot, \omega) := \sum_{\substack{\mathbf{j} \in \Z^d \\  \max_{\ell \in [\![1,d]\!]} j_\ell \geq J}} 2^{j_1(1-h_1)+ \cdots +j_d(1-h_d)} X_\mathbf{j}(\cdot,\omega)
\end{equation}
is normally convergent with respect to the uniform norm $\| \cdot \|_{I,\infty}$ and thus $X_J(\cdot, \omega)$ is a continuous function on $I$. Then, we denote by $\{X_J(t)\}_{t \in I}$ the stochastic process with continuous paths vanishing outside of $\widetilde{\Omega}$ and defined on $I \times \widetilde{\Omega}$ by \eqref{eqn:proof22bis}. Thus, \eqref{eqn:proof22} and the triangular inequality imply that, for all $J \in \N$, the following inequality holds on $\widetilde{\Omega}$:
\begin{equation}\label{eqn:prooofthm:main1}
\| X_J \|_{I,\infty} \leq \widetilde{C} J^{\frac{d}{2}} 2^{-J(h_1+\cdots+h_d-d+\frac{1}{2})}.
\end{equation}
On another hand, we know from equality \eqref{eq:X-XJ} that, for all fixed $J \in \N$ and $t \in I$, the random series
\[ \sum_{\substack{\mathbf{j} \in \Z^d \\  \max_{\ell \in [\![1,d]\!]} j_\ell \geq J}} 2^{j_1(1-h_1)+ \cdots +j_d(1-h_d)}X_\mathbf{j}(t)\]
converges to $X^{(d,\bot)}_{\mathbf{h},J}(t):=X^{(d)}_{\mathbf{h}}(t)-X^{(d)}_{\mathbf{h},J}(t)$ in $L^2(\Omega)$. Combining this fact with \eqref{eqn:proof22bis} one concludes that, for all $t \in I$, almost surely,
\[ X_J(t)=X^{(d)}_{\mathbf{h}}(t)-X^{(d)}_{\mathbf{h},J}(t).\]
This latter equality and the fact that the two stochastic processes $\{X_J(t)\}_{t \in I}$ and $\{ X^{(d)}_{\mathbf{h}}(t)-X^{(d)}_{\mathbf{h},J}(t)\}_{t \in I}$ have continuous paths imply that these two processes are indistinguishable. Thus, the inequality \eqref{eqn:prooofthm:main1} is nothing else than the inequality \eqref{eqn:thm:main1}.

It remains us to show that \eqref{eqn:proof22} holds. In fact, it results from Lemmata \ref{lemmepour22:1}, \ref{lemmepour22:2}, \ref{lemmepour22:3}, \ref{lemmepour22:4}, \ref{lemmepour22:5} and the inequality:
\[ \sum_{\mathbf{j} \in \Z^d \, : \,  \displaystyle\max_{\ell \in [\![1,d]\!]} j_\ell \geq J} 2^{j_1(1-h_1)+ \cdots +j_d(1-h_d)} \| X_\mathbf{j}\|_{I,\infty} \leq \sum_{n=1}^{d} \left( \mathcal{M}_{n,J}+ \sum_{m=0}^3 \mathcal{H}_{n,J}^m \right), \]
which is obtained by using \eqref{eqn:proof22ter}, the triangular inequality, standard computations, and the definitions of the random variables $\mathcal{M}_{n,J}$, and $\mathcal{H}_{n,J}^m$ with $m\in [\![0,3]\!]$.  
\end{proof}

\section{Proof of Theorem \ref{thm:main2}}
\label{sec:altern-rep}

In this final Section, we aim at proving Theorem \ref{thm:main2}. We will need a number of intermediary results which mainly consist in bounding in convenient ways well-chosen parts of the random series in \eqref{eqn:fullserie}. In fact, most of the job has already been done in the previous Section \ref{sect:approxerro} but there still remain some parts of the series which need to be conveniently bounded. It is the purpose of the following lemmata. We mention that the event $\Omega^*$ of probability $1$ (see Lemma \ref{lem:bound-gjk}) will appear in their statements.

\begin{Lemma}\label{lemmepour24:1}
Let $T >2$ and $L>3/2$ be two fixed real numbers. There exits a positive almost surely finite random variables $C$ such that, for all $n \in [\![1,d]\!] $ and $N \in \N$, on $\Omega^*$, the random variable
\begin{align}
&\mathcal{L}^1_{n,N} := \sum_{\mathbf{j} \in \Z^d \, : \,   \max_{\ell \in [\![1,d]\!]} j_\ell <N} 2^{ j_1 (1-h_1)+\cdots + j_d (1-h_d)} \times \ldots \nonumber \\
& \qquad \ldots \times\sum_{|k_n| > 2^{N+1} T } \sum_{\substack{k_\ell \in \Z \\ \ell\in [\![1,d]\!] \setminus \{n\}}} \left|\varepsilon_{\mathbf{j},\mathbf{k}}\right|  \sup_{t \in [0,T]} \left \{\left| \mathcal{A}_{\mathbf{j},\mathbf{k}}(t)\right| \right\} \label{def:lnN1}
\end{align}
is bounded from above by $ C  2^{-N(h_1+ \cdots + h_d+L -d-1)} N^\frac{d}{2} \log(3+N)^\frac{d}{2}.$
\end{Lemma}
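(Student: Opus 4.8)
The plan is to adapt, with only cosmetic changes, the argument proving Lemma~\ref{lemmepour22:1}, the difference being that here it is the space index $k_n$ that is forced to be large while all the dilation indices stay below $N$, so the sum over $\mathbf{j}$ decouples into a product of $d$ one‑dimensional geometric‑type sums. First I would fix $\mathbf{j}$ with $\max_{\ell\in[\![1,d]\!]}j_\ell<N$ and, for each admissible $\mathbf{k}$, bound
\[
\left|\varepsilon_{\mathbf{j},\mathbf{k}}\right|\sup_{t\in[0,T]}\left|\mathcal{A}_{\mathbf{j},\mathbf{k}}(t)\right|\le C_0\int_0^T\prod_{\ell=1}^d\frac{\sqrt{\log(3+|j_\ell|+|k_\ell|)}}{(3+|2^{j_\ell}s-k_\ell|)^L}\,ds,
\]
using the definition \eqref{def:AJK}, the fast‑decay property \eqref{maj:psih}, and the logarithmic bound \eqref{eqn:bound:rv} (so that the whole argument takes place on $\Omega^*$). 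Summing over the unconstrained indices $k_\ell$, $\ell\neq n$, Lemma~\ref{lem:maj:somme1} produces, exactly as in the first display leading to \eqref{maj:gnj0:avant}, a factor at most $c\sqrt{\log(3+|j_\ell|+2^{j_\ell}T)}$ for every $s\in[0,T]$, which is independent of $s$ and hence pulls out of the integral.

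For the remaining sum over $|k_n|>2^{N+1}T$ I would use that, since $\max_\ell j_\ell<N$,
\[
|2^{j_n}s|\le 2^{j_n}T<2^{N}T<\tfrac12|k_n|\qquad\text{for all }s\in[0,T],
\]
so that $3+|2^{j_n}s-k_n|>3+\tfrac12|k_n|$; the resulting $s$‑independent tail series is controlled, using \eqref{log:ineg1}--\eqref{log:ineg2}, a comparison with an integral and the hypothesis $L>3/2$, by
\[
\sum_{|k_n|>2^{N+1}T}\frac{\sqrt{\log(3+|j_n|+|k_n|)}}{(3+|k_n|/2)^{L}}\le c\,2^{-N(L-1)}\Bigl(\sqrt{\log(3+|j_n|)}+\sqrt{N}\Bigr),
\]
and the $s$‑integral then contributes only a factor $T$.

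Collecting these estimates, $\mathcal{L}^1_{n,N}$ is bounded by an almost surely finite random constant times $2^{-N(L-1)}$ times
\[
\Bigl(\sum_{j_n<N}2^{j_n(1-h_n)}\bigl(\sqrt{\log(3+|j_n|)}+\sqrt{N}\bigr)\Bigr)\prod_{\ell\neq n}\Bigl(\sum_{j_\ell<N}2^{j_\ell(1-h_\ell)}\sqrt{\log(3+|j_\ell|+2^{j_\ell}T)}\Bigr),
\]
the factorisation being legitimate because $\{\mathbf{j}:\max_\ell j_\ell<N\}=\prod_{\ell}\{j_\ell<N\}$. Each one‑dimensional sum is geometric of ratio $2^{-(1-h_\ell)}<1$ and is dominated by the terms with $j_\ell$ close to $N-1$; writing $j_\ell=N-1-p$, using \eqref{log:ineg1}--\eqref{log:ineg2} and $\log(3+2^{N}T)\le cN$, each such factor is at most $c\,2^{N(1-h_\ell)}\sqrt{N}$. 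Multiplying the $d$ resulting factors and using $2^{-N(L-1)}\prod_{\ell=1}^d2^{N(1-h_\ell)}=2^{-N(h_1+\cdots+h_d+L-d-1)}$ yields
\[
\mathcal{L}^1_{n,N}\le C\,2^{-N(h_1+\cdots+h_d+L-d-1)}N^{d/2},
\]
which is even stronger than the asserted bound $C\,2^{-N(h_1+\cdots+h_d+L-d-1)}N^{d/2}\log(3+N)^{d/2}$.

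There is no genuinely new obstacle here; the only point deserving care is the tail estimate over $k_n$, where the inequality $|2^{j_n}s-k_n|\ge\tfrac12|k_n|$ must be seen to hold \emph{uniformly} in $s\in[0,T]$ precisely because $\max_\ell j_\ell<N$ — it is this decoupling that produces the decisive factor $2^{-N(L-1)}$ and lets the remaining sums over $\mathbf{j}$ factorise. As usual, all the constants $C_0,c,C,\dots$ are positive and, where random, almost surely finite, and none of them depends on $n$ or $N$.
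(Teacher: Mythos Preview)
Your argument is correct and follows essentially the same route as the paper's proof: bound $|\varepsilon_{\mathbf{j},\mathbf{k}}|\,|\mathcal{A}_{\mathbf{j},\mathbf{k}}(t)|$ via \eqref{maj:psih} and \eqref{eqn:bound:rv}, apply Lemma~\ref{lem:maj:somme1} for the unconstrained $k_\ell$, use $|2^{j_n}s-k_n|\ge \tfrac12|k_n|$ to extract the factor $2^{-N(L-1)}$ from the $k_n$-tail, and then exploit the product structure of $\{\mathbf{j}:\max_\ell j_\ell<N\}$ to reduce to $d$ one-dimensional geometric sums. Your additive splitting $\sqrt{\log(3+|j_n|+|k_n|)}\le \sqrt{\log(3+|j_n|)}+\sqrt{\log(3+|k_n|)}$ (in place of the paper's multiplicative use of \eqref{log:ineg1}) is what lets you drop the extra $\log(3+N)^{d/2}$ factor, so your bound is indeed slightly sharper than the one stated.
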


\begin{proof}
Let us fix $N \in \N$ and $\mathbf{j}\in \Z^d$ such that $\max_{\ell \in [\![1,d]\!]} j_\ell <N$. Using  the definition \eqref{def:AJK}, the fast decay property \eqref{maj:psih}, the bound \eqref{eqn:bound:rv}, the triangular inequality, inequalities \eqref{log:ineg1} and \eqref{maj:somme1} and the fact that the function $y \mapsto(2+y)^{-L} \sqrt{\log(2+y)}$ is decreasing on $\R_+$, we get
\begin{align}
& \sum_{|k_n| > 2^{N+1} T } \sum_{\substack{k_\ell \in \Z \\ \ell\in [\![1,d]\!] \setminus \{n\}}} \left|\varepsilon_{\mathbf{j},\mathbf{k}}\right|  \sup_{t \in [0,T]} \left \{\left| \mathcal{A}_{\mathbf{j},\mathbf{k}}(t)\right| \right\} \nonumber \\ 
& \leq C_0 \int_0^T \sum_{ |k_n| > 2^{N+1}T} \frac{\sqrt{\log(3+|j_n|+|k_n|)}}{(3+|2^{j_n}s-k_n|)^L} \times \ldots \nonumber \\
& \qquad \ldots \times \prod_{\substack{\ell=1 \\ \ell\neq n}}^d \left( \sum_{k_\ell \in \Z} \frac{\sqrt{\log(3+|j_\ell|+|k_\ell|)}}{(3+|2^{j_\ell}s-k_\ell|)^L} \right) \, ds \nonumber \\
& \leq  C_1 \int_0^T \sum_{ |k_n| > 2^{N+1}T} \frac{\sqrt{\log(3+|j_n|+|k_n|)}}{(3+|k_n|- 2^{j_n}s)^L} \prod_{\substack{\ell=1 \\ \ell\neq n}}^d \left( \sqrt{\log(3+|j_\ell|+2^{j_\ell} s)} \right) \, ds \nonumber \\
&\leq  C_1 T 2^L  \sqrt{\log(3+|j_n|)} \left( \prod_{\substack{\ell=1 \\ \ell\neq n}}^d  \sqrt{\log(3+|j_\ell|+2^{j_\ell} T)} \right) \times \ldots \nonumber \\
&\qquad \ldots \times  \left( \sum_{ |k_n| > 2^{N+1}T} \frac{\sqrt{\log(3+|k_n|)}}{(3+|k_n|)^L} \right) \nonumber \\
& \leq  C_1 T 2^{L+1}  \sqrt{\log(3+|j_n|)} \left(\prod_{\substack{\ell=1 \\ \ell\neq n}}^d  \sqrt{\log(3+|j_\ell|+2^{j_\ell} T)} \right) \times \ldots \nonumber \\
& \qquad\ldots \times  \left( \int_{2^{N+1}T}^{+ \infty} \frac{\sqrt{\log(2+y)}}{(2+y)^L} \right) \nonumber \\
& \leq C_2 \sqrt{\prod_{\ell=1}^d \log(3+|j_\ell|)} N^\frac{d}{2} 2^{-N(L-1)},\label{eqn:boundl1}
\end{align}
where $C_0$, $C_1$ and $C_2$ are positive almost surely finite random variables not depending on $\mathbf{j}$ and $N$. It follows from \eqref{eqn:boundl1} that
\begin{align*}
\mathcal{L}^1_{n,N} & \leq C_2  N^\frac{d}{2} 2^{-N(L-1)} \sum_{j_1 =- \infty}^{N-1} \cdots \sum_{j_d =- \infty}^{N-1} 2^{ j_1 (1-h_1)+\cdots + j_d (1-h_d)} \sqrt{\prod_{\ell=1}^d \log(3+|j_\ell|)} \\
& \leq C_3 N^\frac{d}{2} \log(3+N)^\frac{d}{2} 2^{-N(h_1+\cdots+h_d+L-d-1)},
\end{align*}
where $C_3$ is a positive almost surely finite random variable not depending on $N$.
\end{proof}

\begin{Lemma}\label{lemmepour24:2}
Let $T >2$, $L>1$ and $g>0$ be three fixed real numbers. There exits a positive almost surely finite random variable $C$ such that, for all $n\in[\![1,d]\!] $ and $N \in \N$, on $\Omega^*$, the random variable
\begin{align}
\mathcal{L}^2_{n,N} :=  \sum_{\mathbf{j} \in -\N^d} 2^{ j_1 (1-h_1)+\cdots + j_d (1-h_d)} \sum_{|k_n| > 2^{Ng} } \sum_{\substack{k_\ell \in \Z \\ \ell \in [\![1,d]\!], \ell \neq n}} \left|\varepsilon_{\mathbf{j},\mathbf{k}}\right|  \sup_{t \in [0,T]} \left \{\left| \mathcal{A}_{\mathbf{j},\mathbf{k}}(t)\right| \right\} \label{def:lnN2}
\end{align}
is bounded from above by $ C  2^{-N(L-1)g} \sqrt{N}. $
\end{Lemma}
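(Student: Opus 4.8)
The proof runs along the same lines as that of Lemma~\ref{lemmepour24:1}, the essential difference being that here the multi-index $\mathbf{j}$ ranges over $-\N^d$, so that every factor $2^{j_\ell(1-h_\ell)}$ decays geometrically and the outer summation over $\mathbf{j}$ converges with no interaction coming from large dilations; in particular the condition $L>1$ will be enough.

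First I would fix $N\in\N$ large enough that $2^{Ng}\ge 2T$ (the finitely many smaller values of $N$ being absorbed into the random constant at the very end), together with $n\in[\![1,d]\!]$ and $\mathbf{j}\in-\N^d$. Starting from \eqref{def:AJK}, the fast decay property \eqref{maj:psih} and the bound \eqref{eqn:bound:rv} on $|\varepsilon_{\mathbf{j},\mathbf{k}}|$, the inner double sum in \eqref{def:lnN2} is controlled by
\[
C_1 \int_0^T \left(\sum_{|k_n|>2^{Ng}} \frac{\sqrt{\log(3+|j_n|+|k_n|)}}{(3+|2^{j_n}s-k_n|)^L}\right) \prod_{\substack{\ell=1\\ \ell\ne n}}^d\left(\sum_{k_\ell\in\Z}\frac{\sqrt{\log(3+|j_\ell|+|k_\ell|)}}{(3+|2^{j_\ell}s-k_\ell|)^L}\right)ds.
\]
Since $j_\ell<0$ for every $\ell$, one has $2^{j_\ell}T<T$, hence Lemma~\ref{lem:maj:somme1} bounds each of the $d-1$ factors with $\ell\ne n$ by $c\sqrt{\log(3+|j_\ell|+2^{j_\ell}T)}\le c'\sqrt{\log(3+|j_\ell|)}$, uniformly in $s\in[0,T]$. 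For the $k_n$-sum, $j_n<0$ and $2^{Ng}\ge 2T$ give $|2^{j_n}s-k_n|\ge|k_n|-T\ge|k_n|/2$ whenever $|k_n|>2^{Ng}$; using moreover \eqref{log:ineg2} in the form $\sqrt{\log(3+|j_n|+|k_n|)}\le c\,\sqrt{\log(3+|j_n|)}\,\sqrt{\log(3+|k_n|)}$ and comparing the remaining series with an integral, one gets an upper bound $c\,\sqrt{\log(3+|j_n|)}\int_{2^{Ng}}^{+\infty}(2+y)^{-L}\sqrt{\log(2+y)}\,dy$. An integration by parts (licit because $L>1$) shows this last integral is at most $c\,\sqrt{N}\,2^{-N(L-1)g}$. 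Multiplying everything (the integration over $s\in[0,T]$ contributing only the constant $T$), the inner double sum is at most $C_2\,\sqrt{N}\,2^{-N(L-1)g}\,\sqrt{\prod_{\ell=1}^d\log(3+|j_\ell|)}$.

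Finally I would sum over $\mathbf{j}\in-\N^d$, which gives
\[
\mathcal{L}^2_{n,N}\le C_2\,\sqrt{N}\,2^{-N(L-1)g}\prod_{\ell=1}^d\left(\sum_{j_\ell=-\infty}^{-1} 2^{j_\ell(1-h_\ell)}\sqrt{\log(3+|j_\ell|)}\right).
\]
By \eqref{eqn:hermi:cond} each $1-h_\ell>0$, so every factor of the product above is a convergent series (the geometric decay dominating the logarithmic growth), and the claimed bound $\mathcal{L}^2_{n,N}\le C\,2^{-N(L-1)g}\sqrt{N}$ follows. There is no real obstacle in this argument; the only point deserving a little care is the elementary estimate $\int_{2^{Ng}}^{+\infty}(2+y)^{-L}\sqrt{\log(2+y)}\,dy\le c\,\sqrt{N}\,2^{-N(L-1)g}$, which is exactly where the hypothesis $L>1$ enters.
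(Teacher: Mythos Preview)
Your proof is correct and follows essentially the same route as the paper's: bound the inner $\mathbf{k}$-sum by separating the $k_n$-tail from the full $k_\ell$-sums ($\ell\ne n$), extract a factor $\sqrt{\prod_\ell\log(3+|j_\ell|)}$, estimate the $k_n$-tail by the integral $\int_{2^{Ng}}^{+\infty}(2+y)^{-L}\sqrt{\log(2+y)}\,dy\le c\sqrt{N}\,2^{-N(L-1)g}$, and then sum the convergent product $\prod_\ell\sum_{j_\ell<0}2^{j_\ell(1-h_\ell)}\sqrt{\log(3+|j_\ell|)}$. The only cosmetic differences are that the paper writes the fast-decay denominator as $(3+T+|2^{j_\ell}s-k_\ell|)^L$ (so that the triangle inequality $3+T+|2^{j_\ell}s-k_\ell|\ge 3+|k_\ell|$ works for all $N$ at once, avoiding your initial restriction $2^{Ng}\ge 2T$) and handles the $\ell\ne n$ sums by the same direct estimate rather than invoking Lemma~\ref{lem:maj:somme1}. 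One small slip: the inequality $\sqrt{\log(3+|j_n|+|k_n|)}\le\sqrt{\log(3+|j_n|)}\sqrt{\log(3+|k_n|)}$ you use is \eqref{log:ineg1}, not \eqref{log:ineg2}.
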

\begin{proof}
Let ux fix $N \in \N$ and $\mathbf{j}\in - \N^d$. Using the definition \eqref{def:AJK}, the fast decay property \eqref{maj:psih}, the bound \eqref{eqn:bound:rv}, the inequality \eqref{log:ineg1}, the triangular inequality and the fact that the function $y \mapsto(2+y)^{-L} \sqrt{\log(2+y)}$ is decreasing on $\R_+$, we get
\begin{align}
& \sum_{|k_n| > 2^{Ng} } \sum_{\substack{k_\ell \in \Z \\ \ell\in [\![1,d]\!] \setminus \{n\}}} \left|\varepsilon_{\mathbf{j},\mathbf{k}}\right|  \sup_{t \in [0,T]} \left \{\left| \mathcal{A}_{\mathbf{j},\mathbf{k}}(t)\right| \right\} \nonumber \\
& \leq C_0 \int_0^T \sum_{ |k_n| > 2^{N g}} \frac{\sqrt{\log(3+|j_n|+|k_n|)}}{(3+T+|2^{j_n}s-k_n|)^L} \times \ldots \nonumber \\
& \qquad \ldots \times \prod_{\substack{\ell=1 \\ \ell\neq n}}^d \left( \sum_{k_\ell \in \Z} \frac{\sqrt{\log(3+|j_\ell|+|k_\ell|)}}{(3+T+|2^{j_\ell}s-k_\ell|)^L} \right) \, ds \nonumber \\
& \leq C_0 \sqrt{\prod_{\ell=1}^d \log(3+|j_\ell|)} \int_0^T \sum_{ |k_n| > 2^{N g}} \frac{\sqrt{\log(3+|k_n|)}}{(3+|k_n|)^L} \times \ldots \nonumber \\
& \qquad \ldots \times \prod_{\substack{\ell=1 \\ \ell\neq n}}^d \left( \sum_{k_\ell \in \Z} \frac{\sqrt{\log(3+|k_\ell|)}}{(3+|k_\ell|)^L} \right) \, ds \nonumber \\ 
& \leq C_1  \sqrt{\prod_{\ell=1}^d \log(3+|j_\ell|)}  \sum_{ |k_n| > 2^{N g}} \frac{\sqrt{\log(3+|k_n|)}}{(3+|k_n|)^L} \nonumber \\
& \leq 2 C_1  \sqrt{\prod_{\ell=1}^d \log(3+|j_\ell|)}  \int_{2^{N g}}^{+ \infty} \frac{\sqrt{\log(2+y)}}{(2+y)^L} \nonumber\\
& \leq C_2 \sqrt{\prod_{\ell=1}^d \log(3+|j_\ell|)} 2^{-N(L-1)g} \sqrt{N}, \label{eqn:boundl2}
\end{align}
where $C_0$, $C_1$ and $C_2$ are positive almost surely finite random variables not depending on $\mathbf{j}$ and $N$. It follows from \eqref{eqn:boundl2} that
\begin{align*}
\mathcal{L}^2_{n,N} & \leq C_2 2^{-N(L-1)g} \sqrt{N} \sum_{\mathbf{j} \in -\N^d} 2^{ j_1 (1-h_1)+\cdots + j_d (1-h_d)} \sqrt{\prod_{\ell=1}^d \log(3+|j_\ell|)} \\
& \leq C_3 2^{-N(L-1)g} \sqrt{N},
\end{align*}
where $C_3$ is a positive almost surely finite random variables not depending on $N$.
\end{proof}

\begin{Def}
If $b >0$ is a fixed real numbers, we define the set of boolean vector
\[ \mathcal{B} := \Big\{v=(v_l)_{\ell \in [\![1,d]\!]}  \in \{0,1\}^d : \exists\, (l',l'')\in [\![1,d]\!]^2  \text{ such that } l'\ne l'' \text{ and } v_{l'}\ne v_{l''} \Big\} \]
and, for all $v \in \mathcal{B}$ and $N \in \N$, the subset of $\Z^d$
\begin{multline*}
\gimel_{v,N}^3 := \Big\{ \mathbf{j} \in \Z^d \, : \, \forall \,\ell \in  [\![1,d]\!], j_\ell \in  [\![0,N]\!]\text{ if } v_\ell=0 \text{ and, otherwise, } j_\ell \leq -2^{N b} \Big\}.     
\end{multline*}

\end{Def}

\begin{Lemma}\label{lemmepour24:3}
Let $T >2$ and $b>0$ be two fixed real numbers. There exits a positive almost surely finite random variable $C$ such that, for all $v \in \mathcal{B}$ and $N \in \N$, on $\Omega^*$, the random variable
\begin{align}
\mathcal{L}^3_{v,N} :=  \sum_{\mathbf{j} \in \gimel_{v,N}^3} 2^{ j_1 (1-h_1)+\cdots + j_d (1-h_d)}  \sum_{\mathbf{k} \in \Z^d} \left|\varepsilon_{\mathbf{j},\mathbf{k}}\right|  \sup_{t \in [0,T]} \left \{\left| \mathcal{A}_{\mathbf{j},\mathbf{k}}(t)\right| \right\} \label{def:lnN3}
\end{align}
is bounded from above by $C  N^\frac{d}{2} 2^{N \left(\sum_{(\ell \, : \, v_\ell =0)} (1-h_\ell)\right)-2^{Nb} \left( \sum_{(\ell' \, : \, v_{\ell'} =1)}(1-h_{\ell'})\right)}.$
\end{Lemma}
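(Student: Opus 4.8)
The plan is to follow the same two–step scheme as in the proofs of Lemmas~\ref{lemmepour24:1} and~\ref{lemmepour24:2}. First I will bound, for a fixed $\mathbf{j}\in\gimel_{v,N}^3$, the inner sum $\sum_{\mathbf{k}\in\Z^d}|\varepsilon_{\mathbf{j},\mathbf{k}}|\sup_{t\in[0,T]}|\mathcal{A}_{\mathbf{j},\mathbf{k}}(t)|$; then I will sum the resulting estimate over $\mathbf{j}\in\gimel_{v,N}^3$, exploiting the fact that this index set is a Cartesian product, so that the sum over $\mathbf{j}$ factorizes into one factor per coordinate, of two possible types according to whether $v_\ell=0$ or $v_\ell=1$.

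\emph{Step 1: the inner sum.} Fix $\mathbf{j}\in\gimel_{v,N}^3$. Choosing a decay exponent $L>1$ in \eqref{maj:psih}, and combining the definition \eqref{def:AJK}, the fast decay of the $\psi_{h_\ell}$, the bound \eqref{eqn:bound:rv} for $\varepsilon_{\mathbf{j},\mathbf{k}}$, Fubini's theorem and Lemma~\ref{lem:maj:somme1}, one gets
\[
\sum_{\mathbf{k}\in\Z^d}|\varepsilon_{\mathbf{j},\mathbf{k}}|\sup_{t\in[0,T]}|\mathcal{A}_{\mathbf{j},\mathbf{k}}(t)|
\le C_1\int_0^T\prod_{\ell=1}^d\Big(\sum_{k_\ell\in\Z}\frac{\sqrt{\log(3+|j_\ell|+|k_\ell|)}}{(3+|2^{j_\ell}s-k_\ell|)^L}\Big)\,ds
\le C_2\prod_{\ell=1}^d\sqrt{\log(3+|j_\ell|+2^{j_\ell}T)}.
\]
Now I split the last product according to $v$: if $v_\ell=0$ then $j_\ell\in[\![0,N]\!]$, hence $2^{j_\ell}T\le 2^NT$ and $\log(3+|j_\ell|+2^{j_\ell}T)\le cN$; if $v_{\ell'}=1$ then $j_{\ell'}<0$, hence $2^{j_{\ell'}}T\le T$ and $\log(3+|j_{\ell'}|+2^{j_{\ell'}}T)\le c\log(3+|j_{\ell'}|)$. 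This yields
\[
\sum_{\mathbf{k}\in\Z^d}|\varepsilon_{\mathbf{j},\mathbf{k}}|\sup_{t\in[0,T]}|\mathcal{A}_{\mathbf{j},\mathbf{k}}(t)|\le C_3\,N^{\frac12\#\{\ell\,:\,v_\ell=0\}}\prod_{\ell'\,:\,v_{\ell'}=1}\sqrt{\log(3+|j_{\ell'}|)}.
\]

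\emph{Step 2: summing over $\mathbf{j}$.} Inserting this into \eqref{def:lnN3} and using the product structure of $\gimel_{v,N}^3$,
\[
\mathcal{L}^3_{v,N}\le C_3\,N^{\frac12\#\{\ell\,:\,v_\ell=0\}}\Big(\prod_{\ell\,:\,v_\ell=0}\sum_{j_\ell=0}^{N}2^{j_\ell(1-h_\ell)}\Big)\Big(\prod_{\ell'\,:\,v_{\ell'}=1}\sum_{j_{\ell'}\le-2^{Nb}}2^{j_{\ell'}(1-h_{\ell'})}\sqrt{\log(3+|j_{\ell'}|)}\Big).
\]
Since $1-h_\ell\in(0,\tfrac12)$, each factor $\sum_{j_\ell=0}^{N}2^{j_\ell(1-h_\ell)}$ is a geometric sum bounded by $c\,2^{N(1-h_\ell)}$; and, setting $m=-j_{\ell'}\ge 2^{Nb}$ and using \eqref{log:ineg1} to write $\sqrt{\log(3+m)}\le c\sqrt{\log(3+2^{Nb})}\sqrt{\log(3+m-2^{Nb})}$ together with the convergence of $\sum_{p\ge0}2^{-p(1-h_{\ell'})}\sqrt{\log(3+p)}$, each factor $\sum_{j_{\ell'}\le-2^{Nb}}2^{j_{\ell'}(1-h_{\ell'})}\sqrt{\log(3+|j_{\ell'}|)}$ is bounded by $c\,2^{-2^{Nb}(1-h_{\ell'})}\sqrt{\log(3+2^{Nb})}\le c'\,2^{-2^{Nb}(1-h_{\ell'})}\sqrt{N}$. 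Multiplying these estimates, and using $\#\{\ell\,:\,v_\ell=0\}+\#\{\ell'\,:\,v_{\ell'}=1\}=d$ so that all the $\sqrt{N}$–factors combine into $N^{d/2}$, gives exactly
\[
\mathcal{L}^3_{v,N}\le C\,N^{d/2}\,2^{N\left(\sum_{\ell\,:\,v_\ell=0}(1-h_\ell)\right)-2^{Nb}\left(\sum_{\ell'\,:\,v_{\ell'}=1}(1-h_{\ell'})\right)}.
\]
One notes that the hypothesis $v\in\mathcal{B}$ is not actually needed here; it merely reflects that the all‑$0$ and all‑$1$ cases are treated in other lemmas.

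The main obstacle is essentially bookkeeping rather than anything conceptual, the argument being a reprise of Lemmas~\ref{lemmepour24:1}--\ref{lemmepour24:2}. The point requiring care is the treatment of the logarithmic factors on the negative coordinates: one must split $\log(3+m)$ for $m\ge 2^{Nb}$ so as to peel off the $N$-dependent part — which costs at most $\sqrt{N}$ per such coordinate and is harmlessly absorbed into $N^{d/2}$ — from a part summable in the remaining index; and one must check that, on the non‑negative coordinates $0\le j_\ell\le N$, the term $2^{j_\ell}T$ inside the logarithm only contributes a factor of order $N$, not of exponential size.
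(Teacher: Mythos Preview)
Your proof is correct and follows essentially the same approach as the paper's own proof. The only cosmetic difference is that in Step~1 you apply Lemma~\ref{lem:maj:somme1} uniformly to all coordinates and then split the resulting factor $\sqrt{\log(3+|j_\ell|+2^{j_\ell}T)}$ according to $v$, whereas the paper splits earlier and handles the $v_{\ell'}=1$ coordinates by a direct triangular-inequality bound on the denominator; both routes lead to the same estimate $C_1\,N^{\#\{\ell:v_\ell=0\}/2}\prod_{\ell':v_{\ell'}=1}\sqrt{\log(3+|j_{\ell'}|)}$, and Step~2 is identical.
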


\begin{proof}
Let ux fix $N \in \N$ and $ \mathbf{j} \in \gimel_{v,N}^3$. Using the definition \eqref{def:AJK}, the fast decay property \eqref{maj:psih}, the bound \eqref{eqn:bound:rv}, the inequality \eqref{log:ineg1}, the triangular inequality and the inequalities \eqref{maj:somme1} and \eqref{log:ineg2}, we get
\begin{align}
&\sum_{\mathbf{k} \in \Z^d}  \left|\varepsilon_{\mathbf{j},\mathbf{k}}\right|  \sup_{t \in [0,T]} \left \{\left| \mathcal{A}_{\mathbf{j},\mathbf{k}}(t)\right| \right\} \nonumber \\
& \leq C_0 \int_0^T \prod_{\ell \, : \, v_\ell =0} \left( \frac{\sqrt{\log(3+j_\ell+|k_\ell|)}}{(3+|2^{j_\ell}s-k_\ell|)^2} \right) \times \ldots \nonumber \\
& \qquad \ldots \times\prod_{\ell' \, : \, v_{\ell'} =1} \left( \frac{\sqrt{\log(3+|j_{\ell'}|+|k_{\ell'}|)}}{(3+|2^{j_{\ell'}}s-k_{\ell'}|)^2} \right) \, ds \nonumber \\
& \leq C_0 \int_0^T \prod_{\ell \, : \, v_\ell =0} \left( \sqrt{\log(3+j_\ell+2^{j_\ell}T)} \right) \times \ldots \nonumber \\
& \qquad \ldots \times \prod_{\ell' \, : \, v_{\ell'} =1} \left( \frac{\sqrt{\log(3+|j_{\ell'}|)}\sqrt{\log(3+|k_{\ell'}|)}}{(3+|k_{\ell'}|)^2} \right) \, ds \nonumber \\
& \leq C_1 N^{ \# \{\ell \, : \, v_\ell=0\} /2}  \prod_{\ell' \, : \, v_{\ell'} =1} \left(\sqrt{\log(3+|j_{\ell'}|)} \right), \label{eqn:boundl3}
\end{align}
where $C_0$ and $C_1$ are positive almost surely finite random variables not depending on $\mathbf{j}$ and $N$. It follows from \eqref{eqn:boundl3} that
\begin{align*}
\mathcal{L}^3_{v,N}  &\leq C_1 N^{ \# \{\ell \, : \, v_\ell=0\} /2}  \prod_{\ell \, : \, v_\ell=0} \left( \sum_{j_\ell =0}^{N} 2^{j_\ell (1-h_\ell)}\right) \times \ldots \nonumber \\
& \qquad \ldots \times\prod_{\ell' \, : \, v_{\ell'} =1} \left(\sum_{j_{\ell'}=- \infty}^{\lfloor-2^{Nb}\rfloor} 2^{j_{\ell'}(1-h_{\ell'})} \sqrt{\log(3+|j_{\ell'}|)} \right) \\
& \leq C_2 N^\frac{d}{2} 2^{N \left(\sum_{(\ell \, : \, v_\ell =0)} (1-h_\ell)\right)-2^{Nb} \left( \sum_{(\ell' \, : \, v_{\ell'} =1)}(1-h_{\ell'})\right)},
\end{align*}
where $C_2$ is a positive almost surely finite random variable not depending on $N$.
\end{proof}

\begin{Def}
Let $b' >0$ be a fixed real number. For all boolean vector $v =(v_l)_{\ell \in [\![1,d]\!]}\in \{0,1\}^d$ and $N \in \N$, we define the subset of $\Z^d$
\[ \gimel_{v,N}^4 := \big\{ j \in -\N^d \, : \, \forall \,\ell \in [\![1,d]\!],\, j_\ell  \leq -2^{N b' }\text{ if } v_\ell=1  \big\}. \]
\end{Def}

\begin{Lemma}\label{lemmepour24:4}
Let $T >2$ and $b'>0$ be two fixed real numbers. There exits a positive almost surely finite random variable $C$ such that, for all $v \in \{0,1\}^d$ and $N \in \N$, on $\Omega^*$, the random variable
\begin{align}
\mathcal{L}^4_{v,N} :=  \sum_{\mathbf{j} \in \gimel_{v,N}^4} 2^{ j_1 (1-h_1)+\cdots + j_d (1-h_d)}  \sum_{\mathbf{k} \in \Z^d} \left|\varepsilon_{\mathbf{j},\mathbf{k}}\right|  \sup_{t \in [0,T]} \left \{\left| \mathcal{A}_{\mathbf{j},\mathbf{k}}(t)\right| \right\} \label{def:lN4}
\end{align}
is bounded from above by $C  N^\frac{d}{2} 2^{-2^{Nb'} \left(\sum_{(\ell \, : \, v_\ell =1)} (1-h_\ell)\right)}$.
\end{Lemma}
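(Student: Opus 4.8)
The plan is to follow the proof of Lemma \ref{lemmepour24:3} almost verbatim; the present situation is in fact slightly simpler, because every coordinate $j_\ell$ of any $\mathbf{j}\in\gimel_{v,N}^4$ is nonpositive, so that $0\le 2^{j_\ell}s\le T$ for all $s\in[0,T]$ and all the relevant sums over $k_\ell$ are bounded uniformly in $s$.

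First I would fix $N\in\N$ and $\mathbf{j}\in\gimel_{v,N}^4$ and bound the inner sum $\sum_{\mathbf{k}\in\Z^d}|\varepsilon_{\mathbf{j},\mathbf{k}}|\sup_{t\in[0,T]}|\mathcal{A}_{\mathbf{j},\mathbf{k}}(t)|$. Using the definition \eqref{def:AJK}, the fast decay property \eqref{maj:psih} with an exponent $L>1$, the bound \eqref{eqn:bound:rv} on $|\varepsilon_{\mathbf{j},\mathbf{k}}|$ and Fubini's theorem, this quantity is at most
\[
C_0\int_0^T\prod_{\ell=1}^d\left(\sum_{k_\ell\in\Z}\frac{\sqrt{\log(3+|j_\ell|+|k_\ell|)}}{(3+|2^{j_\ell}s-k_\ell|)^L}\right)ds .
\]
Since $j_\ell\le 0$, we have $2^{j_\ell}s\le T$, so applying \eqref{log:ineg1} to separate the $j_\ell$--dependence from the $k_\ell$--dependence and then \eqref{maj:somme1} together with \eqref{log:ineg2}, each factor is bounded, uniformly in $s\in[0,T]$, by a constant times $\sqrt{\log(3+|j_\ell|)}$. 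Hence
\[
\sum_{\mathbf{k}\in\Z^d}\left|\varepsilon_{\mathbf{j},\mathbf{k}}\right|\sup_{t\in[0,T]}\left|\mathcal{A}_{\mathbf{j},\mathbf{k}}(t)\right|\le C_1\prod_{\ell=1}^d\sqrt{\log(3+|j_\ell|)},
\]
with $C_1$ a positive almost surely finite random variable not depending on $\mathbf{j}$ nor $N$.

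Next I would insert this estimate into the definition \eqref{def:lN4} and sum over $\mathbf{j}\in\gimel_{v,N}^4$, the sum factorizing over the coordinates. For the indices $\ell$ with $v_\ell=0$ one is left with the full series $\sum_{j_\ell\le 0}2^{j_\ell(1-h_\ell)}\sqrt{\log(3+|j_\ell|)}$, which is a finite constant because $1-h_\ell>0$ by \eqref{eqn:hermi:cond}. For the indices $\ell$ with $v_\ell=1$ the sum runs over $j_\ell\le\lfloor-2^{Nb'}\rfloor$; its leading term has order $2^{-2^{Nb'}(1-h_\ell)}\sqrt{\log(3+2^{Nb'})}$ and the remaining terms form a geometric tail of ratio $2^{-(1-h_\ell)}<1$, so the whole sum is at most $c\sqrt{N}\,2^{-2^{Nb'}(1-h_\ell)}$. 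Multiplying over all coordinates gives
\[
\mathcal{L}^4_{v,N}\le C\,N^{\#\{\ell\,:\,v_\ell=1\}/2}\,2^{-2^{Nb'}\left(\sum_{(\ell\,:\,v_\ell=1)}(1-h_\ell)\right)}\le C\,N^{\frac d2}\,2^{-2^{Nb'}\left(\sum_{(\ell\,:\,v_\ell=1)}(1-h_\ell)\right)},
\]
which is the claimed bound.

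I do not expect any genuine obstacle here; the only points requiring a little care are keeping track that all the constants produced along the chain of inequalities are independent of $\mathbf{j}$ and $N$, so that the final summation over $\mathbf{j}$ really yields the announced dependence on $N$, and handling the tail $\sum_{j_\ell\le\lfloor-2^{Nb'}\rfloor}2^{j_\ell(1-h_\ell)}\sqrt{\log(3+|j_\ell|)}$, where one uses that the slowly varying factor $\sqrt{\log(3+|j_\ell|)}$ does not destroy the geometric convergence and contributes only the extra $\sqrt{N}$ coming from its value at the threshold $|j_\ell|\simeq 2^{Nb'}$.
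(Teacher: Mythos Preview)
Your proposal is correct and follows essentially the same approach as the paper's own proof. Both arguments bound the inner $\mathbf{k}$-sum by $C_1\prod_{\ell=1}^d\sqrt{\log(3+|j_\ell|)}$ (the paper uses a direct triangular-inequality bound on the denominator while you invoke \eqref{maj:somme1}, but these are equivalent here since $j_\ell\le 0$), and then factorize the $\mathbf{j}$-sum over coordinates, treating the $v_\ell=0$ tails as finite constants and extracting the $\sqrt N\,2^{-2^{Nb'}(1-h_\ell)}$ contribution from each $v_\ell=1$ tail exactly as you describe.
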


\begin{proof}
Let us fix $N \in \N$ and $\mathbf{j} \in \gimel_{v,N}^4$. Using the definition \eqref{def:AJK}, the fast decay property \eqref{maj:psih}, the bound \eqref{eqn:bound:rv}, the inequality \eqref{log:ineg1} and the triangular inequality, we get
\begin{align}
& \sum_{\mathbf{k} \in \Z^d} \left|\varepsilon_{\mathbf{j},\mathbf{k}}\right|  \sup_{t \in [0,T]} \left \{\left| \mathcal{A}_{\mathbf{j},\mathbf{k}}(t)\right| \right\} 
 \leq C_0 \int_0^T \prod_{\ell=1}^d \left( \sum_{k_\ell \in \Z} \frac{\sqrt{\log(3+|j_\ell|+|k_\ell|)}}{(3+|2^{j_\ell}s-k_\ell|)^2} \right) \, ds \nonumber \\
& \leq C_0 \int_0^T \prod_{\ell=1}^d \left( \sum_{k_\ell \in \Z} \frac{\sqrt{\log(3+|j_\ell|)}\sqrt{\log(3+|k_\ell|)}}{(3+|k_\ell|)^2} \right) \, ds \nonumber\\
& \leq C_1  \prod_{\ell=1}^d \sqrt{\log(3+|j_\ell|)}\label{eqn:boundl4},
\end{align}
where $C_0$ and $C_1$ are positive almost surely finite random variables not depending on $\mathbf{j}$ and $N$. It follows from \eqref{eqn:boundl4} that
\begin{align*}
\mathcal{L}^4_{v,N} & \leq C_1 \prod_{\ell \, : \, v_\ell =1} \left( \sum_{j_\ell=-\infty}^{\lfloor -2^{N b'}\rfloor} 2^{j_\ell (1-h_\ell)}\sqrt{\log(3+|j_\ell|)}\right) \times \ldots \nonumber \\
& \qquad \ldots \times\prod_{\ell' \, : \, v_{\ell'} =0} \left( \sum_{j_{\ell'}=-\infty}^{-1} 2^{j_{\ell'} (1-h_{\ell'})}\sqrt{\log(3+|j_{\ell'}|)}\right) \\
& \leq C_2 N^\frac{d}{2} 2^{-2^{Nb'} \left(\sum_{(\ell \, : \, v_\ell =1)} (1-h_\ell)\right)},
\end{align*}
where $C_2$ is a positive almost surely finite random variable not depending on $N$.
\end{proof}

We are now in position to complete the proof of Theorem \ref{thm:main2}.

\begin{proof}[End of the Proof of Theorem \ref{thm:main2}]
The interval $[0,T]$ in statement of Theorem \ref{thm:main2} is denoted by $I$ in this proof. Let $\widetilde{\Omega}$ be the same event of probability $1$ as in the proof of Theorem \ref{thm:main1}; recall that it is defined as: $\widetilde{\Omega}= \Omega^* \cap \Omega^{**}$, where $\Omega^*$ and $\Omega^{**}$ are as in Lemmata \ref{lem:bound-gjk} and  \ref{lem:eventomega**}. Next, observe that for proving Theorem \ref{thm:main2} it is enough to show that there exists a positive almost surely finite random variable $C$ such that, on $\widetilde{\Omega}$, we have, for all $N,P \in \N$,  
\begin{equation}\label{eqn:main2step}
\big\| \widetilde{X}^{(d)}_{\mathbf{h},N+P}-\widetilde{X}^{(d)}_{\mathbf{h},N}\big\|_{I,\infty} \leq C N^{\frac{d}{2}} 2^{-N(  h_1+ \cdots + h_d -d + 1/2)},
\end{equation}
where, for all fixed $\omega\in\Omega$, the continuous function $\widetilde{X}^{(d)}_{\mathbf{h},N}(\cdot,\omega)$ is defined through \eqref{anteq1:thm:main2}.
Indeed, assuming that \eqref{eqn:main2step} is true, then it turns out that, for each fixed $\omega \in \widetilde{\Omega}$, the sequence of functions $\big(\widetilde{X}^{(d)}_{\mathbf{h},N}(\cdot,\omega)\big)_{N \in \N}$ is a Cauchy sequence in the Banach space of the real-valued continuous functions over $I$, equipped with its natural norm $\|\cdot \|_{I,\infty}$. Therefore, it converges, for this norm, to a continuous function over $I$ denoted by $\widetilde{X}^{(d)}_{\mathbf{h}}(\cdot,\omega)$. On the other hand, when $\omega\in\Omega\setminus\widetilde{\Omega}$ we set $\widetilde{X}^{(d)}_{\mathbf{h}}(t,\omega)=0$, for all $t\in I$. Next, observe that, in view of the previous definition of the stochastic process $\{ \widetilde{X}^{(d)}_{\mathbf{h}}(t)\}_{t \in I}$ we have, for all $t \in I$, almost surely,
\begin{equation}
\label{anteq2:thm:main2}
X^{(d)}_{\mathbf{h}}(t)=\widetilde{X}^{(d)}_{\mathbf{h}}(t),
\end{equation} 
since we know from \eqref{anteq1:thm:main2} and \eqref{eqn:fullserie}, that, for each fixed $t\in\R_+$ (and in particular for $t\in I$), the sequence of random variables $(\widetilde{X}^{(d)}_{\mathbf{h},N}(t,\omega))_{N \in \N}$ converges to $X^{(d)}_{\mathbf{h}}(t)$ in $L^2(\Omega)$. Next, using the fact that the two stochastic processes $\{X^{(d)}_{\mathbf{h}}(t) \}_{t \in I}$ and $ \{ \widetilde{X}^{(d)}_{\mathbf{h}}(t)\}_{t \in I}$ have continuous paths, we can derive from the almost sure equality \eqref{anteq2:thm:main2} that these two processes are indistinguishable. Thus, letting $P$ in \eqref{eqn:main2step} tends to $+ \infty$ , we obtain \eqref{eqn:toshowinthm2}.

It remains us to show that \eqref{eqn:main2step} holds. Using the inclusion
\[ D_{j}^1(t) \subseteq \{k \in \Z \, : \, |k| \leq 2^{N+P+1}T \},\quad\mbox{for all $t \in I:=[0,T]$ and $N \leq j \leq N+P$,}\]
and the triangle inequality, we get that
\begin{align*}
  & \|\widetilde{X}^{(d)}_{\mathbf{h},N+P}-\widetilde{X}^{(d)}_{\mathbf{h},N}\|_{I,\infty} \nonumber \\
  & \leq  \sum_{n=1}^{d} \left( \mathcal{M}_{n,N}+ \sum_{m=0}^3 \mathcal{H}_{n,N}^m + \sum_{m=1}^2 \mathcal{L}_{n,N}^m\right) + \sum_{v \in \mathcal{B}} \mathcal{L}_{v,N}^3+\sum_{v \in \{0,1\}^d} \mathcal{L}_{v,N}^4 . 
\end{align*}
Then the fact that \eqref{eqn:main2step} is satisfied follows from Lemmata \ref{lemmepour22:1}, \ref{lemmepour22:2}, \ref{lemmepour22:3}, \ref{lemmepour22:4},\ref{lemmepour22:5}, \ref{lemmepour24:1}, \ref{lemmepour24:2}, \ref{lemmepour24:3} and \ref{lemmepour24:4}.
\end{proof}

\bigskip

\noindent \textbf{Acknowledgement.} The three authors of this work are members of the GDR 3475 (Analyse Multifractale et Autosimilarit\'e) which partially supports them. Antoine Ayache is also partially supported by the Labex CEMPI (ANR-11-LABX-0007-01) and the Australian Research Council's Discovery Projects funding scheme (project number  DP220101680). Laurent Loosveldt is supported by the FNR OPEN grant APOGEe at University of Luxembourg. The first results of this paper were formulated during a visit of Antoine Ayache at the University of Luxembourg, also funded by the FNR OPEN grant APOGEe.

\appendix

\section{Some facts concerning multiple Wiener integrals} \label{sec:ape;profito}

In this section, mainly we give the proof of the crucial equality \eqref{eqn:itoprod}. This proof relies on some fundamental facts concerning multiple Wiener integrals. We refer to the two books \cite{MR2962301,MR2200233} for detailed presentations of such stochastic integrals and many other related topics (Wiener chaoses, Malliavin calculus, and so on). We recall that a function $f \in L^2(\R^n)$ is said to be symmetric if, for all $\sigma \in \mathfrak{S}_n$ (the set of permutations of $[\![1,n]\!]=\{1,\ldots,n\}$) and for Lebesgue almost every $(x_1,\ldots,x_n) \in \R^n$, one has  $f(x_{\sigma(1)},\ldots,x_{\sigma(n)})=f(x_1,\ldots,x_n)$. In other words, $f \in L^2(\R^n)$ is symmetric if and only if it is almost everywhere equal to its canonical symmetrization $\widetilde{f}$ defined, for all $(x_1,\ldots,x_n)\in\R^n$, as:
\begin{equation}
\label{eq:def-symmet}
\widetilde{f}(x_1,\ldots,x_n):=\frac{1}{n!} \sum_{\sigma \in \mathfrak{S}_n} f(x_{\sigma(1)},\ldots,x_{\sigma(n)}).
\end{equation}
We point out that a very fundamental property of multiple integrals is that
\begin{equation}
\label{eq:fftilde}
I_n(f) =I_n(\widetilde{f}), \quad\mbox{for all $f \in L^2(\R^n)$.} 
\end{equation}

For proving the equality \eqref{eqn:itoprod}, we will make use of the so-called product formula for multiple Wiener integrals \cite[Proposition 1.1.3]{MR2200233}. In order to give this important formula, first we need the following definition: let $m$ and $n$ be two arbitrary positive integers, if $f \in L^2(\R^m)$ and $g \in L^2(\R^n)$ are symmetric functions and $r\in [\![0,m \wedge n]\!]$, the contraction $f \otimes_r g$ is the $L^2(\R^{m+n-2r})$ function defined, for all $(x_1, \ldots,x_{m+n-2r}) \in \R^{m+n-2r}$, through the Lebesgue integral
\begin{align*}
& (f \otimes_r g)(x_1, \ldots,x_{m+n-2r}) \\
&:= \int_{\R^r} f(x_1,\ldots , x_{m-r},s_1,\ldots,s_r) g(x_{m-r+1},\ldots, x_{m+n-2r},s_1,\ldots,s_r) \, ds_1 \ldots ds_r\,,
\end{align*}
with the convention that $f \otimes_0 g:= f \otimes g$, which means that $f \otimes_0 g$ is the usual tensor product of $f$ and $g$; also notice that when $m=n$, then $f \otimes_n g$ is identified with the Lebesgue integral $\int_{\R^n} f g$. Using, the previous definition, one can write the product formula for multiple Wiener integrals in the following way: for each positive integers $m$ and $n$, and for every symmetric functions $f \in L^2(\R^m)$ and $g \in L^2(\R^n)$, one has 
\begin{equation}\label{eqn:productformula}
I_m(f)I_n(g) = \sum_{r=0}^{m \wedge n} r! \binom{m}{r} \binom{n}{r} I_{m+n-2r}(f \otimes_r g),
\end{equation}
where, for $p=m$ or $p=n$, the quantity $\binom{p}{r}$ is the usual binomial coefficient 
\[
\binom{p}{r}:=\frac{p!}{r! (p-r)!}\,.
\]

For proving the equality \eqref{eqn:itoprod}, we will also make use of the following fundamental result, which, for instance corresponds to \cite[Theorem 2.7.7]{MR2962301}.  

\begin{Thm} \label{thm:inthermite}
Let $f \in L^2(\R)$ be such that $\|f\|_{L^2(\R)}=1$. For all positive integer $n$, let $H_n$ the Hermite polynomial of degree $n$. Then, one has
\[ H_n \left( I_1(f) \right) = I_n(f^{\otimes_n}).\]
\end{Thm}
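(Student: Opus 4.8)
The plan is to prove the identity by strong induction on $n$, using only the product formula \eqref{eqn:productformula} for multiple Wiener integrals together with the classical three-term recurrence satisfied by the Hermite polynomials of Definition \ref{def:hermP}, namely $H_0\equiv 1$, $H_1(x)=x$ and, for every integer $n\ge 1$,
\begin{equation*}
H_{n+1}(x)=x\,H_n(x)-n\,H_{n-1}(x),\qquad x\in\R
\end{equation*}
(this is obtained, e.g., by applying Leibniz's rule to the identity $\frac{d^{n+1}}{dx^{n+1}}e^{-x^2/2}=\frac{d^{n}}{dx^{n}}\big(-x\,e^{-x^2/2}\big)$ and then multiplying both sides by $(-1)^{n+1}e^{x^2/2}$; it can also be read off from \eqref{eq:herm-part}). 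On the integral side one interprets $f^{\otimes_0}$ as the constant $1$ and $I_0(f^{\otimes_0})=1$, so that the asserted identity holds trivially for $n=0$, while for $n=1$ it is just $H_1(I_1(f))=I_1(f)$; these serve as the base cases.

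For the inductive step, fix $n\ge 1$ and assume that $I_m\big(f^{\otimes_m}\big)=H_m\big(I_1(f)\big)$ holds for $m=n-1$ and $m=n$. Both the single-variable function $f\in L^2(\R)$ and the $n$-variable function $f^{\otimes_n}\in L^2(\R^n)$ are symmetric, so \eqref{eqn:productformula} applies; since the relevant $m\wedge n$ equals $1\wedge n=1$, only the terms $r=0$ and $r=1$ contribute, and one gets
\begin{equation*}
I_1(f)\,I_n\big(f^{\otimes_n}\big)=I_{n+1}\big(f\otimes_0 f^{\otimes_n}\big)+n\,I_{n-1}\big(f\otimes_1 f^{\otimes_n}\big).
\end{equation*}
The first contraction is the ordinary tensor product, $f\otimes_0 f^{\otimes_n}=f^{\otimes_{n+1}}$. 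The second contraction is precisely where the hypothesis $\|f\|_{L^2(\R)}=1$ enters: for almost every $(x_1,\dots,x_{n-1})\in\R^{n-1}$,
\begin{equation*}
\big(f\otimes_1 f^{\otimes_n}\big)(x_1,\dots,x_{n-1})=\Big(\int_\R f(s)^2\,ds\Big)\,f(x_1)\cdots f(x_{n-1})=f^{\otimes_{n-1}}(x_1,\dots,x_{n-1}).
\end{equation*}
Substituting these two facts, and then using the induction hypothesis for $m=n$ and for $m=n-1$, yields $I_{n+1}\big(f^{\otimes_{n+1}}\big)=I_1(f)\,H_n\big(I_1(f)\big)-n\,H_{n-1}\big(I_1(f)\big)$, which by the Hermite recurrence above equals $H_{n+1}\big(I_1(f)\big)$. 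This closes the induction and proves the theorem.

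I expect no genuine obstacle here: once \eqref{eqn:productformula} is granted, the argument is a short bookkeeping computation. The only two points meriting a moment's attention are (i) making sure the three-term recurrence is the one attached to the probabilist's Hermite polynomials of Definition \ref{def:hermP} — under a different normalisation convention one would pick up powers of $2$ and the stated identity would change shape; and (ii) checking the hypotheses of \eqref{eqn:productformula}, i.e.\ that both arguments are symmetric $L^2$ functions, which is immediate because $f^{\otimes_m}$ is symmetric by construction. One could alternatively give a generating-function proof, comparing $\sum_{n\ge 0}\frac{t^n}{n!}H_n\big(I_1(f)\big)=\exp\big(t\,I_1(f)-t^2/2\big)$ with the chaos expansion $\sum_{n\ge 0}\frac{t^n}{n!}I_n\big(f^{\otimes_n}\big)$ of the Wick exponential of $tf$, but that route presupposes more structure than the elementary induction above, so I would favour the induction.
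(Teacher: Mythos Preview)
Your induction argument is correct and self-contained. Note, however, that the paper does not actually supply a proof of this theorem: it is stated in Appendix~\ref{sec:ape;profito} as a known result, attributed to \cite[Theorem~2.7.7]{MR2962301}, and then used as an ingredient in the proof of \eqref{eqn:itoprod}. So there is no ``paper's own proof'' to compare against; you have provided one where the authors chose to cite the literature.

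That said, your approach is the standard one and is exactly the kind of argument one finds in textbooks such as \cite{MR2200233,MR2962301}: the product formula \eqref{eqn:productformula} with $m=1$ collapses to two terms, the contraction $f\otimes_1 f^{\otimes_n}$ reduces to $\|f\|_{L^2}^2\,f^{\otimes_{n-1}}=f^{\otimes_{n-1}}$, and the resulting identity $I_{n+1}(f^{\otimes_{n+1}})=I_1(f)\,I_n(f^{\otimes_n})-n\,I_{n-1}(f^{\otimes_{n-1}})$ matches the three-term recurrence for the probabilist's Hermite polynomials. The two caveats you flag (normalisation of $H_n$ and symmetry of the inputs to \eqref{eqn:productformula}) are the right ones, and both are satisfied here.
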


%\begin{Thm}
%If $\varphi_1,\ldots, \varphi_p$ are orthonormal elements in $L^2(\R)$ and $n_1, \ldots, n_p \in \N$, then
%\[
%I_{n_1 + \cdots +n_p} \left( \varphi_{1}^{\otimes_{n_1}} \otimes \cdots \otimes \varphi_{p}^{\otimes_{n_p}} \right) = \prod_{\ell=1}^{p} H_{n_{\ell}} \left( I_1 \left(\varphi_\ell\right) \right) 
%\]
%\end{Thm}

We are now in position to prove the equality \eqref{eqn:itoprod}

\begin{proof}[Proof of the equality \eqref{eqn:itoprod}]
It follows from Theorem \ref{thm:inthermite} that
\[ \prod_{\ell=1}^p H_{n_\ell} \big(I_1 ( \varphi_\ell)\big) = \prod_{\ell=1}^p I_{n_\ell} \left(\varphi_\ell^{\otimes_{n_\ell}} \right),  \]
and thus, it remains to show
\begin{equation} \label{eqn:pro:rv:prod}
\prod_{\ell=1}^p I_{n_\ell} \left( \varphi_\ell^{\otimes_{n_\ell}} \right) = I_{n_1+\cdots+n_p} \left( \bigotimes_{\ell=1}^p \varphi_\ell^{\otimes_{n_\ell}} \right).
\end{equation}
We proceed by induction on the positive integer $p$. It is clear that  \eqref{eqn:pro:rv:prod} is satisfied when $p=1$. So from now on, we assume that $p\ge 2$ and that 
\[ \prod_{\ell=1}^{p-1} I_{n_\ell} \left( \varphi_\ell^{\otimes_{n_\ell}} \right) = I_{n_1+ \cdots+n_{p-1}} \left( \bigotimes_{\ell=1}^{p-1} \varphi_\ell^{\otimes_{n_\ell}} \right).\]
Then, setting $n=n_1+\cdots+n_{p-1}$ and $d =n_1+ \cdots +n_p=n+n_p$, we can derive from the product formula \eqref{eqn:productformula} that
\begin{align}
\prod_{\ell=1}^p I_{n_\ell} \left( \varphi_\ell^{\otimes_{n_\ell}} \right) &= I_{n} \left( \bigotimes_{\ell=1}^{p-1} \varphi_\ell^{\otimes_{n_\ell}} \right) I_{n_p} \left( \varphi_p \right) \nonumber\\
&= \sum_{r=0}^{n \wedge n_p} r! \binom{n}{r} \binom{n_p}{r} I_{n+n_p-2r} \left(\widetilde{\bigotimes_{\ell=1}^{p-1} \varphi_\ell^{\otimes_{n_\ell}}} \otimes_r \varphi_p^{\otimes_{n_p}} \right)\nonumber\\
&=I_{d} \left(\widetilde{\bigotimes_{\ell=1}^{p-1} \varphi_\ell^{\otimes_{n_\ell}}} \otimes \varphi_p^{\otimes_{n_p}} \right)=I_{d} \left(\oversortoftilde{\widetilde{\bigotimes_{\ell=1}^{p-1}  \varphi_\ell^{\otimes_{n_\ell}}} \otimes \varphi_p^{\otimes_{n_p}}} \right). \label{prop:rv:prodher:eq1}
\end{align}
Notice that the third equality in \eqref{prop:rv:prodher:eq1} results from the equality 
\[ 
\widetilde{\bigotimes_{\ell=1}^{p-1} \varphi_\ell^{\otimes_{n_\ell}}} \otimes_r \varphi_p^{\otimes_{n_p}}=0,\quad\mbox{for all $r\in [\![1, n]\!]$,}
\]
which is a consequence of the orthonormality of the system $(\varphi_\ell)_{\ell=1}^p$. Also notice that the last equality in \eqref{prop:rv:prodher:eq1} results from
 \eqref{eq:fftilde}. Next observe that, in view of \eqref{prop:rv:prodher:eq1} and \eqref{eq:fftilde}, in order to show that \eqref{eqn:pro:rv:prod} holds, it remains us to prove that 
\begin{equation}
\label{prop:rv:prodher:eq2}
\oversortoftilde{\widetilde{\bigotimes_{\ell=1}^{p-1}  \varphi_\ell^{\otimes_{n_\ell}}} \otimes \varphi_p^{\otimes_{n_p}}}=\oversortoftilde{\bigotimes_{\ell=1}^{d} \varphi_\ell^{\otimes_{n_\ell}}}.
\end{equation}  
Notice that any arbitrary permutation $\sigma\in\mathfrak{S}_n$ can be extended in a natural way into a permutation $\check{\sigma}\in\mathfrak{S}_d$ defined, for all $i\in\{1,\ldots, n\}$, as $\check{\sigma}(i)=\sigma (i)$, and for, each $i\in\{n+1,\ldots , d\}$, as
$\check{\sigma}(i)=i$. Thus, using \eqref{eq:def-symmet}, the latter notation and the fact that the composition map $\nu\mapsto \nu o\,\check{\sigma}$ is bijection from $\mathfrak{S}_d$ to itself, one gets, for all $(x_1, \ldots,x_d)\in\R^d$, that
\begin{align*}
&\left(\oversortoftilde{\widetilde{\bigotimes_{\ell=1}^{p-1}  \varphi_\ell^{\otimes_{n_\ell}}} \otimes \varphi_p^{\otimes_{n_p}}} \,\right) (x_1, \ldots,x_d) \\  \quad &= \frac{1}{d!} \frac{1}{n!} \sum_{\sigma \in \mathfrak{S}_n} \sum_{\nu \in \mathfrak{S}_d} \bigotimes_{\ell=1}^{p-1}  \varphi_\ell^{\otimes_{n_\ell}} (x_{\nu(\sigma(1))}, \ldots,x_{\nu(\sigma(n))})  \otimes \varphi_p^{\otimes_{n_p}}(x_{\nu(n+1)},\ldots,x_{\nu(d)})\\
\quad &= \frac{1}{d!} \frac{1}{n!} \sum_{\sigma \in \mathfrak{S}_n} \sum_{\nu \in \mathfrak{S}_d} \bigotimes_{\ell=1}^{p} \varphi_\ell^{\otimes_{n_\ell}} (x_{\nu o\,\check{\sigma} (1)}, \ldots,x_{\nu o\, \check{\sigma}(d)})  \\
\quad &= \frac{1}{d!} \frac{1}{n!} \sum_{\sigma \in \mathfrak{S}_n} \sum_{\nu' \in \mathfrak{S}_d} \bigotimes_{\ell=1}^{p} \varphi_\ell^{\otimes_{n_\ell}} (x_{\nu'(1)}, \ldots,x_{\nu'(d)})  \\
\quad &= \frac{1}{d!} \sum_{\nu' \in \mathfrak{S}_d} \bigotimes_{\ell=1}^{p}  \varphi_\ell^{\otimes_{n_\ell}} (x_{\nu'(1)}, \ldots,x_{\nu'(d)})  \\
\quad &= \left(\oversortoftilde{\bigotimes_{\ell=1}^{p}  \varphi_\ell^{\otimes_{n_\ell}} } \, \right) (x_1, \ldots,x_d).
\end{align*}
\end{proof}

\section{Some useful lemmata}
\label{sec:use-lemmata}

The proofs of the following lemmata, which are extensively used in our articles, can be found in \cite{MR4110623}.

\begin{Lemma}\label{lem:log:ineg1}
For all $(x,y)\in \R_+^2$, we have 
\begin{equation} \label{log:ineg1}
\log(3+x+y) \le \log(3+x) \log(3+y).
\end{equation}
Moreover, for each fixed positive real number $T$, there exists a constant $c>0$ such that, for every $x\in\R_+$, we have
\begin{equation}\label{log:ineg2}
\log(3+x+2^xT) \le c(1+x).
\end{equation}
\end{Lemma}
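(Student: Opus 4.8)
\noindent\textbf{Proof proposal for Lemma \ref{lem:log:ineg1}.} The plan is to establish the two inequalities \eqref{log:ineg1} and \eqref{log:ineg2} separately, each by a short elementary argument. The one conceptual point I would flag at the outset is that one cannot simply chain $\log(3+x+y)\le\log(3+x)+\log(3+y)$ (which is true, since $(3+x)(3+y)\ge 3+x+y$) with $\log(3+x)+\log(3+y)\le\log(3+x)\log(3+y)$: the latter is false when $x$ and $y$ are both small (already $x=y=0$ gives $2\log 3>(\log 3)^2$). So for \eqref{log:ineg1} I would instead use monotonicity in a single variable, relying on $\log 3>1$.

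Concretely, for \eqref{log:ineg1} I would fix $y\in\R_+$ and study $f(x):=\log(3+x)\log(3+y)-\log(3+x+y)$ on $\R_+$. A one-line computation gives
\[
f'(x)=\frac{\log(3+y)}{3+x}-\frac{1}{3+x+y}=\frac{1}{3+x}\Big(\log(3+y)-\frac{3+x}{3+x+y}\Big),
\]
and since $\frac{3+x}{3+x+y}\le 1<\log 3\le\log(3+y)$ we get $f'>0$ on $\R_+$; hence $f(x)\ge f(0)=(\log 3-1)\log(3+y)\ge 0$, which is exactly \eqref{log:ineg1}.

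For \eqref{log:ineg2} I would first record the elementary bounds $x\le 2^x$ and $1\le 2^x$, both valid for $x\in\R_+$, which together yield $3+x+2^xT\le 3\cdot 2^x+2^x+2^xT=2^x(4+T)$. Taking logarithms gives $\log(3+x+2^xT)\le x\log 2+\log(4+T)$, and choosing $c:=\log\big(2(4+T)\big)>0$ (finite and positive since $T>0$) we obtain $\log(3+x+2^xT)\le c(1+x)$, because both $\log 2$ and $\log(4+T)$ are at most $c$.

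There is no genuine obstacle here: the statement is purely a calculus exercise. The only thing worth a moment's care is the remark in the first paragraph that explains why the one-variable monotonicity argument, rather than subadditivity of the logarithm, is the appropriate tool for \eqref{log:ineg1}.
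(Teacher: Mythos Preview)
Your proof is correct. Both arguments are clean: the monotonicity-in-$x$ approach for \eqref{log:ineg1} neatly sidesteps the trap you identified (that $\log(3+x)+\log(3+y)\le\log(3+x)\log(3+y)$ fails for small arguments), and the bound $3+x+2^xT\le 2^x(4+T)$ for \eqref{log:ineg2} is efficient.

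There is nothing to compare against in the present paper: the authors do not prove this lemma here but simply record it in Appendix~\ref{sec:use-lemmata} with the remark that its proof ``can be found in \cite{MR4110623}''. Your self-contained elementary argument is therefore a genuine addition rather than a reproduction or variant of anything in the paper itself.
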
 

\begin{Lemma}\label{lem:maj:somme1}
For each fixed real number $L>1$, there exists a constant $c>0$ such that, for all $j\in\Z$ and for all $s\in\R$, we have
\begin{equation}
\sum_{k\in\Z} \frac{\sqrt{\log(3+|j|+|k|)}}{\big (3+|2^js-k|\big)^L} \le c\sqrt{\log\big (3+|j|+2^j|s|\big)}. \label{maj:somme1}
\end{equation}
\end{Lemma}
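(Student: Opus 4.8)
The final statement is Lemma~\ref{lem:maj:somme1}, which asserts that for each fixed $L>1$ there is a constant $c>0$ with
\[
\sum_{k\in\Z} \frac{\sqrt{\log(3+|j|+|k|)}}{\big (3+|2^js-k|\big)^L} \le c\sqrt{\log\big (3+|j|+2^j|s|\big)}
\]
for all $j\in\Z$ and $s\in\R$. Since the lemma is explicitly borrowed from \cite{MR4110623}, the cleanest route is to reconstruct a short self-contained proof rather than cite.

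The plan is as follows. First I would fix $j\in\Z$ and $s\in\R$, write $x_0:=2^j s$, and split the sum according to whether $k$ is close to $x_0$ or far. Precisely, let $N:=\lfloor x_0\rfloor$ (the integer part) and split $\Z=\{|k-N|\le 1\}\cup\{|k-N|\ge 2\}$; on the near part there are at most three terms, each with denominator $\ge 3^L$, and numerator $\sqrt{\log(3+|j|+|k|)}\le \sqrt{\log(3+|j|+|x_0|+2)}\lesssim \sqrt{\log(3+|j|+2^j|s|)}$, using that $\log$ is increasing and $\log(3+|j|+|x_0|+2)\le c\log(3+|j|+2^j|s|)$ for a suitable $c$ (e.g.\ since $3+|j|+|x_0|+2\le (3+|j|+2^j|s|)^{2}$ once $3+|j|+2^j|s|\ge 2$, and the remaining bounded range is handled by enlarging the constant). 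So the near part is controlled.

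For the far part, the key point is the elementary inequality $|k|\le |k-x_0|+|x_0|$, hence by \eqref{log:ineg1} (Lemma~\ref{lem:log:ineg1}),
\[
\sqrt{\log(3+|j|+|k|)}\le \sqrt{\log(3+|j|+|x_0|)}\,\sqrt{\log(3+|k-x_0|)}.
\]
Factor the first square root out of the sum; it is exactly (a constant multiple of) the desired right-hand side after one more application of \eqref{log:ineg1} to bound $\log(3+|j|+|x_0|)\le\log(3+|j|+2^j|s|)$-type quantities, so it remains to show $\sum_{|k-N|\ge 2}\frac{\sqrt{\log(3+|k-x_0|)}}{(3+|2^js-k|)^L}$ is bounded by a constant depending only on $L$. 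On that range $|2^j s-k|=|x_0-k|\ge |N-k|-1\ge \tfrac12|N-k|$, so the sum is dominated by $2^L\sum_{m\ge 2}\frac{\sqrt{\log(3+m+1)}}{(3+m/2)^L}$, which converges since $L>1$ (the $\sqrt{\log}$ factor is absorbed by any fixed loss in the exponent). Collecting the near and far estimates and adjusting constants gives the claim. I do not anticipate a genuine obstacle here; the only mildly delicate point is making sure every replacement of $\log(3+|j|+|x_0|)$-type expressions by $\log(3+|j|+2^j|s|)$ is legitimate uniformly in $j,s$ — this is handled by \eqref{log:ineg1} together with treating the compact region $3+|j|+2^j|s|\le 2$ separately by enlarging $c$.
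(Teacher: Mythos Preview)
The paper does not actually prove Lemma~\ref{lem:maj:somme1}; it merely states it in Appendix~\ref{sec:use-lemmata} and refers to \cite{MR4110623} for the proof. Your self-contained reconstruction is therefore not competing with anything in the paper, and the argument you outline is correct and is in fact the natural one in this setting: split off the few terms near $x_0=2^j s$, and for the remaining terms use $|k|\le |x_0|+|k-x_0|$ together with \eqref{log:ineg1} to factor out $\sqrt{\log(3+|j|+|x_0|)}$ and reduce to a convergent series in $m=|k-N|$.

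Two small cosmetic points. First, since $2^j>0$ one has $|x_0|=2^j|s|$ exactly, so the quantity $\log(3+|j|+|x_0|)$ is already equal to $\log(3+|j|+2^j|s|)$; no further application of \eqref{log:ineg1} is needed there. Second, the ``compact region $3+|j|+2^j|s|\le 2$'' you mention is empty (the left-hand side is always at least $3$), so that caveat can simply be dropped. Neither point affects the validity of the argument.
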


\begin{Lemma}\label{lem:maj:somme2}
For each fixed real number $L>1$, there exists a constant $c>0$ such that, for all $t\in\R_+$, for all $s\in [0,t]$ and for all $j\in\N$, we have 
\begin{equation}\label{maj:somme2}
\sum_{k\in D_j^3(t)} \frac{\sqrt{\log(3+|j|+|k|)}}{(3+|2^js-k|)^L} \le 
c (1+j) 2^{-j(L-1)(1-a)} \sqrt{\log(3+t)},
\end{equation}
where $D_j^3(t)$ is the infinite subset of $\Z$ defined through \eqref{def:dj3}.
\end{Lemma}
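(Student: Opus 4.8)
\medskip
\noindent\textbf{Proof proposal for Lemma~\ref{lem:maj:somme2}.} The plan is to use the fact that belonging to $D_j^3(t)$ (see \eqref{def:bjk} and \eqref{def:dj3}) forces the center $2^{-j}k$ of $B_{j,k}$ to lie at distance at least $2^{-ja}$ from $[0,t]$, which, for every $s\in[0,t]$, gives $|2^js-k|>2^{j(1-a)}$; this lower bound on the denominators is what will produce the decaying factor $2^{-j(L-1)(1-a)}$. Concretely, I would split $D_j^3(t)=D_j^{3,-}(t)\cup D_j^{3,+}(t)$ according to whether $B_{j,k}$ sits entirely to the left of $0$ or entirely to the right of $t$, i.e. $D_j^{3,-}(t)=\{k\in\Z:k<-2^{j(1-a)}\}$ and $D_j^{3,+}(t)=\{k\in\Z:k>2^jt+2^{j(1-a)}\}$. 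Then for $k\in D_j^{3,-}(t)$ and $s\in[0,t]$ one has, since $2^js\ge0$, the bound $|2^js-k|=2^js-k\ge|k|>2^{j(1-a)}$; while for $k\in D_j^{3,+}(t)$ one has, since $2^js\le2^jt$, both $|2^js-k|=k-2^js\ge k-2^jt>2^{j(1-a)}$ and $|k|=k\le2^jt+|2^js-k|$.

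For the left part, these elementary inequalities reduce the sum to a one-variable tail: it is at most $\sum_{m>2^{j(1-a)},\,m\in\N}\sqrt{\log(3+|j|+m)}\,(3+m)^{-L}$. Applying \eqref{log:ineg1} to factor out $\sqrt{\log(3+|j|)}$, and then comparing $\sum_{m>M}\sqrt{\log(3+m)}\,(3+m)^{-L}$ with the integral $\int_{M-1}^{+\infty}\sqrt{\log(3+x)}\,(3+x)^{-L}\,dx$ (the integrand is decreasing on $\R_+$ because $L>1$, and one integration by parts bounds the integral by a constant times $\sqrt{\log(3+M)}\,M^{1-L}$), one obtains, with $M=2^{j(1-a)}$ and $\log(3+2^{j(1-a)})\le c(1+j)$, a bound of order $\sqrt{\log(3+|j|)}\,\sqrt{1+j}\,2^{-j(L-1)(1-a)}\le(1+j)\,2^{-j(L-1)(1-a)}$, hence $\le(1+j)\,2^{-j(L-1)(1-a)}\sqrt{\log(3+t)}$ since $\sqrt{\log(3+t)}\ge1$.

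For the right part I would first exploit $|k|\le2^jt+|2^js-k|$ and \eqref{log:ineg1} to split the numerator as $\sqrt{\log(3+|j|+2^jt+|2^js-k|)}\le\sqrt{\log(3+|j|+2^jt)}\,\sqrt{\log(3+|2^js-k|)}$, pulling the $k$-independent factor $\sqrt{\log(3+|j|+2^jt)}$ out of the sum. What remains, $\sum_{k\in D_j^{3,+}(t)}\sqrt{\log(3+|2^js-k|)}\,(3+|2^js-k|)^{-L}$, is again a single tail sum over distances exceeding $2^{j(1-a)}$ and is bounded, exactly as in the previous paragraph, by a constant times $\sqrt{1+j}\,2^{-j(L-1)(1-a)}$. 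Finally, from the elementary inequality $3+|j|+2^jt\le3(1+|j|)(1+2^j)(1+t)$ together with $\log(1+|j|)\le c(1+j)$ and $\log(1+2^j)\le c(1+j)$, one gets $\log(3+|j|+2^jt)\le c(1+j)+\log(3+t)$, so $\sqrt{\log(3+|j|+2^jt)}\le c\sqrt{1+j}+\sqrt{\log(3+t)}\le c'\sqrt{1+j}\sqrt{\log(3+t)}$ (using $\sqrt{1+j}\ge1$ and $\sqrt{\log(3+t)}\ge1$). Multiplying the two factors gives $\le c(1+j)\,2^{-j(L-1)(1-a)}\sqrt{\log(3+t)}$ for the right part, and adding the two parts finishes the proof.

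The only delicate point is the right part: there $|k|$ is not controlled in terms of $j$ and $t$, so the numerator $\sqrt{\log(3+|j|+|k|)}$ cannot be replaced outright by a $j,t$-dependent quantity; the device that makes it work is precisely the inequality $|k|\le2^jt+|2^js-k|$ combined with the multiplicative splitting \eqref{log:ineg1}, which confines the genuinely large part of $|k|$ inside the convergent tail sum, leaving only the harmless factor $\sqrt{\log(3+|j|+2^jt)}\le c(1+j)^{1/2}\sqrt{\log(3+t)}$. The finitely many small values of $j$ (recall $j\in\N$) are absorbed by enlarging the constant.
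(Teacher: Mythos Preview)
Your argument is correct. The paper itself does not prove this lemma (it is quoted from \cite{MR4110623}; see the first line of Appendix~\ref{sec:use-lemmata}), so there is no in-paper argument to compare with. Your route---splitting $D_j^3(t)$ into its left and right pieces, exploiting the built-in lower bound $|2^js-k|>2^{j(1-a)}$ to produce the factor $2^{-j(L-1)(1-a)}$ via a tail estimate, and, for the right piece, using $|k|\le 2^jt+|2^js-k|$ together with \eqref{log:ineg1} to decouple the $t$-dependence from the convergent tail sum---is the natural one.
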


\bibliography{biblio}{}
\bibliographystyle{plain}

\end{document}